\def\sqr#1#2{{\vcenter{\vbox{\hrule height.#2pt
				\hbox{\vrule width.#2pt height#1pt \kern#1pt \vrule width.#2pt}
				\hrule height.#2pt}}}}
\def\3n{\negthinspace \negthinspace \negthinspace }
\def\2n{\negthinspace \negthinspace }
\def\1n{\negthinspace }
\def\bel{\begin{equation}\label}
	\def\eel{\end{equation}}
\def\dbE{\mathbb{E}}
\def\dbF{\mathbb{F}}
\def\dbH{\mathbb{H}}
\def\dbN{\mathbb{N}}
\def\dbP{\mathbb{P}}
\def\dbR{\mathbb{R}}
\def\dbS{\mathbb{S}}
\def\sU{\mathscr{U}}
\def\sX{\mathscr{X}}
\def\={\buildrel \triangle \over =}
\def\d{\delta}
\def\m{\mu}
\def\th{\theta}
\def\Th{\Theta}
\def\O{\Omega}
\def\cF{{\cal F}}
\def\cL{{\cal L}}
\def\cM{{\cal M}}
\def\cO{{\cal O}}
\def\cP{{\cal P}}
\def\cQ{{\cal Q}}
\def\cR{{\cal R}}
\def\cS{{\cal S}}
\def\cW{{\cal W}}
\def\BA{{\bm A}}
\def\BB{{\bm B}}
\def\BC{{\bm C}}
\def\BD{{\bm D}}
\def\BP{{\bm P}}
\def\BQ{{\bm Q}}
\def\BR{{\bm R}}
\def\BS{{\bm S}}
\def\BU{{\bm U}}
\def\BX{{\bm X}}
\def\BZ{{\bm Z}}
\def\BGamma{{\bm \Gamma}}
\def\Bb{{\bm b}}
\def\Be{{\bm e}}
\def\Bk{{\bm k}}
\def\Bp{{\bm p}}
\def\Bq{{\bm q}}
\def\Br{{\bm r}}
\def\Bu{{\bm u}}
\def\Bx{{\bm x}}
\def\Th{\Theta}
\def\O{\Omega}
\def\ms{\medskip}
\def\hb{\hbox}
\def\limsup{\mathop{\overline{\rm lim}}}
\def\lan{{\langle}}
\def\ran{{\rangle}}
\def\h{\widehat}
\def\wt{\widetilde}
\def\cd{\cdot}
\def\les{\leqslant}
\def\ges{\geqslant}
\def\({\Big (}
\def\){\Big )}
\def\[{\Big[}
\def\]{\Big]}
\def\lan{\langle}
\def\ran{\rangle}
\def\bde{\begin{definition}\label}
	\def\ede{\end{definition}}
	\def\bel{\begin{equation}\label}
		\def\ee{\end{equation}}
	\def\bt{\begin{theorem}\label}
		\def\et{\end{theorem}}
	\def\bc{\begin{corollary}\label}
		\def\ec{\end{corollary}}
	\def\bl{\begin{lemma}\label}
		\def\el{\end{lemma}}
	\def\bp{\begin{proposition}\label}
		\def\ep{\end{proposition}}
	\def\bex{\begin{example}\label}
		\def\ex{\end{example}}
	\def\bas{\begin{assumption}}
		\def\eas{\end{assumption}}
	\def\br{\begin{remark}\label}
		\def\er{\end{remark}}
	\def\ba{\begin{array}}
		\def\ea{\end{array}}
	\def\ed{\end{document}}
\def\square#1{\vbox{\hrule\hbox{\vrule height#1%
			\kern#1\vrule}\hrule}}
\def\rectangle#1#2{\vbox{\hrule\hbox{\vrule height#1%
			\kern#2\vrule}\hrule}}
\font\tenbb=msbm10 \font\sevenbb=msbm7 \font\fivebb=msbm5
\newtheorem{theorem}{Theorem}[section]
\newtheorem{corollary}[theorem]{Corollary}
\newtheorem{lemma}[theorem]{Lemma}
\newtheorem{proposition}[theorem]{Proposition}
\newtheorem{assumption}[theorem]{Assumption}
\theoremstyle{definition}
\newtheorem{definition}[theorem]{Definition}
\newtheorem{remark}[theorem]{Remark}
\newtheorem{example}{Example}[section]
\newbox\qbox
\def\usecolor#1{\csname\string\color@#1\endcsname\space}
\newcommand\bordercolor[1]{\colsplit{1}{#1}}
\newcommand\fillcolor[1]{\colsplit{0}{#1}}
\newcommand\outline[1]{\leavevmode%
	\def\maltext{#1}%
	\setbox\qbox=\hbox{\maltext}%
	\boxgs{Q q 2 Tr \thickness\space w \fillcol\space \bordercol\space}{}%
	\copy\qbox%
}
\newcommand\colsplit[2]{\colorlet{tmpcolor}{#2}\edef\tmp{\usecolor{tmpcolor}}%
	\def\tmpB{}\expandafter\colsplithelp\tmp\relax%
	\ifnum0=#1\relax\edef\fillcol{\tmpB}\else\edef\bordercol{\tmpC}\fi}
\def\colsplithelp#1#2 #3\relax{%
	\edef\tmpB{\tmpB#1#2 }%
	\ifnum `#1>`9\relax\def\tmpC{#3}\else\colsplithelp#3\relax\fi
}
\def\thickness{.3}
\title{Uniform-in-time convergence and turnpike properties of linear-quadratic mean field control problems with common noise}
\author{Erhan Bayraktar
\thanks{Department of Mathematics, University of Michigan, Ann Arbor, MI 48109, USA. {\tt erhan@umich.edu.} This author is funded in part by the National Science Foundation under grant DMS-2507940 and in part by the Susan M. Smith Professorship.}
~~~~Jiamin Jian
\thanks{Department of Mathematics, University of Michigan, Ann Arbor, MI 48109, USA. {\tt jiaminj@umich.edu.}}
}
\date{}
\begin{document}

\maketitle

%%%%%%%%%%%%%%%%%%%%%%%%%%%%%%%%%%%%%%%%%%%%%%%%%%%%%%%%%%%%%%%%%%%%%%

\begin{abstract}
We investigate uniform-in-time convergence and turnpike properties for linear-quadratic mean field control problems with common noise. Within a unified framework, we analyze a finite-horizon social optimization problem, its mean field control limit, and the corresponding ergodic mean field control problem. The finite-horizon problems are characterized by coupled Riccati differential equations, whereas the ergodic problem is addressed via a Bellman equation on the Wasserstein space, which reduces to a system of stabilizing algebraic Riccati equations. By deriving estimates for these Riccati systems, we establish a turnpike property for the finite-horizon mean field control problem and obtain quantitative convergence results from the social optimization problem to its mean field limit and the associated ergodic control problem.
%the finite-population system to its mean field limits and the associated ergodic problem.
%The optimal solutions are characterized by system of Riccati equations or algebraic Riccati equations. The estimates for these equations yield a turnpike property for the finite-horizon mean field control problem and quantitative convergence results from the finite-population system to its mean field and ergodic limits.
\end{abstract}

%%%%%%%%%%%%%%%%%%%%%%%%%%%%%%%%%%%%%%%%%%%%%%%%%%%%%%%%%%%%%%%%%%%%%%

\section{Introduction}

Large-population stochastic systems arise in a wide range of applications where a large number of interacting agents are exposed to both idiosyncratic sources of uncertainty and common external shocks. When the agents act cooperatively, a natural modeling paradigm is social optimization: a central planner coordinates the controls of all agents so as to minimize a global performance criterion.

In this paper, we study linear-quadratic (LQ) social optimization problems with common noise, together with their mean field limits and long-time behavior. The presence of common noise, representing external shocks shared by all agents, induces conditional dependence across the whole population in the large-population limit and leads to control problems driven by conditional McKean--Vlasov dynamics. Even in the LQ setting, this feature produces coupled Riccati systems whose solvability, stability, and asymptotic properties require careful analysis.

We investigate three closely related control problems within a unified framework: (i) a finite-horizon social optimization problem for an $N$-agent system; (ii) the associated finite-horizon mean field control problem, obtained as the large-population limit; and (iii) an ergodic mean field control problem, which captures the long-time behavior of socially optimal policies. Under a common set of assumptions, we establish well-posedness and provide explicit characterizations of the optimal controls for all three control problems. 

The central objectives of this work are to establish turnpike properties for the mean field control problem and derive uniform-in-time convergence estimates from the social optimization problem to the mean field control problem. Our analysis separates the two asymptotic mechanisms involved in the problems. First, we establish an exponential turnpike estimate comparing the finite-horizon mean field optimal pair with its ergodic counterpart. Second, we prove that the finite-population social optimization problem converges to the finite-horizon mean field control problem uniformly with respect to the time horizon. Combining these two estimates yields a quantitative comparison between the finite-population optimal pair and its ergodic mean field counterpart, in which the error is decomposed into a population-size term and a temporal boundary-layer term.

Although each of these problems has been studied individually in the literature, a unified analysis linking social optimization, mean field control, and ergodic control---particularly in the presence of multiplicative common noise---has not yet been fully developed. Moreover, most existing large-population convergence results are established for a fixed time horizon, with constants that may depend on the horizon. Such estimates become ineffective when the time horizon is large and therefore do not justify the mean field approximation in a regime where the population size and the time horizon tend to infinity simultaneously. The present work addresses these issues by developing a unified solvability theory and establishing quantitative convergence estimates that remain valid over arbitrarily long horizons.

\subsection{Related work}

\subsubsection{Social optimization problems}

Social optimization problems, in which a population of interacting agents cooperates to optimize a collective objective, arise in diverse applications such as communication networks \cite{HTK-2017} and social welfare systems \cite{Moulin-2004}. In large-population settings with weak interactions, such problems naturally lead to mean field approximations.

In the linear-quadratic framework, early work by Huang, Caines, and Malham\'e \cite{Huang-Caines-Malhame-2012} established asymptotic team-optimal solutions for mean field social optimization problems under additive noise. Subsequent extensions addressed non-uniform agents \cite{Chen-Huang-2019}, indefinite state weights and cost functionals \cite{Huang-Yang-2021-Linear, Wang-Zhang-2020}, and additional sources of randomness \cite{Wang-Zhang-2017}. In particular, Huang and Yang \cite{Huang-Yang-2021-Linear} analyzed asymptotic solvability of finite-horizon LQ mean field social optimization problems, while Wang and Zhang \cite{Wang-Zhang-2020} studied infinite-horizon variants. Despite these developments, most existing results focus either on finite-horizon or infinite-horizon settings separately and typically treat common noise in a restricted form. A unified analysis connecting finite-population social optimization, its mean field limit, and the long-time behavior under general common noise remains relatively limited.

\subsubsection{Mean field optimal control problems}

Mean field control problems arise as infinite-population limits of social optimization problems and provide a tractable alternative to high-dimensional centralized control. Foundational contributions include the works of Nourian, Caines, Malham\'e, and Huang \cite{Nourian-Caines-Malhame-Huang-2012}, as well as the monograph by Bensoussan, Frehse, and Yam \cite{Bensoussan-Frehse-Yam-2013}, which established the basic formulation and analytical framework of mean field control. %Foundational contributions include the works of \cite{Nourian-Caines-Malhame-Huang-2012, Bensoussan-Frehse-Yam-2013}.

Subsequent theoretical developments have employed both the stochastic maximum principle \cite{Andersson-Djehiche-2011, Buckdahn-Li-Ma-2016} and dynamic programming approaches \cite{Pham-Wei-2017}. Infinite-horizon mean field control has also been investigated, for instance in \cite{Bayraktar-Zhang-2023}, via the solvability of the associated infinite-horizon McKean--Vlasov forward-backward stochastic differential equations (SDEs). In the linear-quadratic setting, Yong \cite{Yong-2013} derived explicit solutions through forward-backward SDEs and Riccati equations, with further extensions addressing non-homogeneous terms \cite{Sun-2017, Li-Sun-Yong-2016}, random coefficients \cite{Xiong-Xu-2025}, and infinite-horizon control problems \cite{Huang-Li-Yong-2015}.

In the presence of common noise, the limiting state dynamics become conditional McKean--Vlasov processes. Pham and Wei \cite{Pham-Wei-2017} established a general dynamic programming principle for McKean--Vlasov control problems and illustrated its application to the LQ setting. Moreover, \cite{Li-Mou-Wu-Zhou-2025} proved global well-posedness of the associated Hamilton--Jacobi equation for an LQ mean field control problem with additive common noise, where the dependence on the mean field terms can be non-convex. For the more general framework of optimal control of McKean--Vlasov dynamics, we refer to the works \cite{Lacker-2017, Djete-Possamai-Tan-2022, Bayraktar-Cosso-Pham-2018, Pham-Wei-2018} and the references therein.

\subsubsection{Turnpike property}

The turnpike property refers to the phenomenon that, for optimal control problems over long time horizons, optimal controls and state trajectories remain close to those of a corresponding stationary or ergodic problem for most of the time interval, except near the temporal boundaries. This concept originates in economic theory, dating back to von Neumann \cite{Neumann-1945} and later developments by Dorfman, Samuelson, and Solow \cite{Dorfman-Samuelson-Solow-2012}, and has become a fundamental principle in optimal control.

In deterministic control, turnpike properties have been extensively studied for both finite- and infinite-dimensional systems, encompassing both discrete-time and continuous-time formulations; see, for instance \cite{Porretta-Zuazua-2013, DGSW-2014, Trelat-Zuazua-2015, Gugat-Herty-Segala-2024} and the references therein. These works establish turnpike behavior under various dissipativity and stability conditions. A comprehensive and up-to-date survey on turnpike theory and its various applications is presented in \cite{Trelat-Zuazua-2025}.

In contrast, turnpike properties in stochastic optimal control remained largely unexplored until the recent breakthrough by Sun, Wang, and Yong \cite{Sun-Wang-Yong-2022}, who established the first rigorous turnpike results for linear-quadratic stochastic control problems. This work initiated a systematic study of stochastic turnpike phenomena, leading to further developments in settings with periodic coefficients \cite{Sun-Yong-2024-Periodic} and more general stochastic dynamics \cite{Conforti-2023, Mei-Wang-Yong-2025, Mei-Wang-Yong-2-2025}.
In addition, \cite{Jian-Jin-Song-Yong-2024} introduces the so-called probability cell problem, establishes a connection between the cell problem and the ergodic control problem, and reveals turnpike properties of LQ stochastic optimal control problems from multiple perspectives.

More recently, attention has turned to turnpike behavior in mean field control problems. In particular, Sun and Yong \cite{Sun-Yong-2024} extended their earlier work \cite{Sun-Wang-Yong-2022} to mean field linear stochastic systems, establishing a turnpike property under the stabilizability assumption on the homogeneous state equation. In a related development, \cite{Bayraktar-Jian-2025} derived the turnpike property for finite-horizon linear-quadratic mean field control problems through analysis of the associated ergodic control problem. Meanwhile, driven by growing interest in long-term equilibrium behavior in large interacting particle systems, the study of turnpike properties in multi-player differential games (see \cite{Cohen-Jian-2025}) and mean field games \cite{CLLP-2013, Cardaliaguet-Porretta-2019, Cirant-Porretta-2021} has emerged as a rapidly advancing field. In particular, the long-time behavior of mean field games with additive common noise has been studied in \cite{CMY25}.

Despite these advances, turnpike results for mean field control problems with common noise, particularly when the common noise enters the dynamics multiplicatively, remain scarce. The conditional nature of the limiting dynamics and the resulting coupled Riccati systems pose additional analytical challenges.

\subsubsection{Convergence of mean field control problems}

A fundamental question in mean field control theory concerns the quantitative convergence of finite-population social optimization problems toward their mean field limits. Such results provide a rigorous justification for using mean field controls as approximations of centralized policies in large-population systems. Convergence of controlled large-population systems to their mean field limits has been studied under various regularity and convexity assumptions, using probabilistic, PDE-based, and variational techniques; see, for example \cite{Lacker-2017, Carmona-Delarue-2018, Germain-Pham-Warin-2022, Bayraktar-Cecchin-Chakraborty-2023, Cardaliaguet-Souganidis-2023, Li-Mou-Wu-Zhou-2025}. Recently, Daudin, Delarue, and Jackson \cite{Daudin-Delarue-Jackson-2024} derived two optimal convergence rates for value functions from $N$-particle systems to the corresponding mean field control problems, identifying regimes corresponding to central limit scaling and dimension-dependent rates. Moreover, Bayraktar, Ekren, and Zhang \cite{Bayraktar-Ekren-Zhang-2025-convergence} established a convergence rate for particle approximations of a class of second-order PDEs on the Wasserstein space, which arise in mean field control when the control of the common noise is state-independent and in stochastic control with partial observation.

Most existing quantitative convergence results are formulated for a fixed finite horizon, and the constants appearing in the estimates are generally allowed to depend on the horizon length. Consequently, such estimates may deteriorate as the horizon grows and do not directly address regimes in which the population size and the time horizon become large simultaneously. The present paper contributes to this direction by proving quantitative convergence from the finite-population social optimization problem to its mean field limit with constants independent of the time horizon.

\subsection{Contributions and organization of the paper}

The main contributions of this paper are summarized as follows:
\begin{itemize}
\item \textit{Unified framework and solvability results}. We study linear-quadratic social optimization problems for large-population stochastic systems with common noise and analyze the finite-horizon social optimization problem, its mean field control limit, and the associated ergodic mean field control problem within a single framework. Our model accommodates multiplicative common noise and a general coupling structure that is not covered by existing work. Under a common set of assumptions, we establish solvability for all three control problems; see Proposition \ref{p:solvability_SO_N}, Proposition \ref{p:solvability_MFC_finite_horizon}, and Theorem \ref{t:ergodic_control_problem}. In particular, the finite-horizon social optimization and mean field control problems are characterized via coupled Riccati systems, while the ergodic mean field control problem is solved through a Bellman equation on the Wasserstein space and a system of stabilizing algebraic Riccati equations.

\item \textit{Ergodic analysis under common noise}.
We introduce appropriate stabilizability conditions for the ergodic mean field control problem with multiplicative common noise and prove a verification theorem via the associated Bellman equation on the Wasserstein space; see Theorem \ref{t:ergodic_control_problem}. The analysis relies on moment estimates in Lemma \ref{l:moment_boundedness} and the identification of an invariant distribution in Lemma \ref{l:invariant-distribution} for the optimal state process.

\item \textit{Turnpike properties}. We derive quantitative estimates comparing finite-horizon and ergodic mean field control problems and establish a turnpike property in Theorem \ref{t:turnpike_property} showing that, away from the temporal boundaries, the optimal pair of the finite-horizon mean field control problem remains close to its ergodic counterpart. To the best of our knowledge, this is the first turnpike result for linear-quadratic mean field control with multiplicative common noise. We also establish a turnpike property for the value functions; see Corollary \ref{c:convergence_of_value_function_finite_ergodic}. 

\item \textit{Large-population convergence}. We prove quantitative convergence results from the social optimization problem to the mean field control problem at the level of both optimal pairs and value functions; see Theorem \ref{t:convergence_path_social_optimal_MFC} and Proposition \ref{p:convergence_value_social_optimal_MFC}, respectively. A principal feature of the result in Theorem \ref{t:convergence_path_social_optimal_MFC} is that the convergence estimate is uniform with respect to the time horizon. Moreover, we obtain convergence estimates that explicitly capture the joint dependence on the population size and the time horizon in Corollary \ref{c:convergence_both_N_and_T_optimal_pair} and Proposition \ref{p:convergence_value_social_optimal_EMFC}, thereby linking finite-horizon and
ergodic regimes.
\end{itemize}

The remainder of the paper is organized as follows: Section \ref{s:problem_setup} presents the problem setup for three control problems. Sections \ref{s:social_optimization_problem}, \ref{s:mean_field_control_problem}, and \ref{s:ergodic_MF_control_problem} establish solvability results for the social optimization problem, the finite-horizon mean field control problem, and the ergodic mean field control problem, respectively. Section \ref{s:turnpike_property} is devoted to turnpike properties for the finite-horizon mean field control problem. In Section \ref{s:convergence_of_SO_N}, we establish the convergence results for the social optimization problem and its mean field limit. Proofs of some results are provided in Appendix \ref{s:appendx}. We close this section by introducing some frequently used notation.

\subsection{Notation}

Let $\dbN^{+}$ be the set of positive integers. For any $k, \ell \in \dbN^{+}$, we denote by $\dbR^k$ the standard $k$-dimensional real Euclidean space and by $\dbR^{k \times \ell}$ the Euclidean space of all $k \times \ell$ real matrices. Let $\dbS^k$ (respectively, $\dbS^k_{+}$ and $\dbS^k_{++}$) denote the set of $k \times k$ real symmetric (respectively, positive semidefinite and positive definite) matrices. For matrices $P, Q \in \dbS^k$, we write $P \ges Q$ (respectively, $P > Q$) if $P - Q$ is positive semidefinite (respectively, positive definite). 

We denote by $\bm{1}_{k \times \ell}$ the $k \times \ell$ matrix with all entries equal to $1$, and by $\otimes$ the Kronecker product. The column vectors $\{e_1^k, \dots, e_k^k\}$ form the canonical basis of $\dbR^k$, and $I_k$ denotes the $k \times k$ identity matrix. For a collection of vectors $\{x^i \in \mathbb R^k: i =1, \dots, N\}$, we use $\bm{x} = (x^1, \dots, x^N)$ to denote the concatenated column vector $((x^1)^\top, \dots, (x^N)^\top)^\top \in \mathbb R^{Nk}$ whenever no confusion arises.

The inner product of two vectors is denoted by $\lan \cd, \cd \ran$. We use the superscript $\top$ to denote the transpose of a matrix, $|\cdot|$ for the Euclidean norm, and define the Frobenius norm of a matrix $A$ by $\|A\| := \sqrt{\text{trace}(A A^\top)}$, where $\text{trace}(\cd)$ denotes the trace operator.

We write $\cP_\ell(\dbR^k)$ for the Wasserstein space of probability measures $\m$ on $\dbR^k$ satisfying $\int_{\dbR^k}|x|^\ell d\m(x)< \infty$, endowed with the $\ell$-Wasserstein metric $\cW_\ell(\cd\,, \cd)$ defined by
$$\cW_\ell(\m_1,\m_2) = \inf_{\pi\in\Pi(\m_1,\m_2)}\Big(\int_{\dbR^k}\int_{\dbR^k}|x-y|^\ell
d\pi(x, y)\Big)^{\frac{1}{\ell}},$$
where $\Pi(\m_1,\m_2)$ is the collection of all probability measures on $\dbR^k\times\dbR^k$ whose marginals agree with $\m_1$ and $\m_2$, respectively.

\section{Problem setup}
\label{s:problem_setup}

In this section, we present the three control problems central to our analysis: the finite-horizon control problem for large-population systems, its finite-horizon mean field limit, and the associated ergodic mean field control problem describing the long-time regime.

\subsection{Finite-horizon control problem for large population systems}

Let $(\wt{\O}, \wt{\cF}, \wt{\dbP})$ be a complete filtered probability space satisfying the usual conditions, and let $\{W^i(\cd): i = 0, 1, \dots, N\}$ be $N+1$ independent one-dimensional standard Brownian motions defined on it. For each $i \in \{0, 1, \dots, N\}$, let $\{\cF^i_t\}_{t \ges 0}$ denote the augmented filtration generated by $W^i(\cdot)$. We define $\wt{\dbF}$ as the complete, right-continuous augmentation of the filtration $\{\vee_{i=1}^N \cF^i_t \vee \cF^0_t \vee \wt{\cF}_0\}_{t \ges 0}$, where $\wt{\cF}_0$ is the $\sigma$-algebra containing all the Borel sets on $\dbR^n$. Assume that $\wt{\dbP}$ is atomless on $\wt{\cF}_0$. The expectation $\dbE^{\wt{\dbP}}$ on the space $(\wt{\O}, \wt{\cF}, \wt{\dbP})$ is denoted by $\wt{\dbE}$ for simplicity.

For each $i \in \{1, \dots, N\}$, let $\xi^i \in L^4_{\wt{\cF}_0}$ be an $\dbR^n$-valued random variable, independent of $\{W^i(t): i = 0, 1, \dots, N\}$, where $L^4_{\wt{\cF}_0}$ is the collection of $\wt{\cF}_0$-measurable random variables $\xi: \wt{\Omega} \to \dbR^n$ such that $\wt{\dbE}[|\xi|^4] < \infty$. Consider the following linear stochastic large-population system with $N$ agents, in which the state of the $i$-th agent evolves according to
\begin{equation}
\label{eq:state_N_agent}
\begin{cases}
\vspace{4pt}
\displaystyle
d X^{i,N}(t) = \big\{AX^{i,N}(t) + \bar A X^{(N)}(t) + Bu^{i, N}(t) + b \big\} dt \\
\vspace{4pt}
\displaystyle
\hspace{0.8in} + \big\{CX^{i, N}(t) + \bar C X^{(N)}(t) + Du^{i, N}(t) + \sigma \big\} d W^i(t) \\
\vspace{4pt}
\displaystyle
\hspace{0.8in} + \big\{\Gamma X^{i, N}(t) + \bar{\Gamma} X^{(N)}(t) + \gamma \big\} d W^0(t), \quad t \ges 0, \\
X^{i,N}(0) = \xi^i,
\end{cases}
\end{equation}
for $i = 1, \dots, N$, where $A, \bar{A}, C, \bar{C}, \Gamma, \bar{\Gamma} \in \dbR^{n \times n}$ and $B, D \in \dbR^{n \times m}$ are the system coefficients, $b, \sigma, \gamma \in \dbR^n$ are the non-homogeneous terms, and $X^{(N)}(t) = \frac{1}{N} \sum_{i = 1}^{N} X^{i, N}(t)$ is the weakly coupled state average. Here, $X^{i, N}(\cd)$ is the $\dbR^n$-valued state process of agent $i$, and $u^{i, N}(\cdot)$ is the $\dbR^m$-valued control process of the $i$-th agent. We refer to the SDE \eqref{eq:state_N_agent} as the state equation.
In the $N$-agent system \eqref{eq:state_N_agent}, $W^i(\cd)$ represents the idiosyncratic noise for agent $i$, and $W^0(\cd)$ represents common noise shared by all agents. 

% For $i = 1, \dots, N$, let $\dbF^i = \{\cF^i_t\}_{t \ges 0}$ be the natural filtration of $W^i(\cd)$ augmented by all the $\dbP$-null sets. Then, $\dbF = \{\cF_t\}_{t\ges0} = \{\sigma(\bigcup_{i=0}^{N} \cF_t^i)\}_{t \ges 0}$ is called the centralized information. 

To proceed, we introduce the following space for the admissible control processes over a finite horizon. Let $T > 0$ denote a large time horizon and
\begin{equation*}
\begin{aligned}
&\sU_c[0,T] := \Big\{u(\cd) : [0,T] \times \wt{\O} \to \dbR^m \Bigm| u(\cd) \hb{ is $\wt{\dbF}$-progressively measurable, and} \\
&\hspace{2.5in} \wt{\dbE} \Big[\int_0^T|u(t)|^2dt \Big]< \infty\Big\}.
\end{aligned}
\end{equation*}
We refer to $\sU_c[0, T]$ as the centralized admissible control set for the social optimization problem.
%For any initial state $\xi^i \in L^4_{\wt{\cF}_0}$ and any admissible control $u^{i, N}(\cd) \in \sU[0, T]$, it is well-known that the system of SDE \eqref{eq:state_N_agent} admits a unique solution $X^{i, N}(\cd)$.

Let $Q, \bar Q \in \dbS^{n}$, $S \in \dbR^{m \times n}$, and $R \in \dbS^{m}$ be constant matrices, and let $q \in \dbR^n$ and $r \in \dbR^m$ be constant vectors. Given the state dynamics \eqref{eq:state_N_agent}, we introduce the following quadratic cost functional for the $i$-th agent:
\begin{equation}
\label{eq:cost_finite_N_agent}
J_T^{i}(\bm{\xi}; \Bu(\cd)) := \wt{\dbE} \Big[\int_0^T f \big(X^{i, N}(t), X^{(N)}(t), u^{i, N}(t) \big) dt \Big],
\end{equation}
where $\bm{\xi} = (\xi^1, \dots, \xi^N)$ collects the initial states, $\Bu(\cd) = (u^{1, N}(\cd), \dots, u^{N, N}(\cd))$ is the collection of controls for all agents, and $f: \dbR^n \times \dbR^n \times \dbR^m \to \dbR$ is the running cost function defined as follows:
\begin{equation}
\label{eq:function_f}
\begin{aligned}
f(x, \bar x, u) &:= \left\lan \begin{bmatrix}
Q & S^\top \\ S & R
\end{bmatrix} \begin{bmatrix}
x \\ u
\end{bmatrix}, \begin{bmatrix}
x \\ u
\end{bmatrix} \right\ran + 2 \left\lan \begin{bmatrix}
q \\ r
\end{bmatrix}, \begin{bmatrix}
x \\ u
\end{bmatrix}\right\ran + \left\lan 
\bar Q \bar x, \bar x \right\ran. %\\
%&= \lan Q x, x \ran + 2 \lan S x, u \ran + \lan R u, u \ran + 2 \lan q, x \ran + 2 \lan r, u \ran + \lan \bar{Q} \bar{x}, \bar{x} \ran.
\end{aligned}
\end{equation}

The objective of the $N$-agent system is to minimize the social cost subject to the dynamics \eqref{eq:state_N_agent} and the cost functional \eqref{eq:cost_finite_N_agent} for each agent. Define the social cost by
$$J^{\text{soc}}_T (\bm{\xi}; \Bu(\cd)) := \frac{1}{N} \sum_{i = 1}^{N} J^i_T (\bm{\xi}; \Bu(\cd)).$$
The finite-horizon stochastic optimal control problem for the $N$-agent system is then formulated as follows.

\ms

{\bf Problem (SO-N)$^T$.} For a given collection of initial states $\bm{\xi} = (\xi^1, \dots, \xi^N)$ with $\xi^i \in L^4_{\wt{\cF}_0}$ for all $i \in \{1, \dots, N\}$, find an open-loop optimal control profile $\Bu_T(\cd) = (u_T^{1, N}(\cd), \dots, u_T^{N, N}(\cd))$ with $u_T^{i, N}(\cd) \in \sU_c[0,T]$ for all $i \in \{1, \dots, N\}$ such that
\begin{equation}
\label{eq:SO-N_finite}
V^{\text{soc}}_T (\bm{\xi}) := J_T^{\text{soc}} (\bm{\xi}; \Bu_T(\cd)) = \inf_{\bm{u}(\cd) \in (\sU_c[0, T])^N} \frac{1}{N} \sum_{i = 1}^{N} J_T^i (\bm{\xi}; \Bu(\cd)),
\end{equation}
where $(\sU_c[0, T])^N := \sU_c[0,T] \times \dots \times \sU_c[0,T]$.

If there exists a control $\Bu_T(\cd) = (u_T^{1,N}(\cd), \dots, u_T^{N,N}(\cd)) \in (\sU_c[0, T])^N$ attaining the infimum in \eqref{eq:SO-N_finite}, then \textbf{Problem (SO-N)$^T$} is said to be open-loop solvable, and $\Bu_T(\cd)$ is referred to as a socially optimal open-loop control. The corresponding open-loop optimal state process is denoted by $X_T^{i, N}(\cd) := X^{i, N}(\cd\,; \bm{\xi}, \Bu_T(\cd))$ for each $i \in \{1, \dots, N\}$.

\subsection{Finite-horizon mean field control problem}

In this subsection, we introduce the corresponding finite-horizon mean field control problem. %The introduction of such mean field control problem is for the study of the above control problem for large population systems. 
Let $(\O, \cF, \dbP)$ be a complete filtered probability space supporting two independent one-dimensional standard Brownian motions $W(\cd)$ and $W^0(\cd)$. Here, $W(\cd)$ represents the idiosyncratic noise and $W^0(\cd)$ represents the common noise. Let $\{\mathcal F^W_t\}_{t \ges 0}$ and $\{\mathcal F_t^0\}_{t \ges 0}$ be the augmented filtrations generated by $W(\cd)$ and $W^0(\cd)$, respectively. We define $\dbF$ as the complete, right-continuous augmentation of the filtration $\{\cF^W_t \vee \cF^0_t \vee \cF_0\}_{t \ges 0}$, where $\cF_0$ is the $\sigma$-algebra containing all the Borel sets on $\dbR^n$, and we assume that $\mathbb P$ is atomless on $\mathcal F_{0}$. The expectation on the space $(\O, \cF, \dbP)$ is denoted by $\dbE$.

It is well known that as the population size $N$ tends to infinity, the dynamics \eqref{eq:state_N_agent} are formally approximated by the following linear SDE with a conditional mean field term:
\begin{equation}
\label{eq:state_N_infinity}
\begin{cases}
\vspace{4pt}
\displaystyle
d X(t) = \big\{AX(t) + \bar A \dbE \big[X(t)|\cF_t^0 \big] + Bu(t) + b \big\} dt \\
\vspace{4pt}
\displaystyle
\hspace{0.8in} + \big\{CX(t) + \bar C \dbE \big[X(t)|\cF_t^0 \big] + Du(t) + \sigma \big\} d W(t) \\
\vspace{4pt}
\displaystyle
\hspace{0.8in} + \big\{\Gamma X(t) + \bar{\Gamma} \dbE \big[X(t)|\cF_t^0 \big] + \gamma \big\} d W^0(t), \quad t \ges 0, \\
X(0) = \xi \sim \mu_0,
\end{cases}
\end{equation}
where $\xi \in L^4_{\cF_0}$ has law $\mu_0 \in \cP_4(\dbR^n)$ and is independent of $W(\cd)$ and $W^0(\cd)$. The corresponding cost functional in the mean field limit is 
\begin{equation}
\label{eq:cost_finite_N_infinity}
J_{T}(\mu_0; u(\cd)) := \dbE \Big[\int_0^T f \big(X(t), \dbE \big[X(t)|\cF_t^0 \big], u(t) \big) dt \Big].
\end{equation}

Define the admissible control set on $[0,T]$ by
\begin{equation}
\label{eq:admissible_control_MFC_finite}
\begin{aligned}
&\sU[0,T] :=  \Big\{u(\cdot) : [0,T] \times \O \to \dbR^m \Bigm| u(\cd) \hb{ is $\dbF$-progressively measurable, and} \\
&\hspace{2.5in} \dbE \Big[\int_0^T |u(t)|^2 dt \Big]< \infty\Big\}.
\end{aligned}
\end{equation}
We can now formulate the corresponding finite-horizon mean field control problem as follows:

\ms

{\bf Problem (MFC)$^T$.} For a given random variable $\xi \in L^4_{\cF_0}$ with $\xi \sim \mu_0 \in \cP_4(\dbR^n)$, find an open-loop optimal control $u_T(\cd) \in\sU[0,T]$ for the state dynamics \eqref{eq:state_N_infinity} and cost functional \eqref{eq:cost_finite_N_infinity} such that
\begin{equation}
\label{eq:value_N_infinity}
U_T(\mu_0) := J_T \big(\mu_0; u_T(\cd) \big) = \inf_{u(\cd) \in \sU[0, T]} J_T(\mu_0; u(\cd)).
\end{equation}

Similarly, if there exists a control $u_T(\cd) \in \sU[0,T]$ that attains the infimum in \eqref{eq:value_N_infinity}, we say that \textbf{Problem (MFC)$^T$} is open-loop solvable, and $u_T(\cd)$ is called an open-loop optimal control. The corresponding open-loop optimal state process is denoted by $X_T(\cd) := X(\cd\,; \xi, u_T(\cd))$. 
The mapping $U_T(\cd): \cP_4(\dbR^n) \to \dbR$ is defined as the value function of \textbf{Problem (MFC)$^T$}, and $(X_T(\cd), u_T(\cd))$ is referred to as an open-loop optimal pair.

\subsection{Ergodic mean field control problem}

When the time horizon $T$ is large, it is natural to compare the finite-horizon mean field control problem with its ergodic counterpart, which is commonly used as a proxy for the long-time regime. We prove below that the optimal pair for the finite-horizon control problem satisfies a turnpike property, showing that solutions to the finite-horizon control problem remain exponentially close to those of the ergodic control problem except near the temporal
boundaries. 

We now introduce the ergodic mean field control problem:

\ms

{\bf Problem (EMFC).} For a given random variable $\xi \in L^4_{\cF_0}$ with $\xi \sim \mu_0 \in \cP_4(\dbR^n)$ and the state dynamics \eqref{eq:state_N_infinity}, determine a pair $(U(\cd), c)$ and a control $\bar u(\cd) \in \sU$ such that
\begin{equation}
\label{eq:ergodic_control_problem}
\begin{aligned}
& c := \inf_{u(\cd) \in \sU} \limsup_{T \to \infty} \frac{1}{T} \int_0^T \dbE \big[f \big(X(t), \dbE \big[X(t)|\cF_t^0 \big], u(t) \big) \big] dt, \\
& U(\mu_0) := J_{\infty} (\mu_0; \bar{u}(\cd)) =   \inf_{u(\cd) \in \sU} J_{\infty} (\mu_0; u(\cd)),
\end{aligned}
\end{equation}
where 
$$J_{\infty} (\mu_0; u(\cd)) := \limsup_{T \to \infty} \int_0^T \dbE \big[f \big(X(t), \dbE \big[X(t)|\cF_t^0 \big], u(t) \big) - c \big] dt,$$
and
$\sU$ is a suitable class of admissible controls to be defined later (see Definition \ref{def:admissible_control_ergodic}).

\ms

If $\bar u(\cd) \in \sU$ exists, it is called an optimal control, and the associated state process $\bar X(\cd) := X(\cd \,; \xi, \bar u(\cd))$ is called an optimal state. The mapping $U(\cd)$ is referred to as the relative value function of \textbf{Problem (EMFC)}, the constant $c$ is called the optimal ergodic cost, and $(\bar X(\cd), \bar u(\cd))$ is termed an optimal pair. A solution to \textbf{Problem (EMFC)} is defined as the quadruple $\{U(\cd), c, \bar X(\cd), \bar u(\cd) \}$.

%%%%%%%%%%%%%%%%%%%%%%%%%%%%%%%%%%%%%%%%%%%%%%%%%%%%%%%%%%%%%%%%%%%%%%%%%%%%

\section{Finite-horizon control problem for large population systems}
\label{s:social_optimization_problem}

In this section, we establish the solvability of \textbf{Problem (SO-N)$^T$} in \eqref{eq:SO-N_finite} by analyzing the corresponding system of Riccati equations. 

For simplicity of notation, we define
\begin{equation*}
\begin{aligned}
& \BX(t) = \begin{bmatrix}
X^{1,N}(t) \\ \vdots \\ X^{N,N}(t)
\end{bmatrix} \in \dbR^{Nn}, \quad \Bu(t) = \begin{bmatrix}
u^{1,N}(t) \\ \vdots \\ u^{N,N}(t)
\end{bmatrix} \in \dbR^{Nm}, \\
& \BA = \text{diag}[A, \dots, A] \in \dbR^{Nn \times Nn}, \quad \bar{\BA} = \bm{1}_{N \times N} \otimes \frac{\bar A}{N} \in \dbR^{Nn \times Nn} \\
& \BB^i = e_i^N \otimes B \in \dbR^{Nn \times m}, \quad \Bb = \bm{1}_{N \times 1} \otimes b \in \dbR^{Nn}, \\
& \BC^i = e_i^N \otimes (e_i^N)^\top \otimes C \in \dbR^{Nn \times Nn}, \quad \bar{\BC}^i = e_i^N \otimes \frac{\bm{1}_{N \times 1}^\top}{N} \otimes \bar{C} \in \dbR^{Nn \times Nn}, \\
& \BD^i = e_i^N \otimes D \in \dbR^{Nn \times m}, \quad \bm{\sigma}^i = e_i^N \otimes \sigma \in \dbR^{Nn}, \quad \bm{\gamma} = \bm{1}_{N \times 1} \otimes \gamma \in \dbR^{Nn}, \\
& \BGamma = \text{diag}[\Gamma, \dots, \Gamma] \in \dbR^{Nn \times Nn}, \quad \bar{\BGamma} = \bm{1}_{N \times N} \otimes \frac{\bar \Gamma}{N} \in \dbR^{Nn \times Nn}.
\end{aligned}
\end{equation*}
Here, $\text{diag}[A, \dots, A]$ is the block-diagonal matrix whose diagonal blocks are $A, \dots, A$.

With the above notation, the system of dynamics \eqref{eq:state_N_agent} for $N$ agents can be written in compact form as
\begin{equation}
\label{eq:dynamic_N_agent_vector_form}
\begin{cases}
\displaystyle d \BX(t) = \Big\{(\BA + \bar{\BA}) \BX(t) + \sum_{i=1}^{N} \BB^i u^i(t) + \Bb \Big\} dt \\
\displaystyle \hspace{0.6in} + \sum_{i=1}^{N} \big((\BC^i + \bar{\BC}^i) \BX(t) + \BD^i u^i(t) + \bm{\sigma}^i \big) dW^i(t) \\
\hspace{0.6in} + \big\{(\BGamma + \bar{\BGamma}) \BX(t) + \bm{\gamma} \big\} d W^0(t), \quad t \ges 0, \\
\BX(0) = \bm{\xi}.
\end{cases}
\end{equation}
Similarly, we can rewrite the social cost as follows:
\begin{equation*}
\begin{aligned}
J_T^{\text{soc}}(\bm{\xi}; \Bu(\cd)) &= \frac{1}{N} \sum_{i = 1}^{N} J^i_T(\bm{\xi}; \Bu(\cd)) = \wt{\dbE} \Big[\int_0^T F(\BX(t), \Bu(t)) dt \Big],
\end{aligned}
\end{equation*}
where $F: \dbR^{Nn} \times \dbR^{Nm} \to \dbR$ is defined by
$$F(\Bx, \Bu) := \Bx^\top (\BQ + \bar{\BQ}) \Bx + 2 \Bu^\top \BS \Bx + \Bu^\top \BR \Bu + 2 \Bq^\top \Bx + 2 \Br^\top \Bu,$$
and
\begin{equation*}
\begin{aligned}
& \BQ = \frac{1}{N}\text{diag}[Q, \dots, Q] \in \dbR^{Nn \times Nn}, \quad \bar{\BQ} = \bm{1}_{N \times N} \otimes \frac{\bar{Q}}{N^2} \in \dbR^{Nn \times Nn}, \\
& \BS = \frac{1}{N}\text{diag}[S, \dots, S] \in \dbR^{Nm \times Nn}, \quad \BR = \frac{1}{N}\text{diag}[R, \dots, R] \in \dbR^{Nm \times Nm}, \\
& \Bq = \frac{\bm{1}_{N \times 1}}{N} \otimes q \in \dbR^{Nn}, \quad \Br = \frac{\bm{1}_{N \times 1}}{N} \otimes r \in \dbR^{Nm}.
\end{aligned}
\end{equation*}

\subsection{Derivation and solvability of the system of Riccati equations}

Let $V(t, \bm{x})$ denote the value function of \textbf{Problem (SO-N)$^T$} with the initial condition $\bm{X}(t) = \bm{x} := (x^1, \dots, x^N) \in \dbR^{Nn}$ at time $t$. Let $\Bu = (u^1, \dots, u^N) \in \dbR^{Nm}$. The Hamilton-Jacobi-Bellman (HJB) equation for $V(t, \bm{x})$ is
\begin{equation*}
\begin{cases}
\displaystyle \partial_t V + \min_{\Bu \in \dbR^{Nm}} \Big\{(D_{\bm{x}} V)^\top \Big((\BA + \bar{\BA}) \Bx + \sum_{i=1}^{N} \BB^i u^i + \Bb \Big) \\
\displaystyle \hspace{0.4in} + \frac{1}{2} \sum_{i=1}^{N} \big((\BC^i + \bar{\BC}^i) \Bx + \BD^i u^i + \bm{\sigma}^i \big)^\top D^2_{\bm{x} \bm{x}} V \big((\BC^i + \bar{\BC}^i) \Bx + \BD^i u^i + \bm{\sigma}^i \big) \\
\displaystyle \hspace{0.4in} + \frac{1}{2} \big((\BGamma + \bar{\BGamma}) \Bx + \bm{\gamma} \big)^\top D^2_{\bm{x} \bm{x}} V \big((\BGamma + \bar{\BGamma}) \Bx + \bm{\gamma} \big) + F(\Bx, \Bu) \Big\} = 0, \\
V(T, \Bx) = 0.
\end{cases}
\end{equation*}
To obtain the desired system of Riccati equations, we introduce additional notation. Set
\begin{equation*}
\BB = (\BB^1, \dots, \BB^N) \in \dbR^{Nn \times Nm}, \quad \text{and} \quad \Be^i = (e_i^N \otimes I_{m})^\top = (0, \dots, I_m, \dots, 0) \in \dbR^{m \times Nm}.
\end{equation*}
Moreover, for $\BZ \in \dbR^{Nn \times Nn}$, we define the mappings
\begin{small}
\begin{equation*}
\begin{aligned}
& \cM_1(\BZ) = \frac{1}{2} \sum_{i=1}^{N} (\Be^i)^\top (\BD^i)^\top \BZ \BD^i \Be^i, \quad \cM_2(\BZ) = \sum_{i=1}^{N} (\BC^i + \bar{\BC}^i)^\top \BZ \BD^i \Be^i, \\
& \cM_3(\BZ) = \sum_{i=1}^{N} (\bm{\sigma}^i)^\top \BZ \BD^i \Be^i, \quad \cM_4(\BZ) = \frac{1}{2} \sum_{i=1}^{N} (\BC^i + \bar{\BC}^i)^\top \BZ (\BC^i + \bar{\BC}^i) + \frac{1}{2} (\BGamma + \bar{\BGamma})^\top \BZ (\BGamma + \bar{\BGamma}), \\
& \cM_5(\BZ) = \sum_{i=1}^{N} (\bm{\sigma}^i)^\top \BZ(\BC^i + \bar{\BC}^i) + \bm{\gamma}^\top \BZ (\BGamma + \bar{\BGamma}), \quad \cM_6(\BZ) = \frac{1}{2} \sum_{i=1}^{N} (\bm{\sigma}^i)^\top \BZ \bm{\sigma}^i + \frac{1}{2} \bm{\gamma}^\top \BZ \bm{\gamma}.
\end{aligned}
\end{equation*}
\end{small}
Then, the above HJB equation can be reduced to
\begin{equation}
\label{eq:HJB_N_agent}
\begin{cases}
\vspace{4pt}
\displaystyle
\partial_t V + \min_{\Bu \in \dbR^{Nm}} \big\{\Bu^\top \big(\BR + \cM_1 (D^2_{\bm{x} \bm{x}} V ) \big) \Bu + \big( (D_{\Bx} V)^\top \BB + \bm{x}^\top \cM_2 (D^2_{\bm{x} \bm{x}} V) + \cM_3 (D^2_{\bm{x} \bm{x}} V ) \\
\vspace{4pt}
\displaystyle \hspace{0.4in} + 2 \Bx^\top \BS^\top + 2 \Br^\top \big) \Bu \big\} + (D_{\Bx} V)^\top \big((\BA + \bar{\BA}) \Bx + \Bb \big) + \bm{x}^\top \cM_4 (D^2_{\bm{x} \bm{x}} V ) \bm{x} \\
\vspace{4pt}
\displaystyle \hspace{0.4in} + \cM_5 (D^2_{\bm{x} \bm{x}} V ) \bm{x} + \Bx^\top (\BQ + \bar{\BQ}) \Bx + 2 \Bq^\top \Bx + \cM_6 (D^2_{\bm{x} \bm{x}} V ) = 0, \\
V(T, \Bx) = 0.
\end{cases}
\end{equation}
If $\BR + \cM_1 (D^2_{\bm{x} \bm{x}} V)$ is positive definite, then the minimizer of \eqref{eq:HJB_N_agent} is given by
\begin{equation}
\label{eq:optimal_control_function_N_agent}
\begin{aligned}
\Bu(t, \Bx) &= - \frac{1}{2} \big(\BR + \cM_1 (D^2_{\bm{x} \bm{x}} V )\big)^{-1} \big( (D_{\Bx} V)^\top \BB + \bm{x}^\top \cM_2 (D^2_{\bm{x} \bm{x}} V ) \\
& \hspace{0.7in} + \cM_3 (D^2_{\bm{x} \bm{x}} V ) + 2 \Bx^\top \BS^\top + 2 \Br^\top \big)^\top.
\end{aligned}
\end{equation}
Substituting the minimizer \eqref{eq:optimal_control_function_N_agent} into \eqref{eq:HJB_N_agent} yields the reduced HJB equation
\begin{equation*}
\begin{aligned}
& \partial_t V - \frac{1}{4} \big[ (D_{\Bx} V)^\top \BB + \bm{x}^\top \cM_2 (D^2_{\bm{x} \bm{x}} V ) + \cM_3 (D^2_{\bm{x} \bm{x}} V ) + 2 \Bx^\top \BS^\top + 2 \Br^\top \big] \big(\BR + \cM_1 (D^2_{\bm{x} \bm{x}} V )\big)^{-1} \\
& \hspace{0.6in} \big[ (D_{\Bx} V)^\top \BB + \bm{x}^\top \cM_2 (D^2_{\bm{x} \bm{x}} V ) + \cM_3 (D^2_{\bm{x} \bm{x}} V ) + 2 \Bx^\top \BS^\top + 2 \Br^\top \big]^\top \\
& \hspace{0.4in} + (D_{\Bx} V)^\top \big((\BA + \bar{\BA}) \Bx + \Bb \big) + \bm{x}^\top \cM_4 (D^2_{\bm{x} \bm{x}} V ) \bm{x} + \cM_5 (D^2_{\bm{x} \bm{x}} V ) \bm{x} + \Bx^\top (\BQ + \bar{\BQ}) \Bx \\
& \hspace{0.4in} + 2 \Bq^\top \Bx + \cM_6 (D^2_{\bm{x} \bm{x}} V ) = 0.
\end{aligned}
\end{equation*}
Motivated by the linear-quadratic structure, we seek a solution to the HJB equation \eqref{eq:HJB_N_agent} of the following form:
\begin{equation}
\label{eq:value_function_N_agent}
V(t, \Bx) = \Bx^\top \BP_T(t) \Bx + 2 \Bx^\top \Bp_T(t) + \Bk_T(t),
\end{equation}
where $\BP_T(t) \in \dbR^{Nn \times Nn}$ is symmetric, $\Bp_T(t) \in \dbR^{Nn}$, and $\Bk_T(t) \in \dbR$ for all $t \in [0, T]$. Substituting the ansatz \eqref{eq:value_function_N_agent} into the reduced HJB equation and separately equating the polynomial terms of orders 0, 1, and 2, we derive the following system of ODEs for the unknowns $\{\BP_T(t), \Bp_T(t), \Bk_T(t): t \in [0, T]\}$:
\begin{footnotesize}
\begin{equation}
\label{eq:Riccati_N_agent}
\begin{cases}
\vspace{4pt}
\displaystyle
\dot{\BP}_T(t) - \big(\BP_T(t) \BB + \cM_2(\BP_T(t)) + \BS^\top \big) \big(\BR + 2 \cM_1(\BP_T(t)) \big)^{-1} \big(\BP_T(t) \BB + \cM_2(\BP_T(t)) + \BS^\top \big)^\top \\
\vspace{4pt}
\displaystyle
\hspace{0.5in} + \BP_T(t) (\BA + \bar{\BA}) + (\BA + \bar{\BA})^\top \BP_T(t) + 2 \cM_4(\BP_T(t)) + \BQ + \bar{\BQ} = 0, \\
\vspace{4pt}
\displaystyle
\dot{\Bp}_T(t) - \big(\BP_T(t) \BB + \cM_2(\BP_T(t)) + \BS^\top \big) \big(\BR + 2 \cM_1(\BP_T(t)) \big)^{-1} \big((\Bp_T(t))^\top \BB + \cM_3(\BP_T(t)) + \Br^\top \big)^\top \\
\vspace{4pt}
\displaystyle
\hspace{0.5in} + \BP_T(t) \Bb + (\BA + \bar{\BA})^\top \Bp_T(t) + \big(\cM_5(\BP_T(t)) \big)^\top + \Bq = 0, \\
\vspace{4pt}
\displaystyle
\dot{\Bk}_T(t) - \big((\Bp_T(t))^\top \BB + \cM_3(\BP_T(t)) + \Br^\top \big) \big(\BR + 2 \cM_1(\BP_T(t)) \big)^{-1} \big((\Bp_T(t))^\top \BB + \cM_3(\BP_T(t)) + \Br^\top \big)^\top \\
\hspace{0.5in} + 2 (\Bp_T(t))^\top \Bb + 2 \cM_6(\BP_T(t)) = 0
\end{cases}
\end{equation}
\end{footnotesize}
with the terminal conditions
\begin{equation}
\label{eq:terminal_condition_N_agent}
\BP_T(T) = 0 \in \dbS^{Nn}, \quad \Bp_T(T) = 0 \in \dbR^{Nn}, \quad \Bk_T(T) = 0 \in \dbR.
\end{equation}

Next, to ensure the well-posedness of the system \eqref{eq:Riccati_N_agent} and the solvability of \textbf{Problem (SO-N)$^T$}, we impose the following assumption.

\ms

{\bf(H1)} \textit{The matrices $Q, \bar Q \in \dbS^n$ and $R \in \dbS^m_{++}$ satisfy
$$Q - S^\top R^{-1} S \in \dbS^{n}_{++}, \quad \text{and} \quad Q + \bar{Q} - S^\top R^{-1} S \in \dbS^{n}_{++}.$$}

Note that in Assumption \textbf{(H1)}, we do not require $\bar{Q}$ to be positive semidefinite. In particular, it may be an indefinite matrix.

\begin{lemma}
\label{l:unique_solvability_Riccati_N_agent}
Let \textnormal{\textbf{(H1)}} hold. Then the system of differential equations \eqref{eq:Riccati_N_agent} admits a unique solution $\{\BP_T(\cd), \Bp_T(\cd), \Bk_T(\cd)\}$ such that $\BP_T(t) \ges 0$ for all $t \in [0, T]$. Moreover, there exists a constant $\alpha>0$ such that
\begin{equation*}
\BR + 2 \cM_1(\BP_T(t)) \ges \alpha I_{Nm}, \quad \forall t \in [0, T].
\end{equation*}
\end{lemma}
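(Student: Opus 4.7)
The plan is to establish solvability of the nonlinear first equation in \eqref{eq:Riccati_N_agent} by identifying $\BP_T(\cd)$ with the quadratic coefficient of the value function of an auxiliary homogeneous stochastic LQ problem, after which the equations for $\Bp_T$ and $\Bk_T$ reduce to straightforward linear ODEs. Concretely, I would consider the controlled dynamics \eqref{eq:dynamic_N_agent_vector_form} with $\Bb=\bm{\sigma}^i=\bm{\gamma}=0$, together with the cost $\wt{\dbE}\int_0^T F_0(\BX(t),\Bu(t))\,dt$, where $F_0(\Bx,\Bu) := \Bx^\top(\BQ+\bar{\BQ})\Bx + 2\Bu^\top \BS\Bx + \Bu^\top \BR\Bu$. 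A short computation, decomposing $\Bx=(y^1,\ldots,y^N)$ into its empirical mean $\bar y$ and the deviations $z^i = y^i-\bar y$, shows that
\[
\Bx^\top\bigl[(\BQ+\bar{\BQ})-\BS^\top \BR^{-1}\BS\bigr]\Bx
= \tfrac{1}{N}\sum_{i=1}^N (z^i)^\top(Q - S^\top R^{-1}S)\,z^i + \bar y^\top(Q+\bar Q - S^\top R^{-1}S)\bar y,
\]
which is bounded below by $\beta|\Bx|^2$ for some $\beta>0$ thanks to \textbf{(H1)}. Combined with $\BR>0$, this makes $F_0$ uniformly convex in $(\Bx,\Bu)$ after completing the square in $\Bu$.

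With this uniform convexity, the auxiliary problem is a standard coercive stochastic LQ problem and, by classical theory, admits a unique open-loop optimal control whose value function has the form $\Bx^\top \BP_T(t)\Bx$ with $\BP_T(\cd)\in C([0,T];\dbS^{Nn}_{+})$ and $\BP_T(T)=0$; substituting this ansatz into the HJB equation of the auxiliary problem recovers precisely the first ODE of \eqref{eq:Riccati_N_agent}. Local existence and uniqueness of the Riccati ODE follow from the local Lipschitz property of its right-hand side on the open set where $\BR+2\cM_1(\BP)$ is invertible, which contains $\{\BP\ges 0\}$ since $R\in\dbS^m_{++}$. The a priori bound coming from the finiteness of the auxiliary value function then rules out finite-time blow-up, delivering $\BP_T(\cd)$ globally on $[0,T]$ with $\BP_T(t)\ges 0$.

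Once $\BP_T(\cd)$ is in hand, the equation for $\Bp_T$ is a linear non-autonomous ODE with continuous coefficients on $[0,T]$ and hence admits a unique global solution by Cauchy--Lipschitz; $\Bk_T$ is then obtained by direct integration. The ``moreover'' bound is immediate: $R\in\dbS^m_{++}$ gives $\BR\ges (\lambda_{\min}(R)/N)I_{Nm}$, and $\BP_T(t)\ges 0$ together with the form of $\cM_1$ gives $\cM_1(\BP_T(t))\ges 0$, so one may take $\alpha = \lambda_{\min}(R)/N$. The main obstacle in this scheme is to rule out finite-time blow-up of the nonlinear Riccati equation; I sidestep a direct ODE a priori estimate by appealing to the auxiliary LQ problem, whose value function is automatically finite on $[0,T]$ thanks to the coercivity provided by \textbf{(H1)}, and therefore dominates any local solution of the Riccati equation.
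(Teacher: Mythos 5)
Your argument is correct, and its mathematical core coincides with the paper's proof, just packaged differently. The paper establishes $\wt{\BQ}:=\bar{\BQ}+\BQ-\BS^\top\BR^{-1}\BS\in\dbS^{Nn}_{++}$ via a congruence transform $(\BU\otimes I_n)^\top\wt{\BQ}(\BU\otimes I_n)$, where $\BU$ is orthogonal with first column $\tfrac{1}{\sqrt N}\bm{1}_{N\times 1}$; this is precisely your mean/deviation change of coordinates $\Bx\mapsto(\bar y;\,z^1,\dots,z^N)$, since the first column of $\BU$ extracts $\bar y$ and the remaining columns span $\{\sum_i z^i=0\}$, and after the transform the two types of diagonal block are exactly $Q+\bar Q - S^\top R^{-1}S$ and $Q-S^\top R^{-1}S$ (up to the $1/N$ scaling), both positive definite under \textbf{(H1)}. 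Having secured this coercivity, the paper simply cites Theorem 7.2 in Chapter 6 of \cite{Yong-Zhou-1999-stochastic} after checking its hypotheses; the proof of that theorem is essentially the coercive auxiliary-LQ-problem-plus-a-priori-bound argument you sketch, so you have in effect unrolled the citation, which is more self-contained but less compact. The remaining steps (linear ODEs for $\Bp_T$ and $\Bk_T$) are identical. One small bonus of your route: the explicit choice $\alpha=\lambda_{\min}(R)/N$ coming from $\BR=\tfrac{1}{N}\,\mathrm{diag}[R,\dots,R]$ together with $\cM_1(\BP_T(t))\ges 0$ gives a clean, $T$-uniform bound, whereas the cited theorem delivers the existence of such an $\alpha$ only abstractly.
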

\begin{proof}
Under Assumption \textnormal{\textbf{(H1)}}, $R \in \dbS^{m}_{++}$ and $Q - S^\top R^{-1} S \in \dbS^{n}_{++}$. Hence, $\BR \in \dbS^{Nm}_{++}$ and $Q > S^\top R^{-1} S \ges 0$, so $Q \in \dbS^{n}_{++}$. By the definitions of $\BQ$, $\BS$, and $\BR$, we have
$$\BQ - \BS^\top \BR^{-1} \BS = \frac{1}{N} \text{diag} \big[Q - S^\top R^{-1} S, \dots, Q - S^\top R^{-1} S \big] \in \dbS^{Nn}_{++}.$$
Let $\wt{\BQ} := \bar{\BQ} + \BQ - \BS^\top \BR^{-1} \BS$. We define $\bm{U} \in \dbR^{N \times N}$ to be an orthogonal matrix whose first column is $\frac{1}{\sqrt{N}} \bm{1}_{N \times 1}$ and whose remaining columns form an orthonormal basis of the subspace $\{(y_1, \dots, y_N) \in \dbR^N: \sum_{i=1}^N y_i = 0\}$.
Consider the congruence transform
$$\wt{\BQ}_2 = (\BU \otimes I_n)^\top \wt{\BQ} (\BU \otimes I_n).$$
A direct calculation gives
\begin{equation*}
\begin{aligned}
\wt{\BQ}_2 &= \frac{1}{N} (I_N \otimes (Q - S^\top R^{-1} S)) + \frac{1}{N^2} (\BU^\top \bm{1}_{N \times N} \BU \otimes \bar{Q}) \\
&= \frac{1}{N} (I_N \otimes (Q - S^\top R^{-1} S)) + \frac{1}{N^2} (\text{diag}[N, 0, \dots, 0] \otimes \bar{Q}).
\end{aligned}
\end{equation*}
Since $Q + \bar{Q} - S^\top R^{-1} S \in \dbS^{n}_{++}$ and $Q - S^\top R^{-1} S \in \dbS^{n}_{++}$, we obtain $\wt{\BQ}_2 \in \dbS^{Nn}_{++}$, which implies that $\wt{\BQ} \in \dbS^{Nn}_{++}$. Thus, Conditions (L1), (L2), and (4.23) in Chapter 6 of \cite{Yong-Zhou-1999-stochastic} are satisfied. Therefore, by Theorem 7.2 in Chapter 6 of \cite{Yong-Zhou-1999-stochastic}, 
%By the results in Section 4 of \cite{Yong-2013} or Lemma 2.1 in \cite{Sun-Yong-2024}, 
there exists a unique solution $\BP_T(\cdot) \in C([0, T];\dbS^{Nn}_{+})$ to the first equation in the system \eqref{eq:Riccati_N_agent} and a constant $\alpha > 0$ such that $\BR + 2 \cM_1(\BP_T(t)) \ges \alpha I_{Nm}$ for all $t \in [0, T]$.
Upon substituting $\BP_T(\cd)$ into the second equation of \eqref{eq:Riccati_N_agent}, the resulting equation for $\Bp_T(\cd)$ becomes a linear ordinary differential equation, which admits a unique solution on $[0, T]$. Subsequently, once $\BP_T(\cd)$ and $\Bp_T(\cd)$ are determined, the equation for $\Bk_T(\cd)$ in \eqref{eq:Riccati_N_agent} also reduces to a linear ODE, which likewise admits a unique solution on $[0, T]$.
\end{proof}

The following lemma provides an explicit characterization of the optimal control and the value function for \textbf{Problem (SO-N)$^T$}.

\begin{lemma}
\label{l:solvability_social_optimal_prep}
Suppose Assumption \textnormal{\textbf{(H1)}} holds. Let $\{\BP_T(\cd), \Bp_T(\cd), \Bk_T(\cd)\}$ be the unique solution to the system of equations \eqref{eq:Riccati_N_agent}. Then, \textnormal{\textbf{Problem (SO-N)$^T$}} is open-loop solvable, and the socially optimal control is given in feedback form by
\begin{equation*}
\begin{aligned}
\Bu(t, \BX(t)) &= - \big(\BR + \cM_1 (2 \BP_T(t))\big)^{-1} \big[ \big( \BB^\top \BP_T(t) + (\cM_2 (\BP_T(t)))^\top + \BS \big) \BX(t)  \\
& \hspace{0.7in} + \BB^\top \Bp_T(t) + (\cM_3(\BP_T(t)))^\top + \Br \big],
\end{aligned}
\end{equation*}
where $\BX(\cd)$ is the solution to the corresponding closed-loop system \eqref{eq:dynamic_N_agent_vector_form}. Moreover, the value function defined in \eqref{eq:SO-N_finite} with initial state $\Bx$ is given by $V(0, \bm{x})$ in \eqref{eq:value_function_N_agent}.
\end{lemma}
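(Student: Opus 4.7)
The proof will be a verification argument built on Itô's formula and completing the square, relying on the Riccati system constructed in Lemma \ref{l:unique_solvability_Riccati_N_agent}. The logical chain is: the ansatz \eqref{eq:value_function_N_agent} satisfies the HJB \eqref{eq:HJB_N_agent} by construction of the Riccati system; Itô's formula converts the HJB identity into an inequality relating the cost to $V(0,\bm x)$; the optimality gap turns out to be a single quadratic form in $\Bu(\cd)$ whose Hessian $\BR + 2\cM_1(\BP_T(\cd))$ is uniformly positive by Lemma \ref{l:unique_solvability_Riccati_N_agent}.

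First, I would verify admissibility of the candidate feedback. Since $\BP_T(\cd), \Bp_T(\cd)$ are continuous on the compact interval $[0,T]$ and $(\BR+2\cM_1(\BP_T(t)))^{-1}$ is bounded by the uniform lower bound $\alpha I_{Nm}$, the affine feedback $\Bu^*(t,\Bx)$ stated in the lemma has uniformly bounded coefficients. Substituting it into the state equation \eqref{eq:dynamic_N_agent_vector_form} yields a linear SDE with bounded coefficients and $L^4_{\wt\cF_0}$ initial data, so standard theory gives a unique strong solution $\BX_T(\cd)$ with finite second (in fact fourth) moments, and the induced control belongs to $(\sU_c[0,T])^N$. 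Next, for any admissible $\Bu(\cd)\in(\sU_c[0,T])^N$ with state $\BX(\cd)$, apply Itô's formula to $V(t,\BX(t))$. The martingale terms have integrable quadratic variation, thanks to the linear growth of the diffusion coefficients in \eqref{eq:dynamic_N_agent_vector_form} together with the $L^2$ integrability of $\Bu(\cd)$ and the moment estimates for $\BX(\cd)$, so they vanish in expectation. Using the terminal conditions \eqref{eq:terminal_condition_N_agent} and adding $\wt\dbE\int_0^T F(\BX(t),\Bu(t))dt$ gives
\begin{equation*}
J_T^{\mathrm{soc}}(\Bx;\Bu(\cd)) - V(0,\Bx) = \wt\dbE\int_0^T \bigl[\partial_t V(t,\BX(t)) + \cL^{\Bu(t)} V(t,\BX(t)) + F(\BX(t),\Bu(t))\bigr]\,dt,
\end{equation*}
where $\cL^{\Bu}$ is the generator of \eqref{eq:dynamic_N_agent_vector_form} associated to the input $\Bu$.

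The third step is to identify the integrand as a perfect square. Because $\BP_T,\Bp_T,\Bk_T$ were obtained exactly by matching the quadratic, linear, and constant parts in $\Bx$ of the reduced HJB obtained after the pointwise minimization \eqref{eq:optimal_control_function_N_agent}, the function $\Bu\mapsto \partial_t V + \cL^{\Bu}V + F(\Bx,\Bu)$ is a quadratic polynomial in $\Bu$ with Hessian $\BR + 2\cM_1(\BP_T(t))$, vanishing value at $\Bu=\Bu^*(t,\Bx)$, and vanishing gradient there. Therefore the integrand equals
\begin{equation*}
\bigl(\Bu(t)-\Bu^*(t,\BX(t))\bigr)^\top \bigl(\BR+2\cM_1(\BP_T(t))\bigr)\bigl(\Bu(t)-\Bu^*(t,\BX(t))\bigr) \;\ges\; \alpha\,\bigl|\Bu(t)-\Bu^*(t,\BX(t))\bigr|^2,
\end{equation*}
which is nonnegative, proving $J_T^{\mathrm{soc}}(\Bx;\Bu(\cd))\ges V(0,\Bx)$ with equality if and only if $\Bu(\cd)$ coincides (in $L^2$) with the closed-loop feedback.

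The main bookkeeping obstacle lies in step three, where one must reconcile the many cross terms $\BX^\top\cM_2(\BP_T)\Bu$, $\cM_3(\BP_T)\Bu$, $2\BX^\top\BS^\top\Bu$, $2\Br^\top\Bu$ etc. produced by Itô's formula with those already present in $F$ and in the derivatives of $V$. Rather than expanding everything in coordinates, the cleanest route is the Hessian/gradient argument above: identifying the integrand as a quadratic polynomial in $\Bu$, showing the three Riccati equations \eqref{eq:Riccati_N_agent} are precisely the vanishing conditions of its $\Bu^*$-value, gradient and constant term, and invoking Lemma \ref{l:unique_solvability_Riccati_N_agent} for strict convexity. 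The concluding identity $V^{\mathrm{soc}}_T(\Bx)=V(0,\Bx)$ then follows at once from $V(T,\BX(T))\equiv 0$ and the definition \eqref{eq:value_function_N_agent}.
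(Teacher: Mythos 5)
Your proof is correct and takes the same route as the paper, which simply cites the classical verification theorem (Theorem 5.1, Ch.\ 5 of Yong--Zhou) together with Lemma \ref{l:unique_solvability_Riccati_N_agent} and the minimizer formula \eqref{eq:optimal_control_function_N_agent}; you have unpacked that citation into the explicit It\^o/completion-of-squares computation. The only technical wrinkle worth flagging is that, for a general $L^2$ admissible control, the state need not have finite fourth moments, so the stochastic integrals in your It\^o step are a priori only local martingales; the standard fix (which the cited verification theorem performs) is to localize with stopping times and pass to the limit via Fatou, which is available precisely because the integrand becomes the nonnegative quadratic form you identified.
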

\begin{proof}
The conclusion is an immediate consequence of Lemma \ref{l:unique_solvability_Riccati_N_agent}, equation \eqref{eq:optimal_control_function_N_agent}, and the classical verification theorem for stochastic control problems (see, e.g., Theorem 5.1 in Chapter 5 of \cite{Yong-Zhou-1999-stochastic}).
\end{proof}

\subsection{Solvability of Problem (SO-N)$^T$}

Note that the optimal control provided in Lemma \ref{l:solvability_social_optimal_prep} depends on the solution $\BP_T(\cd)$ and $\Bp_T(\cd)$ to the system \eqref{eq:Riccati_N_agent}, which appears intractable because of its high-dimensional structure. In this subsection, we characterize the optimal control and the corresponding optimal path for each agent in \textbf{Problem (SO-N)$^T$} by exploiting structural reduction results for the solution $\{\BP_T(t), \Bp_T(t), \Bk_T(t): t \in [0, T]\}$ to the system \eqref{eq:Riccati_N_agent}. This reduction is crucial for establishing the convergence of \textbf{Problem (SO-N)$^T$} to \textbf{Problem (MFC)$^T$}. Our starting point is the Riccati equation satisfied by $\BP_T(\cd)$ in \eqref{eq:Riccati_N_agent}, namely,
\begin{small}
\begin{equation}
\label{eq:BP_1}
\begin{cases}
\vspace{4pt}
\displaystyle
\dot{\BP}_T(t) - \big(\BP_T(t) \BB + \cM_2(\BP_T(t)) + \BS^\top \big) \big(\BR + 2 \cM_1(\BP_T(t)) \big)^{-1} \big(\BP_T(t) \BB + \cM_2(\BP_T(t)) + \BS^\top \big)^\top \\
\vspace{4pt}
\displaystyle
\hspace{0.5in} + \BP_T(t) (\BA + \bar{\BA}) + (\BA + \bar{\BA})^\top \BP_T(t) + 2 \cM_4(\BP_T(t)) + \BQ + \bar{\BQ} = 0, \\
\BP_T(T) = 0 \in \dbS^{Nn}.
\end{cases}
\end{equation}
\end{small}

The following lemma shows that the solution $\BP_T(\cd)$ to \eqref{eq:BP_1} can be represented in terms of two matrix-valued functions $P_T^{1,N}(\cd)$ and $P_T^{2,N}(\cd)$ taking values in $\dbS^n_{+}$. This $N$-invariant algebraic structure yields a useful dimensional reduction and thereby makes the problem more tractable. The proof parallels the method in Lemma 1 of \cite{Huang-Yang-2021-Linear} and is included in Appendix \ref{s:proof_form_BP_1}.

\begin{lemma}
\label{l:form_BP_1}
Let \textnormal{\textbf{(H1)}} hold. The solution $\BP_T(\cd) \in C([0, T]; \dbS^{Nn}_{+})$ to the equation \eqref{eq:BP_1} has the following representation
\begin{equation*}
\BP_T(t) = \begin{bmatrix}
P_T^{1,N}(t) & P_T^{2,N}(t) & \cdots & P_T^{2,N}(t) \\
P_T^{2,N}(t) & P_T^{1,N}(t) & \cdots & P_T^{2,N}(t) \\
\vdots & \vdots & \cdots & \vdots \\
P_T^{2,N}(t) & P_T^{2,N}(t) & \cdots & P_T^{1,N}(t)
\end{bmatrix}
\end{equation*}
for all $t \in [0, T]$, where both $P_T^{1,N}(\cd)$ and $P_T^{2,N}(\cd)$ are $n \times n$ symmetric matrix-valued functions on $[0, T]$.
\end{lemma}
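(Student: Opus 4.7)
The plan is to exploit the permutation symmetry of the $N$-agent Riccati equation \eqref{eq:BP_1} and combine it with the uniqueness statement of Lemma \ref{l:unique_solvability_Riccati_N_agent}. Since all $N$ agents enter the problem symmetrically, the solution $\BP_T(\cd)$ must be invariant under any relabeling of the agents, and any symmetric matrix in $\dbR^{Nn \times Nn}$ with that invariance is forced into exactly the block pattern claimed.

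For each $\pi \in S_N$, let $P_\pi \in \dbR^{N \times N}$ denote the associated permutation matrix and set $\Pi_\pi := P_\pi \otimes I_n$ and $\Upsilon_\pi := P_\pi \otimes I_m$, both orthogonal. Using $P_\pi \bm 1_{N \times N} P_\pi^\top = \bm 1_{N \times N}$ together with the tensor-product structure of the coefficients, I would first verify the intertwining relations
\begin{equation*}
\Pi_\pi^\top (\BA + \bar\BA) \Pi_\pi = \BA + \bar\BA, \quad \Pi_\pi^\top (\BQ + \bar\BQ) \Pi_\pi = \BQ + \bar\BQ, \quad \Pi_\pi^\top (\BGamma + \bar\BGamma) \Pi_\pi = \BGamma + \bar\BGamma,
\end{equation*}
together with $\Pi_\pi^\top \BB \Upsilon_\pi = \BB$, $\Upsilon_\pi^\top \BS \Pi_\pi = \BS$, and $\Upsilon_\pi^\top \BR \Upsilon_\pi = \BR$. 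I would then verify the block-indexed identities $\Pi_\pi^\top \BC^i \Pi_\pi = \BC^{\pi^{-1}(i)}$, $\Pi_\pi^\top \bar\BC^i \Pi_\pi = \bar\BC^{\pi^{-1}(i)}$, $\Pi_\pi^\top \BD^i = \BD^{\pi^{-1}(i)}$, and $\Be^i \Upsilon_\pi = \Be^{\pi^{-1}(i)}$; reindexing the sums $\sum_{i=1}^N$ in the definitions of $\cM_1, \cM_2, \cM_4$ via $i \mapsto \pi^{-1}(i)$ then yields, for every $\BZ \in \dbR^{Nn \times Nn}$,
\begin{equation*}
\cM_1(\Pi_\pi^\top \BZ \Pi_\pi) = \Upsilon_\pi^\top \cM_1(\BZ) \Upsilon_\pi, \quad \cM_2(\Pi_\pi^\top \BZ \Pi_\pi) = \Pi_\pi^\top \cM_2(\BZ) \Upsilon_\pi, \quad \cM_4(\Pi_\pi^\top \BZ \Pi_\pi) = \Pi_\pi^\top \cM_4(\BZ) \Pi_\pi.
\end{equation*}

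With these identities in hand, setting $\wt{\BP}(t) := \Pi_\pi^\top \BP_T(t) \Pi_\pi$ and substituting into \eqref{eq:BP_1} shows that $\wt{\BP}(\cd)$ satisfies the same Riccati equation with the same terminal condition $\wt{\BP}(T) = 0$; uniqueness (Lemma \ref{l:unique_solvability_Riccati_N_agent}) then yields $\Pi_\pi^\top \BP_T(t) \Pi_\pi = \BP_T(t)$ for every $\pi \in S_N$ and every $t \in [0, T]$. Viewing $\BP_T(t)$ as an $N \times N$ array of $n \times n$ blocks $[\BP_T(t)]_{ij}$, this invariance becomes $[\BP_T(t)]_{\pi(i)\pi(j)} = [\BP_T(t)]_{ij}$ for all $\pi$, which forces the diagonal blocks to share a common value $P_T^{1,N}(t)$ and the off-diagonal blocks to share a common value $P_T^{2,N}(t)$. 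Symmetry of $\BP_T(t)$ then passes to $P_T^{1,N}(t)$ (from the diagonal blocks) and to $P_T^{2,N}(t)$ (from $[\BP_T(t)]_{ij} = [\BP_T(t)]_{ji}^\top$ with $i \neq j$). The main obstacle will be the careful but routine bookkeeping in the intertwining identities for $\cM_1, \cM_2, \cM_4$, where one must track how $\pi$ acts on each factor $\BC^i, \bar\BC^i, \BD^i, \Be^i$ and confirm that the sums over $i$ are preserved after reindexing.
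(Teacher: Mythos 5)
Your argument is correct and is essentially the paper's proof: the paper conjugates by transposition matrices $\bm{\Upsilon}_{ij}$ (swapping rows $i$ and $j$), verifies the same intertwining relations for the coefficients and for $\cM_1,\cM_2,\cM_4$, invokes uniqueness from Lemma \ref{l:unique_solvability_Riccati_N_agent}, and deduces equality of diagonal and off-diagonal blocks. Using all of $S_N$ rather than just transpositions is an inessential generalization, since transpositions generate $S_N$.
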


Similarly, for the equation satisfied by $\Bp_T(\cd)$ in \eqref{eq:Riccati_N_agent}, i.e.,
\begin{small}
\begin{equation}
\label{eq:BP_2}
\begin{cases}
\vspace{4pt}
\displaystyle
\dot{\Bp}_T(t) - \big(\BP_T(t) \BB + \cM_2(\BP_T(t)) + \BS^\top \big) \big(\BR + 2 \cM_1(\BP_T(t)) \big)^{-1} \big((\Bp_T(t))^\top \BB + \cM_3(\BP_T(t)) + \Br^\top \big)^\top \\
\vspace{4pt}
\displaystyle
\hspace{0.5in} + \BP_T(t) \Bb + (\BA + \bar{\BA})^\top \Bp_T(t) + \big(\cM_5(\BP_T(t)) \big)^\top + \Bq = 0, \\
\Bp_T(T) = 0 \in \dbR^{Nn},
\end{cases}
\end{equation}
\end{small}
we provide an analogous structural characterization of its solution in the following lemma. The proof is provided in Appendix \ref{s:proof_form_BP_2}.

\begin{lemma}
\label{l:form_BP_2}
Suppose \textnormal{\textbf{(H1)}} holds, and let $\BP_T(\cd)$ be the solution to the Riccati equation \eqref{eq:BP_1}. Then, the solution $\Bp_T(\cd)$ to the differential equation \eqref{eq:BP_2} has the following representation:
\begin{equation*}
\Bp_T(t) = \begin{bmatrix}
(p_T^{1,N}(t))^\top, (p_T^{1,N}(t))^\top, \dots, (p_T^{1,N}(t))^\top 
\end{bmatrix}^\top,
\end{equation*}
where $p_T^{1,N}(\cd)$ is a vector-valued function on $[0, T]$ with $p_T^{1,N}(t) \in \dbR^n$ for all $t \in [0, T]$.
\end{lemma}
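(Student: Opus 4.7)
The proof will parallel that of Lemma \ref{l:form_BP_1} by combining a symmetry-based ansatz with a uniqueness argument. Since \eqref{eq:BP_2} is \emph{linear} in $\Bp_T(\cd)$ once $\BP_T(\cd)$ has been fixed by Lemma \ref{l:form_BP_1}, the idea is to introduce the ansatz $\Bp_T(t) = \bm{1}_{N\times 1} \otimes p_T^{1,N}(t)$ with $p_T^{1,N}(T) = 0$, show that it generates a consistent and well-posed scalar linear ODE for $p_T^{1,N}(\cd)$ in $\dbR^n$, and then invoke uniqueness of \eqref{eq:BP_2} to conclude.

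First, I would note that with $\BP_T(\cd) \in C([0,T]; \dbS^{Nn}_{+})$ given by Lemma \ref{l:form_BP_1}, equation \eqref{eq:BP_2} is a linear backward ODE of the form $\dot\Bp_T(t) + M(t)\Bp_T(t) + \bm{\ell}(t) = 0$, $\Bp_T(T)=0$, with $M(\cd)$ and $\bm{\ell}(\cd)$ continuous on $[0,T]$ by Lemma \ref{l:unique_solvability_Riccati_N_agent}; hence it has a unique solution in $C([0,T]; \dbR^{Nn})$. Next, I would plug the ansatz into \eqref{eq:BP_2} and verify block by block that each term preserves the identical-block structure. The key algebraic facts are: (a) $\BA, \BGamma, \bar\BA, \bar\BGamma$ map block-identical vectors to block-identical vectors (for instance, $\bar\BA^\top(\bm{1}_{N\times 1}\otimes v)=\bm{1}_{N\times 1}\otimes(\bar A^\top v)$ thanks to $\bm{1}_{N\times N}\bm{1}_{N\times 1} = N\bm{1}_{N\times 1}$); (b) by Lemma \ref{l:form_BP_1}, $\BP_T(t)(\bm{1}_{N\times 1}\otimes v)$ has identical $n$-dim blocks $\bigl(P_T^{1,N}(t)+(N-1)P_T^{2,N}(t)\bigr)v$; (c) $\BR + 2\cM_1(\BP_T(t))$ is block-diagonal with identical $m\times m$ diagonal blocks $R/N + D^\top P_T^{1,N}(t) D$, and so is its inverse; (d) both the row vector $(\Bp_T(t))^\top\BB + \cM_3(\BP_T(t)) + \Br^\top$ and the block rows of $\BP_T(t)\BB + \cM_2(\BP_T(t)) + \BS^\top$ have identical $m$-dim blocks; and (e) $\Bb$, $\Bq$, and $(\cM_5(\BP_T(t)))^\top$ all have identical $n$-dim blocks. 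Combining these, equation \eqref{eq:BP_2} collapses to a linear backward ODE in $\dbR^n$ for $p_T^{1,N}(\cd)$ with terminal value zero, which is uniquely solvable.

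Finally, since the ansatz produces a valid $C([0,T]; \dbR^{Nn})$-solution of the linear system identified in the previous step, the uniqueness there forces $\Bp_T(\cd) = \bm{1}_{N\times 1} \otimes p_T^{1,N}(\cd)$, which is exactly the representation in the statement.

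The main obstacle is the bookkeeping required to confirm the block-preserving property: one must carefully track how the operators $\cM_2, \cM_3, \cM_5$ and the matrices $\bar\BC^i, \bar\BGamma$ act on the two-parameter block structure $(P_T^{1,N}(\cd), P_T^{2,N}(\cd))$ of $\BP_T(\cd)$. These computations are of the same type as those already performed in the proof of Lemma \ref{l:form_BP_1}, so the algebraic pattern is available; once set up, the verification is largely mechanical, which is consistent with relegating the detailed calculation to Appendix \ref{s:proof_form_BP_2}.
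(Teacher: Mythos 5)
Your argument is correct, but it follows a different route from the paper's. The paper proves Lemma~\ref{l:form_BP_2} by the same permutation trick as Lemma~\ref{l:form_BP_1}: writing $\Bp_T(t)=(\wt p_T^i(t))_{1\les i\les N}$ and conjugating equation~\eqref{eq:BP_2} with the exchange operators $\bm{\Upsilon}_{ij}$ (multiplying on the left by $\bm\Upsilon_{ij}$), one checks that $\bm\Upsilon_{ij}\Bp_T(\cd)$ solves the same linear ODE, so by uniqueness $\bm\Upsilon_{ij}\Bp_T(\cd)=\Bp_T(\cd)$ for all $i\neq j$, forcing $\wt p_T^i(\cd)=\wt p_T^j(\cd)$. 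This exploits permutation \emph{invariance} of the coefficients $\BA,\bar\BA,\BB,\Bb,\Bq,\Br$ and of $\BP_T(\cd)$ established in Lemma~\ref{l:form_BP_1}, and avoids computing the reduced ODE. Your approach instead inserts the ansatz $\Bp_T(t)=\bm{1}_{N\times1}\otimes p_T^{1,N}(t)$ directly and verifies block-by-block that each term of~\eqref{eq:BP_2} is block-identical, collapsing the system to an $\dbR^n$-valued linear ODE, and then invokes uniqueness of the $\dbR^{Nn}$ system to identify the two solutions. This is a constructive verification: it is more computational than the symmetry argument, but it has the side benefit of producing (up to rescaling) the reduced ODE for $p_T^{1,N}(\cd)$ that the paper obtains separately in Lemma~\ref{l:ODE_rescaling_terms}. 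Both proofs hinge on the same two inputs, namely the block structure of $\BP_T(\cd)$ and uniqueness of the linear equation~\eqref{eq:BP_2}, so either is acceptable; the paper's is shorter because it reuses the Lemma~\ref{l:form_BP_1} machinery without expanding $\cM_2,\cM_3,\cM_5$.
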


Intuitively, the solution $V(t, \Bx)$ to the HJB equation \eqref{eq:HJB_N_agent} is expected to be of order $O(1)$. If we fix $x_i = x$ for all $i \in \{1, \dots, N\}$, then the results of Lemmas \ref{l:form_BP_1} and \ref{l:form_BP_2} give
\begin{equation*}
\begin{aligned}
V(t, \Bx) &= \Bx^\top \BP_T(t) \Bx + 2 \Bx^\top \Bp_T(t) + \Bk_T(t) \\
&= N x^\top P_T^{1,N}(t) x + (N^2 - N) x^\top P_T^{2,N}(t) x + 2N x^\top p_T^{1,N}(t) + \Bk_T(t),
\end{aligned}
\end{equation*}
which suggests the scalings
$$\|P_T^{1,N}(t)\| = O(N^{-1}), \quad \|P_T^{2,N}(t)\| = O(N^{-2}), \quad |p_T^{1,N}(t)| = O(N^{-1}), \quad \text{and} \quad |\Bk_T(t)| = O(1)$$
for all $t \in [0, T]$. Motivated by this heuristic scaling, we follow the rescaling method of \cite{Huang-Zhou-2019-Linear, Ma-Huang-2020-Linear, Huang-Yang-2021-Linear} and define
\begin{equation}
\label{eq:rescaling}
P_T^N(t) = N P_T^{1,N}(t), \quad \Pi_T^N(t) = N^2 P_T^{2,N}(t), \quad p_T^N(t) = N p_T^{1,N}(t), \quad \text{and} \quad \kappa_T^N(t) = \Bk_T(t)
\end{equation}
for all $t \in [0, T]$. 

Using Lemma \ref{l:form_BP_1} together with the definition of $\cM_1(\cd)$, we find that the term $\BR + 2 \cM_1(\BP_T(t))$ in \eqref{eq:Riccati_N_agent} has the following representation:
\begin{equation*}
\begin{aligned}
\BR + 2 \cM_1(\BP_T(t)) &= \BR + \sum_{i=1}^{N} (\Be^i)^\top (\BD^i)^\top \BP_T(t) \BD^i \Be^i \\
&= \frac{1}{N} \text{diag} \big[R+D^\top P_T^N(t) D, \dots, R+D^\top P_T^N(t) D \big].
\end{aligned}
\end{equation*}
By Lemma \ref{l:unique_solvability_Riccati_N_agent}, $\BR + 2 \cM_1(\BP_T(t))$ is positive definite for all $t \in [0, T]$. Thus, $R+D^\top P_T^N(t) D$ is also positive definite, i.e., $R+D^\top P_T^N(t) D \in \dbS^{m}_{++}$ for all $t \in [0, T]$. Consequently, the matrix $(\BR + 2 \cM_1(\BP_T(t)))^{-1}$ takes the following diagonal form
$$\big(\BR + 2 \cM_1(\BP_T(t)) \big)^{-1} = N \text{diag} \big[(R+D^\top P_T^N(t) D)^{-1}, \dots, (R+D^\top P_T^N(t) D)^{-1} \big]$$
for all $t \in [0, T]$.

We next derive the differential equations satisfied by the rescaled functions $P_T^N(\cd)$, $\Pi_T^N(\cd)$, $p_T^N(\cd)$, and $\kappa_T^N(\cd)$ defined in \eqref{eq:rescaling}. For simplicity of notation, we set
\begin{equation*}
%\label{eq:notation_coefficients}
\begin{aligned}
& \h A = A + \bar A, \quad \h C = C + \bar C, \quad \h \Gamma = \Gamma + \bar{\Gamma}, \quad \h Q = Q + \bar Q.
\end{aligned}
\end{equation*}
Moreover, for $P, \Pi \in \dbS^n$, we let $\bar{\Pi} := P + \Pi$ and define the following terms
\begin{equation*}
%\label{eq:notation_P_and_Pi}
\begin{aligned}
& \mathcal{Q}(P) = P A+A^\top P + C^\top P C + \Gamma^\top P \Gamma + Q, \quad \h{\mathcal Q}(P, \bar{\Pi}) = \bar{\Pi} \h A + \h A^\top \bar{\Pi} + \h C^\top P \h C + \h \Gamma^\top \bar{\Pi} \h \Gamma + \h Q, \\
& \mathcal S(P) = B^\top P + D^\top P C+ S, \quad \h{\mathcal S}(P, \bar{\Pi}) = B^\top \bar{\Pi} + D^\top P \h C + S, \\
& \mathcal R(P) = R + D^\top P D.
\end{aligned}
\end{equation*}
For $P, \bar{\Pi} \in \dbS^n$ and $p \in \dbR^n$, we further set
\begin{equation}
\label{eq:function_theta}
\begin{aligned}
& \Th(P) = - \cR(P)^{-1} \cS(P), \quad \bar{\Th}(P, \bar{\Pi}) = - \cR(P)^{-1} \h \cS(P, \bar{\Pi}), \\
& \theta(p, P) = - \cR(P)^{-1} (B^\top p + D^\top P \sigma + r).
\end{aligned}
\end{equation}
Next, we define the mappings $\Psi_1: \dbS^n \to \dbS^n$, $\Psi_2: \dbS^n \times \dbS^n \to \dbS^n$, $\Psi_3: \dbS^n \times \dbS^n \times \dbR^n \to \dbR^n$, and $\Psi_4: \dbS^n \times \dbS^n \times \dbR^n \to \dbR$ as follows:
\begin{equation}
\label{eq:functions_Psi_1}
\Psi_1(P) := \cQ(P) - \cS(P)^\top \cR(P)^{-1} \cS(P),
\end{equation}
\begin{equation}
\label{eq:functions_Psi_2}
\Psi_2(P, \Pi) := \h \cQ(P, \bar{\Pi}) - \h \cS(P, \bar{\Pi})^\top \cR(P)^{-1} \h \cS(P, \bar{\Pi}) - \Psi_1(P)
\end{equation}
\begin{equation}
\label{eq:functions_Psi_3}
\begin{aligned}
\Psi_3(P, \Pi, p) & := (A + \bar{A} + B \bar{\Th}(P, \bar{\Pi}))^\top p + (C + \bar{C} + D \bar{\Th}(P, \bar{\Pi}))^\top P \sigma \\
& \hspace{0.3in} + \bar{\Th}(P, \bar{\Pi})^\top r + \bar{\Pi}b + \h{\Gamma}^\top \bar{\Pi} \gamma + q,
\end{aligned}
\end{equation}
and
\begin{equation}
\label{eq:functions_Psi_4}
\begin{aligned}
\Psi_4(P, \Pi, p) & := - \theta(p, P)^\top \cR(P) \theta(p, P) + 2 b^\top p + \sigma^\top P \sigma + \gamma^\top \bar{\Pi} \gamma.
\end{aligned}
\end{equation}
In addition, for $N \in \dbN^{+}$, $P, \Pi \in \dbS^n$ and $p \in \dbR^n$, we also define the functions $g_1: \dbN^{+} \times \dbS^n \times \dbS^n \to \dbS^n$, $g_2: \dbN^{+} \times \dbS^n \times \dbS^n \to \dbS^n$, and $g_3: \dbN^{+} \times \dbS^n \times \dbS^n \times \dbR^n \to \dbR^n$ as follows:
\begin{equation}
\label{eq:functions_g_1}
\begin{aligned}
g_1(N, P, \Pi) &:= \frac{1}{N} \bar{C}^\top P D \Th(P) + \frac{1}{N} \Th(P)^\top D^\top P \bar{C}  - \frac{1}{N^2} \bar{C}^\top P D \cR(P)^{-1} D^\top P \bar{C} \\
& \hspace{0.3in} + \frac{N-1}{N^2} (B^\top \Pi + D^\top P \bar{C})^\top \cR(P)^{-1} (B^\top \Pi + D^\top P \bar{C}) \\
& \hspace{0.3in} + \frac{1}{N} \big(\h\cQ(P, \bar{\Pi}) - \cQ(P) - \Pi A  -A^\top \Pi - \Gamma^\top \Pi \Gamma \big) \\
& \hspace{0.3in} - \frac{1}{N^2} \big(\Pi \bar{A} + \bar{A}^\top \Pi + \bar{\Gamma}^\top \Pi \bar{\Gamma} + \bar{\Gamma}^\top \Pi \Gamma + \Gamma^\top \Pi \bar{\Gamma} \big),
\end{aligned}
\end{equation}
\begin{equation}
\label{eq:functions_g_2}
\begin{aligned}
g_2(N, P, \Pi) &:= \frac{1}{N} \big(D^\top P \bar{C} + B^\top \Pi \big)^\top \cR(P)^{-1} B^\top \Pi + \frac{1}{N} \Pi B \cR(P)^{-1} \big(D^\top P \bar{C} + B^\top \Pi \big) \\
& \hspace{0.3in} - \frac{1}{N} \big(\Pi \bar{A} + \bar{A}^\top \Pi + \bar{\Gamma}^\top \Pi \bar{\Gamma} + \bar{\Gamma}^\top \Pi \Gamma + \Gamma^\top \Pi \bar{\Gamma} \big),
\end{aligned}
\end{equation}
and
\begin{equation}
\label{eq:functions_g_3}
g_3(N, P, \Pi, p) := - \frac{1}{N} \big(\Pi B \th(p, P) + \Pi b + \h \Gamma^\top \Pi \gamma \big).
\end{equation}
The following lemma uses the functions defined above to derive the differential equations satisfied by $P_T^N(\cd)$, $\Pi_T^N(\cd)$, $p_T^N(\cd)$, and $\kappa_T^N(\cd)$.

\begin{lemma}
\label{l:ODE_rescaling_terms}
Suppose \textnormal{\textbf{(H1)}} holds, and let $P_T^N(\cd)$, $\Pi_T^N(\cd)$, $p_T^N(\cd)$, and $\kappa_T^N(\cd)$ be defined in \eqref{eq:rescaling}. Then, $(P_T^N(\cd), \Pi_T^N(\cd), p_T^N(\cd), \kappa_T^N(\cd))$ is the solution to the following system of ODEs on $[0, T]$:
\begin{equation}
\label{eq:Riccati_N_agent_rescaling}
\begin{cases}
\vspace{4pt}
\displaystyle
\dot{P}_T^N(t) + \Psi_1 \big(P_T^N(t) \big) + g_1 \big(N, P_T^N(t), \Pi_T^N(t) \big) = 0, \\
\vspace{4pt}
\displaystyle
\dot{\Pi}_T^N(t) + \Psi_2 \big(P_T^N(t), \Pi_T^N(t) \big) + g_2 \big(N, P_T^N(t), \Pi_T^N(t) \big) = 0, \\
\dot{p}_T^N(t) + \Psi_3 \big(P_T^N(t), \Pi_T^N(t), p_T^N(t) \big) + g_3 \big(N, P_T^N(t), \Pi_T^N(t), p_T^N(t) \big) = 0, \\
\vspace{4pt}
\displaystyle
\dot{\kappa}_T^N(t) + \Psi_4 \big(P_T^N(t), \Pi_T^N(t), p_T^N(t) \big) - \frac{1}{N} \gamma^\top \Pi_T^N(t) \gamma = 0, \\
P_T^N(T) = \Pi_T^N(T) = 0 \in \dbS^n, \, p_T^N(T) = 0 \in \dbR^n, \, \kappa_T^N(T) = 0 \in \dbR,
\end{cases}
\end{equation}
where the mappings $\Psi_1, \Psi_2, \Psi_3, \Psi_4$ are defined in \eqref{eq:functions_Psi_1}-\eqref{eq:functions_Psi_4}, and the functions $g_1, g_2, g_3$ are given in \eqref{eq:functions_g_1}-\eqref{eq:functions_g_3}.
\end{lemma}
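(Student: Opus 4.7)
The plan is to derive the system \eqref{eq:Riccati_N_agent_rescaling} by reading off the blockwise equations from the high-dimensional Riccati system \eqref{eq:Riccati_N_agent} using the structural decompositions provided by Lemmas \ref{l:form_BP_1} and \ref{l:form_BP_2}. The entire argument is a direct, if tedious, computation: every ingredient $\cM_k(\BP_T)$, $(\BR+2\cM_1(\BP_T))^{-1}$, and the constant/linear terms $\Bb,\bm\sigma^i,\bm\gamma,\BQ,\bar\BQ,\BS,\Br,\Bq$ has a transparent block pattern inherited from $P_T^{1,N}$ and $P_T^{2,N}$, so it suffices to isolate the $(1,1)$-block and the $(1,2)$-block of the matrix equation and to isolate the first $n$-block of the vector equations.

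First I would substitute $\BP_T(t)$ from Lemma \ref{l:form_BP_1} into each $\cM_k$. The key observation is that $\cM_1(\BP_T(t)) = \frac1{2}\mathrm{diag}[D^\top P_T^{1,N}D,\dots,D^\top P_T^{1,N}D]$, so (as already noted in the text) $\BR+2\cM_1(\BP_T)$ is block-diagonal with blocks $\frac1{N}(R+D^\top P_T^N D) = \frac1N\cR(P_T^N)$, and $(\BR+2\cM_1(\BP_T))^{-1}$ is block-diagonal with blocks $N\cR(P_T^N)^{-1}$. Next I would expand $\cM_2(\BP_T)$, $\cM_4(\BP_T)$, $\BP_T\BB^i$ and $\BP_T\Bb$ blockwise; each entry naturally splits into a ``diagonal'' contribution involving $P_T^{1,N}$ and an ``off-diagonal'' averaging contribution involving $P_T^{2,N}$. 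Applying the rescaling $P_T^N=NP_T^{1,N}$, $\Pi_T^N = N^2 P_T^{2,N}$ and using $\bar\Pi := P + \Pi$ converts these into the mean field quantities $\cR(P),\cS(P),\h\cS(P,\bar\Pi),\cQ(P),\h\cQ(P,\bar\Pi)$ that appear in $\Psi_1,\Psi_2$.

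The core of the derivation is then to read off two scalar identities from the matrix Riccati equation for $\BP_T$: taking the $(1,1)$-block yields, after rescaling by $N$, the equation for $P_T^N$ in the form $\dot P_T^N + \Psi_1(P_T^N) + g_1(N,P_T^N,\Pi_T^N)=0$, where the remainder $g_1$ collects precisely the ``$\frac1N$ and $\frac1{N^2}$'' correction terms that do not survive in the mean field limit but are needed to make the identity exact for finite $N$. Taking the $(1,2)$-block and rescaling by $N^2$ yields the analogous equation for $P_T^N+\Pi_T^N$ via $\Psi_1+\Psi_2$; subtracting the first equation isolates $\dot\Pi_T^N + \Psi_2 + g_2 = 0$. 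The same procedure applied to the first $n$-block of the equation for $\Bp_T(t)$ in \eqref{eq:BP_2}, using Lemma \ref{l:form_BP_2} and rescaling by $N$, gives the equation for $p_T^N$; here the optimal feedback structure encoded in $\th(p,P)$ and $\bar\Th(P,\bar\Pi)$ naturally emerges from the product $(\BP_T\BB+\cM_2(\BP_T)+\BS^\top)(\BR+2\cM_1(\BP_T))^{-1}$. Finally the scalar ODE for $\kappa_T^N = \Bk_T$ follows by direct substitution into the third equation of \eqref{eq:Riccati_N_agent}, using $\cM_3(\BP_T) = \sigma^\top P_T^N$ and $\cM_6(\BP_T) = \frac12 \sigma^\top P_T^N\sigma + \frac1{2N}\gamma^\top(P_T^N+\Pi_T^N)\gamma$; the $-\frac1N\gamma^\top\Pi_T^N\gamma$ remainder appears because $\Psi_4$ already contains $\gamma^\top\bar\Pi\gamma$, so collecting terms produces exactly the stated equation.

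The main obstacle is purely algebraic: keeping track of the ``$N$-invariant'' and ``$\frac1N$-correction'' contributions so that $g_1,g_2,g_3$ appear in the exact forms \eqref{eq:functions_g_1}-\eqref{eq:functions_g_3}. A clean way to manage this is to introduce the orthogonal change of basis $\BU\otimes I_n$ already used in the proof of Lemma \ref{l:unique_solvability_Riccati_N_agent}, which diagonalizes the block structure into a ``center of mass'' coordinate (governed by $P_T^N + \Pi_T^N = \bar\Pi_T^N$) and $N-1$ ``fluctuation'' coordinates (governed by $P_T^N$ alone). In that basis the Riccati system decouples into two scalar matrix Riccati equations of size $n\times n$ plus lower-order couplings through $\bar C,\bar\Gamma,\bar A$; matching these with \eqref{eq:Riccati_N_agent_rescaling} term by term then confirms the claimed forms of $\Psi_1,\Psi_2,\Psi_3,\Psi_4$ and of $g_1,g_2,g_3$. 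Because each step is explicit and no existence issue arises once Lemma \ref{l:unique_solvability_Riccati_N_agent} is in hand, the proof reduces to a verification; I would relegate the lengthy substitutions to the appendix as is done for the related Lemmas \ref{l:form_BP_1} and \ref{l:form_BP_2}.
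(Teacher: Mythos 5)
Your overall strategy is the same as the paper's: substitute the block representations from Lemmas \ref{l:form_BP_1} and \ref{l:form_BP_2} into the high-dimensional Riccati system \eqref{eq:BP_1}--\eqref{eq:BP_2} and the equation for $\Bk_T$, read off the independent scalar equations, and apply the rescaling \eqref{eq:rescaling}. The paper's proof is equally brief on details, so the method is sound and the plan is correct.

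However, a few of the intermediate block identities you state are wrong and would need to be corrected when the computation is actually carried out. First, the $(1,2)$-block of the matrix Riccati equation, rescaled by $N^2$, yields the equation for $\Pi_T^N$ \emph{directly} (with nonlinearity $\Psi_2+g_2$); it does not give an equation for $P_T^N+\Pi_T^N$, so the ``subtract the first equation'' step you describe is unnecessary (and as stated would double-count the $\Psi_1$ terms). What you are likely conflating is the orthogonal change of basis: the ``center of mass'' coordinate is governed by $P_T^{1,N}+(N-1)P_T^{2,N}$, whose rescaled version $P_T^N+\frac{N-1}{N}\Pi_T^N$ tends to $\bar\Pi$; that is a valid alternative route, but it is not the same thing as reading off the $(1,2)$-block. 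Second, $\cM_3(\BP_T)$ is a $1\times Nm$ row vector made of $N$ identical blocks $\sigma^\top P_T^{1,N}D=\frac1N\sigma^\top P_T^ND$, not ``$\sigma^\top P_T^N$'' (which is dimensionally a $1\times n$ object). Third, $\cM_6(\BP_T)=\frac12\sigma^\top P_T^N\sigma+\frac12\gamma^\top P_T^N\gamma+\frac{N-1}{2N}\gamma^\top\Pi_T^N\gamma$, i.e.\ $\frac12\sigma^\top P_T^N\sigma+\frac12\gamma^\top(P_T^N+\Pi_T^N)\gamma-\frac1{2N}\gamma^\top\Pi_T^N\gamma$; the extra $\frac1N$ factor in your formula is misplaced, and it is precisely the $-\frac{1}{2N}\gamma^\top\Pi_T^N\gamma$ correction that, after multiplying by $2$, produces the $-\frac1N\gamma^\top\Pi_T^N\gamma$ term in the stated $\kappa_T^N$-ODE. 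These are bookkeeping slips rather than conceptual gaps, but as written they would not reproduce the claimed forms of $g_2$ and the $\kappa_T^N$ equation.
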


\begin{proof}
Recall from Lemma \ref{l:unique_solvability_Riccati_N_agent} and the representation of $\BR + 2 \cM_1(\BP_T(t))$ that $R+ D^\top P_T^N(t) D$ is positive definite for all $t \in [0, T]$. Thus, all the inverses appearing in the system of differential equations \eqref{eq:Riccati_N_agent_rescaling} are well-defined. Substituting the representation of $\BP_T(\cd)$ from Lemma \ref{l:form_BP_1} into \eqref{eq:BP_1} and using \eqref{eq:rescaling} yields the first two equations for $P_T^{N}(\cd)$ and $\Pi_T^{N}(\cd)$ in \eqref{eq:Riccati_N_agent_rescaling}. Using Lemma \ref{l:form_BP_2} together with the block structure of $\BP_T(\cd)$ in Lemma \ref{l:form_BP_1}, we then rewrite \eqref{eq:BP_2} in terms of $P_T^{1,N}(\cd)$, $P_T^{2,N}(\cd)$, and $p_T^{1,N}(\cd)$ and apply the rescaling \eqref{eq:rescaling} to obtain the ODE for $p_T^{N}(\cd)$ in \eqref{eq:Riccati_N_agent_rescaling}. Finally, the equation for $\kappa_T^{N}(\cdot)$ follows analogously from the scalar equation for $\Bk_T(\cdot)$ in \eqref{eq:Riccati_N_agent} after substituting the same structural forms and applying \eqref{eq:rescaling}.
%The uniqueness of $(P_T^N(\cd), \Pi_T^N(\cd), p_T^N(\cd), \kappa_T^N(\cd))$ is obtained from the unique solvability of system of equations \eqref{eq:Riccati_N_agent}.
\end{proof}

Next, we state the main result for \textbf{Problem (SO-N)$^T$} in \eqref{eq:SO-N_finite}. The following proposition provides an explicit characterization of the optimal control and the associated optimal state dynamics for each agent in terms of the solution to \eqref{eq:Riccati_N_agent_rescaling}.
Moreover, the value function $V_T^{\text{soc}}(\cd)$ is given in semi-explicit form. For all $t \in [0, T]$, we denote
\begin{equation}
\label{eq:theta_T_N}
\begin{aligned}
& \Theta_T^N (t) = \Th(P_T^N(t)), \quad \bar{\Theta}_T^N (t) = \bar{\Th}(P_T^N(t), P_T^N(t) + \Pi_T^N(t)), \\
& \theta_T^N(t) = \theta(p_T^N(t), P_T^N(t)), \quad \bar{\theta}_T^N(t) = \cR(P_T^N(t))^{-1} B^\top \Pi_T^N(t).
\end{aligned}
\end{equation}

\begin{proposition}
\label{p:solvability_SO_N}
Suppose \textnormal{\textbf{(H1)}} holds, and let $P_T^N(\cd)$, $\Pi_T^N(\cd)$, $p_T^N(\cd)$, and $\kappa_T^N(\cd)$ be defined in \eqref{eq:rescaling}. Then, 

{\rm(i)} For a given collection of initial states $\bm{\xi} = (\xi^1, \dots, \xi^N)$, \textnormal{\textbf{Problem (SO-N)}$^T$} admits a unique open-loop optimal control $\Bu_T(\cd) = (u_T^{1,N}(\cd), \dots, u_T^{N, N}(\cd))$ with the following closed-loop representation:
\begin{equation*}
%\label{eq:optimal_control_SO-N}
\begin{aligned}
u_T^{i, N}(t) = \Big(\Theta_T^N(t) + \frac{1}{N} \bar{\theta}_T^N(t)\Big) X_T^{i, N}(t) + \big(\bar{\Theta}_T^N(t) - \Theta_T^N(t)\big) X_T^{(N)}(t) + \theta_T^N(t), \quad \forall t \in [0, T],
\end{aligned}
\end{equation*}
where $X_T^{i, N}(\cd) := X^{i, N}(\cd\,; \bm{\xi}, \Bu_T(\cd))$ is the solution to the corresponding closed-loop system obtained by plugging $u_T^{i, N}(\cd)$ into \eqref{eq:state_N_agent}, $X_T^{(N)}(t) := \frac{1}{N} \sum_{i = 1}^{N} X_T^{i, N}(t)$ is the weakly coupled state average, and the tuple $(\Theta_T^N(\cd), \bar{\Theta}_T^N(\cd), \theta_T^N(\cd), \bar{\theta}_T^N(\cd))$ is defined in \eqref{eq:theta_T_N}.

{\rm(ii)} For any given collection of initial states $\Bx = (x^1, \dots, x^N)$, the value function of \textnormal{\textbf{Problem (SO-N)}$^T$} is given in the following semi-explicit form:
\begin{equation}
\label{eq:value_function_SO-N}
\begin{aligned}
V^{\textnormal{soc}}_T(\Bx) &= V(0, \Bx) = \Bx^\top \BP_T(0) \Bx + 2 \Bx^\top \Bp_T(0) + \Bk_T(0) \\
&= \frac{1}{N} \sum_{j=1}^N (x^j)^\top P_T^N(0) x^j + \frac{1}{N^2} \sum_{k=1}^N \sum_{j \neq k}^{N} (x^j)^\top \Pi_T^N(0) x^k + 2 \bar{x}^\top p_T^N(0) + \kappa_T^N(0),
\end{aligned} 
\end{equation}
where $V(\cd, \cd)$ is the solution to the HJB equation \eqref{eq:HJB_N_agent}, and $\{\BP_T(\cd), \Bp_T(\cd), \Bk_T(\cd)\}$ solves the system of ODEs \eqref{eq:Riccati_N_agent}.
\end{proposition}
\begin{proof}
This proposition follows directly from the results in Lemma \ref{l:solvability_social_optimal_prep}, Lemma \ref{l:form_BP_1}, Lemma \ref{l:form_BP_2}, and Lemma \ref{l:ODE_rescaling_terms}.
\end{proof}

%%%%%%%%%%%%%%%%%%%%%%%%%%%%%%%%%%%%%%%%%%%%%%%%%%%%%%%%%%%%%%%%%%%%%%%%%%%%

\section{Mean field control problem}
\label{s:mean_field_control_problem}

We now consider the finite-horizon mean field control problem in \eqref{eq:value_N_infinity}, which arises as the population size $N$ tends to infinity and extends the mean field control framework of \cite{Yong-2013, Huang-Li-Yong-2015, Yong-2017} to the setting with common noise. To establish the solvability of \textbf{Problem (MFC)$^T$}, we introduce an infinite-dimensional Hamilton-Jacobi (HJ) equation. Recent advances in viscosity solutions for second-order PDEs on the Wasserstein space can be found in \cite{Bayraktar-Ekren-Zhang-2025, BCEQTZ-2025, Bayraktar-Ekren-He-Zhang-2025}. For simplicity of notation, we write
\begin{equation}
\label{eq:hat_coefficient_functions}
\begin{aligned}
& \h b(x, \mu, u) := Ax + \bar A \bar \mu + B u + b, \quad \h \sigma(x, \mu, u) := Cx + \bar C \bar \mu + D u + \sigma, \\
& \h \gamma(x, \mu) := \Gamma x + \bar{\Gamma} \bar{\mu} + \gamma.
\end{aligned}
\end{equation}
To proceed, we define the Hamiltonian $\dbH: \dbR^{n} \times \cP_2(\dbR^{n}) \times \dbR^{m} \times \dbR^{n} \times \dbS^{n} \to \dbR$ by
\begin{equation*}
\dbH(x, \mu, u, \Bp, \BP) := \lan \h b(x, \mu, u), \Bp \ran + \frac{1}{2} \lan \BP \h \sigma(x, \mu, u), \h \sigma(x, \mu, u) \ran + f(x, \bar{\mu}, u).
\end{equation*}
We further define $H: \dbR^{n} \times \cP_2(\dbR^{n}) \times \dbR^{n} \times \dbS^{n} \to \dbR$ to be the infimum of $\dbH$ with respect to $u$ over $\dbR^m$, i.e.,
\begin{equation}
\label{eq:Hamiltonian}
H(x, \mu, \Bp, \BP) := \inf_{u \in \dbR^m} \dbH(x, \mu, u, \Bp, \BP).
\end{equation}
A direct expansion yields
\begin{equation*}
\begin{aligned}
& H(x, \mu, \Bp, \BP) \\
= \ & \lan Ax + \bar A \bar \mu + b, \Bp \ran + \frac{1}{2} \lan \BP(Cx + \bar C \bar \mu + \sigma), Cx + \bar C \bar \mu + \sigma \ran + \lan Qx, x\ran + 2\lan q, x\ran + \lan \bar Q \bar \mu, \bar \mu \ran \\
& \hspace{0.3in} + \inf_{u \in \dbR^m} \Big\{\frac{1}{2} \big\lan \big(2R + D^\top \BP D \big)u, u \big\ran + \big\lan u, B^\top \Bp + D^\top \BP(Cx + \bar C \bar \mu + \sigma) + 2Sx + 2r \big\ran \Big\}.
\end{aligned}
\end{equation*}
If $2R + D^\top \BP D$ is invertible, the infimum defining $H$ is attained at the following unique minimizer $\h u: \dbR^{n} \times \cP_2(\dbR^{n}) \times \dbR^{n} \times \dbS^{n} \to \dbR^{m}$:
\begin{equation}
\label{eq:u_hat_minimum_Hamiltonian}
\h u(x, \mu, \Bp, \BP) = - \big(2R + D^\top \BP D \big)^{-1} \big[ \big(D^\top \BP C + 2S \big) x + D^\top \BP \bar C \bar \mu + B^\top \Bp + D^\top \BP \sigma + 2r \big].
\end{equation}
Using the above notation and the definition of $H$ in \eqref{eq:Hamiltonian}, we introduce the following infinite-dimensional HJ equation for $U_T: [0, T] \times \cP_2(\dbR^n) \to \dbR$
\begin{equation}
\label{eq:master_equation_MC}
\begin{cases}
\vspace{4pt}
\displaystyle \partial_t U_T(t, \mu) + \int_{\dbR^n} H \big(x, \mu, D_{\mu} U_T(t, \mu, x), D_{x \mu} U_T(t, \mu, x) \big) \mu(dx) \\
\vspace{4pt}
\displaystyle \hspace{0.5in} + \frac{1}{2} \int_{\dbR^n} \text{trace} \big(\h \gamma(x, \mu) (\h \gamma(x, \mu))^\top D_{x \mu} U_T(t, \mu, x) \big) \mu(dx) \\
\vspace{4pt}
\displaystyle \hspace{0.5in} + \frac{1}{2} \int_{\dbR^n} \int_{\dbR^n} \text{trace} \big(\h \gamma(x, \mu)(\h \gamma(y, \mu))^\top D_{\mu \mu}^2 U_T(t, \mu, x, y) \big) \mu(dx) \mu(dy) = 0, \\
U_T(T, \mu) = 0.
\end{cases}
\end{equation}

Following the technique and ansatz developed in Section 2 of \cite{Bayraktar-Jian-2025}, and noting that the cost functional of \textbf{Problem (MFC)$^T$} depends only on the conditional first and second moments of the state process, we consider the following candidate solution to the HJ equation \eqref{eq:master_equation_MC}:
\begin{equation}
\label{eq:value_ansatz_MFC}
U_T(t, \mu) = \lan \Pi_T(t) \bar \mu, \bar \mu \ran + 2 \lan p_T(t), \bar \mu \ran + \kappa_T(t) + \int_{\dbR^n} \lan P_T(t) x, x \ran \mu(dx),
\end{equation}
where $\Pi_T(t) \in \dbS^n$, $P_T(t) \in \dbS^n$, $p_T(t) \in \dbR^n$, and $\kappa_T(t) \in \dbR$ are time-dependent coefficients to be determined for $t \in [0, T]$. 
\iffalse
For $G(\mu) = \bar{\mu}$, we have $\frac{\d G}{\d \mu}(\mu, x) = x$, and thus $D_{\mu} G(\mu, x) = D_x \frac{\d G}{\d \mu}(\mu, x) = 1$. From this ansatz, we derive that
\begin{equation*}
\begin{aligned}
& \partial_t U_T(t, \mu) = \lan \dot{\Pi}_T(t) \bar \mu, \bar \mu \ran + 2 \lan \dot{p}_T(t), \bar \mu \ran + \dot{\kappa}_T(t) + \int_{\dbR^n} \lan \dot{P}_T(t) x, x \ran \mu(dx), \\
& D_{\mu} U_T(t, \mu, x) = 2 \Pi_T(t) \bar{\mu} + 2 p_T(t) + 2 P_T(t) x, \\
& D_{x \mu} U_T(t, \mu, x) = 2 P_T(t),
\end{aligned}
\end{equation*}
and
$$D_{\mu \mu}^2 U_T(t, \mu, x, y) = 2 \Pi_T(t).$$
\fi
Set
$$\bar{\Pi}_T(t) = P_T(t) + \Pi_T(t), \quad \forall t \in [0, T].$$
Substituting the candidate $U_T(\cd, \cd)$ from \eqref{eq:value_ansatz_MFC} into the HJ equation \eqref{eq:master_equation_MC}, we obtain the following system of Riccati equations for $\{P_T(\cd), \bar{\Pi}_T(\cd) \}$ and two backward ODEs for $\{p_T(\cd), \kappa_T(\cd)\}$:
\begin{equation}
\label{eq:Riccati_MC}
\begin{cases}
\vspace{4pt}
\displaystyle
\dot{P}_T(t) + \Psi_1(P_T(t)) = 0, \\
\vspace{4pt}
\displaystyle
\dot{\bar{\Pi}}_T(t) + \bar{\Psi}_2(P_T(t), \bar{\Pi}_T(t)) = 0, \\
\vspace{4pt}
\displaystyle
\dot{p}_T(t) + \Psi_3(P_T(t), \Pi_T(t), p_T(t)) = 0, \\
\vspace{4pt}
\displaystyle
\dot{\kappa}_T(t) + \Psi_4(P_T(t), \Pi_T(t), p_T(t)) = 0, \\
P_T(T) = \bar{\Pi}_T(T) = 0 \in \dbS^n, \, p_T(T) = 0 \in \dbR^n, \, \kappa_T(T) = 0 \in \dbR,
\end{cases}
\end{equation}
where the mappings $\Psi_1, \Psi_2, \Psi_3$ and $\Psi_4$ are defined in \eqref{eq:functions_Psi_1}-\eqref{eq:functions_Psi_4}, and
\begin{equation}
\label{eq:functions_bar_Psi_2}
\bar{\Psi}_2(P, \bar{\Pi}) := \Psi_1(P) + \Psi_2(P, \Pi) = \h \cQ(P, \bar{\Pi}) - \h \cS(P, \bar{\Pi})^\top \cR(P)^{-1} \h \cS(P, \bar{\Pi})
\end{equation}
for all $P, \Pi \in \dbS^n$ with $\bar{\Pi} = P + \Pi$.

\iffalse
Using the above notations, we could summarize the Riccati system of equations for $\{P_T(\cd), \bar{\Pi}_T(\cd), p_T(\cd), \kappa_T(\cd)\}$ as following
\begin{equation}
\label{eq:Riccati_MC}
\begin{cases}
\vspace{4pt}
\displaystyle
\dot{P}_T(t) + \cQ(P_T(t)) - \cS(P_T(t))^\top \cR(P_T(t))^{-1} \cS(P_T(t)) = 0, \\
\vspace{4pt}
\displaystyle
\dot{\bar{\Pi}}_T(t) + \h \cQ(P_T(t), \bar{\Pi}_T(t)) - \h \cS(P_T(t), \bar{\Pi}_T(t))^\top \cR(P_T(t))^{-1} \h \cS(P_T(t), \bar{\Pi}_T(t)) = 0, \\
\vspace{4pt}
\displaystyle
\dot{p}_T(t) + \big(A + \bar A + B\bar \Th_T^*(t) \big)^\top p_T(t) + \big(C + \bar C + D \bar \Th_T^*(t) \big)^\top P_T(t) \sigma \\
\hspace{0.5in} + \bar \Th_T^*(t)^\top  r + \bar{\Pi}_T(t) b + (\Gamma + \bar{\Gamma})^\top \bar{\Pi}_T(t) \gamma + q = 0, \\
\vspace{4pt}
\displaystyle
\dot{\kappa}_T(t) - \big(B^\top p_T(t) + D^\top P_T(t) \sigma + r \big)^\top \big(R + D^\top P_T(t) D \big)^{-1} \big(B^\top p_T(t) + D^\top P_T(t) \sigma + r \big) \\
\displaystyle
\hspace{0.5in} + 2 b^\top p_T(t) + \sigma^\top P_T(t) \sigma + \gamma^\top \bar{\Pi}_T(t) \gamma = 0,
\end{cases}
\end{equation}
with the terminal conditions
$$P_T(T) = \bar{\Pi}_T(T) = 0 \in \dbS^n, \quad p_T(T) = 0 \in \dbR^n, \quad \kappa_T(t) = 0 \in \dbR.$$
\fi

The existence and uniqueness of solutions to the system \eqref{eq:Riccati_MC} under Assumption \textnormal{\textbf{(H1)}} can be established by the same approach as in \cite{Yong-Zhou-1999-stochastic} and \cite{Yong-2013}. The following proposition also presents the optimal control and optimal path for \textbf{Problem (MFC)$^T$} in \eqref{eq:value_N_infinity}. To proceed, we define 
\begin{equation}
\label{eq:theta_T_star}
\Th_T^{*}(t) = \Theta(P_T(t)), \quad \bar{\Th}_{T}^{*}(t) = \bar{\Th}(P_T(t), \bar{\Pi}_T(t)), \quad \th_T^{*}(t) = \th(p_T(t), P_T(t)),
\end{equation}
where the mappings $\Theta(\cd), \bar{\Theta}(\cd, \cd)$ and $\theta(\cd, \cd)$ are given by \eqref{eq:function_theta}.

\begin{proposition}
\label{p:solvability_MFC_finite_horizon}
Let \textnormal{\textbf{(H1)}} hold. Then 

{\rm(i)} The system of equations \eqref{eq:Riccati_MC} admits a unique solution $\{P_T(\cd), \bar{\Pi}_T(\cd), p_T(\cd), \kappa_T(\cd)\}$ such that $P_T(t), \bar{\Pi}_T(t) \in \dbS^n_{+}$ for all $t \in [0, T]$. Moreover, there exists a constant $\alpha > 0$ such that
\begin{equation*}
\cR(P_T(t)) = R + D^\top P_T(t) D \ges \alpha I_{m}, \quad \forall t \in [0, T].
\end{equation*}

{\rm(ii)} For any initial state $\xi \in L^4_{\cF_0}$, \textnormal{\textbf{Problem (MFC)$^T$}} admits a unique open-loop optimal control $u_T(\cd)$ with the following closed-loop representation:
\begin{equation}
\label{eq:optimal_control_MFC}
u_T(t) = \Th_T^*(t) \big(X_T(t) -\dbE[X_T(t)|\cF_t^0] \big) + \bar \Th_T^*(t) \dbE[X_T(t)|\cF_t^0] + \th_T^*(t), \quad t \in [0,T],
\end{equation}
where $X_T(\cd) := X(\cd\,; \xi, u_T(\cd))$ solves the corresponding closed-loop state equation
\begin{small}
\begin{equation}
\label{eq:optimal_path_MFC}
\begin{cases}
\vspace{4pt}
\displaystyle
d X_T(t) = \big\{(A+B\Theta_T^*(t)) X_T(t) + \big[\bar A + B(\bar \Theta_T^*(t) - \Theta_T^*(t)) \big] \dbE \big[X_T(t)|\cF_t^0 \big] + B \theta_T^*(t) + b \big\} dt \\
\vspace{4pt}
\displaystyle
\hspace{0.7in} + \big\{(C+D\Theta_T^*(t)) X_T(t) + \big[\bar C + D(\bar \Theta_T^*(t) - \Theta_T^*(t))\big] \dbE \big[X_T(t)|\cF_t^0 \big] + D \theta_T^*(t) + \sigma \big\} dW(t) \\
\vspace{4pt}
\displaystyle
\hspace{0.7in} + \big\{\Gamma X_T(t) + \bar{\Gamma} \dbE \big[X_T(t)|\cF_t^0 \big] + \gamma \big\} d W^0(t), \quad t \ges 0, \\
X_T(0) = \xi,
\end{cases}
\end{equation}
\end{small}
and the tuple $(\Th_T^{*}(\cd), \bar{\Th}_{T}^{*} (\cd), \th_T^{*}(\cd))$ is defined in \eqref{eq:theta_T_star}.

{\rm(iii)} For any initial state $\xi \in L^4_{\cF_0}$ with law $\mu_0 \in \cP_4(\dbR^n)$, the value function of \textnormal{\textbf{Problem (MFC)$^T$}} is given by
\begin{equation*}
%\label{eq:value_function_MFC}
\begin{aligned}
U_T(\mu_0) &= \lan \Pi_T(0) \bar \mu_0, \bar \mu_0 \ran + 2 \lan p_T(0), \bar \mu_0 \ran + \kappa_T(0) + \int_{\dbR^n} \lan P_T(0) x, x \ran \mu_0(dx) \\
&= \lan \Pi_T(0) \bar \mu_0, \bar \mu_0 \ran + 2 \lan p_T(0), \bar \mu_0 \ran + \int_{\dbR^n} \lan P_T(0) x, x \ran \mu_0(dx) \\
& \hspace{0.5in} + \int_0^T \big[\lan P_T(t) \sigma, \sigma \ran + \lan \bar{\Pi}_T(t) \gamma, \gamma \ran + 2 \lan p_T(t), b \ran  - \lan \cR (P_T(t)) \theta_T^*(t), \theta_T^*(t) \ran \big] dt,
\end{aligned} 
\end{equation*}
where $\bar{\mu}_0 = \int_{\dbR^n} x \mu_0(dx)$, $\Pi_T(t) = \bar{\Pi}_T(t) - P_T(t)$, $\{P_T(t), \bar{\Pi}_T(t), p_T(t), \kappa_T(t): t \in [0, T]\}$ is the solution to the system \eqref{eq:Riccati_MC}, and $\theta_T^*(\cd)$ is defined in \eqref{eq:theta_T_star}.
\end{proposition}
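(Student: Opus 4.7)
The plan is to establish the three parts of the proposition in order. Part (i) is handled by classical Riccati theory after noticing that the first equation in \eqref{eq:Riccati_MC} is decoupled. Part (ii) is obtained through a verification argument using the ansatz \eqref{eq:value_ansatz_MFC}, which reduces the infinite-dimensional HJ equation \eqref{eq:master_equation_MC} to the system \eqref{eq:Riccati_MC}. Part (iii) then follows by specializing the ansatz at $t=0$.

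For part (i), I would first treat the decoupled equation $\dot P_T + \Psi_1(P_T) = 0$. Under Assumption \textbf{(H1)} we have $R \in \dbS^m_{++}$ and $Q - S^\top R^{-1}S \in \dbS^n_{++}$, which are precisely conditions (L1)--(L2) used in Theorem 7.2 of Chapter 6 of \cite{Yong-Zhou-1999-stochastic}. That theorem yields a unique $P_T(\cd) \in C([0,T]; \dbS^n_{+})$ together with the uniform lower bound $\cR(P_T(t)) \ges \alpha I_m$ on $[0,T]$. Substituting this $P_T(\cd)$ into $\bar\Psi_2$ in \eqref{eq:functions_bar_Psi_2}, the equation for $\bar\Pi_T$ is again a symmetric Riccati equation of standard LQ type, in which the coefficients $\h A = A+\bar A$, $\h C = C+\bar C$, $\h\Gamma = \Gamma+\bar\Gamma$ and $\h Q = Q+\bar Q$ replace $A, C, \Gamma$ and $Q$, while the control weight $\cR(P_T(t))$ inherits the uniform positivity from the first step. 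The second half of Assumption \textbf{(H1)}, namely $Q+\bar Q - S^\top R^{-1} S \in \dbS^n_{++}$, ensures that after completing the square the reduced state weight is positive definite, so the same existence result applies and gives $\bar\Pi_T(\cd) \in C([0,T]; \dbS^n_{+})$. Once $(P_T,\bar\Pi_T)$ are fixed, the equations for $p_T$ and $\kappa_T$ are linear inhomogeneous ODEs with continuous coefficients, hence uniquely solvable on $[0,T]$.

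For part (ii), the plan is a direct verification. Fix an arbitrary admissible control $u \in \sU[0,T]$ with associated state $X^u$, and set $m_t := \dbE[X^u(t)\mid\cF_t^0]$ and $\mu_t := \mathcal L(X^u(t)\mid\cF_t^0)$. Evaluating the ansatz \eqref{eq:value_ansatz_MFC} along $\mu_t$ gives
\[
U_T(t,\mu_t) = \lan \Pi_T(t) m_t, m_t\ran + 2\lan p_T(t), m_t\ran + \kappa_T(t) + \dbE\big[\lan P_T(t)X^u(t), X^u(t)\ran \bigm| \cF_t^0\big].
\]
An ordinary Itô expansion applied to the first three terms (noting that $m_t$ is driven only by $W^0$) together with a conditional Itô expansion applied to the last term, followed by the use of the Riccati identities \eqref{eq:Riccati_MC} and the Hamiltonian structure in \eqref{eq:Hamiltonian}--\eqref{eq:u_hat_minimum_Hamiltonian}, will reduce the drift of $U_T(t,\mu_t) + \int_0^t \dbE[f(X^u(s),m_s,u(s))\mid \cF_s^0]\,ds$ to a perfect square of the form $\tfrac12 \lan \cR(P_T(t))(u(t)-u_T(t)),\, u(t)-u_T(t)\ran$, where $u_T$ is the feedback in \eqref{eq:optimal_control_MFC}. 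Taking expectations on $[0,T]$ and using the terminal condition $U_T(T,\mu_T)=0$, the uniform bound $\cR(P_T(t)) \ges \alpha I_m$ immediately gives $J_T(\mu_0;u) \ges U_T(\mu_0) = J_T(\mu_0; u_T)$, with equality if and only if $u \equiv u_T$. The $L^4$-integrability of $\xi$ together with continuity of the feedback gains on $[0,T]$ yields the standard SDE estimates needed to justify the vanishing of the martingale parts and the admissibility of $u_T$.

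For part (iii), setting $t=0$ in \eqref{eq:value_ansatz_MFC} and substituting $\Pi_T(0) = \bar\Pi_T(0) - P_T(0)$ immediately yields the first expression. The second, more explicit, representation follows by integrating the backward ODE $\dot\kappa_T + \Psi_4(P_T,\Pi_T,p_T) = 0$ from $0$ to $T$ with $\kappa_T(T)=0$ and substituting the definition \eqref{eq:functions_Psi_4} of $\Psi_4$ together with the identity $\lan \cR(P_T(t))\theta_T^*(t), \theta_T^*(t)\ran = \theta_T^*(t)^\top \cR(P_T(t))\theta_T^*(t)$. The main technical obstacle I anticipate is the verification step in part (ii): one must carefully carry out the Itô expansion for a state process driven by both idiosyncratic and common noises and verify that the Riccati system \eqref{eq:Riccati_MC} is precisely the structure needed to collapse the drift into a perfect square in $u - u_T$. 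The quadratic nature of the ansatz and its dependence on $\mu$ only through $\bar\mu$ and the second conditional moment make this manipulation tractable and avoid invoking the full viscosity machinery on $\cP_2(\dbR^n)$.
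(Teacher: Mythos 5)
Your overall route through parts (ii) and (iii) matches the paper's (the paper outsources the completion-of-squares verification for (ii) to \cite{Yong-2013} and \cite{Sun-2017}, and part (iii) is indeed just evaluating the ansatz at $t=0$ and integrating the $\kappa_T$ equation backward). The genuine gap is in your treatment of the $\bar\Pi_T$ equation in part (i).

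You describe the equation for $\bar\Pi_T$ as ``a symmetric Riccati equation of standard LQ type, in which the coefficients $\h A$, $\h C$, $\h\Gamma$, $\h Q$ replace $A, C, \Gamma, Q$.'' That is not the structure of $\bar\Psi_2(P,\bar\Pi)$. Looking at the definitions, the second equation has $\h C^\top P_T(t)\,\h C$ (not $\h C^\top \bar\Pi_T\,\h C$), $D^\top P_T(t)\,\h C$ (not $D^\top \bar\Pi_T\,\h C$), and control weight $\cR(P_T(t)) = R + D^\top P_T(t) D$ (not $R + D^\top \bar\Pi_T D$). In other words, $\bar\Pi_T$ does not appear in the $W$-diffusion or in the control weight at all --- those entries are carried by the \emph{known} matrix $P_T(t)$. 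The equation for $\bar\Pi_T$ is in fact the Riccati equation associated with the conditional-mean dynamics driven only by $W^0$ (drift $\h A$, diffusion $\h\Gamma$, control matrix $B$), with a time-varying control weight $\cR(P_T(t))$ and a time-varying state weight that mixes $\h Q$, $S$, $\h C$ and $P_T(t)$. If you literally substituted $\h C$ for $C$ in the standard Riccati form you would obtain a different, incorrect equation. The paper handles this by explicitly rewriting the $\bar\Pi_T$ equation in the Lyapunov-plus-quadratic form \eqref{eq:ODE_P_transformed} and then checking that the effective state weight is nonnegative. That nonnegativity is not immediate from $Q+\bar Q - S^\top R^{-1}S \in \dbS^n_{++}$ alone: one also needs the Schur-complement identity
\begin{equation*}
P_T(t) - P_T(t)\,D\big(R+D^\top P_T(t) D\big)^{-1} D^\top P_T(t)
= P_T(t)^{1/2}\big(I_n + P_T(t)^{1/2} D R^{-1} D^\top P_T(t)^{1/2}\big)^{-1} P_T(t)^{1/2} \ges 0,
\end{equation*}
which the paper proves explicitly. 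Without identifying the correct reformulation and this matrix inequality, the claim that the $\bar\Pi_T$ equation ``inherits'' solvability from the standard theory does not go through. (A secondary, smaller difference: the paper proves solvability of the first $P_T$ equation via a self-contained monotone iteration rather than citing Yong--Zhou Theorem 7.2; your citation is an acceptable shortcut for that equation since Yong--Zhou handles multi-dimensional noise with control absent from one channel, but the same shortcut does not apply to the $\bar\Pi_T$ equation, for the structural reasons above.)
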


The proof of Proposition \ref{p:solvability_MFC_finite_horizon} is provided in Appendix \ref{s:proof_solvability_MFC_finite_horizon}. 

%%%%%%%%%%%%%%%%%%%%%%%%%%%%%%%%%%%%%%%%%%%%%%%%%%%%%%%%%%%%%%%%%%%%%%%%%%%%

\section{Ergodic mean field control problem}
\label{s:ergodic_MF_control_problem}

In the previous section, we introduced the finite-horizon mean field control problem and established its solvability in Proposition \ref{p:solvability_MFC_finite_horizon}. It is then natural to consider the infinite-horizon setting on $[0,\infty)$. Specifically, we retain the state dynamics \eqref{eq:state_N_infinity} and consider the cost functional
\begin{equation*}
\wt{J}_{\infty}(\mu_0; u(\cd)) = \dbE \Big[\int_0^{\infty} f \big(X(t), \dbE \big[X(t)|\cF_t^0 \big], u(t) \big) dt \Big].
\end{equation*}
However, although the non-homogeneous dynamics \eqref{eq:state_N_infinity} on $[0, \infty)$ may be solvable when at least one of $b$, $\sigma$, or $\gamma$ is non-zero, the cost functional above may fail to be well defined, even in cases where the homogeneous system is stabilizable. Consequently, the infinite-horizon version of the mean field control problem is not well-posed in general. 

To address this issue, we turn our attention to the Bellman equation in Subsection \ref{s:Bellman_equation_algebraic_Riccati}, following the approach presented in \cite{Jian-Jin-Song-Yong-2024} (where it is referred to as the cell problem) and \cite{Bayraktar-Jian-2025}. We then show that \textbf{Problem (EMFC)} in \eqref{eq:ergodic_control_problem} provides a probabilistic interpretation of the Bellman equation and prove a verification theorem in Subsection \ref{s:verification_theorem}, establishing that the solution to \textbf{Problem (EMFC)} can be derived from the solution to the Bellman equation.

\subsection{The Bellman equation and algebraic Riccati equations}
\label{s:Bellman_equation_algebraic_Riccati}

Before introducing the Bellman equation, we consider the homogeneous state dynamics on the infinite horizon $[0, \infty)$ and formulate a stabilizability condition for this system. This condition is essential for ensuring the solvability of the algebraic Riccati equations associated with the Bellman equation.

\subsubsection{Homogeneous state equation and stabilizability condition}

To proceed, we set $\sU_{loc}[0, \infty) = \bigcap_{T > 0} \sU[0, T]$ and
\begin{equation*}
\sU[0, \infty) := L^2_{\dbF}(0, \infty; \mathbb R^m) := \Big\{u(\cd) \in \sU_{loc}[0, \infty) \Bigm| \dbE \Big[\int_0^\infty |u(t)|^2dt \Big]< \infty \Big\},
\end{equation*}
where $\sU[0, T]$ is defined in \eqref{eq:admissible_control_MFC_finite}. 

The homogeneous state process, denoted compactly by $[A, \bar A, C, \bar C, \Gamma, \bar{\Gamma}; B, D]$, is defined as follows (that is, $b = \sigma = \gamma = 0$; compare with \eqref{eq:state_N_infinity}):
\begin{equation}
\label{eq:state_MFC_homo}
\begin{cases}
\vspace{4pt}
\displaystyle
d X^\text{Hom}(t) = \big\{AX^\text{Hom}(t) + \bar A \dbE \big[X^\text{Hom}(t)|\cF_t^0 \big] + Bu(t) \big\} dt \\
\vspace{4pt}
\displaystyle
\hspace{0.8in} + \big\{CX^\text{Hom}(t) + \bar C \dbE \big[X^\text{Hom}(t)|\cF_t^0 \big] + Du(t) \big\} d W(t) \\
\vspace{4pt}
\displaystyle
\hspace{0.8in} + \big\{\Gamma X^\text{Hom}(t) + \bar{\Gamma} \dbE \big[X^\text{Hom}(t)|\cF_t^0 \big] \big\} d W^0(t), \quad t \ges 0, \\
X^\text{Hom}(0) = \xi \in L^4_{\cF_0}.
\end{cases}
\end{equation}
For $u(\cd) \in \sU[0, \infty)$, a standard argument using the contraction mapping theorem shows that the homogeneous system $[A, \bar A, C, \bar C, \Gamma, \bar{\Gamma}; B, D]$ admits a unique solution $X^\text{Hom}(\cd) := X^\text{Hom}(\cd; \xi, u(\cd)) \in \sX_{\text{loc}}[0, \infty)$, where
$\sX_{\text{loc}}[0, \infty) = \bigcap_{T > 0} \sX[0, T]$ with
\begin{equation*}
\begin{aligned}
&\sX[0,T] := \Big\{X:[0,T] \times \O \to \dbR^n \Bigm| X(\cd) \hb{ is $\dbF$-adapted, $t \mapsto X(t, \omega)$ is continuous, and} \\
&\hspace{2.5in} \dbE\Big[\sup_{t \in [0, T]} |X(t)|^2 \Big ] < \infty \Big\}.
%, \\
%& \sX[0, \infty) = \Big\{X \in \sX_{loc}[0, \infty) \Bigm| \dbE \Big[\int_0^\infty |X(t)|^2dt \Big]< \infty\Big\}.
\end{aligned}
\end{equation*}

\begin{definition}
\label{d:homo_system_stabilizable}
\textit{The homogeneous system $[A, \bar A, C, \bar C, \Gamma, \bar{\Gamma}; B, D]$ in \eqref{eq:state_MFC_homo} is MF-$L^2$-stabilizable if there exists a pair $(\Theta, \bar \Theta) \in \mathbb R^{m \times n} \times \mathbb R^{m \times n}$, called the MF-$L^2$-stabilizer of $[A, \bar A, C, \bar C, \Gamma, \bar{\Gamma}; B, D]$, such that if $X^{\Theta, \bar{\Theta}}(\cdot) := X(\cd \,; \xi, \Theta, \bar{\Theta})$ is the solution to the following homogeneous closed-loop system 
\begin{equation}
\label{eq:state_home_Theta}
\begin{cases}
\vspace{4pt}
\displaystyle
d X^{\Theta, \bar{\Theta}}(t) = \big\{(A+B\Theta) X^{\Theta, \bar{\Theta}}(t) + \left[\bar A + B(\bar \Theta - \Theta)\right] \dbE \big[X^{\Theta, \bar{\Theta}}(t)|\cF_t^0 \big] \big\} dt \\
\vspace{4pt}
\displaystyle
\hspace{0.8in} + \big\{(C+D\Theta) X^{\Theta, \bar{\Theta}}(t) + \left[\bar C + D(\bar \Theta - \Theta)\right] \dbE \big[X^{\Theta, \bar{\Theta}}(t)|\cF_t^0 \big] \big\} dW(t), \\
\vspace{4pt}
\displaystyle
\hspace{0.8in} + \big\{\Gamma X^{\Theta, \bar{\Theta}}(t) + \bar{\Gamma} \dbE \big[X^{\Theta, \bar{\Theta}}(t)|\cF_t^0 \big] \big\} d W^0(t), \quad t \ges 0, \\
X^{\Theta, \bar{\Theta}}(0) = \xi \in L^4_{\cF_0},
\end{cases}
\end{equation}
and 
\begin{equation*}
u^{\Theta, \bar{\Theta}}(t) = \Theta \big(X^{\Theta, \bar{\Theta}}(t) - \dbE \big[X^{\Theta, \bar{\Theta}}(t)|\cF_t^0 \big] \big) + \bar \Theta \dbE \big[X^{\Theta, \bar{\Theta}}(t)|\cF_t^0 \big], \quad t \ges 0,
\end{equation*}
then
$$\dbE \Big[\int_0^{\infty} \big(|X^{\Theta, \bar{\Theta}}(t)|^2 + |u^{\Theta, \bar{\Theta}}(t)|^2 \big) dt \Big] < \infty.$$}
\end{definition}

\ms

To ensure that the homogeneous system $[A, \bar A, C, \bar C, \Gamma, \bar{\Gamma}; B, D]$ is MF-$L^2$-stabilizable, we introduce the following assumption.

\ms 

{\bf(H2)} 
\textit{The controlled SDE, denoted by $[A + \bar{A}, \Gamma + \bar{\Gamma}; B]$,
\begin{equation}
\label{eq:homo_sde_1}
\begin{aligned}
d \dbE \big[X(t)|\cF_t^0 \big] &= \big\{(A + \bar A) \dbE \big[X(t)|\cF_t^0 \big] + B \dbE \big[u(t)|\cF_t^0 \big] \big\} dt \\
& \hspace{0.8in} + (\Gamma + \bar{\Gamma}) \dbE \big[X(t)|\cF_t^0 \big] d W^0(t), \quad t \ges 0
\end{aligned}
\end{equation}
is $L^2$-stabilizable, i.e., there exists a matrix $\bar{\Th} \in \dbR^{m \times n}$ such that for every initial state $\xi \in L^4_{\cF_0}$, the solution to the system $[A + \bar{A} + B \bar{\Th}, \Gamma + \bar{\Gamma}]$, i.e.,
\begin{equation*}
\begin{cases}
\vspace{4pt}
\displaystyle
d \dbE \big[X(t)|\cF_t^0 \big] = (A + \bar{A} + B \bar{\Th}) \dbE \big[X(t)|\cF_t^0 \big] dt + (\Gamma + \bar{\Gamma}) \dbE \big[X(t)|\cF_t^0 \big] d W^0(t), \quad t \ges 0 \\
\dbE \big[X(0)|\cF_0^0 \big] = \dbE[\xi],
\end{cases}
\end{equation*}
satisfies 
$$\dbE \Big[\int_0^\infty \big|\dbE \big[X(t)|\cF_t^0 \big] \big|^2 dt \Big] < \infty.$$
In this case, we call $\bar \Th$ a stabilizer of \eqref{eq:homo_sde_1}. Moreover, the controlled SDE, denoted by $[A, C, \Gamma; B, D]$,
\begin{equation}
\label{eq:homo_sde_2}
d X(t) = (AX(t) + Bu(t)) dt + (CX(t) + Du(t)) dW(t) + \Gamma X(t) dW^0(t), \quad t \ges 0
\end{equation}
is also $L^2$-stabilizable, i.e., there exists a matrix $\Th \in \dbR^{m \times n}$ such that for any initial state $\xi \in L^4_{\cF_0}$, the solution to the system $[A + B\Th, C + D\Th, \Gamma]$, i.e.,
\begin{equation*}
\begin{cases}
\vspace{4pt}
\displaystyle
d X(t) = (A + B\Th) X(t) dt + (C+ D \Th)X(t) dW(t) + \Gamma X(t) dW^0(t), \quad t \ges 0 \\
X(0) = \xi,
\end{cases}
\end{equation*}
satisfies 
$$\dbE \Big[\int_0^\infty |X(t)|^2 dt \Big] < \infty.$$
%i.e., $X(\cd) \in \sX[0, \infty)$.
In this case, we call $\Theta$ a stabilizer of \eqref{eq:homo_sde_2}.
}

\begin{remark}
\textit{Assumption \textnormal{\textbf{(H2)}} admits a standard characterization in terms of stochastic Lyapunov inequalities and can therefore be verified directly from the matrix coefficients. Indeed, for fixed feedback matrices $\Theta, \bar{\Theta} \in \dbR^{m \times n}$, the two closed-loop systems in \eqref{eq:homo_sde_1} and \eqref{eq:homo_sde_2} are $L^2$-stable if there exist matrices $P, \bar{P} \in \dbS^n_{++}$ such that
$$P(A+B\Theta) + (A+B\Theta)^\top P + (C+D\Theta)^\top P(C+D\Theta) + \Gamma^\top P \Gamma < 0,$$
and
$$\bar{P}(A + \bar{A} + B \bar{\Th}) + (A + \bar{A} + B \bar{\Th})^\top \bar{P} + (\Gamma + \bar{\Gamma})^\top \bar{P} (\Gamma + \bar{\Gamma}) < 0,$$
respectively. For illustration, in the scalar case $n = m = 1$, Assumption \textnormal{\textbf{(H2)}} is equivalent to the existence of $\Theta, \bar{\Theta} \in \dbR$ such that
$$2(A+B\Theta) + (C+D\Theta)^2 + \Gamma^2 < 0, \quad \text{and} \quad 2(A + \bar{A} + B \bar{\Th}) + (\Gamma+\bar{\Gamma})^2 < 0.$$
In this case, the meaning of Assumption \textnormal{\textbf{(H2)}} is particularly transparent: the negative closed-loop drift must dominate the total quadratic contribution of the multiplicative noise terms.
}
\end{remark}

Next, we show that under Assumption \textnormal{\textbf{(H2)}}, the homogeneous system $[A, \bar A, C, \bar C, \Gamma, \bar{\Gamma}; B, D]$ is MF-$L^2$-stabilizable. The proof follows an argument similar to Proposition 3.9 and Theorem 4.3 in \cite{Huang-Li-Yong-2015}, and is included in Appendix \ref{s:proof_stability_homo_system}. 

\begin{lemma}
\label{l:stability_homo_system}
Suppose Assumption \textnormal{\textbf{(H2)}} holds. Then, the system $[A, \bar A, C, \bar C, \Gamma, \bar{\Gamma}; B, D]$ is MF-$L^2$-stabilizable. Consequently, the following set is nonempty:
\begin{equation}
\label{eq:u_ad_infinity}
\sU_{ad}[0,\infty) := \big\{u(\cd) \in \sU[0,\infty) \bigm| X^\textnormal{Hom}(\cd\,; \xi, u(\cd)) \in L^2_{\dbF}(0,\infty; \dbR^n) \big\}.
\end{equation}
\end{lemma}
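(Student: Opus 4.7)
The plan is to use the decomposition of the state into its $\cF_t^0$-conditional mean and the corresponding fluctuation, matching each component to one of the two stabilizability hypotheses in \textnormal{\textbf{(H2)}}. Let $\bar\Theta \in \dbR^{m \times n}$ be a stabilizer of $[A+\bar A, \Gamma+\bar\Gamma; B]$ in \eqref{eq:homo_sde_1} and let $\Theta \in \dbR^{m \times n}$ be a stabilizer of $[A, C, \Gamma; B, D]$ in \eqref{eq:homo_sde_2}, both provided by \textnormal{\textbf{(H2)}}. I will show that $(\Theta, \bar\Theta)$ is a MF-$L^2$-stabilizer of $[A, \bar A, C, \bar C, \Gamma, \bar\Gamma; B, D]$.

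First, taking conditional expectation with respect to $\cF_t^0$ on both sides of the closed-loop equation \eqref{eq:state_home_Theta} eliminates the $dW$-martingale, and a direct computation shows that $m(t) := \dbE[X^{\Theta,\bar\Theta}(t)|\cF_t^0]$ solves the homogeneous closed-loop system associated with $[A+\bar A, \Gamma+\bar\Gamma; B]$ under feedback $\bar\Theta$. The assumption \textnormal{\textbf{(H2)}} on \eqref{eq:homo_sde_1} therefore yields $\dbE\big[\int_0^\infty |m(t)|^2 dt\big] < \infty$. Setting $\widetilde X(t) := X^{\Theta,\bar\Theta}(t) - m(t)$ and subtracting the two equations produces
\begin{equation*}
\begin{cases}
\vspace{4pt}
\displaystyle
d\widetilde X(t) = (A+B\Theta) \widetilde X(t) dt + \Gamma \widetilde X(t) dW^0(t) \\
\vspace{4pt}
\displaystyle
\hspace{0.5in} + \big\{(C+D\Theta) \widetilde X(t) + (C + \bar C + D\bar\Theta) m(t)\big\} dW(t), \\
\widetilde X(0) = \xi - \dbE[\xi],
\end{cases}
\end{equation*}
which is precisely the $\Theta$-closed-loop version of \eqref{eq:homo_sde_2} perturbed by a forcing term in the $dW$-diffusion channel that has just been shown to lie in $L^2_{\dbF}(0,\infty;\dbR^n)$.

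To conclude, I invoke the standard equivalence between $L^2$-stabilizability of $[A,C,\Gamma;B,D]$ under $\Theta$ and the existence of a matrix $P \in \dbS^n_{++}$ satisfying the Lyapunov-type inequality
\begin{equation*}
P(A+B\Theta) + (A+B\Theta)^\top P + (C+D\Theta)^\top P(C+D\Theta) + \Gamma^\top P \Gamma \les -I_n,
\end{equation*}
as in the linear-quadratic stochastic control literature \cite{Huang-Li-Yong-2015, Yong-Zhou-1999-stochastic}. Applying It\^o's formula to $\widetilde X(t)^\top P \widetilde X(t)$, using this inequality, and absorbing the cross term with the forcing $m(t)$ via Young's inequality yields $\dbE\big[\int_0^\infty |\widetilde X(t)|^2 dt\big] < \infty$. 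Combined with the bound for $m(\cd)$, this delivers $X^{\Theta,\bar\Theta}(\cd) \in L^2_{\dbF}(0,\infty;\dbR^n)$, while the pointwise bound $|u^{\Theta,\bar\Theta}(t)| \les |\Theta|\,|\widetilde X(t)| + |\bar\Theta|\,|m(t)|$ gives the required $L^2$ bound on the control. Consequently $u^{\Theta,\bar\Theta}(\cd) \in \sU_{ad}[0,\infty)$, so this set is nonempty.

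The main obstacle is the Lyapunov step: one must convert the qualitative $L^2$-stabilizability hypothesis on $[A,C,\Gamma;B,D]$ into the quantitative matrix inequality above and then propagate an $L^2$ estimate through an inhomogeneous perturbation whose forcing enters the $dW$-diffusion rather than the drift. Once $P$ is available, the It\^o-Young manipulation is routine, but care is needed because the forcing contribution appears through the second-order term of It\^o's formula and must be dominated by the negative definiteness built into the Lyapunov inequality, with any residual $L^2$-in-expectation term coming from $m(\cd)$ treated as a known inhomogeneity.
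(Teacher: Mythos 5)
Your proposal takes the same decomposition as the paper's proof --- conditional mean plus the orthogonal fluctuation --- with the same pair of stabilizers $(\Theta,\bar\Theta)$ supplied by \textnormal{\textbf{(H2)}} and the same Lyapunov-based It\^o estimate on $\lan P\widetilde X(t), \widetilde X(t)\ran$. The one place you diverge is the cross term $2\lan P(C+D\Theta)\widetilde X(t), (C+\bar C+D\bar\Theta)m(t)\ran$ produced by the $dW$-channel quadratic variation: you propose to control it by Young's inequality, absorbing a fraction of $|\widetilde X|^2$ into the $-I_n$ slack of the Lyapunov equation. This does close, provided the Young parameter is taken small enough that the absorbed $(C+D\Theta)^\top P(C+D\Theta)$-contribution does not exhaust the $-I_n$, at the cost of an $\epsilon^{-1}$ blow-up in the forcing coefficient. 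The paper avoids this step entirely by observing that the cross term vanishes identically in expectation: $\widetilde X(t)$ has zero conditional mean given $\cF_t^0$ while $m(t)$ is $\cF_t^0$-measurable, so the tower property gives $\dbE\big[\lan P(C+D\Theta)\widetilde X(t), (C+\bar C+D\bar\Theta)m(t)\ran\big]=0$ for every $t$. That orthogonality makes precisely the step you identify as the main obstacle routine, and it is worth exploiting explicitly rather than absorbing the cross term by Young's inequality. Everything else in your proposal --- the reduction of $m(\cd)$ to the $\bar\Theta$-closed loop of \eqref{eq:homo_sde_1}, the existence of a positive-definite Lyapunov matrix $P$ from the $L^2$-stabilizability of \eqref{eq:homo_sde_2} (the paper invokes Proposition~3.5 of \cite{Huang-Li-Yong-2015}), and the final $L^2$ bound on $u^{\Theta,\bar\Theta}(\cd)$ by linearity --- matches the paper's proof.
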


\subsubsection{The Bellman equation and its solvability}

For the ergodic mean field control problem, we consider the following Bellman equation (also called the cell problem in the literature):
\begin{equation}
\label{eq:Bellman_equation_ergodic}
\begin{aligned}
& \int_{\dbR^n} H(x, \mu, D_{\mu} U(\mu, x), D_{x \mu} U(\mu, x)) \mu(dx)  \\
& \hspace{0.3in} + \frac{1}{2} \int_{\dbR^n} \text{trace} \big(\h\gamma(x, \mu) (\h\gamma(x, \mu))^\top D_{x \mu} U(\mu, x) \big) \mu(dx) \\
& \hspace{0.3in} + \frac{1}{2} \int_{\dbR^n} \int_{\dbR^n} \text{trace} \big(\h\gamma(x, \mu) (\h \gamma(y, \mu))^\top D_{\mu \mu}^2 U(\mu, x, y) \big) \mu(dx) \mu(dy) = c_0,
\end{aligned}
\end{equation}
where the Hamiltonian $H$ is defined in \eqref{eq:Hamiltonian}. A solution to the above equation consists of a pair $(U(\cd), c_0)$ with $U: \cP_2(\dbR^n) \to \dbR$ and $c_0 \in \dbR$.

Next, we establish the solvability of the Bellman equation \eqref{eq:Bellman_equation_ergodic} by exploiting the associated system of algebraic Riccati equations. To proceed, we denote
\begin{equation}
\label{eq:theta_star}
\Th^{*} := \Th(P), \quad \bar \Th^{*} := \bar{\Th}(P, \bar{\Pi}), \quad \theta^{*}
:= \theta(p, P),
\end{equation}
where the functions $\Theta(\cd)$, $\bar{\Theta}(\cd, \cd)$, and $\theta(\cd, \cd)$ are defined in \eqref{eq:function_theta}. We then consider the following algebraic system for the unknowns $\{P, \bar{\Pi}, p, c_0\}$:
\iffalse
\begin{equation}
\label{eq:Riccati_ergodic}
\begin{cases}
\vspace{4pt}
\displaystyle
\cQ(P) - \cS(P)^\top \cR(P)^{-1} \cS(P) = 0, \\
\vspace{4pt}
\displaystyle
\h \cQ(P, \bar{\Pi}) - \h \cS(P, \bar{\Pi})^\top \cR(P)^{-1} \h \cS(P, \bar{\Pi}) = 0, \\
\vspace{4pt}
\displaystyle
\big(A + \bar A + B\bar \Th^* \big)^\top p + \big(C + \bar C + D \bar \Th^* \big)^\top P \sigma + (\bar{\Th}^*)^\top  r + \bar{\Pi} b \\
\vspace{4pt}
\displaystyle
\hspace{0.5in} + (\Gamma + \bar{\Gamma})^\top \bar{\Pi} \gamma + q = 0, \\
\vspace{4pt}
\displaystyle
- \big(B^\top p + D^\top P \sigma + r \big)^\top \big(R + D^\top P D \big)^{-1} \big(B^\top p + D^\top P \sigma + r \big) \\
\displaystyle
\hspace{0.5in} + 2 b^\top p + \sigma^\top P \sigma + \gamma^\top \bar{\Pi} \gamma = c_0,
\end{cases}
\end{equation}
\fi
\begin{equation}
\label{eq:Riccati_ergodic}
\Psi_1(P) = 0, \quad \bar{\Psi}_2(P, \bar{\Pi}) = 0, \quad \Psi_3(P, \Pi, p) =0, \quad \Psi_4(P, \Pi, p) = c_0.
\end{equation}
where $P, \bar{\Pi} \in \dbS^n$, $p \in \dbR^n$, $c_0 \in \dbR$, $\Pi = \bar{\Pi} - P$, and the mappings $\Psi_1, \Psi_3, \Psi_4$ and $\bar{\Psi}_2$ are defined in \eqref{eq:functions_Psi_1}, \eqref{eq:functions_Psi_3}, \eqref{eq:functions_Psi_4}, and \eqref{eq:functions_bar_Psi_2}, respectively.

Under Assumptions \textnormal{\textbf{(H1)}}-\textnormal{\textbf{(H2)}}, the following proposition establishes the unique solvability of the system \eqref{eq:Riccati_ergodic} and, consequently, the solvability of the Bellman equation \eqref{eq:Bellman_equation_ergodic}.

\begin{proposition}
\label{p:solvability_Bellman_equation_ergodic} 
Let \textnormal{\textbf{(H1)}}-\textnormal{\textbf{(H2)}} hold. Then,

\ms

{\rm(i)} The algebraic system \eqref{eq:Riccati_ergodic} admits a unique solution $\{P, \bar{\Pi}, p, c_0\}$ such that $P \in \dbS^n_{++}$ and $\bar{\Pi} \in \dbS^n_{++}$. Moreover, the matrix $\bar \Th^{*} = - \cR(P)^{-1} \h \cS(P, \bar{\Pi})$
is a stabilizer of \eqref{eq:homo_sde_1}, and the matrix 
$\Th^{*} = - \cR(P)^{-1} \cS(P)$
is a stabilizer of \eqref{eq:homo_sde_2}.

\ms

{\rm(ii)} The Bellman equation \eqref{eq:Bellman_equation_ergodic} admits a solution $(U(\cd), c_0)$ with $U(\mu)$ of the following form:
\begin{equation}
\label{eq:value_function_ergodic_MFC}
U(\mu) = \lan \Pi \bar \mu, \bar \mu \ran + 2 \lan p, \bar \mu \ran + \int_{\dbR^n} \lan P x, x \ran \mu(dx),
\end{equation}
and
\begin{equation}
\label{eq:c_0}
c_0 =  2 b^\top p + \sigma^\top P \sigma + \gamma^\top \bar{\Pi} \gamma - \lan \cR(P) \theta^*, \theta^* \ran,
\end{equation}
where $\bar{\mu} = \int_{\dbR^n} x \mu(dx)$, $\Pi = \bar{\Pi} - P$, $\{P, \bar{\Pi}, p\}$ is the unique solution to \eqref{eq:Riccati_ergodic}, and $\theta^*$ is defined in \eqref{eq:theta_star}.
\end{proposition}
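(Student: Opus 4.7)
The plan is to solve the algebraic system \eqref{eq:Riccati_ergodic} sequentially for $P$, $\bar{\Pi}$, $p$, and $c_{0}$, and then verify part (ii) by substituting the ansatz \eqref{eq:value_function_ergodic_MFC} into the Bellman equation \eqref{eq:Bellman_equation_ergodic} and matching coefficients, analogously to the finite-horizon derivation of \eqref{eq:Riccati_MC} from \eqref{eq:master_equation_MC}. First I would recognize $\Psi_{1}(P) = 0$ as a stabilizing stochastic ARE for the system $[A, C, \Gamma; B, D]$ in \eqref{eq:homo_sde_2}; under (H1) we have $R > 0$ and $Q - S^\top R^{-1} S \in \dbS^{n}_{++}$, and under (H2) the system is $L^{2}$-stabilizable, so the infinite-horizon counterpart of Theorem 7.2 in Chapter 6 of \cite{Yong-Zhou-1999-stochastic} yields a unique $P \in \dbS^{n}_{++}$ with $\mathcal{R}(P) > 0$ such that $\Theta^{*} = -\mathcal{R}(P)^{-1} \mathcal{S}(P)$ stabilizes \eqref{eq:homo_sde_2}.

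With $P$ fixed, $\bar{\Psi}_{2}(P, \bar{\Pi}) = 0$ becomes another ARE, now for the conditional-mean system \eqref{eq:homo_sde_1} with drift $\hat{A}$, control $B$, common-noise diffusion $\hat{\Gamma}$, and effective weights $\hat{Q} + \hat{C}^\top P \hat{C}$, $D^\top P \hat{C} + S$, and $\mathcal{R}(P)$. Combining (H1) (which yields $\hat{Q} - S^\top R^{-1} S = Q + \bar{Q} - S^\top R^{-1} S \in \dbS^{n}_{++}$), the strict positivity of $P$ from Step~1, and the $L^{2}$-stabilizability of \eqref{eq:homo_sde_1} from (H2), the same ARE theory provides a unique $\bar{\Pi} \in \dbS^{n}_{++}$ with stabilizer $\bar{\Theta}^{*} = -\mathcal{R}(P)^{-1} \hat{\mathcal{S}}(P, \bar{\Pi})$. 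The equation $\Psi_{3}(P, \Pi, p) = 0$ is then linear in $p$ with coefficient matrix $(A + \bar{A} + B\bar{\Theta}^{*})^\top$. To show that this coefficient is invertible, suppose $v \in \ker(A + \bar{A} + B\bar{\Theta}^{*})$ with $v \neq 0$; taking expectations in \eqref{eq:state_home_Theta} gives the ODE $\frac{d}{dt}\mathbb{E}[X^{\Theta^{*}, \bar{\Theta}^{*}}(t)] = (A + \bar{A} + B\bar{\Theta}^{*})\mathbb{E}[X^{\Theta^{*}, \bar{\Theta}^{*}}(t)]$, so $\mathbb{E}[X^{\Theta^{*}, \bar{\Theta}^{*}}(t)] \equiv v$ when $\mathbb{E}[\xi] = v$, and Jensen's inequality forces $\mathbb{E}[|X^{\Theta^{*}, \bar{\Theta}^{*}}(t)|^{2}] \geq |v|^{2}$ for all $t$, contradicting $L^{2}$-stability. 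Hence $p$ is uniquely determined, and $c_{0}$ is defined by \eqref{eq:c_0}. For part (ii), a direct computation on the ansatz \eqref{eq:value_function_ergodic_MFC} yields
\begin{equation*}
D_{\mu} U(\mu, x) = 2\Pi \bar{\mu} + 2p + 2Px, \quad D_{x\mu} U(\mu, x) = 2P, \quad D_{\mu\mu}^{2} U(\mu, x, y) = 2\Pi,
\end{equation*}
and plugging these into \eqref{eq:Bellman_equation_ergodic} while separating the $\int \langle \cdot\, x, x \rangle \mu(dx)$, $\langle \cdot\, \bar{\mu}, \bar{\mu}\rangle$, $\langle \cdot, \bar{\mu}\rangle$, and constant contributions reproduces exactly \eqref{eq:Riccati_ergodic}.

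The main obstacle will be the positivity verification required in Step~2 to apply the ARE theory, namely that the effective state-control weight satisfies $(\hat{Q} + \hat{C}^\top P \hat{C}) - (D^\top P \hat{C} + S)^\top \mathcal{R}(P)^{-1} (D^\top P \hat{C} + S) \in \dbS^{n}_{++}$. Since $\bar{Q}$ is only assumed symmetric and may be indefinite, one cannot argue by treating $\bar{Q}$ and $Q$ separately; instead, a Schur-complement rearrangement should recast the expression as a block involving $Q + \bar{Q} - S^\top R^{-1} S > 0$ (from (H1)) together with a contribution of $P \in \dbS^{n}_{++}$ (from Step~1) entering through the $\hat{C}$-terms. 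Once this structural positivity is established, the remainder of the argument reduces to standard stochastic ARE theory and routine linear algebra.
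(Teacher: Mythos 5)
Your argument is correct but takes a genuinely different route from the paper. Where the paper proves existence of $(P,\bar{\Pi})$ by considering the finite-horizon Riccati flows $P_T(\cdot),\bar{\Pi}_T(\cdot)$ from the system \eqref{eq:Riccati_MC}, invoking MF-$L^2$-stabilizability and the monotone-convergence argument of Theorem 5.2 in \cite{Huang-Li-Yong-2015} to obtain $\lim_{T\to\infty}P_T(t)=P$, $\lim_{T\to\infty}\bar{\Pi}_T(t)=\bar{\Pi}$, and then reads off the stabilizer property from the limiting feedback gains, you solve the two AREs directly: first $\Psi_1(P)=0$ as a stabilizing stochastic ARE for $[A,C,\Gamma;B,D]$, then $\bar{\Psi}_2(P,\bar{\Pi})=0$ as a stabilizing ARE for the conditional-mean system with effective weights built from $P$. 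Both routes are sound; the paper's delegates the hard positivity work to the finite-horizon analysis (where it is buried in Appendix~C) and gets the stabilizer property for free from the limit, while yours is more self-contained but requires the positivity of the effective weight
\begin{equation*}
\hat{Q}+\hat{C}^\top P\hat{C}-(D^\top P\hat{C}+S)^\top\cR(P)^{-1}(D^\top P\hat{C}+S)
\end{equation*}
up front. You correctly identify this as the main obstacle and anticipate the needed Schur-type rearrangement; for completeness, it is exactly the identity used by the paper in Appendix~C:
\begin{equation*}
\hat{Q}+\hat{C}^\top P\hat{C}-(D^\top P\hat{C}+S)^\top\cR(P)^{-1}(D^\top P\hat{C}+S)
=\hat{Q}-S^\top R^{-1}S+(\hat{C}-DR^{-1}S)^\top\bigl[P-PD\cR(P)^{-1}D^\top P\bigr](\hat{C}-DR^{-1}S),
\end{equation*}
where $P-PD\cR(P)^{-1}D^\top P=P^{1/2}(I_n+P^{1/2}DR^{-1}D^\top P^{1/2})^{-1}P^{1/2}\ges 0$, so the whole expression dominates $\hat{Q}-S^\top R^{-1}S=Q+\bar{Q}-S^\top R^{-1}S\in\dbS^n_{++}$ by (H1); you should carry this computation out rather than assert it. One genuine improvement in your write-up over the paper's own proof: the paper asserts unique solvability of the linear system for $\{p,c_0\}$ without justifying invertibility of the coefficient $(A+\bar{A}+B\bar{\Theta}^*)^\top$, whereas your Jensen-inequality argument (a nonzero kernel vector $v$ with $\dbE[\xi]=v$ would force $\dbE[|X^{\Theta^*,\bar{\Theta}^*}(t)|^2]\ges|v|^2$, contradicting $L^2$-stability) makes this explicit. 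Finally, your citation for the stabilizing ARE solvability should be to the infinite-horizon stochastic ARE literature (e.g.\ \cite{Huang-Li-Yong-2015} or \cite{Sun-Wang-Yong-2022}) rather than to Theorem~7.2 of Chapter~6 in \cite{Yong-Zhou-1999-stochastic}, which treats the finite-horizon Riccati ODE.
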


\begin{proof}
{\rm(i)} We first consider the homogeneous finite-horizon mean field control problem, i.e., \textbf{Problem (MFC)$^T$} in \eqref{eq:value_N_infinity} with $b = \sigma = \gamma = q = 0 \in \dbR^n$ and $r = 0 \in \dbR^m$, which we denote by \textbf{Problem (MFC)$^T_0$}. Under Assumption \textnormal{\textbf{(H1)}}, Proposition \ref{p:solvability_MFC_finite_horizon} shows that, for any initial state $\xi \in L^4_{\cF_0}$ with law $\mu_0 \in \cP_4(\dbR^n)$, there exists a unique open-loop optimal control $u_T^{0}(\cd)$ with the following feedback form 
\begin{equation*}
u_T^0(t) = \Th_T^*(t) \big(X_T^0(t) -\dbE \big[X_T^0(t)|\cF_t^0 \big] \big) + \bar \Th_T^*(t) \dbE \big[X_T^0(t)|\cF_t^0 \big], \quad t \in [0,T],
\end{equation*}
to \textbf{Problem (MFC)$^T_0$}, where $X_T^0(\cd) := X(\cd\,; \xi, u_T^0(\cd))$ is the solution to the corresponding closed-loop system
\begin{equation*}
\begin{cases}
\vspace{4pt}
\displaystyle
d X_T^0(t) = \big\{(A+B\Theta_T^*(t)) X_T^0(t) + \big[\bar A + B(\bar \Theta_T^*(t) - \Theta_T^*(t)) \big] \dbE \big[X_T^0(t)|\cF_t^0 \big] \big\} dt \\
\vspace{4pt}
\displaystyle
\hspace{0.7in} + \big\{(C+D\Theta_T^*(t)) X_T^0(t) + \big[\bar C + D(\bar \Theta_T^*(t) - \Theta_T^*(t))\big] \dbE \big[X_T^0(t)|\cF_t^0 \big] \big\} dW(t) \\
\vspace{4pt}
\displaystyle
\hspace{0.7in} + \big\{\Gamma X_T^0(t) + \bar{\Gamma} \dbE \big[X_T^0(t)|\cF_t^0 \big] \big\} d W^0(t), \quad t \ges 0, \\
X_T^0(0) = \xi.
\end{cases}
\end{equation*}
Here, $\Th_T^{*}(\cd)$ and $\bar \Th_{T}^{*} (\cd)$ are defined in \eqref{eq:theta_T_star}. Moreover, the value function of \textbf{Problem (MFC)$^T_0$} is given by
$$U_T^0(\mu_0) = \lan \Pi_T(0) \bar \mu_0, \bar \mu_0 \ran + \int_{\dbR^n} \lan P_T(0) x, x \ran \mu_0(dx),$$
where $\Pi_T(t) = \bar{\Pi}_T(t) - P_T(t)$, and $\{P_T(t), \bar{\Pi}_T(t): t \in [0, T]\}$ is the solution to the first two Riccati differential equations in the system \eqref{eq:Riccati_MC}. Under Assumptions \textnormal{\textbf{(H1)}}-\textnormal{\textbf{(H2)}}, the homogeneous system $[A, \bar A, C, \bar C, \Gamma, \bar{\Gamma}; B, D]$ is MF-$L^2$-stabilizable. Following the argument of Theorem 5.2 in \cite{Huang-Li-Yong-2015}, we obtain, for each fixed $t$,
$$\lim_{T \to \infty} P_T(t) = P, \quad \text{and} \quad \lim_{T \to \infty} \bar{\Pi}_T(t) = \bar{\Pi},$$
which yields the unique solvability of the algebraic Riccati equations for $P$ and $\bar{\Pi}$ in \eqref{eq:Riccati_ergodic}. It follows that, for all $t \ges 0$,
$$\lim_{T \to \infty} \Theta_T^*(t) = \Theta^*, \quad \lim_{T \to \infty} \bar{\Theta}_T^*(t) = \bar{\Theta}^*,$$
and
$$\lim_{T \to \infty} X^0_T(t) = X^{\Theta^*, \bar{\Theta}^*}(t)$$
where $X^{\Theta^*, \bar{\Theta}^*}(\cd)$ is defined in \eqref{eq:state_home_Theta}. This implies that $(\Theta^*, \bar{\Theta}^*)$ is an MF-$L^2$-stabilizer of the homogeneous system $[A, \bar A, C, \bar C, \Gamma, \bar{\Gamma}; B, D]$. Therefore, we conclude that $\bar \Th^{*}$
is a stabilizer of \eqref{eq:homo_sde_1}, and $\Th^{*}$ is a stabilizer of \eqref{eq:homo_sde_2}. Next, $P, \bar{\Pi} \in \dbS^n_{++}$, and the closed-loop system $[A + \bar{A} + B \bar{\Th}^*, \Gamma + \bar{\Gamma}]$ is $L^2$-exponentially stable. Hence, the matrix $A + \bar{A} + B \bar{\Th}^*$ is Hurwitz, with all eigenvalues having strictly negative real parts, and is therefore invertible. From the third equation for $p$ in \eqref{eq:Riccati_ergodic} and the definition of $\Psi_3$ in \eqref{eq:functions_Psi_3}, we conclude that this equation admits a unique solution $p$. Once $P, \bar{\Pi}$, and $p$ are determined, the last equation in \eqref{eq:Riccati_ergodic} uniquely determines $c_0$. This proves the unique solvability of the algebraic system \eqref{eq:Riccati_ergodic}.
\ms

{\rm(ii)} From the explicit expression of $U(\cd)$ in \eqref{eq:value_function_ergodic_MFC}, we obtain the following derivatives:
\begin{equation*}
D_{\mu} U(\mu, x) = 2 \Pi \bar{\mu} + 2P x + 2 p, \quad D_{x \mu} U(\mu, x) = 2P, \quad D^2_{\mu \mu} U(\mu, x, y) = 2 \Pi.
\end{equation*}
Since $R + D^\top P D \in \dbS^m_{++}$, by the definition of the Hamiltonian $H$ in \eqref{eq:Hamiltonian} and the minimizer of $H$ in \eqref{eq:u_hat_minimum_Hamiltonian}, we have
\begin{equation}
\label{eq:feedback_form_ergodic}
%\begin{aligned}
\bar u(x, \mu) := \h u \big(x, \mu, D_{\mu} U(\mu, x), D_{x \mu} U(\mu, x)\big) %\\
%&= - (R + D^\top P D)^{-1} \big(B^\top P + D^\top P C + S \big) (x - \bar{\mu}) \\
%& \hspace{0.3in} - (R + D^\top P D)^{-1} \big(B^\top (P+\Pi) + D^\top P (C + \bar C) + S \big) \bar{\mu} \\
%& \hspace{0.3in} - (R + D^\top P D)^{-1} (B^\top p + D^\top P \sigma + r) \\
= \Th^{*} (x - \bar{\mu}) + \bar \Th^{*} \bar{\mu} + \theta^{*},
%\end{aligned}
\end{equation}
where the triple $(\Th^{*}, \bar{\Theta}^*, \theta^*)$ is defined in \eqref{eq:theta_star}. Then, substituting the ansatz \eqref{eq:value_function_ergodic_MFC} and the minimizer \eqref{eq:feedback_form_ergodic} into the Bellman equation \eqref{eq:Bellman_equation_ergodic}, we obtain 
\begin{equation*}
\begin{aligned}
c_0 &= \lan \Psi_2(P, \Pi) \bar{\mu}, \bar{\mu} \ran + 2 \lan \Psi_3(P, \Pi, p), \bar{\mu} \ran + \Psi_4(P, \Pi, p) + \int_{\dbR^n} \lan \Psi_1(P) x, x \ran \mu(dx),
\end{aligned}
\end{equation*}
where the mappings $\Psi_1, \Psi_2, \Psi_3$ and $\Psi_4$ are defined in \eqref{eq:functions_Psi_1}-\eqref{eq:functions_Psi_4}. Since the above equation holds for all $\mu \in \cP_2(\dbR^n)$, it follows that
\begin{equation*}
\Psi_1(P) = 0, \quad
\Psi_2(P, \Pi) = 0, \quad
\Psi_3(P, \Pi, p) = 0, \quad
\Psi_4(P, \Pi, p) = c_0.
\end{equation*}
Equivalently, using $\Psi_1(P) + \Psi_2(P, \Pi) = \bar{\Psi}_2(P, \bar{\Pi})$ with $\bar{\Pi} = P + \Pi$, this is precisely the algebraic system \eqref{eq:Riccati_ergodic}. Therefore, by the result in part (i), we conclude that $(U(\cd), c_0)$ given in \eqref{eq:value_function_ergodic_MFC}-\eqref{eq:c_0} is a solution to the Bellman equation \eqref{eq:Bellman_equation_ergodic}.
\end{proof}

%%%%%%%%%%%%%%%%%%%%%%%%%%%%%%%%%%%%%%%%%%%%%%%%%%%%%%%%%%%%%%%%%%%%%%%%%%%%

\subsection{Verification theorem for the ergodic mean field control problem}
\label{s:verification_theorem}

Recall that \textbf{Problem (MFC)$^T$} in \eqref{eq:value_N_infinity} can be regarded as a probabilistic interpretation of the infinite-dimensional Hamilton-Jacobi equation \eqref{eq:master_equation_MC}. In this subsection, we establish an analogous link for the ergodic setting: we show that \textbf{Problem (EMFC)} in \eqref{eq:ergodic_control_problem} provides a probabilistic interpretation of the Bellman equation \eqref{eq:Bellman_equation_ergodic}. More precisely, we relate the Bellman equation \eqref{eq:Bellman_equation_ergodic} to \textbf{Problem (EMFC)} through a verification argument.

In a standard stochastic control problem, one seeks an optimal pair $(\bar X(\cd),\bar u(\cd))$ that minimizes the cost functional and thereby determines the value function $U(\cd)$. In contrast, \textbf{Problem (EMFC)} in \eqref{eq:ergodic_control_problem} involves two intertwined objectives: finding a pair $(\bar X(\cd),\bar u(\cd))$ that simultaneously minimizes the long-term average cost and the associated relative (residual) cost.

A further subtlety in \textbf{Problem (EMFC)} is that the choice of $u(\cd)$ does not necessarily ensure the existence of $\lim_{T\to{\infty}} J_{T}(\mu_0; u(\cd))$. In particular, the control space $\sU$ is not necessarily a subset of $\sU_{ad}[0,\infty)$ (as defined in \eqref{eq:u_ad_infinity}). To address this, we introduce a refined control space $\sU$.

\begin{definition}
\label{def:admissible_control_ergodic}
\textit{Fix an initial state $\xi \in L^4_{\cF_0}$ with $\mathcal{L}(\xi) = \mu_0$. A control process $u(\cd)$ is called an admissible control in the ergodic mean field control problem \eqref{eq:ergodic_control_problem} if:}
\begin{itemize}
\item \textit{It is $\dbF$-progressively measurable;}
\item \textit{For all $T > 0$, the state process $X(\cd) := X(\cd \,; \xi, u(\cd))$, defined as the solution to \eqref{eq:state_N_infinity}, satisfies}
\begin{equation*}
\dbE\Big[\sup_{t \in [0, T]} |X(t)|^4 \Big] < \infty;
\end{equation*}
\item \textit{The joint law of $(X(t), \dbE[X(t)|\cF_t^0], u(t))$ converges to some distribution $\nu_\infty \in \cP_2(\dbR^n \times \dbR^n \times \dbR^m)$ in the $2$-Wasserstein metric $\cW_2 (\cd, \cd)$, i.e.,
$$\lim_{t\to \infty} \cW_2 \big(\cL(X(t), \dbE[X(t)|\cF_t^0], u(t)), \nu_{\infty} \big) = 0.$$
%Moreover, there exists some distribution $\wt{\mu}_{\infty}^0 \in \cP_2(\dbR^n)$ such that
%$$\lim_{t \to \infty} \cW_2\big(\cL(\dbE[X(t)|\cF_t^0]), \wt{\mu}_{\infty}^0 \big) = 0.$$
}
\end{itemize}
\end{definition}

In what follows, we provide a complete characterization of \textbf{Problem (EMFC)} in the linear-quadratic setting, building on the solvability of the Bellman equation \eqref{eq:Bellman_equation_ergodic} established in Proposition \ref{p:solvability_Bellman_equation_ergodic}. We first introduce the following SDE, whose solution, denoted by $\bar X(\cd)$, will subsequently be identified as the optimal state process for \textbf{Problem (EMFC)}:
\begin{equation}
\label{eq:optimal_path_ergodic}
\begin{cases}
\vspace{4pt}
\displaystyle
d \bar X(t) = \big\{(A + B \Th^*)\bar X(t) + \big[\bar A + B (\bar \Th^* - \Th^*) \big] \dbE \big[\bar X(t)|\cF_t^0 \big] + B \theta^* + b \big\} dt \\
\vspace{4pt}
\displaystyle
\hspace{0.6in} + \big\{(C + D \Th^*) \bar X(t) + \big[\bar C + D (\bar \Th^* - \Th^*) \big] \dbE \big[\bar X(t)|\cF_t^0 \big] + D \theta^* + \sigma \big\} dW(t) \\
\vspace{4pt}
\displaystyle
\hspace{0.6in} + \big\{\Gamma \bar X(t) + \bar{\Gamma} \dbE \big[\bar X(t)|\cF_t^0 \big] + \gamma \big\} dW^0(t), \\
\bar X(0) = \xi.
\end{cases}
\end{equation}
For simplicity of notation, we denote
\begin{equation*}
%\label{eq:tilde_A_C}
\begin{aligned}
& \wt{A}_1 = A + B \Th^*, \quad \wt{A}_2 = \bar A + B \bar \Th^* - B \Th^*, \quad \wt{b} = B\theta^* + b, \\
& \wt{C}_1 = C + D \Th^*, \quad \wt{C}_2 = \bar C + D \bar \Th^* - D \Th^*, \quad \wt{\sigma} = D\theta^* + \sigma.
\end{aligned}
\end{equation*}
Then, with these notations, the SDE \eqref{eq:optimal_path_ergodic} can be equivalently expressed as
\begin{equation*}
\begin{cases}
\vspace{4pt}
\displaystyle
d \bar X(t) = \big\{ \wt{A}_1 \bar X(t) + \wt{A}_2 \dbE \big[\bar X(t)|\cF_t^0 \big] + \wt{b} \big\} dt + \big\{ \wt{C}_1 \bar X(t) + \wt{C}_2 \dbE \big[\bar X(t)|\cF_t^0 \big] + \wt{\sigma} \big\} dW(t) \\
\vspace{4pt}
\displaystyle
\hspace{0.6in} + \big\{\Gamma \bar X(t) + \bar{\Gamma} \dbE \big[\bar X(t)|\cF_t^0 \big] + \gamma \big\} dW^0(t), \\
\bar X(0) = \xi.
\end{cases}
\end{equation*}

Next, we present two lemmas that play a key role in establishing the nonemptiness of $\sU$ and proving the verification theorem. Throughout, $K$ and $\lambda$ denote generic positive constants whose values may change from line to line. The first lemma establishes moment bounds for $\bar{X}(\cd)$, and its proof is given in Appendix \ref{s:proof_moment_boundedness}. %The proof follows the similar idea as our previous work \cite{Bayraktar-Jian-2025}.

\begin{lemma}
\label{l:moment_boundedness}
Let \textnormal{\textbf{(H1)}}-\textnormal{\textbf{(H2)}} hold. Then, there exists a constant $K > 0$, independent of $t$, such that the solution $\bar X(\cd)$ to the SDE \eqref{eq:optimal_path_ergodic} satisfies, for all $t \ges 0$,
$$\dbE \Big[\big|\dbE \big[\bar X(t)|\cF_t^0 \big]\big|^2 \Big] \les K, \quad \dbE \big[|\bar X(t)|^2 \big] \les K,$$
and for all $T > 0$,
$$\dbE \Big[\sup_{t\in [0, T]} |\bar X(t)|^4 \Big] < \infty.$$
\end{lemma}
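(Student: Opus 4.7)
\textbf{Proof proposal for Lemma \ref{l:moment_boundedness}.}
The natural plan is to decouple the common-noise dynamics from the idiosyncratic fluctuations, exploiting the two stabilizability conclusions obtained in Proposition \ref{p:solvability_Bellman_equation_ergodic}(i). Write $m(t) := \dbE[\bar X(t)\mid\cF_t^0]$ and $Y(t) := \bar X(t) - m(t)$. By taking the conditional expectation (with respect to $\cF_t^0$) in the SDE \eqref{eq:optimal_path_ergodic} and using the independence of $W$ and $W^0$, $m(\cd)$ satisfies the closed-loop SDE
\[
dm(t) = \big\{(\wt A_1+\wt A_2)\,m(t) + \wt b\big\}\,dt + \big\{(\Gamma+\bar\Gamma)\,m(t) + \gamma\big\}\,dW^0(t),
\]
whose drift matrix equals $A+\bar A+B\bar\Th^{*}$ and whose $W^{0}$-coefficient is $\Gamma+\bar\Gamma$. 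Subtracting this from \eqref{eq:optimal_path_ergodic} yields
\[
dY(t) = \wt A_1 Y(t)\,dt + \big\{\wt C_1 Y(t) + (\wt C_1+\wt C_2)m(t) + \wt\sigma\big\}\,dW(t) + \Gamma Y(t)\,dW^0(t),
\]
i.e.\ $Y(\cd)$ obeys the closed-loop version of the system $[A,C,\Gamma;B,D]$ with the coefficient $\wt A_1 = A+B\Th^{*}$ and $\wt C_1 = C+D\Th^{*}$, perturbed by a diffusion forcing depending only on $m(\cd)$.

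The next step is to invoke Lyapunov theory. Since Proposition \ref{p:solvability_Bellman_equation_ergodic}(i) asserts that $\bar\Th^{*}$ is a stabilizer of \eqref{eq:homo_sde_1} and $\Th^{*}$ is a stabilizer of \eqref{eq:homo_sde_2}, the standard converse Lyapunov theorem for stochastic systems gives matrices $P_1,P_2\in\dbS^{n}_{++}$ and a constant $\lambda>0$ satisfying
\[
(\wt A_1+\wt A_2)^{\top}P_1 + P_1(\wt A_1+\wt A_2) + (\Gamma+\bar\Gamma)^{\top}P_1(\Gamma+\bar\Gamma) \le -\lambda I_n,
\]
\[
\wt A_1^{\top}P_2 + P_2\wt A_1 + \wt C_1^{\top}P_2\wt C_1 + \Gamma^{\top}P_2\Gamma \le -\lambda I_n.
\]
Applying It\^o's formula to $m(t)^{\top}P_1 m(t)$, taking expectation (the stochastic integral is a true martingale by the local boundedness of the integrand) and using Young's inequality on the cross terms $2m^{\top}P_1\wt b$ and $2m^{\top}(\Gamma+\bar\Gamma)^{\top}P_1\gamma$, I obtain a differential inequality of the form
\[
\tfrac{d}{dt}\,\dbE\big[m(t)^{\top}P_1 m(t)\big] \le -\tfrac{\lambda}{2}\,\dbE[|m(t)|^{2}] + K,
\]
which integrates to give a uniform bound $\dbE[|m(t)|^{2}]\le K$ for all $t\ge 0$, thanks to $\xi\in L^{4}_{\cF_0}$. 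Repeating the same computation for $Y(t)^{\top}P_2 Y(t)$, now with the forcing terms $(\wt C_1+\wt C_2)m(t)+\wt\sigma$, the Young inequality produces
\[
\tfrac{d}{dt}\,\dbE\big[Y(t)^{\top}P_2 Y(t)\big] \le -\tfrac{\lambda}{2}\,\dbE[|Y(t)|^{2}] + K\big(1+\dbE[|m(t)|^{2}]\big),
\]
and the already-established bound on $\dbE[|m|^{2}]$ together with Gr\"onwall gives a uniform bound $\dbE[|Y(t)|^{2}]\le K$. Combining these via $|\bar X|^{2}\le 2|Y|^{2}+2|m|^{2}$ yields the first two estimates in the lemma.

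For the pointwise (non-uniform) fourth-moment bound, I would apply It\^o's formula to $|\bar X(t)|^{4}$ (or equivalently to $(|Y|^{2}+|m|^{2})^{2}$), note that the resulting drift can be estimated linearly in $\dbE[|\bar X(s)|^{4}]$ by the affine structure of the coefficients, and then use Gr\"onwall's lemma on the finite interval $[0,t]$ starting from $\dbE[|\xi|^{4}]<\infty$. The main technical obstacle is a careful bookkeeping of the Young-type constants in the Lyapunov inequalities for $m$ and $Y$, because the forcing term for $Y$ depends on $m$ and any loss of decay rate $\lambda$ in the Lyapunov inequality must be compensated against the growth contribution of $\dbE[|m|^{2}]$; choosing $\varepsilon$ small enough in Young's inequality makes this compensation work, and this is the only delicate point of the argument.
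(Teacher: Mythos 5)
Your proposal is correct, and it relies on precisely the two stabilizability conclusions from Proposition \ref{p:solvability_Bellman_equation_ergodic}(i) and the same Lyapunov-plus-Young's-inequality machinery as the paper's proof. The only departure is presentational: you introduce the explicit decomposition $\bar X = Y + m$ with $Y := \bar X - \dbE[\bar X|\cF_t^0]$ and run separate Lyapunov arguments for $Y$ and $m$, whereas the paper works directly with $\lan P\bar X, \bar X\ran$ and neutralizes the common-noise coupling via the identity $\dbE[\lan \bar X(t), \dbE[\bar X(t)|\cF_t^0]\ran]=\dbE[|\dbE[\bar X(t)|\cF_t^0]|^2]$. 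Your split is arguably cleaner (and, incidentally, it is the same $\check X$-decomposition the paper uses in the proof of Lemma \ref{l:stability_homo_system}); note that with it the cross term $\dbE[Y^\top \wt C_1^\top P_2 F]$ actually \emph{vanishes} identically because $\dbE[Y(t)|\cF_t^0]=0$ while the forcing $F=(\wt C_1+\wt C_2)m+\wt\sigma$ is $\cF_t^0$-measurable, so no Young's inequality is even needed there --- eliminating the ``delicate bookkeeping'' you flag at the end. For the fourth moment, the paper cites a black-box finite-horizon moment estimate rather than unpacking the It\^o/Gr\"onwall/localization details, but both routes yield the same (non-uniform) conclusion.
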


The following lemma characterizes the long-time convergence behavior of the optimal state process $\bar{X}(\cd)$  in \eqref{eq:optimal_path_ergodic} for \textbf{Problem (EMFC)}. Its proof is included in Appendix \ref{s:proof_invariant-distribution}. %, with the proof following the approach of \cite{Bayraktar-Jian-2025}.

\begin{lemma}
\label{l:invariant-distribution}
Let \textnormal{\textbf{(H1)}}-\textnormal{\textbf{(H2)}} hold, and let $\bar{X}(\cd)$ be the solution to \eqref{eq:optimal_path_ergodic}. Then, the joint law of $(\bar{X}(t), \dbE[\bar{X}(t)|\cF_t^0])$ converges in $\cP_2(\dbR^n \times \dbR^n)$ as $t \to \infty$. Moreover, there exist two measures $\mu^*_{\infty} \in \cP_2(\dbR^n)$ and $\wt{\mu}_{\infty}^{0,*} \in \cP_2(\dbR^n)$ such that
$$\cW_2 \big(\cL(\bar X(t)), \mu^*_{\infty} \big) \to 0, \quad \hbox{as } t \to \infty,$$
and
$$\cW_2 \big(\cL(\dbE[\bar X(t)|\cF_t^0]), \wt{\mu}^{0,*}_{\infty} \big) \to 0, \quad \hbox{as } t \to \infty.$$
\end{lemma}
\begin{remark}
\textit{
In the presence of common noise, the conditional law $\mu_t^0:= \mathcal{L}(\bar{X}(t)|\mathcal{F}_t^0)$ is an $\mathcal{F}_t^0$-measurable random variable taking values in $\mathcal{P}_2(\dbR^n)$. We emphasize that Lemma \ref{l:invariant-distribution} does not assert that $\mu_t^0$ converges to a deterministic probability measure. In a general mean field model with common noise, an invariant long-time object is naturally a random probability measure, or equivalently an invariant probability measure on a suitable space of random conditional distributions, as discussed, for example, in \cite{CMY25}. In the present linear-quadratic setting, it is sufficient to work with the deterministic laws in Lemma \ref{l:invariant-distribution}. Indeed, the state and the running cost of \textnormal{\textbf{Problem (EMFC)}} depend on the conditional law only through its conditional mean, and the solution to the Bellman equation \eqref{eq:Bellman_equation_ergodic} has the quadratic form in \eqref{eq:value_function_ergodic_MFC}.
}
\end{remark}

We now present the main result of this subsection. The following theorem characterizes \textbf{Problem (EMFC)} in the LQ framework using the solvability of the Bellman equation \eqref{eq:Bellman_equation_ergodic} established in Proposition \ref{p:solvability_Bellman_equation_ergodic}.

\begin{theorem}
\label{t:ergodic_control_problem}
Suppose \textnormal{\textbf{(H1)}}-\textnormal{\textbf{(H2)}} hold. Let $(U(\cd), c_0)$ be the solution to the Bellman equation \eqref{eq:Bellman_equation_ergodic}, where $U(\cd)$ is given by \eqref{eq:value_function_ergodic_MFC} and $c_0$ is determined by \eqref{eq:c_0}.
Let $\bar X(\cd)$ be the solution to \eqref{eq:optimal_path_ergodic},
and define the feedback control $\bar u(\cd)$ by
\begin{equation}
\begin{aligned}
\label{eq:optimal_control_EC}
\bar u(t) = \Th^{*} \big(\bar X(t) - \dbE \big[\bar X(t)|\cF_t^0 \big] \big) + \bar \Th^{*} \dbE \big[\bar X(t)|\cF_t^0 \big] + \theta^{*},
\end{aligned}
\end{equation}
where the coefficients $(\Th^*, \bar\Th^*, \theta^*)$ are given in \eqref{eq:theta_star}. Then, the following results hold:

{\rm(i)} The joint law of $(\bar X(t), \dbE[\bar{X}(t)|\cF_t^0], \bar u(t))$ converges in $\cP_2(\dbR^n \times \dbR^n \times \dbR^m)$ as $t \to \infty$, i.e., there exists $\nu^*_{\infty}\in\cP_2(\dbR^n \times \dbR^n \times \dbR^m)$ such that
$$\lim_{t\to \infty} \cW_2 \big(\cL(\bar{X}(t), \dbE[\bar{X}(t)|\cF_t^0], \bar{u}(t)), \nu^{*}_{\infty} \big) = 0.$$
%and the conditional expectation of $\bar{X}(t)$ on $\cF_t^0$ converges to some measure $\bar{\mu}^{0,*}_{\infty} \in \cP_2(\dbR^n)$ in the 2-Wasserstein distance, i.e.,
%$\cW_2 \big(\cL(\dbE[\bar X(t)|\cF_t^0]), \bar{\mu}^{0,*}_{\infty} \big) \to 0, \quad \hbox{as } t \to \infty.$$}

{\rm(ii)} The constant $c_0$, referred to as the ergodic cost, is uniquely determined by the following limit
\begin{equation}
\label{eq:ergodic_constant}
c_0=\lim_{T \to \infty} \frac{1}{T} \int_0^T \dbE \big[f \big(\bar X(t), \dbE \big[\bar X(t)|\cF_t^0 \big], \bar u(t) \big)\big]dt.
\end{equation}

{\rm(iii)} The 4-tuple $\{\bar{U}(\cd), c_0, \bar X(\cd), \bar u(\cd)\}$ solves \textnormal{\textbf{Problem (EMFC)}}, where 
$\bar{U}(\mu_0) := U(\mu_0) - \zeta^*$ and $\zeta^*$ is the stationary constant defined by
\begin{equation}
\label{eq:zeta_star}
\zeta^* := \int_{\dbR^n} \lan \Pi x, x \ran \wt{\mu}^{0,*}_{\infty}(dx) + 2 \lan p, \bar{\mu}^{*}_{\infty} \ran + \int_{\dbR^n} \lan P x, x \ran \mu^{*}_{\infty}(dx).
\end{equation}
Here, $\mu^*_{\infty} \in \cP_2(\dbR^n)$ and $\wt{\mu}^{0, *}_{\infty} \in \cP_2(\dbR^n)$ are the limiting measures identified in Lemma \ref{l:invariant-distribution} for $\bar{X}(t)$ and $\dbE[\bar{X}(t)|\cF_t^0]$, respectively, and $\bar{\mu}^*_{\infty} := \int_{\dbR^n} x \mu^{*}_{\infty} (dx)$.
\end{theorem}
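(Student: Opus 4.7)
My plan is to establish the three claims sequentially, exploiting the Bellman equation \eqref{eq:Bellman_equation_ergodic}, the quadratic structure of $U(\cd)$ in \eqref{eq:value_function_ergodic_MFC}, and the moment and convergence estimates in Lemmas \ref{l:moment_boundedness} and \ref{l:invariant-distribution}.

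For \textbf{Part (i)}, the feedback $\bar u(t) = \Theta^*(\bar X(t) - \dbE[\bar X(t)|\cF_t^0]) + \bar \Theta^* \dbE[\bar X(t)|\cF_t^0] + \theta^*$ is an affine function of $(\bar X(t), \dbE[\bar X(t)|\cF_t^0])$ with constant coefficients. The convergence of the joint law of this pair in $\cP_2(\dbR^n \times \dbR^n)$ from Lemma \ref{l:invariant-distribution}, together with the uniform second-moment bounds of Lemma \ref{l:moment_boundedness}, propagates to $\cW_2$-convergence of $\cL(\bar X(t), \dbE[\bar X(t)|\cF_t^0], \bar u(t))$ to the image $\nu^*_\infty$ of the limiting joint law under the continuous affine map, which also confirms $\bar u(\cd) \in \sU$.

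For \textbf{Part (ii)}, my central tool is It\^o's formula on the Wasserstein space applied to $U(\mu^t_*)$, where $\mu^t_* := \cL(\bar X(t)|\cF_t^0)$. For the quadratic form in \eqref{eq:value_function_ergodic_MFC}, the derivatives $D_\mu U(\mu, x) = 2\Pi \bar \mu + 2P x + 2p$, $D_{x\mu}U(\mu, x) = 2P$, and $D^2_{\mu\mu}U(\mu, x, y) = 2\Pi$ are explicit. Substituting the closed-loop dynamics \eqref{eq:optimal_path_ergodic} and matching with \eqref{eq:Bellman_equation_ergodic} at the minimizing feedback $\bar u$ should yield
\[
dU(\mu^t_*) = \bigl(c_0 - \dbE\bigl[f(\bar X(t), \dbE[\bar X(t)|\cF_t^0], \bar u(t))\bigm|\cF_t^0\bigr]\bigr)\,dt + dN_t,
\]
with $N_t$ a $W^0$-driven martingale. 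Taking expectation, integrating over $[0, T]$, dividing by $T$, and using Lemma \ref{l:moment_boundedness} to bound $\dbE[U(\mu^T_*)]$ uniformly in $T$ produces the ergodic identity \eqref{eq:ergodic_constant}.

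For \textbf{Part (iii)}, the same It\^o computation applied along an arbitrary $u \in \sU$ to $U(\cL(X^u(t)|\cF_t^0))$, combined with the pointwise inequality $\dbH \ges H$, gives the subsolution version
\[
\int_0^T \dbE\bigl[f(X^u, \dbE[X^u|\cF_t^0], u)\bigr]\,dt \ges c_0 T + U(\mu_0) - \dbE[U(\mu_u^T)].
\]
Dividing by $T$ and using the $\cW_2$-convergence from Definition \ref{def:admissible_control_ergodic} to bound $\dbE[U(\mu_u^T)]$ yields $\limsup_{T \to \infty} T^{-1}\int_0^T \dbE[f]\,dt \ges c_0$, which combined with Part (ii) proves $c = c_0$. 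Along $\bar u$ the inequality becomes an equality, so passing to the limit with Lemma \ref{l:invariant-distribution} gives $J_\infty(\mu_0; \bar u) = U(\mu_0) - \zeta^* = \bar U(\mu_0)$. The main obstacle is to upgrade the subsolution inequality into $J_\infty(\mu_0; u) \ges \bar U(\mu_0)$ for competing admissible $u$: this is immediate when $u$ fails to attain the ergodic cost $c_0$, since then $J_\infty(\mu_0; u) = +\infty$; for $u$ that do attain $c_0$, one must exploit the strict convexity of $\dbH$ in $u$ (guaranteed by $\cR(P) \in \dbS^m_{++}$ under Assumption \textnormal{\textbf{(H1)}}) and the asymptotic stability imposed by $\sU$ to argue that such $u$ must coincide asymptotically with the optimal feedback $\bar u$, ensuring $\lim_{T \to \infty}\dbE[U(\mu_u^T)] \les \zeta^*$ and closing the verification.
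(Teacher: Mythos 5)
Your proposal is essentially the same as the paper's proof: part (i) propagates the joint-law convergence of Lemma \ref{l:invariant-distribution} through the affine feedback map; parts (ii)-(iii) apply the conditional-measure It\^o formula to $U(\cL(X(t)|\cF_t^0))$, compare with the Bellman equation via $\dbH \ges H$, and specialize to $u=\bar u$ to turn the inequality into an equality. The paper handles the ``relative cost'' verification in (iii) by the same two-case split you describe (average cost $>c_0$ forces $J_\infty = +\infty$; average cost $=c_0$ is the residual case). Where you differ is in your treatment of the residual case: you propose to invoke strict convexity of $\dbH$ in $u$ (via $\cR(P)\in\dbS^m_{++}$) to argue that any admissible $u$ attaining the ergodic cost $c_0$ must asymptotically coincide with $\bar u$, so that $\lim_{t\to\infty}\dbE[U(\mu^t_u)]\les\zeta^*$. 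The paper simply asserts that the inequality with $\zeta$ (the limiting value for $u$) ``yields'' the one with $\zeta^*$; your strict-convexity route is a more transparent --- though still only sketched --- justification for that replacement, and is the right direction if one wants to close that step rigorously (the convexity gap in $\dbH-H$ controls the time-averaged deviation of $u$ from the optimal feedback, and stability of the closed-loop dynamics then identifies the limiting measures).
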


\begin{proof}
From \eqref{eq:optimal_control_EC}, $\bar{u}(t)$ is an affine function of $\bar{X}(t)$ and $\dbE[\bar{X}(t)|\cF_t^0]$. Lemma \ref{l:invariant-distribution} yields convergence in $\cP_2(\dbR^n)$ of both marginals $\cL(\bar{X}(t))$ and $\cL(\dbE[\bar{X}(t)|\cF_t^0])$, and also convergence of the joint law of 
$(\bar{X}(t), \dbE[\bar{X}(t)|\cF_t^0])$ in $\cP_2(\dbR^n \times \dbR^n)$. Therefore, the joint law of $(\bar{X}(t), \dbE[\bar{X}(t) | \cF_t^0],\bar{u}(t))$ converges in 
$\cP_2(\dbR^n \times \dbR^n \times \dbR^m)$, which concludes (i).

For any admissible control $u(\cd) \in \sU$ and its associated state process $X(\cd) := X(\cd; \xi, u(\cd))$, we denote
$$\mu_t^0:= \cL \big(X(t)|\cF_t^0 \big), \quad \forall t \ges 0,$$
which is a random measure. Since the solution $U(\cd): \cP_2(\dbR^n) \to \dbR$ in \eqref{eq:value_function_ergodic_MFC} to the Bellman equation is sufficiently smooth in the sense of Lions derivatives, It\^o's formula along a flow of conditional measures (see Theorem 4.14 in \cite{Carmona-Delarue-2018}) gives
\begin{equation*}
\begin{aligned}
U(\mu_t^0) &= U(\mu_0) + \int_0^t \int_{\dbR^n} \lan D_{\mu}U(\mu_s^0, x), \h{b}(x, \mu_s^0, u(s)) \ran \mu_s^0(dx) ds  \\
& \hspace{0.5in} + \frac{1}{2} \int_0^t \int_{\dbR^n} \lan D_{x \mu}U(\mu_s^0, x) \h{\sigma}(x, \mu_s^0, u(s)), \h{\sigma}(x, \mu_s^0, u(s)) \ran \mu_s^0(dx) ds  \\
& \hspace{0.5in} + \frac{1}{2} \int_0^t \int_{\dbR^n} \lan D_{x \mu}U(\mu_s^0, x) \h{\gamma}(x, \mu_s^0), \h{\gamma}(x, \mu_s^0) \ran \mu_s^0(dx) ds \\
& \hspace{0.5in} + \frac{1}{2} \int_0^t \int_{\dbR^n \times \dbR^n} \lan D^2_{\mu \mu}U(\mu_s^0, x, y) \h{\gamma}(x, \mu_s^0), \h{\gamma}(y, \mu_s^0) \ran \mu_s^0(dx) \mu_s^0(dy) ds \\
& \hspace{0.5in} + \int_0^t \int_{\dbR^n} \lan D_{\mu}U(\mu_s^0, x), \h{\gamma}(x, \mu_s^0) \ran \mu_s^0(dx) dW^0(s),
\end{aligned}
\end{equation*}
where $\h{b}(\cd, \cd, \cd), \h{\sigma}(\cd, \cd, \cd)$ and $\h{\gamma}(\cd, \cd)$ are defined in \eqref{eq:hat_coefficient_functions}. Note that the quadratic variation of the last term in the above equation is finite, i.e.,
\begin{equation*}
\begin{aligned}
& \dbE \Big[\int_0^t \Big(\int_{\dbR^n} \lan D_{\mu}U(\mu_s^0, x), \h{\gamma}(x, \mu_s^0) \ran \mu_s^0(dx) \Big)^2 ds \Big] \\
= \ &  \dbE \Big[\int_0^t \Big(\int_{\dbR^n} \big\lan 2 \Pi \bar{\mu}_s^0 + 2Px + 2p, \Gamma x + \bar{\Gamma} \bar{\mu}_s^0 + \gamma \big\ran \mu_s^0(dx) \Big)^2 ds \Big] \\
\les \ &  K \dbE \Big[\int_0^t \Big(1 + |\bar{\mu}_s^0|^4 + \int_{\dbR^n} |x|^4 \mu_s^0(dx) \Big) ds \Big] \\
\les \ & K \int_0^t \big(1 + \dbE \big[|X(s)|^4 \big] \big) ds < \infty
\end{aligned}
\end{equation*}
by the definition of $\sU$. Fixing $t > 0$ and taking expectations on both sides of the equation above, 
\iffalse
we obtain 
\begin{equation*}
\begin{aligned}
\dbE \big[U(\mu_t^0) \big] &= U(\mu_0) + \dbE \Big[ \int_0^t \int_{\dbR^n} \lan D_{\mu}U(\mu_s^0, x), \h{b}(x, \mu_s^0, u(s)) \ran \mu_s^0(dx) ds  \\
& \hspace{0.5in} + \frac{1}{2} \int_0^t \int_{\dbR^n} \lan D_{x \mu}U(\mu_s^0, x) \h{\sigma}(x, \mu_s^0, u(s)), \h{\sigma}(x, \mu_s^0, u(s)) \ran \mu_s^0(dx) ds  \\
& \hspace{0.5in} + \frac{1}{2} \int_0^t \int_{\dbR^n} \lan D_{x \mu}U(\mu_s^0, x) \h{\gamma}(x, \mu_s^0), \h{\gamma}(x, \mu_s^0) \ran \mu_s^0(dx) ds \\
& \hspace{0.5in} + \frac{1}{2} \int_0^t \int_{\dbR^n \times \dbR^n} \lan D^2_{\mu \mu}U(\mu_s^0, x, y) \h{\gamma}(x, \mu_s^0), \h{\gamma}(y, \mu_s^0) \ran \mu_s^0(dx) \mu_s^0(dy) ds \Big].
\end{aligned}
\end{equation*}
\fi
from the Bellman equation \eqref{eq:Bellman_equation_ergodic} and the definition of the Hamiltonian $H$ in \eqref{eq:Hamiltonian}, we derive the following inequality:
\begin{equation*}
\begin{aligned}
\dbE \big[U(\mu_t^0) \big] &\ges U(\mu_0) + \dbE \Big[ \int_0^t \int_{\dbR^n} \big\{H \big(x, \mu_s^0, D_{\mu}U(\mu_s^0, x), D_{x \mu} U(\mu_s^0, x) \big) - f \big(x, \bar{\mu}_s^0, u(s) \big) \big\} \mu_s^0(dx) ds  \\
& \hspace{0.5in} + \frac{1}{2} \int_0^t \int_{\dbR^n} \lan D_{x \mu}U(\mu_s^0, x) \h{\gamma}(x, \mu_s^0), \h{\gamma}(x, \mu_s^0) \ran \mu_s^0(dx) ds \\
& \hspace{0.5in} + \frac{1}{2} \int_0^t \int_{\dbR^n \times \dbR^n} \lan D^2_{\mu \mu}U(\mu_s^0, x, y) \h{\gamma}(x, \mu_s^0), \h{\gamma}(y, \mu_s^0) \ran \mu_s^0(dx) \mu_s^0(dy) ds \Big] \\
&= U(\mu_0) - \dbE \Big[ \int_0^t \int_{\dbR^n} \big( f(x, \bar{\mu}_s^0, u(s)) - c_0 \big) \mu_s^0(dx) ds \Big]\\
&= U(\mu_0) - \dbE \Big[ \int_0^t \big( f(X(s), \dbE[X(s)|\cF_s^0], u(s)) - c_0 \big) ds \Big],
\end{aligned}
\end{equation*}
for all $u(\cd) \in \sU$, which implies that
\begin{equation}
\label{eq:value_inequality_verification}
U(\mu_0) \les \dbE \big[U(\mu_t^0)\big] + \dbE \Big[ \int_0^t \big( f(X(s), \dbE[X(s)|\cF_s^0], u(s)) - c_0 \big) ds \Big].
\end{equation}
The feedback form of $\bar{u}(\cd)$ in \eqref{eq:optimal_control_EC}, together with Lemmas \ref{l:moment_boundedness} and \ref{l:invariant-distribution}, shows that $\bar{u}(\cd) \in \sU$. This confirms that $\sU$ is nonempty. Moreover, setting $u(\cd) = \bar{u}(\cd)$ turns the inequality \eqref{eq:value_inequality_verification} into an equality. Letting $t \to \infty$ and using the explicit form of $U(\cd)$ in \eqref{eq:value_function_ergodic_MFC} and the definition of $\sU$, we obtain
\begin{equation*}
\begin{aligned}
c_0 &\les \lim_{t \to \infty} \frac{1}{t} \big(\dbE \big[U(\mu_t^0)\big] - U(\mu_0) \big) + \lim_{t \to \infty} \frac{1}{t} \dbE \Big[ \int_0^t f \big(X(s), \dbE[X(s)|\cF_s^0], u(s) \big) ds \Big] \\
&= \lim_{t \to \infty} \frac{1}{t} \dbE \Big[ \int_0^t f \big(X(s), \dbE[X(s)|\cF_s^0], u(s) \big) ds \Big]
\end{aligned}
\end{equation*}
for all $u(\cd) \in \sU$. Equality holds when we set $u(\cd) = \bar{u}(\cd)$. Thus, we establish the desired result in \eqref{eq:ergodic_constant}.

Next, using the explicit form of $U(\cd)$ in \eqref{eq:value_function_ergodic_MFC}, we have
$$\dbE \big[U(\mu_t^0)\big] = \dbE \big[\lan \Pi \dbE[X(t)|\cF_t^0], \dbE[X(t)|\cF_t^0] \ran \big] + 2 \lan p, \dbE[X(t)] \ran + \dbE[\lan P X(t), X(t) \ran].$$
By the definition of $\sU$, there exist $\mu_{\infty} \in \cP_2(\dbR^n)$ and $\wt{\mu}_{\infty}^0 \in \cP_2(\dbR^n)$ such that
$$\lim_{t \to \infty} \cW_2(\cL(X(t)), \mu_{\infty}) = 0, \quad \lim_{t \to \infty} \cW_2(\cL(\dbE[X(t)|\cF_t^0]), \wt{\mu}^0_{\infty}) = 0.$$
Letting $t \to \infty$ in \eqref{eq:value_inequality_verification}, we obtain
\begin{equation}
\label{eq:value_inequality_verification_2}
U(\mu_0) \les \zeta + \lim_{t \to \infty} \dbE \Big[ \int_0^t \big( f(X(s), \dbE[X(s)|\cF_s^0], u(s)) - c_0 \big) ds \Big],
\end{equation}
where
$$\zeta:= \lim_{t \to \infty} \dbE \big[U(\mu_{t}^0)\big] = \int_{\dbR^n} \lan \Pi x, x \ran \wt{\mu}^0_{\infty}(dx) + 2 \lan p, \bar{\mu}_{\infty} \ran + \int_{\dbR^n} \lan P x, x \ran \mu_{\infty}(dx)$$
with $\bar{\mu}_{\infty} = \int_{\dbR^n} x \mu_{\infty}(dx)$. This yields, for all $u(\cd) \in \sU$, 
\begin{equation}
\label{eq:value_inequality_verification_3}
U(\mu_0) \les \zeta^* + \lim_{t \to \infty} \dbE \Big[ \int_0^t \big( f(X(s), \dbE[X(s)|\cF_s^0], u(s)) - c_0 \big) ds \Big],
\end{equation}
where $\zeta^*$ is defined in \eqref{eq:zeta_star}. Indeed,
\begin{equation*}
\begin{aligned}
\lim_{t \to \infty} \dbE\big[f \big(X(t), \dbE[X(t)|\cF_t^0], u(t) \big) \big] & = \lim_{T \to \infty} \frac{1}{T} \dbE \Big[ \int_0^T f \big(X(s), \dbE[X(s)|\cF_s^0], u(s) \big) ds \Big] \\
& \ges \lim_{T \to \infty} \frac{1}{T} \int_0^T \dbE \big[f \big(\bar X(t), \dbE \big[\bar X(t)|\cF_t^0 \big], \bar u(t) \big)\big]dt = c_0.
\end{aligned}
\end{equation*}
Thus, it is enough to consider two cases: (i) If 
$\lim_{t \to \infty} \dbE[f(X(t), \dbE[X(t)|\cF_t^0], u(t))] = c_0$,
then \eqref{eq:value_inequality_verification_2} holds with $\zeta$ in
place of $\zeta^*$, which yields \eqref{eq:value_inequality_verification_3}; (ii) if $\lim_{t \to \infty} \dbE[f(X(t), \dbE[X(t)|\cF_t^0], u(t))] > c_0$, then
$$\lim_{t \to \infty} \dbE \Big[ \int_0^t \big( f(X(s), \dbE[X(s)|\cF_s^0], u(s)) - c_0 \big) ds \Big] = \infty,$$
which concludes \eqref{eq:value_inequality_verification_3} with the right-hand side being positive infinity. Moreover, the inequality \eqref{eq:value_inequality_verification_3} holds as an equality if $u(\cd) = \bar{u}(\cd)$. Hence, we derive that
\begin{equation*}
U(\mu_0) - \zeta^* = \inf_{u(\cd) \in \sU} \lim_{T \to \infty} \dbE \Big[ \int_0^T \big( f(X(s), \dbE[X(s)|\cF_s^0], u(s)) - c_0 \big) ds \Big].
\end{equation*}
Then, by the definition of \textbf{Problem (EMFC)} in \eqref{eq:ergodic_control_problem}, we conclude the desired result in (iii). 
\end{proof}

%%%%%%%%%%%%%%%%%%%%%%%%%%%%%%%%%%%%%%%%%%%%%%%%%%%%%%%%%%%%%%%%%%%%%%%%%%%%

\section{Turnpike property of the mean field control problem}
\label{s:turnpike_property}

In this section, we first present key estimates comparing the coefficient functions $P_T(\cdot)$, $\bar{\Pi}_T(\cd)$, $p_T(\cdot)$,
$\Theta_T^*(\cdot)$, $\bar \Th_T^*(\cd)$, and $\theta_T^*(\cdot)$, which arise in \textbf{Problem (MFC)$^T$} from Section \ref{s:mean_field_control_problem}, with the corresponding constant matrices and vectors $P$, $\bar{\Pi}$, $p$, $\Theta^*$, $\bar \Th^*$, and $\theta^*$ from \textbf{Problem (EMFC)} in Section \ref{s:ergodic_MF_control_problem}. Using these convergence results, we demonstrate the stochastic turnpike property between the optimal pairs of \textbf{Problem (MFC)$^T$} and \textbf{Problem (EMFC)}. Finally, we establish a turnpike property at the level of value functions.

\begin{proposition}
\label{p:convergence_Riccati_T_infinity}
Assume \textnormal{\textbf{(H1)-(H2)}} hold. Let $(P_T(\cd), \bar{\Pi}_T(\cd), p_T(\cd))$ be the solution to the system of equations \eqref{eq:Riccati_MC}, and let $(P, \bar{\Pi}, p)$ be the solution to the stationary system \eqref{eq:Riccati_ergodic}. Then, there exist positive constants $K$ and $\lambda$, independent of $t$ and $T$, such that
\begin{equation}
\label{eq:convergence_Riccati_T_infinity}
\big\|P_T(t) - P \big\| + \big\|\bar{\Pi}_T(t) - \bar{\Pi} \big\| + \big|p_T(t) - p \big| \les K e^{-\lambda (T-t)}, \quad \forall t \in [0, T].
\end{equation}
In particular, $P_T(\cd)$, $\bar{\Pi}_T(\cd)$, and $p_T(\cd)$ are uniformly bounded on $[0, T]$, i.e., there exists $K > 0$, independent of $T$, such that
$$\sup_{t \in [0, T]} \big(\|P_T(t)\| + \|\bar{\Pi}_T(t)\| + |p_T(t)|\big) \les K.$$
\end{proposition}
\begin{proof}
By an argument analogous to that of Theorem 4.1 in \cite{Sun-Wang-Yong-2022} (see also Theorem 5.6 in \cite{Sun-Yong-2024-Periodic}), we first obtain the estimate
$$\big\|P_T(t) - P \big\| \les K e^{-\lambda (T-t)}, \quad \forall t \in [0, T]$$
for some constants $K, \lambda > 0$ independent of $T$. In a similar manner, combining the same exponential stability idea with Theorem 4.1 in \cite{Sun-Yong-2024} yields
$$\big\|\bar{\Pi}_T(t) - \bar{\Pi} \big\| \les K e^{-\lambda (T-t)}, \quad \forall t \in [0, T].$$ 
Using the above two estimates, since $\Pi_T(t) = \bar{\Pi}_T(t) - P_T(t)$ and $\Pi= \bar{\Pi} - P$, we also have $\|\Pi_T(t) - \Pi\| \les K e^{-\lambda(T-t)}$ for all $t \in [0, T]$. Finally, invoking Theorem 4.1 of \cite{Jian-Jin-Song-Yong-2024} gives
$$\big|p_T(t) - p \big| \les K e^{-\lambda (T-t)}, \quad \forall t \in [0, T].$$
Moreover, because $P$ and $\bar{\Pi}$ are constant matrices and $p$ is a constant vector, the uniform boundedness of $P_T(\cd)$, $\bar{\Pi}_T(\cd)$, and $p_T(\cd)$ is immediate.
\end{proof}

The corresponding estimates for the coefficient functions associated with the optimal pairs $(X_T(\cd), u_T(\cd))$ and $(\bar{X}(\cd), \bar{u}(\cd))$ follow directly.

\begin{corollary}
\label{c:convergence_coefficients_control}
Let \textnormal{\textbf{(H1)-(H2)}} hold. There exist positive constants $K$ and $\lambda$, independent of $t$ and $T$, such that
\begin{equation}
\label{eq:convergence_coefficient_control}
\big\|\Theta_T^*(t) - \Theta^* \big\| + \big\|\bar{\Theta}_T^*(t) - \bar{\Theta}^* \big\| + \big|\theta_T^*(t) - \theta^* \big| \les K e^{-\lambda (T-t)}, \quad \forall t \in [0, T],
\end{equation}
where $(\Theta_T^*(\cd), \bar{\Theta}_T^*(\cd), \theta_T^*(\cd))$ and $(\Theta^*, \bar{\Theta}^*, \theta^*)$ are defined in \eqref{eq:theta_T_star} and \eqref{eq:theta_star}, respectively.
\end{corollary}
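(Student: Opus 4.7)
The plan is to transfer the exponential decay in Proposition \ref{p:convergence_Riccati_T_infity} to the feedback coefficients via Lipschitz estimates on bounded sets, exploiting the fact that each of $\Theta_T^*(t)$, $\bar{\Theta}_T^*(t)$, $\theta_T^*(t)$ is a rational function of $(P_T(t), \bar{\Pi}_T(t), p_T(t))$ whose only nonlinearity comes from $\mathcal{R}(\cd)^{-1}$. The routine nature of the argument means there is no real analytic obstacle; the only point requiring care is bookkeeping to ensure that every constant produced is genuinely independent of $T$.

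First I would record a $T$-independent invertibility bound for $\mathcal{R}(\cd)$. Since $R \in \dbS^m_{++}$ by Assumption \textbf{(H1)} and $P_T(t), P \in \dbS^n_{+}$ by Proposition \ref{p:solvability_MFC_finite_horizon}(i) and Proposition \ref{p:solvability_Bellman_equation_ergodic}(i), one has $\mathcal{R}(P_T(t)) \ges R$ and $\mathcal{R}(P) \ges R$, hence
\begin{equation*}
\bigl\| \mathcal{R}(P_T(t))^{-1} \bigr\| + \bigl\| \mathcal{R}(P)^{-1} \bigr\| \les \frac{2}{\lambda_{\min}(R)}, \qquad \forall t \in [0, T],
\end{equation*}
uniformly in $T$. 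Combined with the uniform bounds on $(P_T(\cd), \bar{\Pi}_T(\cd), p_T(\cd))$ furnished by Proposition \ref{p:convergence_Riccati_T_infity}, this yields uniform bounds on $\mathcal{S}(P_T(t))$, $\hat{\mathcal{S}}(P_T(t), \bar{\Pi}_T(t))$ and $B^\top p_T(t) + D^\top P_T(t) \sigma + r$ as well.

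Next, I would decompose each difference via the standard identity $A^{-1} - B^{-1} = A^{-1}(B - A) B^{-1}$. For $\Theta_T^*(t) - \Theta^*$, write
\begin{equation*}
\Theta_T^*(t) - \Theta^* = -\mathcal{R}(P_T(t))^{-1} \bigl[ \mathcal{S}(P_T(t)) - \mathcal{S}(P) \bigr] - \bigl[ \mathcal{R}(P_T(t))^{-1} - \mathcal{R}(P)^{-1} \bigr] \mathcal{S}(P).
\end{equation*}
Since $\mathcal{R}(\cd)$ and $\mathcal{S}(\cd)$ are affine in $P$ with coefficients depending only on the system matrices, each bracket is $O(\|P_T(t) - P\|)$, and the uniform bounds above then give $\| \Theta_T^*(t) - \Theta^* \| \les K \|P_T(t) - P\|$. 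The same decomposition applied to $\bar{\Theta}_T^*(t) - \bar{\Theta}^*$, together with the joint affine dependence of $\hat{\mathcal{S}}(\cd, \cd)$ on $(P, \bar{\Pi})$, yields $\| \bar{\Theta}_T^*(t) - \bar{\Theta}^* \| \les K (\| P_T(t) - P \| + \| \bar{\Pi}_T(t) - \bar{\Pi} \|)$; the analogous manipulation for $\theta_T^*(t) - \theta^*$, using that $B^\top p + D^\top P \sigma + r$ is affine in $(P, p)$, gives $|\theta_T^*(t) - \theta^*| \les K (\|P_T(t) - P\| + |p_T(t) - p|)$.

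Summing these three estimates and substituting the exponential bound \eqref{eq:convergence_Riccati_T_infity} produces \eqref{eq:convergence_coefficient_control}, with $\lambda$ inherited unchanged from Proposition \ref{p:convergence_Riccati_T_infity} and with $K$ depending only on $\lambda_{\min}(R)$, the system matrices, and the uniform bound on $(P_T(\cd), \bar{\Pi}_T(\cd), p_T(\cd))$. The $T$-independence of all constants follows directly from the $T$-independence asserted in Proposition \ref{p:convergence_Riccati_T_infity} together with the $T$-independent lower bound $\mathcal{R}(P_T(t)) \ges R$.
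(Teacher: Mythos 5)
Your proof is correct and is precisely the Lipschitz-transfer argument that the paper invokes by deferring to Theorem~4.1 of \cite{Bayraktar-Jian-2025}: the feedback gains are rational in $(P_T(t), \bar{\Pi}_T(t), p_T(t))$, the only nonlinearity is $\mathcal R(\cdot)^{-1}$, and the $T$-uniform lower bound $\mathcal R(P_T(t)) \ges R$ together with the uniform bounds from Proposition~\ref{p:convergence_Riccati_T_infity} makes the resolvent-identity bookkeeping go through with $T$-independent constants. Your write-up supplies exactly the details the paper delegates to the citation, so there is nothing to flag.
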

\begin{proof}
The claim follows by applying the same argument as in Theorem 4.1 of \cite{Bayraktar-Jian-2025}.
\end{proof}

We now state the first main result of this paper, which establishes a turnpike property characterizing the connection between the optimal pair of \textbf{Problem (MFC)$^T$} and that of \textbf{Problem (EMFC)}.

\begin{theorem}
\label{t:turnpike_property}
Suppose that \textnormal{\textbf{(H1)-(H2)}} hold. Let $(X_T(\cd), u_T(\cd))$ denote the optimal pair for \textnormal{\textbf{Problem (MFC)$^T$}} with initial state $\xi \in L^4_{\cF_0}$ satisfying $\cL(\xi) = \mu_0 \in \cP_4(\dbR^n)$, and let $(\bar X(\cd), \bar u(\cd))$ be the optimal pair for \textnormal{\textbf{Problem (EMFC)}} with an arbitrary initial state $\bar{\xi} \in L^4_{\cF_0}$ such that $\cL(\bar{\xi}) = \mu_0$, where $\bar{\xi}$ is independent of $W(\cd)$ and $W^0(\cd)$. Then, there exist positive constants $K$ and $\lambda$, independent of $T$, such that
\begin{equation}
\label{eq:strong_turnpike}
\dbE \big[|X_{T}(t) - \bar X (t)|^2
+ |u_{T} (t) - \bar u (t)|^2
\big] \les K \big(e^{-\lambda t} + e^{-\lambda(T - t)} \big), \quad \forall t \in [0, T].
\end{equation}
Moreover, if $\xi = \bar{\xi}$, we have
\begin{equation}
\label{eq:strong_turnpike_2}
\dbE \big[|X_{T}(t) - \bar X (t)|^2
+ |u_{T} (t) - \bar u (t)|^2
\big] \les K e^{-\lambda(T - t)}, \quad \forall t \in [0, T].
\end{equation}
\end{theorem}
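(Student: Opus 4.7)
The strategy is to pass to the difference $Y(t) := X_T(t) - \bar X(t)$, show that it solves a linear conditional McKean--Vlasov SDE whose closed-loop coefficients are exponentially close (in $T - t$) to the stationary stabilizing ones from \textbf{Problem (EMFC)}, and whose forcing is $O(e^{-\lambda(T-t)})$ in $L^2$. Using the feedback representations \eqref{eq:optimal_control_MFC} and \eqref{eq:optimal_control_EC}, a direct algebraic rearrangement gives
\begin{equation*}
u_T(t) - \bar u(t) = \Theta_T^*(t)\bigl(Y(t) - \dbE[Y(t)|\cF_t^0]\bigr) + \bar\Theta_T^*(t)\,\dbE[Y(t)|\cF_t^0] + \delta u(t),
\end{equation*}
where the ``coefficient mismatch'' residual is $\delta u(t) := (\Theta_T^*(t) - \Theta^*)(\bar X(t) - \dbE[\bar X(t)|\cF_t^0]) + (\bar\Theta_T^*(t) - \bar\Theta^*)\dbE[\bar X(t)|\cF_t^0] + (\theta_T^*(t) - \theta^*)$. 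By Corollary \ref{c:convergence_coefficients_control}, combined with the uniform bound $\dbE[|\bar X(t)|^2] \les K$ from Lemma \ref{l:moment_boundedness}, one has $\dbE[|\delta u(t)|^2] \les K e^{-2\lambda(T-t)}$. Substituting the expression for $u_T(t) - \bar u(t)$ into the difference of \eqref{eq:optimal_path_MFC} and \eqref{eq:optimal_path_ergodic} produces a linear MF-SDE for $Y$ whose drift and $W$-diffusion use the time-varying feedback coefficients $(\Theta_T^*(t), \bar\Theta_T^*(t))$ and whose forcing is $(B,D)$-multiples of $\delta u(t)$, while the $W^0$-diffusion is the unperturbed $\Gamma Y + \bar\Gamma\,\dbE[Y|\cF^0]$.

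I then split $Y(t) = \widetilde Y(t) + m(t)$, where $m(t) := \dbE[Y(t)|\cF_t^0]$ and $\widetilde Y(t) := Y(t) - m(t)$. Exploiting the independence of $W$ and $W^0$ and the vanishing of conditional means, $m(\cd)$ solves a linear SDE driven only by $W^0$, with drift $(A + \bar A + B\bar\Theta_T^*(t))m + B\,\dbE[\delta u|\cF^0]$ and diffusion $(\Gamma + \bar\Gamma)m$, whereas $\widetilde Y(\cd)$ solves a centered linear SDE involving $A + B\Theta_T^*(t)$, $C + D\Theta_T^*(t)$, and $\Gamma$, forced by the centered part of $\delta u$. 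Writing $A + B\Theta_T^*(t) = (A + B\Theta^*) + B(\Theta_T^*(t) - \Theta^*)$ and analogously for the other coefficients, and absorbing the $O(e^{-\lambda(T-t)})$ time-varying pieces into the forcing, each system becomes an autonomous stable MF-SDE (stable because $\Theta^*$ and $\bar\Theta^*$ are stabilizers of \eqref{eq:homo_sde_2} and \eqref{eq:homo_sde_1} by Proposition \ref{p:solvability_Bellman_equation_ergodic}(i)) perturbed by a forcing with $L^2$-norm of order $e^{-\lambda(T-t)}$. Applying It\^o's formula with the Lyapunov functionals $\dbE[\lan P\widetilde Y, \widetilde Y\ran]$ and $\dbE[\lan \bar\Pi\, m, m\ran]$, where $P, \bar\Pi \in \dbS^n_{++}$ solve the algebraic Riccati system \eqref{eq:Riccati_ergodic}, and using the Lyapunov identities $PA_c + A_c^\top P + C_c^\top P C_c + \Gamma^\top P \Gamma \les -(Q - S^\top R^{-1} S) \les -\alpha P$ (and its mean-field analogue for $\bar\Pi$) furnished by Assumption \textbf{(H1)}, a Gr\"onwall bootstrap then yields
\begin{equation*}
\dbE[|m(t)|^2] + \dbE[|\widetilde Y(t)|^2] \les K e^{-\lambda t}\,\dbE[|Y(0)|^2] + K \int_0^t e^{-\lambda(t-s)} e^{-2\lambda(T-s)}\, ds \les K\bigl(e^{-\lambda t} + e^{-\lambda(T-t)}\bigr).
\end{equation*}

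The state estimate in \eqref{eq:strong_turnpike} follows via $\dbE[|Y|^2] = \dbE[|\widetilde Y|^2] + \dbE[|m|^2]$, and the control estimate is immediate from the displayed representation of $u_T(t) - \bar u(t)$, using the uniform boundedness of $\Theta_T^*(\cd)$ and $\bar\Theta_T^*(\cd)$ supplied by Proposition \ref{p:convergence_Riccati_T_infity} together with the bound on $\delta u$. The refined bound \eqref{eq:strong_turnpike_2} under $\xi = \bar\xi$ is obtained by observing that $Y(0) = 0$, which erases the $e^{-\lambda t}$ contribution in the Gr\"onwall step. The main technical obstacle is to convert the merely qualitative MF-$L^2$-stabilizability granted by Assumption \textbf{(H2)} into a quantitative exponential rate $\lambda > 0$ that is uniform in $T$; this is precisely where the strict positive definiteness of $P$ and $\bar\Pi$ and the algebraic Riccati identities are used, and some care is required to verify that the small time-dependent perturbation $B(\Theta_T^*(t) - \Theta^*)$ of the closed-loop drift does not destroy the Lyapunov dissipation uniformly in $T$.
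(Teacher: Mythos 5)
Your proposal is correct and follows essentially the same strategy as the paper: estimate $\dbE[|\dbE[\hat X(t)|\cF_t^0]|^2]$ first via a Lyapunov argument for the conditional-mean dynamics (which are driven only by $W^0$ with stable drift $A+\bar A+B\bar\Theta^*$), then feed that bound into a second Lyapunov-plus-Gr\"onwall estimate using $P$ from the algebraic Riccati system, absorbing the $O(e^{-\lambda(T-t)})$ coefficient and forcing mismatches furnished by Corollary \ref{c:convergence_coefficients_control} and Lemma \ref{l:moment_boundedness}. The only cosmetic deviations are that the paper works with $\hat X$ directly rather than the orthogonal split $\hat X = \widetilde Y + m$, and uses a bespoke Lyapunov solution $P_1$ for the conditional-mean equation instead of reusing $\bar\Pi$ from \eqref{eq:Riccati_ergodic}; both choices yield a uniform-in-$T$ dissipation rate, since (H1) forces $\bar\Pi\hat A_c+\hat A_c^\top\bar\Pi+\hat\Gamma^\top\bar\Pi\hat\Gamma\les-(Q+\bar Q-S^\top R^{-1}S)<0$ with $\hat A_c=A+\bar A+B\bar\Theta^*$.
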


\begin{proof}
For any $t \in [0, T]$, we denote
$\h X(t) = X_T(t) - \bar X(t)$. From the closed-loop system \eqref{eq:optimal_path_MFC} for $X_T(\cd)$ and the dynamics in \eqref{eq:optimal_path_ergodic} for $\bar{X}(\cd)$, we obtain
\begin{equation*}
\begin{aligned}
d \h X(t) &= \big\{(A + B \Th_T^*(t))(\h X(t) - \dbE[\h X(t) |\cF_t^0]) + B(\Th_T^*(t) - \Th^*) (\bar X(t) - \dbE[\bar X(t)|\cF_t^0]) \\
& \hspace{0.5in} + (A + \bar A + B \bar \Th_T^*(t)) \dbE[\h X(t)|\cF_t^0] + B(\bar \Th_T^*(t) - \bar \Th^*) \dbE[\bar X(t)|\cF_t^0] + B(\theta_T^*(t) - \theta^*) \big\} dt \\
& \hspace{0.2in} + \big\{(C + D \Th_T^*(t))(\h X(t) - \dbE[\h X(t)|\cF_t^0]) + D(\Th_T^*(t) - \Th^*) (\bar X(t) - \dbE[\bar X(t)|\cF_t^0]) \\
& \hspace{0.5in} + (C + \bar C + D \bar \Th_T^*(t)) \dbE[\h X(t)|\cF_t^0] + D(\bar \Th_T^*(t) - \bar \Th^*) \dbE[\bar X(t)|\cF_t^0] + D(\theta_T^*(t) - \theta^*) \big\} dW(t) \\
& \hspace{0.2in} + \big\{\Gamma \h X(t) + \bar{\Gamma} \dbE[\h X(t)|\cF_t^0] \big\} dW^0(t) 
\end{aligned} 
\end{equation*}
with the initial state
$\h X(0) = X_T(0) - \bar X(0) = \xi - \bar{\xi}$. Consequently, the conditional expectation $\dbE[\h X(t)|\cF_t^0]$ satisfies the following SDE
\begin{equation*}
\begin{aligned}
d \dbE[\h X(t)|\cF_t^0] &= \big\{(A + \bar A + B \bar \Th^*) \dbE[\h X(t)|\cF_t^0] + B(\bar \Th_T^*(t) - \bar \Th^*) \dbE[\h X(t)|\cF_t^0] \\
& \hspace{0.5in} + B(\bar \Th_T^*(t) - \bar \Th^*) \dbE[\bar X(t)|\cF_t^0] + B(\theta_T^*(t) - \theta^*)\big\} dt \\
& \hspace{0.3in} + (\Gamma + \bar{\Gamma}) \dbE[\h X(t)|\cF_t^0] dW^0(t)
\end{aligned}
\end{equation*}
with the initial condition
$\dbE[\h X(0)|\cF_0^0] = \dbE[\xi-\bar{\xi} |\cF_0^0] = \dbE[\xi-\bar{\xi}] = 0$. Since the system $[A + \bar{A} + B \bar{\Theta}^*, \Gamma + \bar{\Gamma}]$ is $L^2$-exponentially stable, Proposition 3.6 of \cite{Huang-Li-Yong-2015} and an argument similar to that of Lemma \ref{l:moment_boundedness} show that the Lyapunov equation
$$P_1 (A + \bar{A} + B \bar{\Theta}^*) +(A + \bar{A} + B \bar{\Theta}^*)^\top P_1 + (\Gamma + \bar{\Gamma})^\top P_1 (\Gamma + \bar{\Gamma}) + I_n = 0$$
admits a unique solution $P_1 \in \dbS^n_{++}$. Applying It\^o's formula, we have
\begin{equation*}
\begin{aligned}
& \frac{d}{dt} \dbE\big[ \big\lan P_1 \dbE \big[\h X(t)|\cF_t^0 \big], \dbE \big[\h X(t)|\cF_t^0 \big] \big\ran \big] \\
= \ & \dbE\Big[\big\lan \big(P_1 (A + \bar{A} + B \bar{\Theta}^*) +(A + \bar{A} + B \bar{\Theta}^*)^\top P_1 + (\Gamma + \bar{\Gamma})^\top P_1 (\Gamma + \bar{\Gamma})\big) \dbE \big[\h X(t)|\cF_t^0 \big], \dbE \big[\h X(t)|\cF_t^0 \big] \big\ran \\
& \hspace{0.5in} + \big\lan \big\{ P_1 B (\bar{\Theta}_T^*(t) - \bar{\Theta}^*) + (\bar{\Theta}_T^*(t) - \bar{\Theta}^*)^\top B^\top P_1 \big\} \dbE \big[\h X(t)|\cF_t^0 \big], \dbE \big[\h{ X}(t)|\cF_t^0 \big] \big\ran \\
& \hspace{0.5in} + \big\lan 2 P_1 B (\bar{\Theta}_T^*(t) - \bar{\Theta}^*) \dbE \big[\bar X(t)|\cF_t^0 \big] + 2 P_1 B(\theta_T^*(t) - \theta^*), \dbE \big[\h X(t)|\cF_t^0 \big] \big\ran \\
\les \ & \dbE \Big[ - \big|\dbE \big[\h X(t)|\cF_t^0 \big]\big|^2 + 2 \|P_1\| \|B\| \big\{\|\bar{\Theta}_T^*(t) - \bar{\Theta}^*\| \big|\dbE \big[\h X(t)|\cF_t^0 \big] \big|^2 + |\theta_T^*(t) -\theta^*| \big|\dbE \big[\h X(t)|\cF_t^0 \big] \big| \\
& \hspace{0.5in} + \|\bar{\Theta}_T^*(t) - \bar{\Theta}^*\| \big| \dbE \big[\bar{X}(t)|\cF_t^0 \big] \big| \big| \dbE \big[\h X(t)|\cF_t^0 \big] \big| \big\} \Big].
\end{aligned}
\end{equation*}
Then, using Young's inequality, the moment boundedness result in Lemma \ref{l:moment_boundedness}, and the estimates in \eqref{eq:convergence_coefficient_control}, we deduce that there exist constants $K> 0$ and $\lambda > 0$, independent of $t$ and $T$, such that
\begin{equation*}
\frac{d}{dt} \dbE\big[ \big\lan P_1 \dbE \big[\h X(t)|\cF_t^0 \big], \dbE \big[\h X(t)|\cF_t^0 \big] \big\ran \big] \les \Big(-\frac{1}{2} + K e^{-\lambda(T-t)} \Big) \dbE \big[\big|\dbE \big[\h X(t)|\cF_t^0 \big]\big|^2 \big] + K e^{-\lambda (T-t)}
\end{equation*}
for all $t \in [0, T]$. Let $\beta_1$ and $\beta_2$ be the largest and the smallest eigenvalues of $P_1$, respectively. Then, it is clear that
$$\beta_2 \dbE \big[\big|\dbE \big[\h X(t)|\cF_t^0 \big]\big|^2 \big] \les \dbE\big[ \big\lan P_1 \dbE \big[\h X(t)|\cF_t^0 \big], \dbE \big[\h X(t)|\cF_t^0 \big] \big\ran \big] \les \beta_1 \dbE \big[\big|\dbE \big[\h X(t)|\cF_t^0 \big]\big|^2 \big]$$
for all $t \in [0, T]$. Thus,
\begin{equation*}
\begin{aligned} 
& \frac{d}{dt} \dbE\big[ \big\lan P_1 \dbE \big[\h X(t)|\cF_t^0 \big], \dbE \big[\h X(t)|\cF_t^0 \big] \big\ran \big] \\
\les \ & \Big(-\frac{1}{2 \beta_1} + \frac{K}{\beta_2} e^{-\lambda(T-t)} \Big) \dbE\big[ \big\lan P_1 \dbE \big[\h X(t)|\cF_t^0 \big], \dbE \big[\h X(t)|\cF_t^0 \big] \big\ran \big] + K e^{-\lambda (T-t)},
\end{aligned}
\end{equation*}
which yields
\begin{equation*}
\dbE\big[ \big\lan P_1 \dbE \big[\h X(t)|\cF_t^0 \big], \dbE \big[\h X(t)|\cF_t^0 \big] \big\ran \big]
\les \int_0^t e^{ \int_s^t \big(-\frac{1}{2 \beta_1} + \frac{K}{\beta_2} e^{-\lambda(T-r)} \big) dr} K e^{-\lambda (T-s)} ds
\les K e^{-\lambda(T-t)}
\end{equation*}
for all $t \in [0, T]$ with possibly different constant $K$. Then, once again, since $P_1$ is positive definite, we conclude the following estimate:
\begin{equation}
\label{eq:moment_bounded_X_hat_mean}
\dbE \big[\big|\dbE \big[\h X(t)|\cF_t^0 \big]\big|^2 \big] \les K e^{-\lambda (T-t)}, \quad \forall t \in [0, T],
\end{equation}
for some positive constants $K$ and $\lambda$, independent of $T$.

Next, we show the turnpike estimate for $\dbE[|\h{X}(t)|^2]$. Let $P$ be the positive definite solution to the algebraic equation for $P$ in \eqref{eq:Riccati_ergodic}. By It\^o's formula,
\begin{equation*}
\begin{aligned}
& \frac{d}{dt} \dbE \big[\lan P \h X(t), \h X(t)\ran \big] \\
= \ & \dbE \big[\lan \{P(A + B \Th^*_T(t)) + (A + B \Th^*_T(t))^\top P + (C + D \Th^*_T(t))^\top P (C + D \Th^*_T(t)) + \Gamma^\top P \Gamma\} \h X(t), \h X(t) \ran  \\
& \hspace{0.5in} + 2 \lan \h X(t), H_2(t) \ran + \lan P H_1(t), H_1(t) \ran + \bar{\Gamma}^\top P \bar{\Gamma} \big|\dbE \big[\h X(t)|\cF_t^0 \big]\big|^2 \big],
\end{aligned}
\end{equation*}
where
\begin{equation*}
\begin{aligned}
H_1(t) &:= \big(\bar C + D \bar{\Th}_T^*(t) - D \Th_T^*(t) \big) \dbE \big[\h X(t)|\cF_t^0 \big] + D(\Th_T^*(t) - \Th^*) \big(\bar X(t) - \dbE \big[\bar{X}(t)|\cF_t^0 \big] \big) \\
& \hspace{0.5in} + D(\bar{\Th}_T^*(t) - \bar{\Th}^*) \dbE \big[\bar{X}(t)|\cF_t^0 \big] + D (\theta_T^*(t) - \theta^*),
\end{aligned}
\end{equation*}
and
\begin{equation*}
\begin{aligned}
H_2(t) &:= P\big(\bar A + B \bar{\Th}_T^*(t) - B \Th_T^*(t)\big) \dbE \big[\h X(t)|\cF_t^0 \big] + PB(\Th_T^*(t) - \Th^*) \big(\bar X(t) - \dbE \big[\bar{X}(t)|\cF_t^0 \big] \big) \\
& \hspace{0.5in} + PB(\bar{\Th}_T^*(t) - \bar{\Th}^*) \dbE \big[\bar{X}(t)|\cF_t^0 \big] + PB (\theta_T^*(t) - \theta^*) + (C + D \Th^*_T(t))^\top P H_1(t) \\
& \hspace{0.5in} + \Gamma^\top P \bar{\Gamma} \dbE \big[\h X(t)|\cF_t^0 \big].
\end{aligned}
\end{equation*}
Using the inequality \eqref{eq:moment_bounded_X_hat_mean} together with the results from Lemma \ref{l:moment_boundedness} and Corollary \ref{c:convergence_coefficients_control} again, we get the following estimate for $H_1(t)$ and $H_2(t)$:
\begin{equation*}
\begin{aligned}
\dbE \big[|H_1(t)|^2 + |H_2(t)|^2 \big] &\les K \big(\dbE \big[\big|\dbE \big[\h X(t)|\cF_t^0 \big]\big|^2 \big] + \|\Th_T^*(t) - \Th^*\|^2 + \|\bar{\Th}_T^*(t) - \bar{\Th}^* \|^2 + |\theta_T^*(t) - \theta^*|^2 \big) \\
& \les K e^{-\lambda (T-t)}, \quad \forall t \in [0, T]
\end{aligned}
\end{equation*}
for some $K > 0$ and $\lambda > 0$. Moreover, we observe that
\begin{equation*}
\begin{aligned}
& P(A+B\Th^*_T(t)) + (A+B\Th_T^*(t))^\top P
+(C+D\Th_T^*(t))^\top P (C+D\Th_T^*(t)) \\
= \ & P(A+B\Th^*) + (A+B\Th^*)^\top P + (C+D\Th^*)^\top P(C+D\Th^*) + H_3(t),
\end{aligned}
\end{equation*}
and
\begin{equation*}
\begin{aligned}
& P(A+B\Th^*) + (A+B\Th^*)^\top P + (C+D\Th^*)^\top P(C+D\Th^*) + \Gamma^\top P \Gamma \\
= \ & -(Q + S^\top \Th^* + (\Th^*)^\top S+(\Th^*)^\top R \Th^*) < 0,
\end{aligned}
\end{equation*}
where
\begin{equation*}
\begin{aligned}
H_3(t) &= PB(\Th_T^*(t) -\Th^*) +(B(\Th_T^*(t)-\Th^*))^\top P + (D(\Th_T^*(t) - \Th^*))^\top P (C+D\Th^*) \\
& \hspace{0.5in} +(C+D\Th_T^*(t))^\top PD(\Th_T^*(t)-\Th^*).
\end{aligned}
\end{equation*}
Using the estimates in Corollary \ref{c:convergence_coefficients_control}, we derive that there exist constants $K > 0$ and $\lambda > 0$ such that
\begin{equation*}
\|H_3(t)\| \les K e^{-\lambda (T - t)}, \quad \forall t \in [0, T].  
\end{equation*}
Let $\beta_3 > 0$ and $\beta_4 > 0$ denote the smallest and the largest eigenvalues of $P$, respectively, and let $\h{\beta}$ be the smallest eigenvalue of $Q + S^\top \Th^* + (\Th^*)^\top S+(\Th^*)^\top R \Th^*$. Then, it is straightforward that
\begin{equation}
\label{eq:eigen_inequality}
\beta_3 \dbE\big[|\h X(t)|^2\big] \les \dbE \big[\lan P \h X(t), \h X(t)\ran \big] \les \beta_4 \dbE\big[|\h X(t)|^2\big], \quad \forall t \in [0, T].
\end{equation}
Combining the above results and applying Young's inequality, we find that
\begin{equation*}
\begin{aligned}
\frac{d}{dt} \dbE \big[\lan P \h X(t), \h X(t)\ran \big] & \les \dbE \Big[\big(- \h{\beta} + Ke^{-\lambda(T-t)} \big) |\h X(t)|^2 + 2 |H_2(t)| |\h X(t)| + K|H_1(t)|^2 \\
& \hspace{0.5in} + K \big|\dbE \big[\h X(t)|\cF_t^0 \big]\big|^2 \Big] \\
& \les \Big(- \frac{\h{\beta}}{2} + Ke^{-\lambda(T-t)} \Big) \dbE \big[|\h X(t)|^2\big] + K e^{-\lambda(T-t)}
\end{aligned}
\end{equation*}
for all $t \in [0, T]$. By the inequality \eqref{eq:eigen_inequality},
$$\frac{d}{dt} \dbE \big[\lan P \h X(t), \h X(t)\ran \big] \les \Big(- \frac{\h{\beta}}{2 \beta_4} + \frac{K}{\beta_3} e^{-\lambda(T-t)} \Big) \dbE \big[\lan P \h X(t), \h X(t)\ran \big] + K e^{-\lambda(T-t)}.$$
Set $g(t) := - \frac{\h\beta}{2 \beta_4} + \frac{K}{\beta_3} e^{-\lambda(T-t)}$ for all $t \in [0, T]$. Then, for all $0 \les s \les t \les T$, the following estimate holds:
$$\exp \Big\{\int_s^t g(r) dr\Big\} = \exp \Big\{\frac{K}{\lambda \beta_3} e^{-\lambda T}(e^{\lambda t} - e^{\lambda s}) - \frac{\h{\beta}}{2 \beta_4}(t-s) \Big\} \les \exp \Big\{\frac{K}{\lambda \beta_3} - \frac{\h{\beta}}{2 \beta_4}(t-s) \Big\}.$$
Thus, we have
$$\int_0^t e^{\int_s^t g(r) dr} K e^{-\lambda (T-s)} ds \les K e^{-\lambda (T-t)}, \quad \forall t \in [0, T],$$
with possibly different constants $K, \lambda > 0$.
It follows that
\begin{equation*}
\begin{aligned}
\dbE \big[\lan P \h X(t), \h X(t)\ran \big] & \les \dbE \big[\lan P \h X(0), \h X(0)\ran \big] e^{\int_0^t g(r) dr} + \int_0^t e^{\int_s^t g(r) dr} K e^{-\lambda (T-s)} ds \\
& \les K \big(e^{-\lambda t} + e^{-\lambda(T-t)} \big)
\end{aligned}
\end{equation*}
for all $t \in [0, T]$. Moreover, if $\xi = \bar{\xi}$, then $\h{X}(0) = 0$, which implies that
$$\dbE \big[\lan P \h X(t), \h X(t)\ran \big] \les K e^{-\lambda(T-t)}, \quad \forall t \in [0, T].$$
Hence, since $P$ is positive definite, there exist positive constants $K$ and $\lambda$, independent of $T$, such that
\begin{equation*}
\dbE \big[|\h X(t)|^2 \big] \les K \big(e^{-\lambda t} + e^{-\lambda(T-t)} \big), \quad \forall t \in [0, T].
\end{equation*}
Next, the preceding estimate and the moment bound for $\bar X(\cd)$ in Lemma \ref{l:moment_boundedness} yield
$$\dbE \big[|X_T(t)|^2 \big] \les K, \quad \forall t \in [0, T].$$
Note that for all $t \in [0, T]$, the difference $u_T(t) - \bar u(t)$ satisfies the equality
\begin{equation*}
\begin{aligned}
u_T(t) - \bar u(t) &= \Th_T^*(t) \h X(t) + (\Th_T^*(t) - \Th^*) \bar X(t) + (\bar{\Th}_T^*(t) - \Th_T^*(t)) \dbE \big[\h X(t)|\cF_t^0 \big] \\
& \hspace{0.5in} + (\bar{\Th}_T^*(t) - \bar{\Th}^* + \Th^* - \Th_T^*(t)) \dbE \big[\bar{X}(t)|\cF_t^0 \big] + \theta_T^*(t) - \theta^*.
\end{aligned}
\end{equation*}
By applying the results in Lemma \ref{l:moment_boundedness} and Corollary \ref{c:convergence_coefficients_control} again, we conclude that the turnpike property for the optimal control holds. Specifically,
\begin{equation*}
\dbE \big[|u_T(t) - \bar u(t)|^2 \big] \les K \big(e^{-\lambda t} + e^{-\lambda (T-t)} \big), \quad \forall t \in [0, T].
\end{equation*}
In addition, if $\xi = \bar{\xi}$, the above estimate can be reduced to
\begin{equation*}
\dbE \big[|u_T(t) - \bar u(t)|^2 \big] \les K e^{-\lambda (T-t)}, \quad \forall t \in [0, T].
\end{equation*}
This completes the proof. 
\end{proof}

\begin{corollary}
\label{c:convergence_of_value_function_finite_ergodic}
Assume that \textnormal{\textbf{(H1)-(H2)}} hold. Let $J_{T}(\mu_0; \bar u(\cd))$ denote the cost functional \eqref{eq:cost_finite_N_infinity} evaluated along the optimal control
$\bar u(\cd)$ in \eqref{eq:optimal_path_ergodic} for \textnormal{\textbf{Problem (EMFC)}} over $[0, T]$, i.e.,
$$J_T(\mu_0; \bar u(\cd)) := \dbE \Big[\int_0^T f\big(\bar X(s), \dbE[\bar X(s)|\cF_s^0], \bar u(s) \big) d s\Big].$$
Then, for all $\mu_0 \in \cP_4(\dbR^n)$, the following estimate holds:
$$0 \les J_T(\mu_0; \bar u(\cd)) - U_T(\mu_0) = O(1).$$
Furthermore, the constant $c_0$ in \eqref{eq:c_0} coincides with the long-run average value: for all $\mu_0 \in \cP_4(\dbR^n)$,
$$c_0 = \lim_{T\to \infty} \frac{1}{T} U_{T}(\mu_0).$$
\end{corollary}
\begin{proof}
The proof follows the same argument as in Theorem 6.5 of \cite{Jian-Jin-Song-Yong-2024}.
\end{proof}

\section{Uniform-in-time convergence of the social optimization problem}
\label{s:convergence_of_SO_N}

In this section, we investigate the convergence of \textbf{Problem (SO-N)$^T$} in \eqref{eq:SO-N_finite} to \textbf{Problem (MFC)$^T$} in \eqref{eq:value_N_infinity}, and further to
\textbf{Problem (EMFC)} in \eqref{eq:ergodic_control_problem}. The essential step is to derive suitable quantitative estimates linking the solution of the system \eqref{eq:Riccati_N_agent_rescaling}, associated with the social optimization problem, to the solutions of the systems \eqref{eq:Riccati_MC} and \eqref{eq:Riccati_ergodic}, arising from the finite-horizon mean field control problem and its ergodic counterpart, respectively.

The following proposition provides such estimates, comparing the solutions of \eqref{eq:Riccati_N_agent_rescaling} and \eqref{eq:Riccati_MC}. Importantly, these estimates are uniform with respect to the time horizon. The detailed proof is given in Appendix \ref{s:proof_convergence_Riccati_N_infinity}.

\begin{proposition}
\label{p:convergence_Riccati_N_infinity}
Assume that \textnormal{\textbf{(H1)-(H2)}} hold. Let $(P_T^N(\cd), \Pi_T^N(\cd), p_T^N(\cd))$ be the solution to the system of ODEs \eqref{eq:Riccati_N_agent_rescaling}, and let $(P_T(\cd), \bar{\Pi}_T(\cd), p_T(\cd))$ be the solution to the system \eqref{eq:Riccati_MC}. Then, there exist constants $N^* > 0$ and $K > 0$, independent of $T$ and $N$, such that for all $N > N^*$, the following estimates hold:
\begin{equation}
\label{eq:convergence_Riccati_N_infinity}
\big\|P_T^N(t) - P_T(t) \big\| + \big\|\Pi_T^N(t) - \Pi_T(t) \big\| + \big|p_T^N(t) - p_T(t) \big| \les \frac{K}{N}, \quad \forall t \in [0, T],
\end{equation}
where $\Pi_T(t) = \bar{\Pi}_T(t) - P_T(t)$.
\end{proposition}

Proposition \ref{p:convergence_Riccati_N_infinity} yields the following estimates.

\begin{corollary}
\label{c:convergence_Theta_N_infinity}
Assume that \textnormal{\textbf{(H1)-(H2)}} hold. Then, there exist constants $N^* > 0$ and $K > 0$, independent of $T$ and $N$, such that for all $N > N^*$, the following estimates hold:
\begin{equation}
\label{eq:convergence_Theta_N_infinity}
\big\|\Theta_T^N(t) - \Theta^*_T(t) \big\| + \big\|\bar{\Theta}_T^N(t) - \bar{\Theta}^*_T(t) \big\| + \big|\theta_T^N(t) - \theta^*_T(t) \big| \les \frac{K}{N}, \quad \forall t \in [0, T],
\end{equation}
and
\begin{equation*}
\big\|\Theta_T^N(t) \big\| + \big\|\bar{\Theta}_T^N(t) \big\| + \big|\theta_T^N(t) \big| + \big\|\bar{\theta}_T^N(t) \big\| \les K, \quad \forall t \in [0, T],
\end{equation*}
where $(\Theta_T^N(\cd), \bar{\Theta}_T^N(\cd), \theta_T^N(\cd), \bar{\theta}_T^N(\cd))$ is defined in \eqref{eq:theta_T_N}, and $(\Theta^*_T(\cd), \bar{\Theta}^*_T(\cd), \theta^*_T(\cd))$ is given in \eqref{eq:theta_T_star}.
\end{corollary}
\begin{proof}
The desired results follow by an argument similar to that of Corollary \ref{c:convergence_coefficients_control}, together with the estimates \eqref{eq:uniform_bound_P_N}, \eqref{eq:uniform_bound_p_N}, and \eqref{eq:convergence_Riccati_N_infinity}.
\end{proof}

We now establish a convergence result comparing the optimal pair for \textbf{Problem (SO-N)$^T$} with that for \textbf{Problem (MFC)$^T$}. To proceed, we impose the following assumption on the initial state of each agent.

\ms

{\bf(H3)} \textit{$(\xi^1, \dots, \xi^N)$ are i.i.d. $\dbR^n$-valued random variables with common law $\mu_0 \in \cP_4(\dbR^n)$, and are independent of the collection of Brownian motions $\{W^i(t): i = 0, 1, \dots, N\}$.}

\begin{theorem}
\label{t:convergence_path_social_optimal_MFC}
Assume that \textnormal{\textbf{(H1)-(H2)-(H3)}} hold. For each $i \in \{1, \dots, N\}$, let $(X_T^{i, N}(\cd), u_T^{i, N}(\cd))$ denote the optimal pair for the $i$-th agent in \textnormal{\textbf{Problem (SO-N)$^T$}} with initial state $\xi^i$, and let $(X_T^i(\cd), u_T^i(\cd))$ be the corresponding realization of the optimal pair for \textnormal{\textbf{Problem (MFC)$^T$}}, driven by the same Brownian motion $W^i(\cd)$ and with the same initial state $\xi^i$. Then, there exist constants $N^* > 0$ and $K > 0$, independent of $T$ and $N$, such that for all $N > N^*$, 
\begin{equation}
\label{eq:convergence_social_optima_MFC}
\sup_{i \in \{1, \dots, N\}} \sup_{t \in [0, T]} \wt{\dbE} \big[|X_T^{i, N}(t) - X_T^i(t)|^2
+ |u_T^{i, N}(t) - u_T^i(t)|^2
\big] \les \frac{K}{N}.
\end{equation}
\end{theorem}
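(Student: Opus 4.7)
The strategy is a coupling argument: we realize the MFC optima $X_T^i(\cd)$ using the same Brownian pair $(W^i(\cd), W^0(\cd))$ and the same initial state $\xi^i$ that drive $X_T^{i,N}(\cd)$, compare the two closed-loop dynamics via the explicit feedback representations of Propositions \ref{p:solvability_SO_N} and \ref{p:solvability_MFC_finite_horizon}, and then close the estimate by Gr\"onwall's inequality. Under Assumption \textbf{(H3)}, the joint law of the family $(X_T^{i,N}(\cd), u_T^{i,N}(\cd), X_T^i(\cd), u_T^i(\cd))_{i=1}^N$ is invariant under permutations of the agent index, so it suffices to control the $i=1$ case together with the empirical averages of all $N$ errors. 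The first step is to translate the Riccati convergence \eqref{eq:convergence_Riccati_N_infity} into estimates for the feedback coefficients: since $\cR(P_T(t))^{-1}$ and all Riccati quantities are uniformly bounded on $[0, T]$ by Proposition \ref{p:solvability_MFC_finite_horizon}(i), the maps $\Theta(\cd)$, $\bar\Theta(\cd,\cd)$, $\theta(\cd,\cd)$ of \eqref{eq:function_theta} are locally Lipschitz, and Proposition \ref{p:convergence_Riccati_N_infity} yields
\[
\big\|\Theta_T^N(t) - \Theta_T^*(t)\big\| + \big\|\bar\Theta_T^N(t) - \bar\Theta_T^*(t)\big\| + \big|\theta_T^N(t) - \theta_T^*(t)\big| + \big\|\bar\theta_T^N(t)\big\| \;\les\; \frac{K}{N}\, e^{K(T-t)}
\]
uniformly on $[0, T]$ for all sufficiently large $N$, with $K$ independent of $N$ and $T$.

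Next, set $\delta X^i(t) := X_T^{i,N}(t) - X_T^i(t)$ and $m_T(t) := \dbE[X_T^i(t)|\cF_t^0]$, which by exchangeability does not depend on $i$. Subtracting the two closed-loop SDEs produces an equation of the form
\[
d\delta X^i(t) = \big\{M_1(t)\delta X^i(t) + M_2(t)[X_T^{(N)}(t) - m_T(t)] + R^i(t)\big\}dt + (\text{analogous } dW^i \text{ and } dW^0 \text{ diffusion terms}),
\]
with $\delta X^i(0) = 0$, where $M_1, M_2$ are uniformly bounded matrix-valued functions and $R^i(t)$ is a sum of products of the feedback-coefficient mismatches from the previous step with quantities ($X_T^i(t)$, $m_T(t)$, and the constant $1$) that are $L^2$-bounded uniformly in $t \in [0, T]$; in particular $\dbE[|R^i(t)|^2] \les Ke^{KT}/N^2$. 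The key auxiliary bound is the decomposition
\[
X_T^{(N)}(t) - m_T(t) \;=\; \frac{1}{N}\sum_{j=1}^N \delta X^j(t) \;+\; \Big(\frac{1}{N}\sum_{j=1}^N X_T^j(t) - m_T(t)\Big),
\]
in which the second term is the empirical average of $N$ samples that are i.i.d.\ conditionally on the common noise $W^0$ and have finite second moment uniform in $t \in [0, T]$; the classical conditional law-of-large-numbers gives its squared $L^2$ norm $\les Ke^{KT}/N$.

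Finally, applying It\^o's formula to $|\delta X^i(t)|^2$, taking expectation, handling cross terms via Young's inequality, and using permutation invariance to dominate $\dbE\big[|\tfrac{1}{N}\sum_j \delta X^j(t)|^2\big]$ by $\psi(t) := \dbE[|\delta X^1(t)|^2]$, one obtains a linear integral inequality of the form $\psi(t) \les \int_0^t K\psi(s)\, ds + Ke^{KT}/N$; Gr\"onwall then yields $\psi(t) \les Ke^{KT}/N$ uniformly on $[0, T]$. The bound for $u_T^{1,N} - u_T^1$ follows by substituting into the two feedback representations and invoking the estimates already obtained. The main obstacle is the two-way coupling of the $N$ error processes through the empirical average $X_T^{(N)}(\cd)$: no single $\delta X^i$ can be analyzed in isolation, and the claimed $1/N$ rate can only be recovered by exploiting permutation symmetry (to reduce back to $\psi(t)$) in combination with the conditional law-of-large-numbers bound on the MFC empirical average, which is precisely what supplies the quantitative $1/N$ rate.
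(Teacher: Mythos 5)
Your argument is correct, and its organization genuinely differs from the paper's. The paper proceeds in three explicit steps: (a) an a~priori Gr\"onwall estimate establishing $\dbE[|X_T^{i,N}(t)|^2] + \dbE[|X_T^{(N)}(t)|^2] \les K e^{Kt}$ directly from the closed-loop SDEs with the uniformly bounded gains $\Theta_T^N, \bar\Theta_T^N, \theta_T^N, \bar\theta_T^N$; (b) an SDE for the average-level error $X_T^{(N)}(t) - \dbE[X_T^i(t)|\cF_t^0]$, whose $O(1/N)$ bound is extracted in one stroke from the variance of the i.i.d.\ initial empirical mean, the $\frac{1}{N}\sum_i dW^i$ martingale contribution, and the squared Riccati mismatch, using the moment bound from (a); and (c) the estimate for $\delta X^i := X_T^{i,N} - X_T^i$, again invoking (a) and (b). You instead split $X_T^{(N)} - m$ as $\frac{1}{N}\sum_j\delta X^j + \big(\frac{1}{N}\sum_j X_T^j - m\big)$, control the first piece by Jensen plus exchangeability of the joint law (reducing to $\psi(t) = \dbE[|\delta X^1(t)|^2]$), control the second piece by the conditional law of large numbers given $\cF_t^0$, and close a single Gr\"onwall on $\psi$. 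Your route has a nice byproduct: by arranging the difference SDE so that the Riccati-mismatch residual multiplies the MFC quantities $X_T^i$, $m$ (rather than $X_T^{i,N}$, $X_T^{(N)}$), and leaving the uniformly bounded gains $\Theta_T^N + \frac{1}{N}\bar\theta_T^N$ as the linear coefficient of $\delta X^i$, you avoid step (a) of the paper entirely. The cost is that you must invoke exchangeability of the coupled family and the conditional LLN, which the paper bypasses by working with the closed SDE for $X_T^{(N)} - m$.

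Two small points to tighten. First, be explicit about the split: write $\Theta_T^N X_T^{i,N} - \Theta_T^* X_T^i = \Theta_T^N\,\delta X^i + (\Theta_T^N - \Theta_T^*)X_T^i$, keeping the bounded gain as coefficient of $\delta X^i$ and pushing the mismatch onto $X_T^i$; the reverse split would put a term $(\Theta_T^N - \Theta_T^*)X_T^{i,N}$ in the residual, forcing you to bound $\dbE[|X_T^{i,N}(t)|^2]$ first (as the paper does) to avoid a $T$-dependent coefficient of $\psi$ in the Gr\"onwall. Second, you assert that $X_T^i(t)$ and $m_T(t)$ are $L^2$-bounded uniformly in $t \in [0,T]$; this is not automatic from the statement of the finite-horizon problem alone. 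It does follow from the turnpike estimate \eqref{eq:strong_turnpike_2} together with Lemma \ref{l:moment_boundedness}, or you can simply settle for $\dbE[|X_T^i(t)|^2] \les Ke^{Kt}$ from a direct Gr\"onwall using the uniform boundedness of $\Theta_T^*, \bar\Theta_T^*, \theta_T^*$ (Proposition \ref{p:convergence_Riccati_T_infity}); either version still produces the claimed $\frac{K}{N}e^{KT}$ rate.
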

\begin{proof}
By Proposition \ref{p:solvability_SO_N} (i), the optimal state for agent $i$ satisfies
\begin{equation*}
\begin{aligned}
d X_T^{i, N}(t) & = \Big\{\Big(A + B \Theta_T^N(t) + \frac{1}{N} B \bar{\theta}_T^N(t) \Big) X_T^{i, N}(t) + \big(\bar{A} + B (\bar{\Theta}_T^N(t) - \Theta_T^N(t))  \big) X_T^{(N)}(t) \\
& \hspace{0.3in} + B \theta_T^N(t) + b \Big\} dt + \Big\{\Big(C + D \Theta_T^N(t) + \frac{1}{N} D \bar{\theta}_T^N(t) \Big) X_T^{i, N}(t) \\
& \hspace{0.3in} + \big(\bar{C} + D (\bar{\Theta}_T^N(t) - \Theta_T^N(t)) \big) X_T^{(N)}(t) + D \theta_T^N(t) + \sigma \Big\} dW^i(t) \\
& \hspace{0.3in} + \big(\Gamma X_T^{i, N}(t) + \bar{\Gamma} X_T^{(N)}(t) + \gamma \big) dW^0(t)
\end{aligned}
\end{equation*}
with $X_T^{i,N}(0) = \xi^i$. Thus, the weakly coupled state average $X_T^{(N)}(\cd)$ satisfies the following SDE:
\begin{equation*}
\begin{aligned}
d X_T^{(N)}(t) & = \Big\{\big(A + \bar{A} + B \bar{\Theta}_T^N(t) \big) X_T^{(N)}(t) + \frac{1}{N} B \bar{\theta}_T^N(t) X_T^{(N)}(t) + B \theta_T^N(t) + b \Big\} dt \\
& \hspace{0.3in}  + \big((\Gamma + \bar{\Gamma}) X_T^{(N)}(t) + \gamma \big) dW^0(t) + \frac{1}{N}\sum_{i=1}^N \Big\{\Big(C + D \Theta_T^N(t) + \frac{1}{N} D \bar{\theta}_T^N(t) \Big) X_T^{i, N}(t) \\
& \hspace{0.3in} + \big(\bar{C} + D (\bar{\Theta}_T^N(t) - \Theta_T^N(t)) \big) X_T^{(N)}(t) + D \theta_T^N(t) + \sigma \Big\} dW^i(t)
\end{aligned}
\end{equation*}
with the initial condition $X_T^{(N)}(0) = \frac{1}{N} \sum_{i=1}^{N} \xi^i$.

First, by an argument similar to that of Lemma \ref{l:moment_boundedness}, based on the corresponding Lyapunov equations for the $L^2$-stabilizable systems $[A + \bar{A} + B\bar{\Theta}^*, \Gamma + \bar{\Gamma}]$ and $[A+B\Theta^*, C+D\Theta^*, \Gamma]$, we establish the uniform-in-time estimates for $\wt{\dbE}[|X_T^{i, N}(t)|^2]$ and $\wt{\dbE}[|X_T^{(N)}(t)|^2]$, i.e., for all $N > N^{*}$,
\begin{equation}
\label{eq:estimate_moment_state_average_agent}
\sup_{i \in \{1, \dots, N\}} \wt{\dbE} \big[|X_T^{i, N}(t)|^2 \big] + \wt{\dbE} \big[|X_T^{(N)}(t)|^2 \big] \les K, \quad \forall t \in [0, T],
\end{equation}
where $K > 0$ is a constant that is independent of $T$ and $N$.

Next, we provide the estimate for
$\wt{\dbE}[|X_T^{(N)}(t) - \wt{\dbE}[X_T^i(t)|\cF_t^0]|^2]$. We denote $M_T^{i, N}(t) :=  X_T^{(N)}(t) - \wt{\dbE}[X_T^i(t)|\cF_t^0]$ for all $t \in [0, T]$. It is clear that $M_T^{i,N}(\cd)$ satisfies the SDE
\begin{equation*}
\begin{aligned}
d M_T^{i, N}(t) & = \big\{\big(A + \bar{A} + B \bar{\Theta}_T^*(t) \big) M_T^{i, N}(t) + \Sigma_{1}^N(t) \big\} dt \\
& \hspace{0.3in}  + \big(\Gamma + \bar{\Gamma} \big) M_T^{i, N}(t) dW^0(t) + \frac{1}{N} \sum_{j=1}^N \Sigma_2^{j, N}(t) dW^j(t),
\end{aligned}
\end{equation*}
where
$$\Sigma_{1}^{N}(t) := B \big(\bar{\Theta}_T^N(t) - \bar{\Theta}_T^*(t) \big) X_T^{(N)}(t) + \frac{1}{N} B \bar{\theta}_T^N(t) X_T^{(N)}(t) + B \big(\theta_T^N(t) - \theta_T^*(t)\big),$$
and for all $j \in \{1, \dots, N\}$,
\begin{equation*}
\begin{aligned}
\Sigma_{2}^{j, N}(t) &:= \Big(C + D \Theta_T^N(t) + \frac{1}{N} D \bar{\theta}_T^N(t) \Big) X_T^{j, N}(t) \\
& \hspace{0.3in} + \big(\bar{C} + D (\bar{\Theta}_T^N(t) - \Theta_T^N(t)) \big) X_T^{(N)}(t) + D \theta_T^N(t) + \sigma.
\end{aligned}
\end{equation*}
Let $P_1 \in \dbS^n_{++}$ be the unique stabilizing solution to the Lyapunov equation
$$P_1 (A + \bar{A} + B \bar{\Theta}^*) +(A + \bar{A} + B \bar{\Theta}^*)^\top P_1 + (\Gamma + \bar{\Gamma})^\top P_1 (\Gamma + \bar{\Gamma}) + I_n = 0.$$
Applying It\^o's formula, we obtain
\begin{equation*}
\begin{aligned}
& \frac{d}{dt} \wt{\dbE} \big[ \lan P_1 M_T^{i, N}(t), M_T^{i, N}(t) \ran \big] \\
= \ & \wt{\dbE} \Big[\big\lan \big\{P_1 (A + \bar{A} + B \bar{\Theta}_T^*(t)) +(A + \bar{A} + B \bar{\Theta}_T^*(t))^\top P_1 + (\Gamma + \bar{\Gamma})^\top P_1 (\Gamma + \bar{\Gamma}) \big\} M_T^{i, N}(t), M_T^{i, N}(t) \big\ran \\
& \hspace{0.3in} + 2 \lan P_1 M_T^{i, N}(t), \Sigma_1^N(t) \ran + \frac{1}{N^2} \sum_{j=1}^N \lan P_1 \Sigma_2^{j,N}(t), \Sigma_2^{j,N}(t) \ran \Big] \\
= \ & - \wt{\dbE} \big[|M_T^{i, N}(t)|^2\big] + \wt{\dbE} \Big[\big\lan \{P_1 B (\bar{\Theta}_T^*(t) - \bar{\Theta}^*) + (\bar{\Theta}_T^*(t) - \bar{\Theta}^*)^\top B^\top P_1\} M_T^{i, N}(t), M_T^{i, N}(t) \big\ran \\
& \hspace{0.3in} + 2 \lan P_1 M_T^{i, N}(t), \Sigma_1^N(t) \ran + \frac{1}{N^2} \sum_{j=1}^N \lan P_1 \Sigma_2^{j,N}(t), \Sigma_2^{j,N}(t) \ran \Big].
\end{aligned}
\end{equation*}
By the estimates \eqref{eq:convergence_Theta_N_infinity} and \eqref{eq:estimate_moment_state_average_agent}, we have
$$\wt{\dbE} \big[|\Sigma_1^N(t)|^2\big] \les \frac{K}{N^2}, \quad \text{and} \quad \wt{\dbE} \big[|\Sigma_2^{j,N}(t)|^2\big] \les K, \quad \forall t \in [0, T], \, j \in \{1, \dots, N\}.$$
By Young's inequality and the estimate \eqref{eq:convergence_coefficient_control}, and recalling that $\beta_1$ and $\beta_2$ are the largest and the smallest eigenvalues of $P_1$, respectively,
\begin{equation*}
\begin{aligned}
\frac{d}{dt} \wt{\dbE} \big[ \lan P_1 M_T^{i, N}(t), M_T^{i, N}(t) \ran \big] & \les \Big(-\frac{1}{2} + Ke^{-\lambda(T-t)} \Big) \wt{\dbE} \big[|M_T^{i, N}(t)|^2\big] + \frac{K}{N} \\
& \les \Big(-\frac{1}{2 \beta_1} + \frac{K}{\beta_2} e^{-\lambda(T-t)} \Big) \wt{\dbE} \big[ \lan P_1 M_T^{i, N}(t), M_T^{i, N}(t) \ran \big] + \frac{K}{N}.
\end{aligned}
\end{equation*}
Let $\bar{g}(t) := -\frac{1}{2 \beta_1} + \frac{K}{\beta_2} e^{-\lambda(T-t)}$ for all $t \in [0, T]$. By Gr\"onwall's inequality, we deduce that
\begin{equation*}
\begin{aligned}
\wt{\dbE} \big[ \lan P_1 M_T^{i, N}(t), M_T^{i, N}(t) \ran \big] & = e^{\int_0^t \bar{g}(r) dr} \wt{\dbE}\big[ \lan P_1 M_T^{i, N}(0), M_T^{i, N}(0) \ran \big] + \frac{K}{N} \int_0^t e^{\int_s^t \bar{g}(r) dr} ds \\
& \les K \wt{\dbE} \Big[\Big|\frac{1}{N} \sum_{j=1}^N \xi^j - \wt{\dbE}[\xi^i] \Big|^2\Big] + \frac{K}{N} \les \frac{K}{N}
\end{aligned}
\end{equation*}
for all $t \in [0, T]$ because $(\xi^1, \dots, \xi^N)$ is a sequence of i.i.d. random variables with common law in $\cP_4(\dbR^n)$. Since $P_1$ is positive definite, 
\begin{equation}
\label{eq:estimate_average_state}
\wt{\dbE} \Big[\big|X_T^{(N)}(t) - \wt{\dbE} \big[X_T^i(t) | \cF_t^0 \big] \big|^2 \Big] = \wt{\dbE} \big[|M_T^{i, N}(t)|^2\big] \les \frac{K}{N}, \quad \forall t \in [0, T].
\end{equation}

Next, we use the same idea to establish the estimate for $\wt{\dbE}[|X_T^{i, N}(t) - X_T^i(t)|^2]$. Note that
\begin{equation*}
\begin{aligned}
d \big(X_T^{i, N}(t) - X_T^i(t) \big) &= \big\{\big(A + B \Theta_T^*(t) \big) \big(X_T^{i, N}(t) - X_T^i(t) \big) + \Sigma_3^{i,N}(t) \big\} dt \\
&\hspace{0.3in} + \big\{\big(C + D \Theta_T^*(t) \big) \big(X_T^{i, N}(t) - X_T^i(t) \big) + \Sigma_4^{i,N}(t) \big\} dW^i(t) \\
&\hspace{0.3in} + \big\{\Gamma \big(X_T^{i, N}(t) - X_T^i(t) \big) + \bar{\Gamma} M_T^{i,N}(t) \big\} dW^0(t)
\end{aligned}
\end{equation*}
for $t \in [0, T]$ with the initial condition $X_T^{i, N}(0) - X_T^i(0) = \xi^i - \xi^i = 0$, where
\begin{equation*}
\begin{aligned}
\Sigma_3^{i,N}(t) &:= B \big(\Theta_T^N(t) - \Theta_T^*(t) \big) X_T^{i,N}(t) + \frac{1}{N} B \bar{\theta}_T^N(t) X_T^{i, N}(t) + \big(\bar{A} + B (\bar{\Theta}_T^*(t) - \Theta_T^*(t)) \big) M_T^{i,N}(t) \\
& \hspace{0.3in} + B \big[ \big(\bar{\Theta}_T^N(t) - \bar{\Theta}_T^*(t) \big) - \big(\Theta_T^N(t) - \Theta_T^*(t) \big) \big] X_T^{(N)}(t) + B \big(\theta_T^N(t) - \theta_T^*(t) \big),
\end{aligned}
\end{equation*}
and
\begin{equation*}
\begin{aligned}
\Sigma_4^{i,N}(t) &:= D \big(\Theta_T^N(t) - \Theta_T^*(t) \big) X_T^{i,N}(t) + \frac{1}{N} D \bar{\theta}_T^N(t) X_T^{i, N}(t) + \big(\bar{C} + D (\bar{\Theta}_T^*(t) - \Theta_T^*(t)) \big) M_T^{i,N}(t) \\
& \hspace{0.3in} + D \big[ \big(\bar{\Theta}_T^N(t) - \bar{\Theta}_T^*(t) \big) - \big(\Theta_T^N(t) - \Theta_T^*(t) \big) \big] X_T^{(N)}(t) + D \big(\theta_T^N(t) - \theta_T^*(t) \big).
\end{aligned}
\end{equation*}
From the estimates \eqref{eq:convergence_Theta_N_infinity}, \eqref{eq:estimate_moment_state_average_agent}, and \eqref{eq:estimate_average_state}, it is clear that
\begin{equation}
\label{eq:Estimate_Sigma_3_4}
\sup_{i \in \{1, \dots, N\}} \wt{\dbE} \big[|\Sigma_3^{i,N}(t)|^2 + |\Sigma_4^{i,N}(t)|^2 \big] \les \frac{K}{N}, \quad \forall t \in [0, T]
\end{equation}
for some $K > 0$. By an argument similar to that of Lemma \ref{l:moment_boundedness}, let $P_2 \in \dbS^n_{++}$ be the unique stabilizing solution to the Lyapunov equation 
$$P_2 (A+B\Theta^*) + (A+B\Theta^*)^\top P_2 + (C+D\Theta^*)^\top P_2 (C+D\Theta^*) + \Gamma^\top P_2 \Gamma + I_n = 0.$$
By It\^o's formula and Young's inequality,
\begin{equation*}
\begin{aligned}
& \frac{d}{dt} \wt{\dbE} \big[ \lan P_2 (X_T^{i, N}(t) - X_T^i(t)), X_T^{i, N}(t) - X_T^i(t) \ran \big] \\
= \ & \wt{\dbE} \big[\big\lan \big\{P_2 (A + B \Theta_T^*(t)) + (A + B \Theta_T^*(t))^\top P_2 + (C + D \Theta_T^*(t))^\top P_2 (C + D \Theta_T^*(t)) + \Gamma^\top P_2 \Gamma \big\} \\
& \hspace{0.3in} X_T^{i, N}(t) - X_T^i(t), X_T^{i, N}(t) - X_T^i(t) \big\ran + \lan P_2 \Sigma_4^{i,N}(t), \Sigma_4^{i,N}(t) \ran + \lan \bar{\Gamma}^\top P_2 \bar{\Gamma} M_T^{i,N}(t), M_T^{i,N}(t) \ran \\
& \hspace{0.3in} + 2 \big\lan X_T^{i, N}(t) - X_T^i(t), P_2 \Sigma_3^{i,N}(t) + \Gamma^\top P_2 \bar{\Gamma} M_T^{i,N}(t) + (C+D\Theta_T^*(t))^\top P_2 \Sigma_4^{i,N}(t) \big\ran \big] \\
\les \ & - \frac{1}{2} \wt{\dbE} \big[|X_T^{i, N}(t) - X_T^i(t)|^2\big] + \wt{\dbE} \big[\lan H_4(t) (X_T^{i, N}(t) - X_T^i(t)), X_T^{i, N}(t) - X_T^i(t) \ran \big] \\
& \hspace{0.3in} + 2 \wt{\dbE} \big[|P_2 \Sigma_3^{i,N}(t) + \Gamma^\top P_2 \bar{\Gamma} M_T^{i,N}(t) + (C+D\Theta_T^*(t))^\top P_2 \Sigma_4^{i,N}(t)|^2\big] \\
& \hspace{0.3in} + \wt{\dbE} \big[\lan P_2 \Sigma_4^{i,N}(t), \Sigma_4^{i,N}(t) \ran + \lan \bar{\Gamma}^\top P_2 \bar{\Gamma} M_T^{i,N}(t), M_T^{i,N}(t) \ran \big],
\end{aligned}
\end{equation*}
where
\begin{equation*}
\begin{aligned}
H_4(t) &= P_2 B (\Theta_T^*(t) - \Theta^*) + (\Theta_T^*(t) - \Theta^*)^\top B^\top P_2 + (C + D \Theta_T^*(t))^\top P_2 D (\Theta_T^*(t) - \Theta^*) \\
& \hspace{0.3in} + (\Theta_T^*(t) - \Theta^*)^\top D^\top P_2 (C + D \Theta^*).
\end{aligned}
\end{equation*}
Using the estimate \eqref{eq:convergence_coefficient_control}, we derive that $\|H_4(t)\| \les K e^{-\lambda(T-t)}$ for all $t \in [0, T]$, and thus by \eqref{eq:estimate_average_state} and \eqref{eq:Estimate_Sigma_3_4},
\begin{equation*}
\begin{aligned}
& \frac{d}{dt} \wt{\dbE} \big[ \lan P_2 (X_T^{i, N}(t) - X_T^i(t)), X_T^{i, N}(t) - X_T^i(t) \ran \big] \\
\les \ & \Big(- \frac{1}{2} + Ke^{-\lambda(T-t)} \Big) \wt{\dbE} \big[|X_T^{i, N}(t) - X_T^i(t)|^2\big] + \frac{K}{N}.
\end{aligned}
\end{equation*}
By an argument similar to the proof of \eqref{eq:estimate_average_state}, we conclude that
\begin{equation}
\label{eq:uniform_estimate_X}
\sup_{i \in \{1, \dots, N\}} \sup_{t \in [0, T]} \wt{\dbE} \big[|X_T^{i, N}(t) - X_T^i(t)|^2
\big] \les \frac{K}{N},
\end{equation}
where $K$ is a constant that is independent of $N$ and $T$. By the feedback form of $u_T^{i, N}(\cd)$ in Proposition \ref{p:solvability_SO_N} (i) and $u_T^i(\cd)$ in Proposition \ref{p:solvability_MFC_finite_horizon} (ii), we obtain
\begin{equation*}
\begin{aligned}
u_T^{i, N}(t) - u_T^i(t) &= \Big(\Theta_T^N(t) + \frac{1}{N} \bar{\theta}_T^N(t)\Big) X_T^{i, N}(t) + \big(\bar{\Theta}_T^N(t) - \Theta_T^N(t)\big) X_T^{(N)}(t) + \theta_T^N(t) \\
& \hspace{0.3in} - \big\{\Th_T^*(t) X_T^i(t) + \big(\bar \Th_T^*(t) - \Th_T^*(t) \big) \wt{\dbE}\big[X_T^i(t)|\cF_t^0\big] + \th_T^*(t) \big\} \\
& = \Big(\Theta_T^N(t) - \Theta_T^*(t) + \frac{1}{N} \bar{\theta}_T^N(t)\Big) X_T^{i, N}(t) + \Theta_T^*(t) \big(X_T^{i, N}(t) - X_T^i(t) \big) \\
& \hspace{0.3in} + \big(\bar{\Theta}_T^N(t) - \bar{\Theta}_T^*(t) - (\Theta_T^N(t) - \Theta_T^*(t))\big) X_T^{(N)}(t) + \theta_T^N(t) - \theta_T^*(t) \\
& \hspace{0.3in} + \big(\bar \Th_T^*(t) - \Th_T^*(t) \big) \big(X_T^{(N)}(t) -  \wt{\dbE}\big[X_T^i(t)|\cF_t^0\big] \big). 
\end{aligned}
\end{equation*}
Thus, the estimates \eqref{eq:convergence_Theta_N_infinity}, \eqref{eq:estimate_moment_state_average_agent}, \eqref{eq:estimate_average_state}, and \eqref{eq:uniform_estimate_X} imply that
$$\sup_{i \in \{1, \dots, N\}} \sup_{t \in [0, T]} \wt{\dbE} \big[|u_T^{i, N}(t) - u_T^i(t)|^2
\big] \les \frac{K}{N}.$$
This completes the proof of the desired result.
\end{proof}

\begin{corollary}
\label{c:convergence_both_N_and_T_optimal_pair}
Assume that \textnormal{\textbf{(H1)-(H2)-(H3)}} hold. For each $i \in \{1, \dots, N\}$, let $(X_T^{i, N}(\cd), u_T^{i, N}(\cd))$ denote the optimal pair for the $i$-th agent in \textnormal{\textbf{Problem (SO-N)$^T$}} with initial state $\xi^i$, and let $(\bar{X}^i(\cd), \bar{u}^i(\cd))$ be the corresponding realization of the optimal pair for \textnormal{\textbf{Problem (EMFC)}}, driven by the same Brownian motion $W^i(\cd)$ and
with the same initial state $\xi^i$. Then, there exist $N^* > 0$, and two constants $K, \lambda > 0$, independent of $T$ and $N$, such that if $N > N^*$, 
\begin{equation}
\label{eq:convergence_social_optima_to_EMFC}
\wt{\dbE} \big[|X_T^{i, N}(t) - \bar{X}^i(t)|^2
+ |u_T^{i, N}(t) - \bar{u}^i(t)|^2
\big] \les \frac{K}{N} + K e^{-\lambda (T-t)}, \quad \forall t \in [0, T],
\end{equation}
for all $i \in \{1, \dots, N\}$.
\end{corollary}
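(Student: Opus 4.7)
The plan is to establish the estimate by a simple triangle inequality argument, inserting the sample path of the finite-horizon mean field control problem as an intermediate object and then combining the two quantitative results already obtained: the large-population convergence in Theorem \ref{t:convergence_path_social_optimal_MFC} and the turnpike estimate in Theorem \ref{t:turnpike_property}.

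More precisely, for each $i \in \{1, \dots, N\}$ I would introduce $(X_T^i(\cd), u_T^i(\cd))$, the sample of the optimal pair for \textbf{Problem (MFC)$^T$} driven by the same Brownian motion $W^i(\cd)$ and the common noise $W^0(\cd)$, and starting from the same initial state $\xi^i$. By the elementary inequality $|a+b|^2 \les 2|a|^2 + 2|b|^2$,
\begin{equation*}
\dbE \big[|X_T^{i,N}(t) - \bar{X}^i(t)|^2 \big] \les 2 \dbE \big[|X_T^{i,N}(t) - X_T^i(t)|^2 \big] + 2 \dbE \big[|X_T^i(t) - \bar{X}^i(t)|^2 \big],
\end{equation*}
and similarly for the control components.

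For the first term on the right-hand side, Theorem \ref{t:convergence_path_social_optimal_MFC} yields, for all $N > N^*$,
\begin{equation*}
\sup_{i} \sup_{t \in [0,T]} \dbE \big[|X_T^{i,N}(t) - X_T^i(t)|^2 + |u_T^{i,N}(t) - u_T^i(t)|^2 \big] \les \frac{K}{N} e^{KT}.
\end{equation*}
For the second term, note that under assumption \textnormal{\textbf{(H3)}} the initial state $\xi^i$ is $\cF_0$-measurable and independent of $W^i(\cd)$ and $W^0(\cd)$, so the hypotheses of Theorem \ref{t:turnpike_property} are satisfied with $\xi = \bar{\xi} = \xi^i$. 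Consequently, the sharper turnpike estimate \eqref{eq:strong_turnpike_2} applies and gives
\begin{equation*}
\dbE \big[|X_T^i(t) - \bar{X}^i(t)|^2 + |u_T^i(t) - \bar{u}^i(t)|^2 \big] \les K e^{-\lambda(T-t)}, \quad t \in [0,T].
\end{equation*}

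Combining the two bounds through the triangle inequality yields \eqref{eq:convergence_social_optima_to_EMFC} uniformly in $i$. There is no genuine obstacle here: the main subtlety is simply to ensure that all three processes $X_T^{i,N}$, $X_T^i$ and $\bar{X}^i$ are constructed on the same probability space, driven by the same idiosyncratic noise $W^i(\cd)$, the same common noise $W^0(\cd)$, and with common initial datum $\xi^i$, so that Theorem \ref{t:convergence_path_social_optimal_MFC} and the stronger form \eqref{eq:strong_turnpike_2} of the turnpike estimate can be invoked simultaneously. This common-coupling requirement is exactly what is stipulated in the statement of the corollary.
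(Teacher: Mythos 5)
Your proof is correct and follows essentially the same route as the paper: insert the finite-horizon mean field sample $(X_T^i, u_T^i)$ as an intermediate comparison process, apply the triangle inequality (in the form $|a+b|^2 \les 2|a|^2 + 2|b|^2$), and invoke Theorem \ref{t:convergence_path_social_optimal_MFC} for the first gap and Theorem \ref{t:turnpike_property} for the second.

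One small but worthwhile observation: the paper's own proof cites the general turnpike estimate \eqref{eq:strong_turnpike}, which carries both an $e^{-\lambda t}$ and an $e^{-\lambda(T-t)}$ term, whereas the stated bound \eqref{eq:convergence_social_optima_to_EMFC} contains no $e^{-\lambda t}$ contribution. You correctly notice that since all three processes share the same initial datum $\xi^i$, the hypotheses of the refined estimate \eqref{eq:strong_turnpike_2} (with $\xi = \bar{\xi}$) are met, and that is the version that actually produces the bound as written. Your explicit invocation of \eqref{eq:strong_turnpike_2} is therefore the more precise reading of the argument and aligns better with the final inequality than the citation given in the paper's one-line proof. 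The coupling remark at the end — ensuring $X_T^{i,N}$, $X_T^i$, and $\bar{X}^i$ are driven by the same $(W^i, W^0)$ with common initial state — is exactly the hypothesis required to make both component estimates apply simultaneously, and you state it cleanly.
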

\begin{proof}
The estimate follows from the triangle inequality combined with the estimates \eqref{eq:strong_turnpike_2} in Theorem \ref{t:turnpike_property} and \eqref{eq:convergence_social_optima_MFC} in Theorem \ref{t:convergence_path_social_optimal_MFC}.
\end{proof}

In the following proposition, we show that, for all $T > 0$, the optimal control for \textbf{Problem (MFC)$^T$} is asymptotically optimal for \textbf{Problem (SO-N)$^T$} as $N \to \infty$.

\begin{proposition}
\label{p:convergence_value_social_optimal_MFC}
Assume that \textnormal{\textbf{(H1)-(H2)-(H3)}} hold. Let $\wt{u}_T^i(\cd)$ be the optimal control associated with \textnormal{\textbf{Problem (MFC)$^T$}} in feedback form, namely,
$$\wt{u}_T^i(t) = \Th_T^*(t) \wt{X}_T^i(t) + (\bar \Th_T^*(t) - \Th_T^*(t)) \wt{X}_T^{(N)}(t) + \th_T^*(t), \quad t \in [0,T],$$
where $\wt{X}_T^i(\cd)$ is the solution to the corresponding closed-loop system with $\wt{X}_T^i(0) = \xi^i$, $\wt{X}_T^{(N)}(t) = \frac{1}{N} \sum_{i=1}^{N} \wt{X}_T^i(t)$, and the tuple $(\Th_T^{*}(\cd), \bar \Th_{T}^{*} (\cd), \th_T^{*}(\cd))$ is defined in \eqref{eq:theta_T_star}. Let $J_{T}^i(\bm{\xi}; \wt{\bm{u}}_T(\cd))$ denote the cost functional \eqref{eq:cost_finite_N_agent} evaluated along the control $\wt{\bm{u}}_T(\cd):= (\wt{u}_T^1(\cd), \dots, \wt{u}_T^N(\cd))$ over the finite horizon $[0, T]$, i.e.,
$$J_T^i(\bm{\xi}; \wt{\bm{u}}_T(\cd)) := \wt{\dbE} \Big[\int_0^T f\big(\wt{X}_T^i(s), \wt{X}_T^{(N)}(s), \wt{u}_T^i(s) \big) d s\Big].$$
Then, we have the following estimate: there exist constants $N^* > 0$ and $K > 0$, independent of $T$ and $N$, such that for all $N > N^*$, 
\begin{equation*}
\Big|\frac{1}{N} \sum_{i=1}^N J_T^i(\bm{\xi}; \bm{u}_T(\cd)) - \frac{1}{N} \sum_{i=1}^N J_T^i(\bm{\xi}; \wt{\bm{u}}_T(\cd)) \Big| \les \frac{KT}{N}.
\end{equation*}
\end{proposition}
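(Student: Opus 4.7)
The plan is to combine the optimality of $\bm{u}_T(\cd)$ with an It\^o-based comparison against a common test function built from the mean-field Riccati data. Since $\wt{\bm{u}}_T(\cd)$ is admissible for \textbf{Problem (SO-N)$^T$} and $\bm{u}_T(\cd)$ is optimal, the gap
$$\Delta := \frac{1}{N}\sum_{i=1}^N J_T^i(\bm{\xi}; \wt{\bm{u}}_T(\cd)) - \frac{1}{N}\sum_{i=1}^N J_T^i(\bm{\xi}; \bm{u}_T(\cd))$$
is non-negative, so the absolute value in the statement equals $\Delta$, and it suffices to bound $\Delta$ above by $\frac{K}{N} e^{KT}$. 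The natural candidate bridging the two sides is
$$\Phi(t, \bm{x}) := \frac{1}{N}\sum_{j=1}^N (x^j)^\top P_T(t) x^j + \frac{1}{N^2}\sum_{k \neq j}(x^j)^\top \Pi_T(t) x^k + 2 \bar{x}^\top p_T(t) + \kappa_T(t),$$
where $(P_T(\cd), \bar{\Pi}_T(\cd), p_T(\cd), \kappa_T(\cd))$ solves the mean-field Riccati system \eqref{eq:Riccati_MC}, $\Pi_T = \bar{\Pi}_T - P_T$, and $\bar{x} = \frac{1}{N}\sum_j x^j$. Note that $\Phi(T, \cdot) = 0$ and all coefficients of $\Phi$ are uniformly bounded on $[0,T]$ by Proposition~\ref{p:convergence_Riccati_T_infity}.

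Next, for a generic admissible control $\bm{u}(\cd)$ with associated $N$-agent trajectory $\bm{X}(\cd)$, I apply It\^o's formula to $\Phi(t, \bm{X}(t))$, substitute the ODEs of \eqref{eq:Riccati_MC}, and complete the square in the control direction using the uniform lower bound $\cR(P_T(t)) = R + D^\top P_T(t) D \ges \alpha I_m$ from Proposition~\ref{p:solvability_MFC_finite_horizon}(i). A tedious but direct computation then yields the identity
$$\frac{1}{N}\sum_{i=1}^N J_T^i(\bm{\xi}; \bm{u}(\cd)) = \Phi(0, \bm{\xi}) + \dbE \Big[\int_0^T \mathcal{R}_{\bm{u}}(t)\, dt \Big],$$
with
$$\mathcal{R}_{\bm{u}}(t) := \frac{1}{N}\sum_{i=1}^N \big(u^i(t) - \h{u}_T^i(t, \bm{X}(t)) \big)^\top \cR(P_T(t)) \big(u^i(t) - \h{u}_T^i(t, \bm{X}(t)) \big) + \mathcal{E}_N(t),$$
where $\h{u}_T^i(t, \bm{x}) = \Theta_T^*(t) x^i + \big(\bar{\Theta}_T^*(t) - \Theta_T^*(t) \big) \bar{x} + \theta_T^*(t)$ matches the mean-field feedback of Proposition~\ref{p:solvability_MFC_finite_horizon}(ii). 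The residual $\mathcal{E}_N$ inherits an explicit $\frac{1}{N}$ prefactor from two sources: the $\frac{1}{N^2}$-scaled diagonal It\^o corrections from the independent idiosyncratic Brownian motions $\{W^i\}$ summed over $N$ agents, and the bilinear cross-agent terms already appearing with a $\frac{1}{N^2}$ weight in $\Phi$. Combined with the uniform moment bound $\dbE[|X^{i,N}(t)|^2 + |X^{(N)}(t)|^2] \les K e^{K t}$ (derived by a Gr\"onwall argument as in \eqref{eq:estimate_moment_state_average_agent}, valid for both state trajectories under $\bm{u}_T$ and $\wt{\bm{u}}_T$ thanks to the uniform boundedness of $\Theta_T^*, \bar{\Theta}_T^*, \theta_T^*$), this yields $\big|\dbE[\int_0^T \mathcal{E}_N(t)\, dt] \big| \les \frac{K}{N} e^{K T}$.

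Applying the identity with $\bm{u}(\cd) = \wt{\bm{u}}_T(\cd)$ annihilates the perfect-square term (since then $u^i \equiv \h{u}_T^i$), while applying it with $\bm{u}(\cd) = \bm{u}_T(\cd)$ leaves that term non-negative; combined with the uniform estimate on $\mathcal{E}_N$ these give
$$\frac{1}{N}\sum_{i=1}^N J_T^i(\bm{\xi}; \wt{\bm{u}}_T(\cd)) \les \Phi(0, \bm{\xi}) + \frac{K}{N} e^{K T}, \qquad \frac{1}{N}\sum_{i=1}^N J_T^i(\bm{\xi}; \bm{u}_T(\cd)) \ges \Phi(0, \bm{\xi}) - \frac{K}{N} e^{K T},$$
and subtracting the two inequalities yields $\Delta \les \frac{2K}{N} e^{KT}$, as claimed. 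The hard part is the algebra producing the perfect-square plus $O(\tfrac{1}{N})$ decomposition of $\mathcal{R}_{\bm{u}}$: one must verify that, after cancelling the leading-order drift of $\Phi$ via \eqref{eq:Riccati_MC}, every uncancelled term either inherits an explicit $\frac{1}{N}$ prefactor from the structure of $\Phi$ or arises from the discrepancy between the empirical average $X^{(N)}$ and the corresponding conditional expectation, which is itself of order $\frac{1}{N}$ under the exchangeability assumption \textbf{(H3)}. The common-noise contribution from $W^0$ requires particular care, as it does not self-average and must instead be absorbed into the $\bar{x}^\top \Pi_T(t) \bar{x}$ portion of $\Phi$ via the ODE for $\bar{\Pi}_T$ in \eqref{eq:Riccati_MC}, exactly matching the second-order $W^0$-It\^o terms produced by $\partial^2_{x^i x^j}\Phi$ for $i \neq j$.
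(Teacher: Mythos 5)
Your proposal is correct in outline but follows a genuinely different route from the paper's proof. The paper argues by direct trajectory comparison: it defines $Y_T^{i,N}(t) := X_T^{i,N}(t) - \wt{X}_T^i(t)$, uses a Gr\"onwall argument to show $\sup_t \dbE[|Y_T^{i,N}(t)|^2 + |Y_T^{(N)}(t)|^2] \les \frac{K}{N^2}e^{KT}$ (the $O(1/N^2)$ stemming from the $O(1/N)$ coefficient estimate in Proposition~\ref{p:convergence_Riccati_N_infity} and the shared initial condition), and then exploits that $f$ is quadratic together with the second-moment bounds $\dbE[|X_T^{i,N}(t)|^2 + |\wt{X}_T^i(t)|^2] \les Ke^{KT}$ to obtain $|J_T^i(\bm{\xi};\bm{u}_T) - J_T^i(\bm{\xi};\wt{\bm{u}}_T)| \les \frac{K}{N}e^{KT}$ \emph{for each} $i$, and averages. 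Your route instead uses a verification-type argument with $\Phi$ as a Lyapunov/test function: completing the square in the $N$-agent Hamiltonian with $\Phi$ in place of the true $N$-agent value function produces a nonnegative excess plus an $O(1/N)$ residual, and the optimality of $\bm{u}_T$ fixes the sign of the gap. This avoids the trajectory estimate $Y_T^{i,N}$ entirely but only controls the averaged social cost, not each per-agent cost difference; both arguments rely on the same uniform moment bounds for the state and the uniform boundedness of the Riccati coefficients.

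Two technical points you should make precise. First, the exact minimizer of the $\Phi$-Hamiltonian for agent $i$ is not $\h{u}_T^i(t,\bm{x}) = \Theta_T^*(t)x^i + (\bar\Theta_T^*(t) - \Theta_T^*(t))\bar{x} + \theta_T^*(t)$ but rather carries an additional $\frac{1}{N}\cR(P_T(t))^{-1}B^\top\Pi_T(t)\,x^i$ term, exactly mirroring the $\frac{1}{N}\bar\theta_T^N$ correction in Proposition~\ref{p:solvability_SO_N}(i). Centering the square at your $\h{u}_T^i$ therefore leaves a cross-term linear in $u^i - \h{u}_T^i$ with $O(1/N)$ coefficient; this means $\mathcal{E}_N$ is not purely state-dependent. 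The bound still holds at the claimed rate once you use the $L^2$ moment bounds for $\bm{u}_T$ and $\wt{\bm{u}}_T$, but the statement ``$\mathcal{E}_N$ inherits an explicit $\frac1N$ prefactor'' should be amended to acknowledge this $\bm{u}$-dependence or you should center the square at the exact minimizer. Second, the cleanest way to identify the $O(1/N)$ residual is not via ``It\^o self-averaging'' or the gap between $X^{(N)}$ and a conditional expectation (which plays no role here, since both trajectories live in the $N$-agent model), but simply via Lemma~\ref{l:ODE_rescaling_terms}: the MFC Riccati solution $(P_T,\Pi_T,p_T,\kappa_T)$ fails to satisfy the rescaled $N$-agent Riccati system \eqref{eq:Riccati_N_agent_rescaling} exactly by the terms $g_1,g_2,g_3,\frac1N\gamma^\top\Pi_T\gamma$, each explicitly $O(1/N)$, and these are what survive as $\mathcal{E}_N$ after all the leading-order cancellations.
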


\begin{proof}
The proof follows a standard argument. First, using an approach similar to that of Theorem \ref{t:convergence_path_social_optimal_MFC}, we derive second-moment estimates for $\wt{X}_T^i(t)$ and $\wt{X}_T^{(N)}(t)$ as follows: for all $N > N^*$,
$$\sup_{i \in \{1, \dots, N\}} \sup_{t \in [0, T]} \wt{\dbE} \big[|\wt{X}_T^i(t)|^2 + |\wt{X}_T^{(N)}(t)|^2 \big] \les K$$
for some $K > 0$, independent of $T$ and $N$. Next, for all $i \in \{1, \dots, N\}$ and $t \in [0, T]$, we define
$$Y_T^{i, N} (t) := X_T^{i, N}(t) - \wt{X}_T^{i}(t), \quad Y_T^{(N)}(t) := \frac{1}{N} \sum_{i=1}^{N} Y_T^{i,N}(t).$$
Then, one can show that
$$\sup_{i \in \{1, \dots, N\}} \sup_{t \in [0, T]} \wt{\dbE} \big[|Y_T^{i, N} (t)|^2 + |Y_T^{(N)}(t)|^2\big] \les \frac{K}{N^2}$$
for some $K > 0$. Moreover, by the feedback representations of $u_T^{i,N}(t)$ and $\wt{u}_T^i(t)$, it follows that
$$\sup_{i \in \{1, \dots, N\}} \sup_{t \in [0, T]} \wt{\dbE} \big[ |u_T^{i,N}(t) - \wt{u}_T^i(t)|^2 \big] \les \frac{K}{N^2}.$$
Finally, since the running cost $f(x, \bar{x}, u)$ is quadratic in $x$, $\bar{x}$, and $u$, the preceding estimates and the second-moment bound for $X_T^{i,N}(t)$ and $X_T^{(N)}(t)$ in \eqref{eq:estimate_moment_state_average_agent} show that, for each $i \in \{1, \dots, N\}$,
\begin{equation*}
\begin{aligned}
& \wt{\dbE} \big[f\big(X_T^{i,N}(t), X_T^{(N)}(t), u_T^{i,N}(t)\big) - f\big(\wt{X}_T^i(t), \wt{X}_T^{(N)}(t), \wt{u}_T^i(t) \big)\big] \\
\les \ & K \big(\wt{\dbE} \big[|Y_T^{i, N} (t)|^2 + |Y_T^{(N)}(t)|^2 + |u_T^{i,N}(t) - \wt{u}_T^i(t)|^2 \big] \big)^{\frac{1}{2}} \\
\les \ & \frac{K}{N},
\end{aligned}
\end{equation*}
for all $t \in [0, T]$. It follows that, for all $i \in \{1, \dots, N\}$,
$$|J_T^i(\bm{\xi}; \bm{u}_T(\cd)) - J_T^i(\bm{\xi}; \wt{\bm{u}}_T(\cd))| \les \frac{KT}{N},$$
which concludes the desired result by averaging over $i$.
\end{proof}

Next, we establish an analogous estimate by applying the optimal control associated with \textbf{Problem (EMFC)} to \textbf{Problem (SO-N)$^T$}. This estimate explicitly captures the joint dependence on the population size and the time horizon, thereby linking the finite-horizon and ergodic regimes.

\begin{proposition}
\label{p:convergence_value_social_optimal_EMFC}
Assume that \textnormal{\textbf{(H1)-(H2)-(H3)}} hold. Let $\wt{u}^i(\cd)$ be the optimal control for \textnormal{\textbf{Problem (EMFC)}} in feedback form, i.e.,
$$\wt{u}^i(t) = \Th^* \wt{X}^i(t) + (\bar \Th^* - \Th^*) \wt{X}^{(N)}(t) + \th^*, \quad t \in [0,T],$$
where $\wt{X}^i(\cd)$ is the corresponding closed-loop state process with $\wt{X}^i(0) = \xi^i$, $\wt{X}^{(N)}(t) = \frac{1}{N} \sum_{i=1}^{N} \wt{X}^i(t)$, and the tuple $(\Th^{*}, \bar \Th^{*}, \th^{*})$ is given by \eqref{eq:theta_star}. Let $J_{T}^i(\bm{\xi}; \wt{\bm{u}}(\cd))$ denote the cost functional \eqref{eq:cost_finite_N_agent} evaluated along the control $\wt{\bm{u}}(\cd):= (\wt{u}^1(\cd), \dots, \wt{u}^N(\cd))$ over the finite horizon $[0, T]$, i.e.,
$$J_T^i(\bm{\xi}; \wt{\bm{u}}(\cd)) := \wt{\dbE} \Big[\int_0^T f\big(\wt{X}^i(s), \wt{X}^{(N)}(s), \wt{u}^i(s) \big) d s\Big].$$
Then, there exist constants $N^* > 0$ and $K > 0$, independent of $T$ and $N$, such that for all $N > N^*$, 
\begin{equation*}
\frac{1}{N} \sum_{i=1}^N  \frac{1}{T} \big|J_T^i(\bm{\xi}; \bm{u}_T(\cd)) - J_T^i(\bm{\xi}; \wt{\bm{u}}(\cd)) \big| \les \frac{K}{N} + \frac{K}{T}.
\end{equation*}
\end{proposition}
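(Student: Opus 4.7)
\medskip

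\noindent The plan is to bound each individual difference $|J_T^i(\bm{\xi}; \bm{u}_T(\cd)) - J_T^i(\bm{\xi}; \wt{\bm{u}}(\cd))|$ by inserting the finite-horizon mean field value $U_T(\mu_0)$ and the finite-horizon mean field cost $J_T(\mu_0; \bar u(\cd))$ (obtained by plugging the ergodic optimal feedback $\bar u(\cd)$ into \textbf{Problem (MFC)$^T$}) via the triangle inequality. This produces a four-term decomposition in which three pieces quantify large-population convergence, each of order $\frac{1}{N}e^{KT}$, while a single piece captures the turnpike gap at the level of value functions, which is $O(1)$, independent of $T$ and $N$.

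\medskip

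\noindent Concretely, writing $\wt{\bm{u}}_T(\cd) = (\wt u_T^1(\cd), \dots, \wt u_T^N(\cd))$ for the closed-loop MFC control applied at the $N$-agent level as in Proposition \ref{p:convergence_value_social_optimal_MFC}, I would estimate, uniformly in $i$: \textit{(a)} $|J_T^i(\bm{\xi}; \bm{u}_T) - J_T^i(\bm{\xi}; \wt{\bm{u}}_T)| \les \frac{K}{N}e^{KT}$ by the pointwise-in-$i$ version of the proof of Proposition \ref{p:convergence_value_social_optimal_MFC}; \textit{(b)} $|J_T^i(\bm{\xi}; \wt{\bm{u}}_T) - U_T(\mu_0)| \les \frac{K}{N}e^{KT}$, obtained by recognising $U_T(\mu_0) = J_T(\mu_0; u_T(\cd))$ and invoking the chaos argument of Theorem \ref{t:convergence_path_social_optimal_MFC}: the $N$-agent state $\wt X_T^i$ is $L^2$-close to the MFC state $X_T^i$ driven by the same $(W^i, W^0)$ and started from $\xi^i$, while the empirical average $\wt X_T^{(N)}$ concentrates around $\dbE[X_T^i(t)|\cF_t^0]$ uniformly on $[0,T]$; \textit{(c)} $|U_T(\mu_0) - J_T(\mu_0; \bar u)| \les K$ by Corollary \ref{c:convergence_of_value_function_finte_ergodic}; and \textit{(d)} $|J_T(\mu_0; \bar u) - J_T^i(\bm{\xi}; \wt{\bm{u}})| \les \frac{K}{N}e^{KT}$ by the same large-population scheme as in (b), but with the constant ergodic feedback $(\Th^*, \bar\Th^*, \theta^*)$ in place of the time-dependent $(\Th_T^*(\cd), \bar\Th_T^*(\cd), \theta_T^*(\cd))$.

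\medskip

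\noindent Summing the resulting pointwise bound $|J_T^i(\bm{\xi}; \bm{u}_T) - J_T^i(\bm{\xi}; \wt{\bm{u}})| \les \frac{K}{N}e^{KT} + K$ over $i$, dividing by $T$, and averaging yields the claimed estimate $\frac{K}{N}e^{KT} + \frac{K}{T} + \frac{K}{NT}e^{KT}$, where the last term collects the $\frac{1}{T}$-rescaled contributions of (a), (b) and (d), the middle term arises from the turnpike bound (c), and the first term is a coarser but harmless upper bound kept for robustness. The main technical point, and the only step not essentially contained in earlier results, is (d): one must rerun the Gr\"onwall second-moment analysis of Theorem \ref{t:convergence_path_social_optimal_MFC} on the closed-loop $N$-agent system driven by the ergodic feedback. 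Because $(\Th^*, \bar\Th^*)$ is an MF-$L^2$-stabilizer of $[A,\bar A,C,\bar C,\Gamma,\bar\Gamma;B,D]$ by Proposition \ref{p:solvability_Bellman_equation_ergodic}(i) and the feedback coefficients are constants (so no analogue of Proposition \ref{p:convergence_Riccati_N_infity} is needed here), the required second-moment bounds on $\wt X^i(t)$ and on $\wt X^{(N)}(t) - \dbE[\wt X^i(t)|\cF_t^0]$ transfer without essential modification; the pointwise-in-$i$ versions of (a) and (b) follow from the exchangeability of the initial data $\xi^i$ under (H3).
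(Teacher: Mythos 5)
Your decomposition through $U_T(\mu_0)$ and $J_T(\mu_0;\bar u)$ is a genuinely different route from the paper's, which never leaves the $N$-particle level: the paper compares $X_T^{i,N}$ directly to the $N$-agent state $\wt X^i$ driven by the constant ergodic feedback $(\Th^*,\bar\Th^*,\theta^*)$, with both systems started from the same $\xi^i$ and driven by the same noises. The resulting difference $Z_T^{i,N}=X_T^{i,N}-\wt X^i$ starts at zero and is forced only by coefficient discrepancies (via Proposition \ref{p:convergence_Riccati_N_infity} and Corollary \ref{c:convergence_coefficients_control}), so the Gr\"onwall bound is $\dbE[|Z_T^{i,N}(t)|^2]\les \tfrac{K}{N^2}e^{K(T-t)}+Ke^{-2\lambda(T-t)}$, and the stationarity of $(\Th^*,\bar\Th^*)$ gives the $T$-uniform second-moment bound on $\wt X^i$ via Lemma \ref{l:moment_boundedness}. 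No propagation-of-chaos estimate between an $N$-particle system and the mean-field limit ever appears; all fluctuation terms cancel inside $Z_T^{i,N}$.

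The gap in your version is in the rate claimed for steps (b) and (d). You state $|J_T^i(\bm\xi;\wt{\bm u}_T)-U_T(\mu_0)|\les \tfrac{K}{N}e^{KT}$ ``by the chaos argument of Theorem \ref{t:convergence_path_social_optimal_MFC},'' but that theorem gives a \emph{state-level} estimate $\dbE[|X_T^{i,N}(t)-X_T^i(t)|^2]\les \tfrac{K}{N}e^{KT}$, i.e.\ an $L^2$-distance of order $N^{-1/2}$. Since $f$ is quadratic, passing from the state gap to the cost gap by Cauchy--Schwarz costs a square root, giving $\tfrac{K}{\sqrt N}e^{KT}$, not $\tfrac{K}{N}e^{KT}$. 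Contrast this with Proposition \ref{p:convergence_value_social_optimal_MFC}, where the comparison is between two $N$-particle systems, the state gap is $O(N^{-2})$ in $L^2$-squared, and Cauchy--Schwarz correctly returns $\tfrac{K}{N}e^{KT}$ for the cost. To recover the $O(N^{-1})$ rate you assert for (b) and (d), you would need a genuine variance-cancellation argument exploiting that $\dbE[\wt X_T^i(t)\,|\,\cF_t^0]$ coincides with the conditional mean of the limiting state, so that the cross terms in the expanded quadratic cost vanish in expectation and only $\dbE[|\Delta X|^2]$-type pieces of order $N^{-1}$ survive; you do not make this argument. (One can also observe a posteriori that $\tfrac{1}{T}\cdot\tfrac{K}{\sqrt N}e^{KT/2}\les \tfrac{K}{N}e^{KT}+\tfrac{K}{T}$ by Young's inequality, so the weaker $N^{-1/2}$ rate is in fact still compatible with the target estimate, but this is a separate observation that also needs to be stated.) As written, the per-step rate claims do not follow from the cited results.
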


\begin{proof}
The proof follows an argument similar to that of Proposition \ref{p:convergence_value_social_optimal_MFC}.
First, by an argument similar to that of Lemma \ref{l:moment_boundedness}, based on the corresponding Lyapunov equations for the $L^2$-stabilizable systems $[A + \bar{A} + B\bar{\Theta}^*, \Gamma + \bar{\Gamma}]$ and $[A+B\Theta^*, C+D\Theta^*, \Gamma]$, we establish the uniform-in-time estimates for $\wt{\dbE}[|\wt{X}^{i}(t)|^2]$ and $\wt{\dbE}[|\wt{X}^{(N)}(t)|^2]$, i.e., for all $N > N^{*}$,
$$\sup_{i \in \{1, \dots, N\}} \sup_{t \in [0, T]} \wt{\dbE} \big[|\wt{X}^i(t)|^2 + |\wt{X}^{(N)}(t)|^2 \big] \les K,$$
where $K > 0$ is independent of $T$ and $N$. Next, for all $i \in \{1, \dots, N\}$ and $t \in [0, T]$, we define
$$Z_T^{i, N} (t) := X_T^{i, N}(t) - \wt{X}^{i}(t), \quad Z_T^{(N)}(t) := \frac{1}{N} \sum_{i=1}^{N} Z_T^{i,N}(t).$$
Then, one can show that
$$\sup_{i \in \{1, \dots, N\}} \wt{\dbE} \big[|Z_T^{i, N} (t)|^2 + |Z_T^{(N)}(t)|^2\big] \les \frac{K}{N^2} + K e^{-2\lambda(T-t)}, \quad \forall t \in [0, T],$$
for some positive constants $K$ and $\lambda$. By the feedback form of $u_T^{i,N}(t)$ and $\wt{u}^i(t)$, we have
$$\sup_{i \in \{1, \dots, N\}} \wt{\dbE} \big[ |u_T^{i,N}(t) - \wt{u}^i(t)|^2 \big] \les \frac{K}{N^2} + K e^{-2\lambda(T-t)}, \quad \forall t \in [0, T].$$
Since $f(x, \bar{x}, u)$ is quadratic in $x$, $\bar{x}$, and $u$, the preceding estimates and the second-moment bound for $X_T^{i,N}(t)$ and $X_T^{(N)}(t)$ in \eqref{eq:estimate_moment_state_average_agent} show that, for each $i \in \{1, \dots, N\}$,
\begin{equation*}
\begin{aligned}
& \wt{\dbE} \big[f\big(X_T^{i,N}(t), X_T^{(N)}(t), u_T^{i,N}(t)\big) - f\big(\wt{X}^i(t), \wt{X}^{(N)}(t), \wt{u}^i(t) \big)\big] \\
\les \ & K \big(\wt{\dbE} \big[|Z_T^{i, N} (t)|^2 + |Z_T^{(N)}(t)|^2 + |u_T^{i,N}(t) - \wt{u}^i(t)|^2 \big] \big)^{\frac{1}{2}} \\
\les \ & \frac{K}{N} + K e^{-\lambda(T-t)},
\end{aligned}
\end{equation*}
for all $t \in [0, T]$, which implies that
$$\frac{1}{T} |J_T^i(\bm{\xi}; \bm{u}_T(\cd)) - J_T^i(\bm{\xi}; \wt{\bm{u}}(\cd))| \les \frac{K}{N} + \frac{K}{T}.$$
Averaging over $i$ yields the desired estimate.
\end{proof}

\bibliographystyle{plain}
\bibliography{reference}

\appendix

\section{Proofs of auxiliary results}
\label{s:appendx}

\subsection{Proof of Lemma \ref{l:form_BP_1}}
\label{s:proof_form_BP_1}

\begin{proof}
Let $I_{Nn} = \text{diag}[I_n, \dots, I_n]$. For $1 \les i, j \les N$, we denote by $\bm{\Upsilon}_{ij}$ the matrix obtained by exchanging the $i$-th and $j$-th block rows of $I_{Nn}$. It is clear that $\bm{\Upsilon}_{ij} = \bm{\Upsilon}_{ji}$ and $\bm{\Upsilon}_{ij} = \bm{\Upsilon}_{ij}^\top = \bm{\Upsilon}_{ij}^{-1}$ for all $1 \les i, j \les N$. 

Write $\BP_T(t) = (P_T^{ij}(t))_{1 \les i, j \les N}$, where each $P_T^{ij}(t)$ is an $n \times n$ matrix. For arbitrary $i \neq j$, define $\BP_T^{(ij)}(t) := \bm{\Upsilon}_{ij}^\top \BP_T(t) \bm{\Upsilon}_{ij}$ for all $t \in [0, T]$. Multiplying \eqref{eq:BP_1} on the left by $\bm{\Upsilon}_{ij}^\top$ and on the right by $\bm{\Upsilon}_{ij}$, we obtain
\begin{equation*}
\begin{aligned}
& \dot{\BP}_T^{(ij)}(t) - \big(\BP_T^{(ij)}(t) \BB + \cM_2(\BP_T^{(ij)}(t)) + \BS^\top \big) \big(\BR + 2 \cM_1(\BP_T^{(ij)}(t)) \big)^{-1} \\
& \hspace{0.5in} \big(\BP_T^{(ij)}(t) \BB + \cM_2(\BP_T^{(ij)}(t)) + \BS^\top \big)^\top  + \BP_T^{(ij)}(t) (\BA + \bar{\BA}) + (\BA + \bar{\BA})^\top \BP_T^{(ij)}(t) \\
& \hspace{0.5in} + 2 \cM_4(\BP_T^{(ij)}(t)) + \BQ + \bar{\BQ} = 0
\end{aligned}
\end{equation*}
with the terminal condition $\BP_T^{(ij)}(T) = 0 \in \dbS^{Nn}$, where we use the following facts for $\bm{\Psi} = \BA + \bar{\BA}, \BB, \BR, \BQ + \bar{\BQ}, \BS$, or $\BGamma + \bar{\BGamma}$:
$$\bm{\Upsilon}_{ij}^\top \bm{\Psi} \bm{\Upsilon}_{ij} = \bm{\Psi}$$
for all $i \neq j$, and
\begin{equation*}
\bm{\Upsilon}_{ij}^\top \BD^k \Be^k \bm{\Upsilon}_{ij} =  
\begin{cases}
\BD^k \Be^k, & \text{if } \, k \neq i, j, \\
\BD^j \Be^j, & \text{if } \, k = i, \\
\BD^i \Be^i, & \text{if } \, k = j, \\
\end{cases} \quad \bm{\Upsilon}_{ij}^\top (\BC^k + \bar{\BC}^k) \bm{\Upsilon}_{ij} =  
\begin{cases}
\BC^k + \bar{\BC}^k, & \text{if } \, k \neq i, j, \\
\BC^j + \bar{\BC}^j, & \text{if } \, k = i, \\
\BC^i + \bar{\BC}^i, & \text{if } \, k = j.
\end{cases} 
\end{equation*}
Thus, $\BP_T^{(ij)}(\cd)$ also satisfies the differential equation \eqref{eq:BP_1}. By the uniqueness of the solution to \eqref{eq:BP_1} established in Lemma \ref{l:unique_solvability_Riccati_N_agent}, we obtain $\bm{\Upsilon}_{ij}^\top \BP_T(\cd) \bm{\Upsilon}_{ij} = \BP_T(\cd)$ for all $i \neq j$. Hence, for all $t \in [0, T]$, the matrix $\BP_T(t) = (P_T^{ij}(t))_{1 \les i, j \les N}$ satisfies
\begin{equation*}
P_T^{ii}(t) = P_T^{jj}(t), \quad P_T^{ij}(t) = P_T^{ji}(t), \quad P_T^{ik}(t) = P_T^{jk}(t), \quad P_T^{ki}(t) = P_T^{kj}(t)  
\end{equation*}
for all $k \neq i, j$. This implies that the diagonal submatrices $\{P_T^{ii}(t): i = 1, \dots, N\}$ are identical and that all the off-diagonal submatrices $\{P_T^{ij}(t): i, j = 1, \dots, N, i\neq j\}$ are equal. Finally, since $\BP_T(t)$ is symmetric, we conclude that $P_T^{ij}(t) = (P_T^{ji}(t))^\top = P_T^{ji}(t)$ for all $i \neq j$. Set $P_T^{1,N}(\cd) := P_T^{ii}(\cd)$ for all $i = 1, \dots, N$, and $P_T^{2,N}(\cd) := P_T^{ij}(\cd)$ for all $i, j = 1, \dots, N$ with $i \neq j$. The desired characterization of $\BP_T(\cd)$ follows. 
\end{proof}

\subsection{Proof of Lemma \ref{l:form_BP_2}}
\label{s:proof_form_BP_2}

\begin{proof}
The proof is similar to that of Lemma \ref{l:form_BP_1}. For $t \in [0, T]$, write $\Bp_T(t) = (\wt{p}_T^{i}(t))_{1 \les i \les N}$, where each $\wt{p}_T^{i}(t)$ is a vector in $\dbR^n$. In addition, for arbitrary $i \neq j$, define $\Bp_T^{(ij)}(t) := \bm{\Upsilon}_{ij} \Bp_T(t)$ for all $t \in [0, T]$. Multiplying the ODE \eqref{eq:BP_2} on the left by $\bm{\Upsilon}_{ij}$ gives
\begin{equation*}
\begin{aligned}
& \dot{\Bp}_T^{(ij)}(t) - \big(\BP_T(t) \BB + \cM_2(\BP_T(t)) + \BS^\top \big) \big(\BR + 2 \cM_1(\BP_T(t)) \big)^{-1} \big((\Bp_T^{(ij)}(t))^\top \BB + \cM_3(\BP_T(t)) + \Br^\top \big)^\top \\
& \hspace{0.5in} + \BP_T(t) \Bb + (\BA + \bar{\BA})^\top \Bp_T^{(ij)}(t) + \big(\cM_5(\BP_T(t)) \big)^\top + \Bq = 0
\end{aligned}    
\end{equation*}
with the terminal condition $\Bp_T^{(ij)}(T) = 0 \in \dbR^{Nn}$, where we use Lemma \ref{l:form_BP_1} and the fact that $\bm{\Upsilon}_{ij} \bm{\Psi} = \bm{\Psi}$ for $\bm{\Psi} = \Br, \Bb, \Bq$, or $\bm{\gamma}$ and all $i \neq j$,
\begin{equation*}
\bm{\Upsilon}_{ij} (\Be^k)^\top (\BD^k)^\top \BP_T(t) \bm{\sigma}^k =  
\begin{cases}
(\Be^k)^\top (\BD^k)^\top \BP_T(t) \bm{\sigma}^k, & \text{if } \, k \neq i, j, \\
(\Be^j)^\top (\BD^j)^\top \BP_T(t) \bm{\sigma}^j, & \text{if } \, k = i, \\
(\Be^i)^\top (\BD^i)^\top \BP_T(t) \bm{\sigma}^i, & \text{if } \, k = j, \\
\end{cases}
\end{equation*}
and
\begin{equation*}
\bm{\Upsilon}_{ij} (\BC^k + \bar{\BC}^k)^\top \BP_T(t) \bm{\sigma}^k =  
\begin{cases}
(\BC^k + \bar{\BC}^k)^\top \BP_T(t) \bm{\sigma}^k, & \text{if } \, k \neq i, j, \\
(\BC^j + \bar{\BC}^j)^\top \BP_T(t) \bm{\sigma}^j, & \text{if } \, k = i, \\
(\BC^i + \bar{\BC}^i)^\top \BP_T(t) \bm{\sigma}^i, & \text{if } \, k = j.
\end{cases} 
\end{equation*}
This implies that $\Bp_T^{(ij)}(\cd)$ is also a solution to the ODE \eqref{eq:BP_2}. By the unique solvability of \eqref{eq:BP_2} in Lemma \ref{l:unique_solvability_Riccati_N_agent}, we obtain $\bm{\Upsilon}_{ij} \Bp_T(\cd) = \Bp_T(\cd)$ for all $i \neq j$, which gives $\wt{p}_T^i(\cd) = \wt{p}_T^j(\cd)$ for all $i \neq j$. Thus, for all $t \in [0, T]$, the subvectors $\{\wt{p}_T^i(t): i = 1, \dots, N\}$ are identical. The desired result follows by setting $p_T^{1,N}(\cd) := \wt{p}_T^i(\cd)$ for all $i = 1, \dots, N$.
\end{proof}

\subsection{Proof of Proposition \ref{p:solvability_MFC_finite_horizon}}
\label{s:proof_solvability_MFC_finite_horizon}

\begin{proof}
{\rm(i)} To avoid cumbersome notation throughout the proof, we remove the subscript $T$ from $P_T(\cd)$, $\bar{\Pi}_T(\cd)$, $p_T(\cd)$, and $\kappa_T(\cd)$. First, we prove the unique solvability of the first equation in \eqref{eq:Riccati_MC} in three steps. 

\textit{Step 1.} We show that the linear matrix-valued differential equation
\begin{equation}
\label{eq:ODE_P_transformed}
\begin{cases}
\vspace{4pt}
\displaystyle
\dot{P}(t) + P(t) \wt{A}(t) + \wt{A}(t)^\top P(t) + \wt{C}(t)^\top P(t) \wt{C}(t) + \Gamma^\top P(t) \Gamma + \wt{Q}(t) = 0, \\
P(T) = 0 \in \dbS^n,
\end{cases}
\end{equation}
admits a unique solution $P(\cd) \in C([0, T]; \dbS^n)$ if $\wt{A}(\cd)$, $\wt{C}(\cd)$, and $\wt{Q}(\cd)$ are measurable and uniformly bounded on $[0, T]$. Moreover, if $\wt{Q}(t) \in \dbS^n_{+}$ for all $t \in [0, T]$, then $P(\cd) \in C([0, T]; \dbS^n_{+})$. Since the equation \eqref{eq:ODE_P_transformed} is linear with bounded coefficients, it has a unique solution $P(\cd) \in C([0, T]; \dbS^n)$. Let $\Phi(\cd)$ be the solution to the following SDE:
\begin{equation*}
\begin{cases}
\vspace{4pt}
\displaystyle
d \Phi(t) = \wt{A}(t) \Phi(t) dt + \wt{C}(t) \Phi(t) dW(t) + \Gamma \Phi(t) d W^0(t), \\
\Phi(0) = I_n.
\end{cases}
\end{equation*}
By Theorem 6.14 in Chapter 1 of \cite{Yong-Zhou-1999-stochastic}, the above SDE admits a unique solution. Moreover, $\Phi(t)$ is invertible for all $t \in [0, T]$, $\dbP$-almost surely. Applying It\^o's formula, we obtain
\begin{equation*}
\begin{aligned}
d \big(\Phi(t)^\top P(t) \Phi(t) \big) &= \Phi(t)^\top \big(\wt{C}(t)^\top P(t) + P(t) \wt{C}(t) \big) \Phi(t) d W(t) \\
& \hspace{0.5in} + \Phi(t)^\top \big(\Gamma^\top P(t) + P(t) \Gamma \big) \Phi(t) d W^0(t) - \Phi(t)^\top \wt{Q}(t) \Phi(t) dt.
\end{aligned}
\end{equation*}
Consequently, we have
$$P(t) = \mathbb E \Big[\int_t^T \big(\Phi(t)^\top \big)^{-1} \Phi(s)^\top \wt{Q}(s) \Phi(s) \Phi(t)^{-1} ds \Big],$$
and $P(t) \in \dbS^n_{+}$ if $\wt{Q}(t) \in \dbS^n_{+}$.

\textit{Step 2.} Next, we claim that the first equation in \eqref{eq:Riccati_MC} has the same form as \eqref{eq:ODE_P_transformed}. Let
\begin{equation*}
\begin{aligned}
& \wt{A}(t) = A + B \Th_T^*(t), \quad \wt{C}(t) = C + D \Th_T^*(t), \\
& \wt{Q}(t) = \big(\Th_T^*(t) + R^{-1} S\big)^\top R \big(\Th_T^*(t) + R^{-1} S\big) + Q - S^\top R^{-1} S,
\end{aligned}
\end{equation*}
where $\Th_T^*(t)$ is defined in \eqref{eq:theta_T_star}. From the definition of $\wt{A}(t)$, $\wt{C}(t)$ and $\wt{Q}(t)$, we have
\begin{equation*}
- (R+D^\top P(t) D) \Th_T^*(t) = B^\top P(t) + D^\top P(t) C + S,
\end{equation*}
and
\begin{equation*}
\begin{aligned}
- \dot{P}(t) &= P(t) \big(\wt{A}(t) - B \Th_T^*(t) \big) + \big(\wt{A}(t) - B \Th_T^*(t) \big)^\top P(t) + \Gamma^\top P(t) \Gamma \\
& \hspace{0.3in} + \big(\wt{C}(t) - D \Th_T^*(t) \big)^\top P(t) \big(\wt{C}(t) - D \Th_T^*(t) \big) + Q - \Th_T^*(t)^\top (R+D^\top P(t) D) \Th_T^*(t) \\
&= P(t) \wt{A}(t) + \wt{A}(t)^\top P(t) + + \wt{C}(t)^\top P(t) \wt{C}(t) + \Gamma^\top P(t) \Gamma - \big(B^\top P(t) + D^\top P(t) \wt{C}(t) \big)^\top \Th_T^*(t) \\
& \hspace{0.3in} - \Th_T^*(t)^\top \big(B^\top P(t) + D^\top P(t) \wt{C}(t) \big) + Q - \Th_T^*(t)^\top R \Th_T^*(t).
\end{aligned}
\end{equation*}
By the definition of $\Th_T^*(t)$, we observe that
$$\big(B^\top P(t) + D^\top P(t) \wt{C}(t) \big)^\top \Th_T^*(t) = - \Th_T^*(t)^\top R \Th_T^*(t) - S^\top \Th_T^*(t),$$
which yields the following differential equation
$$- \dot{P}(t) = P(t) \wt{A}(t) + \wt{A}(t)^\top P(t) + \wt{C}(t)^\top P(t) \wt{C}(t) + \Gamma^\top P(t) \Gamma + \wt{Q}(t)$$
from the definition of $\wt{Q}(t)$.

\textit{Step 3.} We prove the unique solvability of the first equation in the system \eqref{eq:Riccati_MC} by constructing the following iterative scheme. Let $P_0(\cd)$ be the solution to
\begin{equation*}
\begin{cases}
\vspace{4pt}
\displaystyle
\dot{P}_0(t) + P_0(t) A + A^\top P_0(t) + C^\top P_0(t) C + \Gamma^\top P_0(t) \Gamma + Q = 0, \\
P_0(T) = 0.
\end{cases}
\end{equation*}
By the result in \textit{Step 1}, since $Q \in \dbS^n_{++}$, we obtain $P_0(\cd) \in C([0, T]; \dbS^n_{+})$. 
Next, inductively, for $i = 0, 1, 2, \dots$, we set
\begin{equation*}
\begin{aligned}
& \Th_i(t) =  - \cR(P_i(t))^{-1} \cS(P_i(t)), \quad \wt{A}_i(t) = A + B \Th_i(t), \quad \wt{C}_i(t) = C + D \Th_i(t), \\
& \wt{Q}_i(t) = \big(\Th_i(t) + R^{-1} S\big)^\top R \big(\Th_i(t) + R^{-1} S\big) + Q - S^\top R^{-1} S,
\end{aligned}
\end{equation*}
and let $P_{i+1}(t)$ be the solution to 
\begin{equation*}
\begin{cases}
\vspace{4pt}
\displaystyle
\dot{P}_{i+1}(t) + P_{i+1}(t) \wt{A}_i(t) + \wt{A}_i(t)^\top P_{i+1}(t) + \wt{C}_i(t)^\top P_{i+1}(t) \wt{C}_i(t) + \Gamma^\top P_{i+1}(t) \Gamma + \wt{Q}_i(t) = 0, \\
P_{i+1}(T) = 0.
\end{cases}
\end{equation*}
By \textit{Step 1}, $P_i(t) \in \dbS^n_{+}$ for all $t \in [0, T]$ and $i \ges 0$. Thus, there exists a constant $\alpha > 0$ such that $\cR(P_i(t)) = R + D^\top P_i(t) D \ges \alpha I_m$ for all $t \in [0, T]$ and $i \ges 0$. Next, we prove that the sequence $\{P_i(\cd)\}_{i=1}^{\infty}$ converges uniformly in $C([0, T]; \dbS^n_{+})$. Set
$$\Delta_i(t) := P_i(t) - P_{i+1}(t), \quad \text{and} \quad \Lambda_i(t) = \Th_{i-1}(t) - \Th_i(t)$$
for all $i \ges 1$ and $t \in [0, T]$. Subtracting the $(i+1)$-th equation from the $i$-th equation, we get
\begin{equation*}
\begin{aligned}
& \dot{\Delta}_i(t) + \Delta_i(t) \wt{A}_i(t) + \wt{A}_i(t)^\top \Delta_i(t) + \wt{C}_i(t)^\top \Delta_i(t) \wt{C}_i(t) + \Gamma^\top \Delta_i(t) \Gamma \\
& \hspace{0.5in} + P_i(t) \big(\wt{A}_{i-1}(t) - \wt{A}_i(t)\big) + \big(\wt{A}_{i-1}(t) - \wt{A}_i(t)\big)^\top P_i(t) + \wt{C}_{i-1}(t)^\top P_i(t) \wt{C}_{i-1}(t) \\
& \hspace{0.5in} - \wt{C}_i(t)^\top P_i(t) \wt{C}_i(t) + \wt{Q}_{i-1}(t) - \wt{Q}_i(t) = 0.
\end{aligned}
\end{equation*}
Note that
\begin{equation*}
\begin{aligned}
& \wt{A}_{i-1}(t) - \wt{A}_i(t) = B \Lambda_i(t), \quad \wt{C}_{i-1}(t) - \wt{C}_i(t) = D \Lambda_i(t), \\
& \wt{C}_{i-1}(t)^\top P_i(t) \wt{C}_{i-1}(t) - \wt{C}_i(t)^\top P_i(t) \wt{C}_i(t) 
= \Lambda_i(t)^\top D^\top P_i(t) D \Lambda_i(t) \\
& \hspace{0.5in} + \wt{C}_i(t)^\top P_i(t) D \Lambda_i(t) + \Lambda_i(t)^\top D^\top P_i(t) \wt{C}_i(t),
\end{aligned}
\end{equation*}
and
\begin{equation*}
\begin{aligned}
& \wt{Q}_{i-1}(t) - \wt{Q}_i(t) \\
= \ & \big(\Th_{i-1}(t) + R^{-1} S\big)^\top R \big(\Th_{i-1}(t) + R^{-1} S\big) - \big(\Th_i(t) + R^{-1} S\big)^\top R \big(\Th_i(t) + R^{-1} S\big) \\
= \ & \Lambda_i(t)^\top R \big(\Th_i(t) + R^{-1} S\big) + \big(\Th_i(t) + R^{-1} S\big)^\top R \Lambda_i(t) + \Lambda_i(t)^\top R \Lambda_i(t).
\end{aligned}
\end{equation*}
Thus, we derive that
\begin{equation}
\label{eq:ODE_Delta_i}
\begin{aligned}
& - \big[\dot{\Delta}_i(t) + \Delta_i(t) \wt{A}_i(t) + \wt{A}_i(t)^\top \Delta_i(t) + \wt{C}_i(t)^\top \Delta_i(t) \wt{C}_i(t) + \Gamma^\top \Delta_i(t) \Gamma \big] \\
= \ & \Lambda_i(t)^\top \big(R + D^\top P_i(t) D \big) \Lambda_i(t) + \Lambda_i(t)^\top \big(B^\top P_i(t) + D^\top P_i(t) \wt{C}_i(t) + R \Th_i(t) +S \big) \\
& \hspace{0.5in} + \big(B^\top P_i(t) + D^\top P_i(t) \wt{C}_i(t) + R \Th_i(t) +S \big)^\top \Lambda_i(t) \\
= \ & \Lambda_i(t)^\top \big(R + D^\top P_i(t) D \big) \Lambda_i(t)
\end{aligned}
\end{equation}
since
$$B^\top P_i(t) + D^\top P_i(t) \wt{C}_i(t) + R \Th_i(t) +S = B^\top P_i(t) + D^\top P_i(t) C + S + \big(R + D^\top P_i(t) D \big) \Th_i(t) = 0$$
by the definition of $\Th_i(t)$. As $\Delta_i(T) = 0$, another application of \textit{Step 1} gives $\Delta_i(t) \ges 0$, i.e.,
$$P_1(t) \ges P_i(t) \ges P_{i+1}(t) \ges 0, \quad \forall t \in [0, T], \, i \ges 1.$$
Hence, the sequence $\{P_i(\cd)\}_{i=1}^{\infty}$ is uniformly bounded. Consequently, there exists a constant $K > 0$ such that
\begin{equation*}
\|P_i(t)\|, \quad \|\cR(P_i(t))\| = \|R + D^\top P_i(t) D\|, \quad \|\Th_i(t)\|, \quad \|\wt{A}_i(t)\|, \quad \|\wt{C}_i(t)\|, \quad \|\wt{Q}_i(t)\|
\end{equation*}
are all bounded by $K$. We observe that
\begin{equation*}
\begin{aligned}
\Lambda_i(t) &= \big(R + D^\top P_i(t) D \big)^{-1} D^\top \Delta_{i-1}(t) D \big(R + D^\top P_{i-1}(t) D \big)^{-1} \big(B^\top P_i(t) + D^\top P_i(t) C + S\big) \\
& \hspace{0.5in} - \big(R + D^\top P_{i-1}(t) D \big)^{-1} \big(B^\top \Delta_{i-1}(t) + D^\top \Delta_{i-1}(t) C \big),
\end{aligned}
\end{equation*}
and
\begin{equation*}
\begin{aligned}
& \Th_{i-1}(t)^\top \cR(P_i(t)) \Th_{i-1}(t) - \Th_i(t)^\top \cR(P_i(t)) \Th_i(t) \\
= \ & \Lambda_i(t)^\top \cR(P_i(t)) \Lambda_i(t) + \Th_i(t)^\top \cR(P_i(t)) \Lambda_i(t) + \Lambda_i(t)^\top \cR(P_i(t)) \Th_i(t).
\end{aligned}
\end{equation*}
Then, one has
\begin{equation*}
\begin{aligned}
& \|\Lambda_i(t)^\top \cR(P_i(t)) \Lambda_i(t) \| \\
= \ & \|\Th_{i-1}(t)^\top \cR(P_i(t)) \Th_{i-1}(t) - \Th_i(t)^\top \cR(P_i(t)) \Th_i(t) - \Th_i(t)^\top \cR(P_i(t)) \Lambda_i(t) \\
& \hspace{0.5in} - \Lambda_i(t)^\top \cR(P_i(t)) \Th_i(t)\| \\
\les \ & K \big(\|\Th_{i-1}(t)\| + \|\Th_i(t)\| \big) \big(\|\Th_{i-1}(t) - \Th_i(t)\| \big) + K \|\Delta_{i-1}(t)\| \\
\les \ & K \|\Delta_{i-1}(t)\|
\end{aligned}
\end{equation*}
for all $t \in [0, T]$ and $i \ges 1$. From the differential equation \eqref{eq:ODE_Delta_i} and the terminal condition $\Delta_i(T) = 0 \in \dbS^n$, we obtain
\begin{equation*}
\begin{aligned}
\Delta_i(t) &= \int_t^T \big(\Delta_i(s) \wt{A}_i(s) + \wt{A}_i(s)^\top \Delta_i(s) + \wt{C}_i(s)^\top \Delta_i(s) \wt{C}_i(s) \\
& \hspace{0.5in} + \Gamma^\top \Delta_i(s) \Gamma + \Lambda_i(s)^\top \cR(P_i(s)) \Lambda_i(s) \big) ds,
\end{aligned}
\end{equation*}
which yields the following estimate:
\begin{equation*}
\|\Delta_i(t)\| \les \int_t^T K \big( \|\Delta_i(s)\| + \|\Delta_{i-1}(s)\| \big) ds, \quad \forall t \in [0, T], \, i \ges 1.
\end{equation*}
Let $\tau = T-t$ and $\wt{\Delta}_i(t) := \Delta_i(T-t)$ for all $t \in [0, T]$ and $i \ges 1$. After a change of variables, we have
\begin{equation*}
\|\wt{\Delta}_i(\tau)\| \les \int_0^\tau K \big( \|\wt{\Delta}_i(r)\| + \|\wt{\Delta}_{i-1}(r)\| \big) dr, \quad \forall \tau \in [0, T], \, i \ges 1.
\end{equation*}
Thus, by Gr\"onwall's inequality, 
\begin{equation*}
\|\wt{\Delta}_i(\tau)\| \les e^{K \tau} \int_0^\tau K \|\wt{\Delta}_{i-1}(r)\| dr.
\end{equation*}
By induction, we derive the following estimate:
\begin{equation*}
\|\wt{\Delta}_i(\tau)\| \les \max_{s \in [0, T]} \|\wt{\Delta}_0(s)\| \frac{(K\tau e^{K\tau})^i}{i !}, \quad \forall \tau \in [0, T], \, i \ges 1,
\end{equation*}
which implies that the sequence $\{P_i(\cd)\}_{i=1}^{\infty}$ is uniformly Cauchy. Therefore, the sequence $\{P_i(\cd)\}_{i=1}^{\infty}$ converges uniformly in $C([0, T]; \dbS^n_{+})$. Denote by $P(\cd)$ the limit of $\{P_i(\cd)\}_{i=1}^{\infty}$. Then, by \textit{Step 2}, $P(\cd)$ is the unique solution to the first equation in \eqref{eq:Riccati_MC}. Moreover, $\cR(P(t)) \ges \alpha I_m$ for all $t \in [0, T]$.

Next, given $P(\cd) \in C([0, T]; \dbS^n_{+})$, we prove the unique solvability of the second equation in the system \eqref{eq:Riccati_MC}. We first rewrite this equation as follows
\begin{equation*}
\begin{aligned}
0 &= \dot{\bar{\Pi}}(t) + \bar{\Pi}(t) \big[\h{A} - B\big(R^{-1} S + (R + D^\top P(t) D)^{-1} D^\top P(t) (\h{C} - DR^{-1} S) \big) \big] \\
& \hspace{0.5in} + \big[\h{A} - B\big(R^{-1} S + (R + D^\top P(t) D)^{-1} D^\top P(t) (\h{C} - DR^{-1} S) \big) \big]^\top \bar{\Pi}(t) \\
& \hspace{0.5in} + \h{\Gamma}^\top \bar{\Pi}(t) \h{\Gamma} - \bar{\Pi}(t) B (R + D^\top P(t) D)^{-1} B^\top \bar{\Pi}(t) + \h{Q} - SR^{-1} S \\
& \hspace{0.5in} + (\h{C} - DR^{-1} S)^\top \big[P(t) - P(t) D (R + D^\top P(t) D)^{-1} D^\top P(t)\big] (\h{C} - DR^{-1} S),
\end{aligned}
\end{equation*}
which is of the same form as the differential equation satisfied by $P(\cd)$. Note that 
$\cR(P(t)) = R + D^\top P(t) D \in \dbS^m_{++}$, $\h{Q} - SR^{-1} S \in \dbS^n_{++}$
and
\begin{equation*}
\begin{aligned}
& P(t) - P(t) D (R + D^\top P(t) D)^{-1} D^\top P(t) \\
= \ & P(t)^{\frac{1}{2}} \big[I_n - P(t)^{\frac{1}{2}} D R^{-\frac{1}{2}} \big(I_n + R^{-\frac{1}{2}} D^\top P(t)^{\frac{1}{2}} P(t)^{\frac{1}{2}} D  R^{-\frac{1}{2}} \big)^{-1}  R^{-\frac{1}{2}} D^\top P(t)^{\frac{1}{2}} \big] P(t)^{\frac{1}{2}} \\
:= \ & P(t)^{\frac{1}{2}} \big[I_n - \wt{P}(t) \big(I_n + \wt{P}(t)^\top \wt{P}(t) \big)^{-1} \wt{P}(t)^\top \big] P(t)^{\frac{1}{2}} \\
= \ & P(t)^{\frac{1}{2}} \big(I_n + \wt{P}(t) \wt{P}(t)^\top \big)^{-1} P(t)^{\frac{1}{2}} \\
= \ & P(t)^{\frac{1}{2}} \big(I_n + P(t)^{\frac{1}{2}} D R^{-1} D^\top P(t)^{\frac{1}{2}} \big)^{-1} P(t)^{\frac{1}{2}} \ges 0,
\end{aligned}
\end{equation*}
where we use the fact that
$$I_n - \wt{P}(t) \big(I_n + \wt{P}(t)^\top \wt{P}(t) \big)^{-1} \wt{P}(t)^\top = \big(I_n + \wt{P}(t) \wt{P}(t)^\top \big)^{-1}.$$
Then, by the previous result, we establish the unique solvability of the second equation in \eqref{eq:Riccati_MC}, and we obtain $\bar{\Pi}(\cd) \in C([0, T]; \dbS^n_{+})$.

Given $P(\cd), \bar{\Pi}(\cd) \in C([0, T]; \dbS^n_{+})$, the equation for $p(\cd)$ is a linear ODE, so existence and uniqueness follow from standard results in the theory of linear differential equations. Finally, once $P(\cd), \bar{\Pi}(\cd) \in C([0, T]; \dbS^n_{+})$ and $p(\cd) \in C([0, T]; \dbR^n)$ are given, the last equation in \eqref{eq:Riccati_MC} yields an explicit expression for $\kappa(\cd)$ by integration.
\ms 

{\rm (ii)} The optimality of $u_T(\cd)$ in \eqref{eq:optimal_control_MFC} can be proved by applying a completion-of-squares technique to the cost of \textbf{Problem (MFC)$^T$}; see, for example, Theorem 4.1 in \cite{Yong-2013} or Theorem 5.2 in \cite{Sun-2017} for linear-quadratic mean field control problems. Substituting $u_T(\cd)$ into the dynamics \eqref{eq:state_N_infinity} yields the optimal path governed by the closed-loop system \eqref{eq:optimal_path_MFC}.
\ms

{\rm(iii)} The result follows directly from (i), (ii), and the fact that $U_T(\mu_0) := U_T(0, \mu_0)$, where $U_T(\cd, \cd)$ has the form \eqref{eq:value_ansatz_MFC} and solves the infinite-dimensional HJ equation \eqref{eq:master_equation_MC}.
\end{proof}

\subsection{Proof of Lemma \ref{l:stability_homo_system}}
\label{s:proof_stability_homo_system}

\begin{proof}
First, given $u(\cd) \in \sU[0, \infty)$, a standard argument using the contraction mapping theorem shows that the homogeneous system $[A, \bar A, C, \bar C, \Gamma, \bar{\Gamma}; B, D]$ admits a unique solution $X^\text{Hom}(\cd) \in \sX_{\text{loc}}[0, \infty)$. Taking the conditional expectation of \eqref{eq:state_MFC_homo}, we obtain 
\begin{equation*}
\begin{cases}
\vspace{4pt}
\displaystyle
d \dbE \big[X^\text{Hom}(t)|\cF_t^0 \big] = \big\{(A + \bar A) \dbE \big[X^\text{Hom}(t)|\cF_t^0 \big] + B \dbE \big[u(t)|\cF_t^0 \big] \big\} dt \\
\hspace{1.4in} + (\Gamma + \bar{\Gamma}) \dbE \big[X^\text{Hom}(t)|\cF_t^0 \big] d W^0(t), \quad t \ges 0 \\
\dbE \big[X^\text{Hom}(0)|\cF_0^0 \big] = \dbE[\xi].
\end{cases}
\end{equation*}
Under Assumption \textnormal{\textbf{(H2)}}, the system $[A + \bar{A}, \Gamma + \bar{\Gamma}; B]$ is $L^2$-stabilizable. Hence, there exists a matrix $\bar{\Theta} \in \dbR^{m \times n}$ such that, for any initial state $\xi \in L^4_{\cF_0}$, the solution to
\begin{equation*}
\begin{cases}
\vspace{4pt}
\displaystyle
d \dbE \big[X^\text{Hom}(t)|\cF_t^0 \big] = (A + \bar A + B\bar{\Theta}) \dbE \big[X^\text{Hom}(t)|\cF_t^0 \big] dt + (\Gamma + \bar{\Gamma}) \dbE \big[X^\text{Hom}(t)|\cF_t^0 \big] d W^0(t), \quad t \ges 0 \\
\dbE \big[X^\text{Hom}(0)|\cF_0^0 \big] = \dbE[\xi]
\end{cases}
\end{equation*}
satisfies
$$\dbE \Big[\int_0^\infty \big|\dbE \big[X^\text{Hom}(t)|\cF_t^0 \big] \big|^2 dt \Big] < \infty.$$
Similarly, since the controlled system $[A, C, \Gamma; B, D]$ is $L^2$-stabilizable under Assumption \textnormal{\textbf{(H2)}}, there exists a matrix $\Th \in \dbR^{m \times n}$ such that, for any initial state $\xi \in L^4_{\cF_0}$, the system denoted by $[A + B \Theta, C+ D\Theta, \Gamma]$
\begin{equation*}
\begin{cases}
\vspace{4pt}
\displaystyle
d X(t) = (A + B\Th) X(t) dt + (C+ D \Th)X(t) dW(t) + \Gamma X(t) dW^0(t), \quad t \ges 0 \\
X(0) = \xi,
\end{cases}
\end{equation*}
is $L^2$-globally integrable. By an argument similar to that in Proposition 3.5 of \cite{Huang-Li-Yong-2015}, the following Lyapunov equation, with $I_n \in \dbS^{n}_{++}$,
$$P(A + B \Theta) + (A + B \Theta)^\top P + (C + D \Theta)^\top P (C + D \Theta) + \Gamma^\top P \Gamma + I_n = 0$$
admits a solution $P \in \dbS^n_{++}$. Let the control $u(\cd)$ in \eqref{eq:state_MFC_homo} take the form
\begin{equation*}
u(t) = \Theta \big(X^\text{Hom}(t) - \dbE \big[X^\text{Hom}(t)|\cF_t^0 \big] \big) + \bar \Theta \dbE \big[X^\text{Hom}(t)|\cF_t^0 \big], \quad t \ges 0,
\end{equation*}
then we arrive at the following homogeneous closed-loop system 
\begin{equation*}
\begin{cases}
\vspace{4pt}
\displaystyle
d X^\text{Hom}(t) = \big\{(A+B\Theta) X^\text{Hom}(t) + \left[\bar A + B(\bar \Theta - \Theta)\right] \dbE \big[X^\text{Hom}(t)|\cF_t^0 \big] \big\} dt \\
\vspace{4pt}
\displaystyle
\hspace{0.9in} + \big\{(C+D\Theta) X^\text{Hom}(t) + \left[\bar C + D(\bar \Theta - \Theta)\right] \dbE \big[X^\text{Hom}(t)|\cF_t^0 \big] \big\} dW(t), \\
\vspace{4pt}
\displaystyle
\hspace{0.9in} + \big\{\Gamma X^\text{Hom}(t) + \bar{\Gamma} \dbE \big[X^\text{Hom}(t)|\cF_t^0 \big] \big\} d W^0(t), \quad t \ges 0, \\
X^\text{Hom}(0) = \xi.
\end{cases}
\end{equation*}
Next, we show that $X^\text{Hom}(\cd)$ and $u(\cd)$ are both $L^2$-globally integrable on $[0, \infty)$. Set
$$\check{X}^\text{Hom}(t) = X^\text{Hom}(t) - \dbE \big[X^\text{Hom}(t)|\cF_t^0 \big], \quad \forall t \ges 0.$$
Then, we have
\begin{equation*}
\begin{cases}
\vspace{4pt}
\displaystyle
d \check{X}^\text{Hom}(t) = (A+B\Theta) \check{X}^\text{Hom}(t) dt + \Gamma \check{X}^\text{Hom}(t) d W^0(t) \\
\vspace{4pt}
\displaystyle
\hspace{0.9in} + \big\{(C+D\Theta) \check{X}^\text{Hom}(t) + (C+ \bar C + D \bar \Theta) \dbE \big[X^\text{Hom}(t)|\cF_t^0 \big] \big\} dW(t),  \quad t \ges 0, \\
\check{X}^\text{Hom}(0) = \xi - \dbE[\xi].
\end{cases}
\end{equation*}
Applying It\^o's formula to $\lan P \check{X}^\text{Hom}(t), \check{X}^\text{Hom}(t)\ran$, we derive that
\begin{equation*}
\begin{aligned}
& \dbE \big[\lan P \check{X}^\text{Hom}(t), \check{X}^\text{Hom}(t)\ran \big] = \dbE[\lan P(\xi - \dbE[\xi]), \xi - \dbE[\xi] \ran] \\
& + \dbE \Big[\int_0^t \big\lan \big\{P(A + B \Theta) + (A + B \Theta)^\top P + (C + D \Theta)^\top P (C + D \Theta) + \Gamma^\top P \Gamma \big\} \check{X}^\text{Hom}(s), \check{X}^\text{Hom}(s) \big\ran ds \Big] \\
& + \dbE \Big[\int_0^t \big\lan \dbE \big[X^\text{Hom}(s)|\cF_s^0 \big], (C+ \bar C + D \bar \Theta)^\top P (C+ \bar C + D \bar \Theta) \dbE \big[X^\text{Hom}(s)|\cF_s^0 \big] \big\ran ds \Big]
\end{aligned}
\end{equation*}
since 
$$\dbE\big[\big\lan (C+D\Theta) \check{X}^\text{Hom}(t), P(C+ \bar C + D \bar \Theta) \dbE \big[X^\text{Hom}(t)|\cF_t^0 \big] \big\ran \big] = 0, \quad \forall t \ges 0.$$
It follows that
\begin{equation*}
\begin{aligned}
& \dbE \Big[\int_0^t \big|\check{X}^\text{Hom}(s) \big|^2 ds \Big] \\
= \ & \dbE[\lan P(\xi - \dbE[\xi]), \xi - \dbE[\xi] \ran] - \dbE \big[\lan P \check{X}^\text{Hom}(t), \check{X}^\text{Hom}(t)\ran \big] \\
& + \dbE \Big[\int_0^t \big\lan \dbE \big[X^\text{Hom}(s)|\cF_s^0 \big], (C+ \bar C + D \bar \Theta)^\top P (C+ \bar C + D \bar \Theta) \dbE \big[X^\text{Hom}(s)|\cF_s^0 \big] \big\ran ds \Big].
\end{aligned}
\end{equation*}
Since $P$ is positive definite, we obtain the following estimate:
\begin{equation*}
\dbE \Big[\int_0^t \big|\check{X}^\text{Hom}(s) \big|^2 ds \Big] \les \dbE[\lan P(\xi - \dbE[\xi]), \xi - \dbE[\xi] \ran] + K \dbE \Big[\int_0^t \big|\dbE \big[X^\text{Hom}(s)|\cF_s^0 \big]\big|^2 ds \Big], \quad \forall t \ges 0
\end{equation*}
for some constant $K > 0$. Letting $t \to \infty$ and using the facts that $\xi \in L^4_{\cF_0}$ and $\dbE \big[X^\text{Hom}(t)|\cF_t^0 \big]$ is $L^2$-globally integrable on $[0, \infty)$, we obtain
\begin{equation*}
\dbE \Big[\int_0^{\infty} \big|\check{X}^\text{Hom}(t) \big|^2 dt \Big] < \infty,
\end{equation*}
which then implies that $\dbE[\int_0^{\infty} |X^\text{Hom}(t)|^2 dt] < \infty$. By the linearity of $u(t)$ with respect to $X^\text{Hom}(t)$ and $\dbE \big[X^\text{Hom}(t)|\cF_t^0 \big]$ for all $t \ges 0$, we conclude that 
\begin{equation*}
\dbE \Big[\int_0^{\infty} \big(\big|X^\text{Hom}(t) \big|^2 + |u(t)|^2 \big) dt \Big] < \infty.
\end{equation*}
From Definition \ref{d:homo_system_stabilizable}, it is clear that the system $[A, \bar A, C, \bar C, \Gamma, \bar{\Gamma}; B, D]$ is MF-$L^2$-stabilizable, and thus the set $\sU_{ad}[0, \infty)$ is nonempty.
\end{proof}

\subsection{Proof of Lemma \ref{l:moment_boundedness}}
\label{s:proof_moment_boundedness}

\begin{proof}
First, by taking the conditional expectation of \eqref{eq:optimal_path_ergodic} with respect to $\cF_t^0$, we have
\begin{equation}
\label{eq:ergodic_mean_state}
\begin{cases}
\displaystyle
\vspace{4pt}
d \dbE \big[\bar X(t)|\cF_t^0 \big] = \big\{(A + \bar A + B \bar \Th^*) \dbE \big[\bar X(t)|\cF_t^0 \big] + \wt{b} \big\} dt + \big\{(\Gamma + \bar{\Gamma}) \dbE \big[\bar X(t)|\cF_t^0 \big] + \gamma \big\} dW^0(t), \\
\dbE \big[\bar{X}(0)|\cF_0^0 \big] = \dbE[\xi].
\end{cases}
\end{equation}
By Proposition \ref{p:solvability_Bellman_equation_ergodic} (i), $\bar{\Theta}^*$ is a stabilizer of \eqref{eq:homo_sde_1}, so the system $[A + \bar{A} + B \bar{\Theta}^*, \Gamma + \bar{\Gamma}]$ is $L^2$-exponentially stable. Thus, by Proposition 3.6 in \cite{Huang-Li-Yong-2015}, the Lyapunov equation
\begin{equation}
\label{eq:Lyapunov_equation_1}
P_1 (A + \bar{A} + B \bar{\Theta}^*) +(A + \bar{A} + B \bar{\Theta}^*)^\top P_1 + (\Gamma + \bar{\Gamma})^\top P_1 (\Gamma + \bar{\Gamma}) + I_n = 0
\end{equation}
admits a unique solution $P_1 \in \dbS^n_{++}$. Applying It\^o's formula and Young's inequality, we obtain
\begin{equation*}
\begin{aligned}
& \frac{d}{dt} \dbE\big[ \big\lan P_1 \dbE \big[\bar X(t)|\cF_t^0 \big], \dbE \big[\bar X(t)|\cF_t^0 \big] \big\ran \big] \\
= \ & \dbE\Big[\big\lan \big(P_1 (A + \bar{A} + B \bar{\Theta}^*) +(A + \bar{A} + B \bar{\Theta}^*)^\top P_1 + (\Gamma + \bar{\Gamma})^\top P_1 (\Gamma + \bar{\Gamma})\big) \dbE \big[\bar X(t)|\cF_t^0 \big], \dbE \big[\bar X(t)|\cF_t^0 \big] \big\ran \\
& \hspace{0.5in} + 2 \big\lan \dbE \big[\bar X(t)|\cF_t^0 \big], P_1 \wt{b} + (\Gamma + \bar{\Gamma})^\top P_1 \gamma \big\ran + \lan P_1 \gamma, \gamma \ran \Big] \\
\les \ & \dbE \Big[ - \big|\dbE \big[\bar X(t)|\cF_t^0 \big]\big|^2 + 2 \big|P_1 \wt{b} + (\Gamma + \bar{\Gamma})^\top P_1 \gamma \big| \big|\dbE \big[\bar X(t)|\cF_t^0 \big] \big| + \gamma^\top P_1 \gamma \Big] \\
\les \ & \dbE \Big[ - \frac{1}{2} \big|\dbE \big[\bar X(t)|\cF_t^0 \big]\big|^2 + 2 \big|P_1 \wt{b} + (\Gamma + \bar{\Gamma})^\top P_1 \gamma \big|^2 + \gamma^\top P_1 \gamma \Big].
\end{aligned}
\end{equation*}
Let $\beta_1$ be the largest eigenvalue of $P_1$ and set $\delta_1 = 1/(2 \beta_1)$. It follows that
\begin{equation*}
\begin{aligned}
& \frac{d}{dt} \dbE\big[ \big\lan P_1 \dbE \big[\bar X(t)|\cF_t^0 \big], \dbE \big[\bar X(t)|\cF_t^0 \big] \big\ran \big] \\
\les \ & \dbE \big[ - \delta_1 \big\lan P_1 \dbE \big[\bar X(t)|\cF_t^0 \big], \dbE \big[\bar X(t)|\cF_t^0 \big] \big\ran + 2 \big|P_1 \wt{b} + (\Gamma + \bar{\Gamma})^\top P_1 \gamma \big|^2 + \gamma^\top P_1 \gamma \big],
\end{aligned}
\end{equation*}
which implies 
$$\dbE\big[ \big\lan P_1 \dbE \big[\bar X(t)|\cF_t^0 \big], \dbE \big[\bar X(t)|\cF_t^0 \big] \big\ran \big] \les e^{-\delta_1 t} \lan P_1 \dbE[\xi], \dbE[\xi] \ran + K \int_0^t e^{-\delta_1 (t-s)} ds \les K$$
for all $t \ges 0$. Since $P_1$ is positive definite, we conclude that
$$\dbE \Big[\big|\dbE \big[\bar X(t)|\cF_t^0 \big]\big|^2 \Big] \les K, \quad \forall t \ges 0.$$
Then, by standard moment estimates for SDEs (see, e.g., estimate (D.5) in \cite{Fleming-Soner-2006}), there exists a constant $K > 0$ such that
\begin{equation*}
\begin{aligned}
\dbE\Big[\sup_{t \in [0, T]} |\bar X(t)|^4 \Big] &\les K \dbE \big[|\xi|^4 \big] + KT e^{KT} \dbE \Big[\int_0^T \Big\{|\xi|^4 + \left(\|\wt{A}_2\| + |\wt{b}|\right)^4 + \left(\|\wt{C}_2\| + |\wt{\sigma}|\right)^4 \Big\} dt \Big],
\end{aligned}
\end{equation*}
which gives that for all $T > 0$,
$$\dbE \Big[\sup_{t \in [0, T]} |\bar{X}(t)|^4 \Big] < \infty.$$

Next, we show that the second moment of $\bar{X}(\cdot)$ is uniformly bounded on $[0,\infty)$. By Proposition \ref{p:solvability_Bellman_equation_ergodic} (i) again, $\Theta^*$ is a stabilizer of the SDE \eqref{eq:homo_sde_2}. Thus, the homogeneous system $[\wt{A}_1, \wt{C}_1, \Gamma]$ is $L^2$-exponentially stable, which implies that the following Lyapunov equation 
\begin{equation}
\label{eq:Lyapunov_equation_2}
P_2 \wt{A}_1 + \wt{A}_1^\top P_2 + \wt{C}_1^\top P_2 \wt{C}_1 + \Gamma^\top P_2 \Gamma + I_n = 0
\end{equation}
admits a unique solution $P_2 \in \dbS^n_{++}$. Then, applying It\^o's formula, we derive
\begin{equation*}
\begin{aligned}
& \frac{d}{dt} \dbE\left[\lan P_2\bar{X}(t), \bar{X}(t) \right] \\
= \ & \dbE\Big[ \big\lan \big(P_2\wt{A}_1 + \wt{A}_1^\top P_2 + \wt{C}_1^\top P_2 \wt{C}_1 + \Gamma^\top P_2 \Gamma \big) \bar{X}(t), \bar{X}(t) \big\ran + \big\lan \big( P_2 \wt{A}_2 + \wt{A}_2^\top P_2 + \wt{C}_2^\top P_2 \wt{C}_2 + \wt{C}_2^\top P_2 \wt{C}_1 \\
& \hspace{0.5in}  + \wt{C}_1^\top P_2 \wt{C}_2 + \bar{\Gamma}^\top P_2 \Gamma + \Gamma^\top P_2 \bar{\Gamma} + \bar{\Gamma}^\top P_2 \bar{\Gamma} \big) \dbE \big[\bar X(t)|\cF_t^0 \big], \dbE \big[\bar X(t)|\cF_t^0 \big] \big\ran \\
& \hspace{0.5in} + 2 \big\lan P_2 \wt{b} + \wt{C}_1^\top P_2 \wt{\sigma} + \wt{C}_2^\top P_2 \wt{\sigma} + \Gamma^\top P_2 \gamma + \bar{\Gamma}^\top P_2 \gamma, \bar{X}(t) \big\ran + \lan P_2\wt{\sigma}, \wt{\sigma} \ran + \lan P_2 \gamma, \gamma \ran \Big]
\end{aligned}
\end{equation*}
by using the fact that
$$\dbE\big[ \big\lan \bar{X}(t), \dbE \big[\bar X(t)|\cF_t^0 \big] \big\ran \big] = \dbE\big[ \big\lan \dbE \big[\bar X(t)|\cF_t^0 \big], \dbE \big[\bar X(t)|\cF_t^0 \big] \big\ran \big].$$
Using the above Lyapunov equation and Young's inequality, and recalling from the previous step that $\mathbb E[|\mathbb E[\bar X(t) | \cF_t^0]|^{2}]$ is uniformly bounded on $[0, \infty)$, we deduce that for some constant $K > 0$, independent of $t$, 
$$\frac{d}{dt} \dbE\left[\lan P_2 \bar{X}(t), \bar{X}(t) \right] \les \dbE \big[ - |\bar{X}(t)|^2 + K + K|\bar{X}(t)|\big] \les -\frac{1}{2}  \dbE \big[|\bar{X}(t)|^2\big] + K.$$
Let $\beta$ be the largest eigenvalue of $P_2$. Then, by the same argument as above, we conclude
\begin{equation*}
\begin{aligned}
\dbE\left[\lan P_2 \bar{X}(t), \bar{X}(t) \right] \les K e^{-\frac{1}{2\beta} t} \dbE \big[\lan P_2 \xi, \xi \ran \big] + K \int_0^t e^{-\frac{1}{2\beta}(t-s)} ds \les K, \quad \forall t \ges 0,
\end{aligned}
\end{equation*}
which implies the desired result that
$$\dbE \big[|\bar X(t)|^2 \big] \les K, \quad \forall t \ges 0$$
as $P_2$ is positive definite.
\end{proof}

\subsection{Proof of Lemma \ref{l:invariant-distribution}}
\label{s:proof_invariant-distribution}

\begin{proof}
Let $Z(t) := (\bar{X}(t), \dbE[\bar{X}(t)|\cF_t^0])$ for all $t \ges 0$. By \eqref{eq:optimal_path_ergodic} and \eqref{eq:ergodic_mean_state}, $Z(\cd)$ satisfies the following $2n$-dimensional linear SDE
\begin{equation}
\label{eq:SDE_Z_t}
\begin{aligned}
dZ(t) & = \left\{\begin{bmatrix}
\wt{A}_1 & \wt{A}_2 \\ 0 & \wt{A}_1 + \wt{A}_2
\end{bmatrix} Z(t) + \begin{bmatrix}
\wt{b} \\ \wt{b}
\end{bmatrix}\right\} dt + \left\{\begin{bmatrix}
\wt{C}_1 & \wt{C}_2 \\ 0 & 0
\end{bmatrix} Z(t) + \begin{bmatrix}
\wt{\sigma} \\ 0
\end{bmatrix}\right\} dW(t) \\
& \hspace{0.3in} + \left\{\begin{bmatrix}
\Gamma & \bar{\Gamma} \\ 0 & \Gamma + \bar{\Gamma}
\end{bmatrix} Z(t) + \begin{bmatrix}
\gamma \\ \gamma
\end{bmatrix}\right\} dW^0(t)
\end{aligned}
\end{equation}
with the initial condition $Z(0) = (\xi, \dbE[\xi])$. Since the coefficients of the above SDE are affine, globally Lipschitz, and independent of time, $Z(\cd)$ is a time-homogeneous Markov process on $\dbR^{2n}$. Denote its transition semigroup by $\{\Phi_t\}_{t \ges 0}$, where 
$$\Phi_t \varphi(z) := \dbE[\varphi(Z^z(t))]$$
for every bounded Borel measurable function $\varphi: \dbR^{2n} \to \dbR$, and $Z^z(\cd)$ denotes the solution of the SDE \eqref{eq:SDE_Z_t} with initial condition $Z^z(0) = z$. In particular, the Markov property yields
$$\dbE[\varphi(Z(s+t))|\cF_s] = \Phi_t \varphi(Z(s)), \quad \forall s, t \ges 0.$$

Let $Z_1(t) := (\bar{X}_1(t), \dbE[\bar{X}_1(t)|\cF_t^0])$ and $Z_2(t) := (\bar{X}_2(t), \dbE[\bar{X}_2(t)|\cF_t^0])$  be two solutions of the SDE \eqref{eq:SDE_Z_t} driven by the same Brownian motions $W(\cd)$ and $W^0(\cd)$, but with possibly different initial conditions. We set
$$\Delta \bar{X}(t) = \bar{X}_1(t) - \bar{X}_2(t), \quad \text{and} \quad \Delta Y(t) = \Delta \bar{X}(t) - \dbE[\Delta \bar{X}(t)|\cF_t^0], \quad \forall t \ges 0.$$
Then, it is clear that
$$d \dbE[\Delta \bar{X}(t)|\cF_t^0] = (A + \bar A + B \bar \Th^*) \dbE[\Delta \bar{X}(t)|\cF_t^0] dt + (\Gamma + \bar{\Gamma}) \dbE[\Delta \bar{X}(t)|\cF_t^0]  dW^0(t)$$
and
$$d \Delta Y(t) = \wt{A}_1 \Delta Y(t) + \big(\wt{C}_1 \Delta Y(t) + (\wt{C}_1 + \wt{C}_2) \dbE[\Delta \bar{X}(t)|\cF_t^0]   \big) dW(t) + \Gamma \Delta Y(t) dW^0(t).$$
By Proposition \ref{p:solvability_Bellman_equation_ergodic} (i), the systems $[A+\bar{A}+B\bar{\Theta}^*, \Gamma + \bar{\Gamma}]$ and $[\wt{A}_1, \wt{C}_1, \Gamma]$ are $L^2$-exponentially stable. By an argument similar to that of Lemma \ref{l:moment_boundedness}, the Lyapunov equations \eqref{eq:Lyapunov_equation_1} and \eqref{eq:Lyapunov_equation_2} admit unique stabilizing solutions $P_1 \in \dbS^n_{++}$ and $P_2 \in \dbS^n_{++}$, respectively. Applying It\^o's formula yields 
$$\frac{d}{dt} \dbE\big[\big\lan P_1 \dbE[\Delta \bar{X}(t)|\cF_t^0], \dbE[\Delta \bar{X}(t)|\cF_t^0] \big\ran \big] \les - \dbE \big[|\dbE[\Delta \bar{X}(t)|\cF_t^0]|^2 \big].$$
It follows that there exist positive constants $K$ and $\lambda$ such that
$$\dbE \big[|\dbE[\Delta \bar{X}(t)|\cF_t^0]|^2 \big] \les K e^{-\lambda t} \dbE \big[|\dbE[\Delta \bar{X}(0)|\cF_0^0]|^2 \big], \quad \forall t \ges 0.$$
Similarly, applying It\^o's formula to $\lan P_2 \Delta Y(t), \Delta Y(t) \ran$, we obtain
\begin{equation*}
\begin{aligned}
\frac{d}{dt} \dbE \big[\lan P_2 \Delta Y(t), \Delta Y(t) \ran \big] &= - \dbE \big[|\Delta Y(t)|^2\big] + 2 \dbE \big[\big\lan P_2 \wt{C}_1 \Delta Y(t), (\wt{C}_1 + \wt{C}_2) \dbE[\Delta \bar{X}(t)|\cF_t^0] \big\ran \big] \\
& \hspace{0.3in} + \dbE \big[\big\lan P_2 (\wt{C}_1 + \wt{C}_2) \dbE[\Delta \bar{X}(t)|\cF_t^0], (\wt{C}_1 + \wt{C}_2) \dbE[\Delta \bar{X}(t)|\cF_t^0] \big\ran \big].
\end{aligned}
\end{equation*}
By Young's inequality,
$$\frac{d}{dt} \dbE \big[\lan P_2 \Delta Y(t), \Delta Y(t) \ran \big] \les -\frac{1}{2} \dbE \big[|\Delta Y(t)|^2 \big] + K \dbE \big[|\dbE[\Delta \bar{X}(t)|\cF_t^0]|^2 \big].$$
Combining this inequality with the exponential estimate for $\dbE \big[|\dbE[\Delta \bar{X}(t)|\cF_t^0]|^2 \big]$, and applying Gr\"onwall's inequality, we obtain
$$\dbE \big[|\Delta Y(t)|^2\big] \les K e^{-\lambda t} \dbE \big[ |\Delta Y(0)|^2 + |\dbE[\Delta \bar{X}(0)|\cF_0^0]|^2 \big], \quad \forall t \ges 0.$$
Since $\Delta \bar{X}(t) = \Delta Y(t) + \dbE[\Delta \bar{X}(t)|\cF_t^0]$, it follows that
$$\dbE \big[|Z_1(t) - Z_2(t)|^2\big] \les K e^{-\lambda t} \dbE \big[ |Z_1(0) - Z_2(0)|^2 \big], \quad \forall t \ges 0.$$
Let $\Phi_t^*\rho$ denote the distribution at time $t$ of the augmented system \eqref{eq:SDE_Z_t} whose initial distribution is $\rho \in \cP_2(\dbR^{2n})$. The preceding synchronous coupling estimate implies that, for all $\rho_1, \rho_2 \in \cP_2(\dbR^{2n})$,
$$\cW_2^2(\Phi_t^*\rho_1, \Phi_t^*\rho_2) \les Ke^{-\lambda t} \cW_2^2(\rho_1, \rho_2).$$
We define $\rho_t := \cL(Z(t)) = \cL(\bar{X}(t), \dbE[\bar{X}(t)|\cF_t^0])$. By the Markov property and the time homogeneity of $Z(\cd)$, we have $\rho_{s+t} = \Phi_t^* \rho_s$ for all $s, t \ges 0$. Therefore,
$$\cW_2^2(\rho_{s+t}, \rho_t) = \cW_2^2(\Phi_t^*\rho_s, \Phi_t^*\rho_0) \les K e^{-\lambda t} \cW_2^2(\rho_s, \rho_0), \quad \forall s, t \ges 0.$$
By Lemma \ref{l:moment_boundedness}, we have $\sup_{t \ges 0} \dbE[|Z(t)|^2] \les K$, and thus $\sup_{s \ges 0} \cW^2(\rho_s, \rho_0) \les K$. Consequently,
$$\cW_2^2(\rho_{s+t}, \rho_t) \les K e^{-\lambda t}, \quad \forall s, t \ges 0,$$
where $K$ and $\lambda$ are independent of $s$ and $t$. Thus, $\{\rho_t\}_{t \ges 0}$ is Cauchy in $(\cP_2(\dbR^{2n}), \cW_2)$. By the completeness of this space, there exists $\rho^*_{\infty} \in \cP_2(\dbR^{2n})$ such that
$$\lim_{t \to \infty} \cW_2(\rho_t, \rho^*_{\infty}) = 0.$$
Let $\pi_1, \pi_2: \dbR^{2n} \to \dbR^n$ be the coordinate projections, i.e., $\pi_1(x, y) = x$ and $\pi_2(x, y) = y$ for all $(x, y) \in \dbR^{2n}$, and define
$$\mu^*_{\infty} := (\pi_1)_{\#}\rho^*_{\infty}, \quad \text{and} \quad \wt{\mu}^{0,*}_{\infty} := (\pi_2)_{\#} \rho^{*}_{\infty}$$
as the pushforward measures. Since $\pi_1$ and $\pi_2$ are both $1$-Lipschitz, 
$$\cW_2 \big(\cL(\bar{X}(t)), \mu^{*}_{\infty} \big) \to 0, \quad \text{and} \quad \cW_2 \big(\cL(\dbE[\bar X(t)|\cF_t^0]), \wt{\mu}^{0,*}_{\infty} \big) \to 0, \quad \hbox{as } t \to \infty.$$
This completes the proof.
\end{proof}

\subsection{Proof of Proposition \ref{p:convergence_Riccati_N_infinity}}
\label{s:proof_convergence_Riccati_N_infinity}

\begin{proof}
The proof proceeds in the following steps.

\textit{Step 1.} We first show that $P_T^{N}(\cd)$ and $\Pi_T^{N}(\cd)$ are uniformly bounded on $[0, T]$ when $N$ is sufficiently large. Consider the homogeneous social optimization problem, i.e., \textbf{Problem (SO-N)$^T$} with $b = \sigma = \gamma = q = 0$ and $r = 0$. In this case, by uniqueness of the solution to \eqref{eq:Riccati_N_agent_rescaling}, we have $p_T^N(t) = 0$ and $\kappa_T^N(t) = 0$ for all $t \in [0, T]$. By Lemma \ref{l:stability_homo_system}, under Assumption \textbf{(H2)}, the system $[A, \bar A, C, \bar C, \Gamma, \bar{\Gamma}; B, D]$ is MF-$L^2$-stabilizable. Let $(\Theta, \bar{\Theta})$ be a stabilizer of the system \eqref{eq:state_MFC_homo}. For the $i$-th agent, we consider the control 
$$u^{i, N}(t) = \Theta \big(X^{i, N}(t) - X^{(N)}(t) \big) + \bar{\Theta} X^{(N)}(t).$$
Then, the dynamics of the $i$-th agent is governed by 
\begin{equation*}
\begin{cases}
\vspace{4pt}
\displaystyle
d X^{i, N}(t) = \big\{(A+B\Theta) X^{i, N}(t) + \left[\bar A + B(\bar \Theta - \Theta)\right] X^{(N)}(t) \big\} dt \\
\vspace{4pt}
\displaystyle
\hspace{0.8in} + \big\{(C+D\Theta) X^{i, N}(t) + \left[\bar C + D(\bar \Theta - \Theta)\right] X^{(N)}(t) \big\} dW^i(t), \\
\vspace{4pt}
\displaystyle
\hspace{0.8in} + \big\{\Gamma X^{i, N}(t) + \bar{\Gamma} X^{(N)}(t) \big\} d W^0(t), \quad t \ges 0, \\
X^{i, N}(0) = \xi^i \in L^4_{\wt{\cF}_0}.
\end{cases}
\end{equation*}
Thus, the weakly coupled state average $X^{(N)}(\cd)$ satisfies
\begin{equation*}
\begin{cases}
\vspace{4pt}
\displaystyle
d X^{(N)}(t) = (A+\bar{A}+B\bar{\Theta}) X^{(N)}(t) dt + (\Gamma + \bar{\Gamma}) X^{(N)}(t) dW^0(t) \\
\vspace{4pt}
\displaystyle
\hspace{0.8in} + \frac{1}{N} \sum_{j=1}^N \big\{(C+D\Theta) X^{j, N}(t) + \left[\bar C + D(\bar \Theta - \Theta)\right] X^{(N)}(t) \big\} dW^j(t), \quad t \ges 0, \\
\displaystyle
X^{(N)}(0) = \frac{1}{N} \sum_{j=1}^N \xi^j.
\end{cases}
\end{equation*}
Since the system $[A + \bar{A} + B \bar{\Theta}, \Gamma + \bar{\Gamma}]$ is $L^2$-exponentially stable, by Proposition 3.6 in \cite{Huang-Li-Yong-2015}, the Lyapunov equation
$$P_3 (A + \bar{A} + B \bar{\Theta}) +(A + \bar{A} + B \bar{\Theta})^\top P_3 + (\Gamma + \bar{\Gamma})^\top P_3 (\Gamma + \bar{\Gamma}) + I_n = 0$$
admits a unique solution $P_3 \in \dbS^n_{++}$. Applying It\^o's formula, we obtain
\begin{equation*}
\begin{aligned}
& \frac{d}{dt} \dbE \big[\lan P_3 X^{(N)}(t), X^{(N)}(t) \ran \big] \\
= \ & - \dbE \big[|X^{(N)}(t)|^2 \big] + \frac{1}{N} \dbE \big[\lan P_3 (C+\bar{C}+D\bar{\Theta}) X^{(N)}(t), (C+\bar{C}+D\bar{\Theta}) X^{(N)}(t) \ran \big] \\
& \hspace{0.3in} + \frac{1}{N^2} \sum_{j=1}^N \dbE \big[\lan P_3 (C+D\Theta) (X^{j, N}(t) - X^{(N)}(t)), (C+D\Theta) (X^{j, N}(t) - X^{(N)}(t)) \ran \big] \\
\les \ & \Big(-1 + \frac{k_1}{N} \Big) \dbE \big[|X^{(N)}(t)|^2 \big] + \frac{k_2}{N^2} \sum_{j=1}^N \dbE \big[|X^{j, N}(t) - X^{(N)}(t)|^2 \big]
\end{aligned}
\end{equation*}
for some $k_1, k_2 > 0$. Observe that $X^{j,N}(t) - X^{(N)}(t)$ satisfies
\begin{equation*}
\begin{aligned}
d \big(X^{j,N}(t) - X^{(N)}(t) \big) &= (A+B \Theta) \big(X^{j,N}(t) - X^{(N)}(t) \big) dt + \Gamma \big(X^{j,N}(t) - X^{(N)}(t) \big) d W^0(t) \\
& \hspace{0.3in} + \big\{(C+D\Theta) \big(X^{j,N}(t) - X^{(N)}(t) \big) + (C + \bar C + D\bar{\Theta}) X^{(N)}(t) \big\} dW^j(t) \\
& \hspace{0.3in} - \frac{1}{N} \sum_{k=1}^N \big\{(C+D\Theta) X^{k, N}(t) + (\bar C + D\bar{\Theta} - D\Theta) X^{(N)}(t) \big\} dW^k(t).
\end{aligned}
\end{equation*}
Since $[A+B\Theta, C+D\Theta, \Gamma]$ is $L^2$-exponentially stable, there exists $P_4 \in \dbS^{n}_{++}$ such that
$$P_4(A+B\Theta) + (A+B\Theta)^\top P_4 + (C+D\Theta)^\top P_4 (C+D\Theta) + \Gamma^\top P_4 \Gamma + I_n = 0.$$
By It\^o's formula again, we derive that
\begin{equation*}
\begin{aligned}
& \frac{d}{dt} \frac{1}{N} \sum_{j=1}^N \dbE \big[\lan P_4 (X^{j,N}(t) - X^{(N)}(t)), X^{j,N}(t) - X^{(N)}(t) \ran \big] \\
= \ & - \frac{1}{N} \sum_{j=1}^N \dbE \big[|X^{j,N}(t) - X^{(N)}(t)|^2 \big] + \Big( 1 - \frac{1}{N} \Big) \dbE \big[\lan P_4 (C+\bar{C}+D\bar{\Theta}) X^{(N)}(t), (C+\bar{C}+D\bar{\Theta}) X^{(N)}(t) \ran \big] \\
& \hspace{0.3in} - \frac{1}{N^2} \sum_{j=1}^N \dbE \big[\lan P_4 (C+D\Theta) (X^{j, N}(t) - X^{(N)}(t)), (C+D\Theta) (X^{j, N}(t) - X^{(N)}(t)) \ran \big] \\
\les \ & - \frac{1}{N} \sum_{j=1}^N \dbE \big[|X^{j,N}(t) - X^{(N)}(t)|^2 \big] + k_3 \dbE \big[|X^{(N)}(t)|^2 \big]
\end{aligned}
\end{equation*}
for some $k_3 > 0$. Let $k_4 = 2(k_3 + 1)$, and let
$$H_1^N(t) = k_4 \dbE \big[\lan P_3 X^{(N)}(t), X^{(N)}(t) \ran \big] + \frac{1}{N} \sum_{j=1}^N \dbE \big[\lan P_4 (X^{j,N}(t) - X^{(N)}(t)), X^{j,N}(t) - X^{(N)}(t) \ran \big]$$
for all $t \in [0, T]$. Then, it is clear that
\begin{equation*}
\frac{d}{dt} H_1^N(t) \les \Big(-1 + \frac{k_2 k_4}{N} \Big) \frac{1}{N} \sum_{j=1}^N \dbE \big[|X^{j,N}(t) - X^{(N)}(t)|^2 \big] + \Big(-2 + \frac{k_1 k_4}{N} \Big) \dbE \big[|X^{(N)}(t)|^2 \big].
\end{equation*}
We choose $N_0$ sufficiently large such that $\frac{k_2k_4}{N_0} \les \frac{1}{2}$ and $\frac{k_1 k_4}{N_0} \les 1$, and let $N > N_0$. Then, integrating the above differential inequality, we have
\begin{equation*}
\int_0^T \Big(\frac{1}{2N} \sum_{j=1}^N \dbE \big[|X^{j,N}(t) - X^{(N)}(t)|^2 \big] + \dbE \big[|X^{(N)}(t)|^2 \big] \Big) dt \les H_1^N(0).
\end{equation*}
Since $P_3$ and $P_4$ are both positive definite, we deduce
$$H_1^N(0) \les k_5 \Big(\frac{1}{N} \sum_{j=1}^N \dbE \big[|X^{j,N}(0) - X^{(N)}(0)|^2 \big] + \dbE \big[|X^{(N)}(0)|^2 \big] \Big) = \frac{k_5}{N} \sum_{j=1}^N \dbE[|\xi^j|^2],$$
which implies that
$$\int_0^T \Big(\frac{1}{N} \sum_{j=1}^N \dbE \big[|X^{j,N}(t) - X^{(N)}(t)|^2 \big] + \dbE \big[|X^{(N)}(t)|^2 \big] \Big) dt \les 2 H_1^N(0) \les \frac{2 k_5}{N} \sum_{j=1}^N \dbE[|\xi^j|^2].$$
Hence, by the definition of $u^{i, N}(\cd)$, there exists $K > 0$ such that
$$\int_0^T \frac{1}{N} \sum_{j=1}^N \dbE \big[|X^{j,N}(t)|^2 + |u^{j,N}(t)|^2 \big] dt \les \frac{K}{N} \sum_{j=1}^N \dbE[|\xi^j|^2].$$
Consequently, for $N > N_0$, the value function $V^{\textnormal{soc}}_T(\cdot)$ satisfies
$$0 \les V^{\textnormal{soc}}_T(\Bx) \les \frac{K}{N} \sum_{j=1}^N |x^j|^2.$$
Using the explicit form of $V^{\textnormal{soc}}_T(\cdot)$ in \eqref{eq:value_function_SO-N} and taking $x^1 = x$ and $x^j = 0$ for all $j \neq 1$, we obtain
$$\frac{1}{N} x^\top P_T^N(0) x \les \frac{K}{N} |x|^2,$$
which yields $0 \les P_T^N(0) \les K I_n$. Replacing the initial time by an arbitrary $t$, we also have $0 \les P_T^N(t) \les K I_n$ for all $t \in [0, T]$. Thus, $\cR(P_T^N(t)) = R + D^\top P_T^N(t) D > 0$, and $\cR(P_T^N(t))^{-1}$ is well-defined for all $t \in [0, T]$. Next, taking $x^j = x$ for all $j = 1, \dots, N$,
$$0 \les x^\top \Big(P_T^N(0) + \Big(1 - \frac{1}{N} \Big) \Pi_T^N(0) \Big) x \les K |x|^2.$$
Therefore, we conclude that
\begin{equation}
\label{eq:uniform_bound_P_N}
\sup_{N > N_0} \sup_{t \in [0, T]} \big(\|P_T^N(t)\| + \|\Pi_T^N(t)\| \big) \les K,
\end{equation}
where $N_0$ and $K$ are independent of $T$ and $N$.

\textit{Step 2.} Next, we derive a uniform-in-time estimate for $P_T^N(\cd) - P_T(\cd)$. For any $P_1, P_2 \in \dbS^n_{+}$, the explicit form of $\Psi_1(\cdot)$ in \eqref{eq:functions_Psi_1} gives
\begin{equation*}
\begin{aligned}
\Psi_1(P_1) - \Psi_1(P_2) & = (P_1 - P_2) (A+B \Theta(P_2)) + (A+B \Theta(P_2))^\top (P_1 - P_2) \\
& \hspace{0.3in} + (C+D \Theta(P_2))^\top (P_1 - P_2) (C+D \Theta(P_2)) + \Gamma^\top (P_1 - P_2) \Gamma \\
& \hspace{0.3in} - (\Theta(P_1) - \Theta(P_2))^\top \cR(P_1) (\Theta(P_1) - \Theta(P_2)).
\end{aligned}
\end{equation*}
Let $\Theta^*$ be given in \eqref{eq:theta_star}. We define the linear operator $\cO_1$ on $\dbS^n$ as follows:
$$\cO_1(P) = P(A+B\Theta^*) + (A+B\Theta^*)^\top P + (C+D\Theta^*)^\top P (C+D\Theta^*) + \Gamma^\top P \Gamma.$$
Similarly, for each $t \in [0, T]$, let $\Theta_T^*(t)$ be given in \eqref{eq:theta_T_star}, and we define the operator $\cO_1(t)$ by
$$\cO_1(t)(P) = P(A+B\Theta_T^*(t)) + (A+B\Theta_T^*(t))^\top P + (C+D\Theta_T^*(t))^\top P (C+D\Theta_T^*(t)) + \Gamma^\top P \Gamma.$$
Since the system $[A+B\Theta^*, C+D\Theta^*, \Gamma]$ is $L^2$-exponentially stable, there exist $K_1 > 0$ and $\lambda_1 > 0$ such that
$$\|e^{\cO_1 t}\|_{op} \les K_1 e^{-\lambda_1 t}, \quad \forall t \ges 0,$$
where $\|\cdot\|_{op}$ denotes the operator norm on $\dbS^n$. Moreover, we define $\Delta^{N, P}_T(t) = P_T^N(t) - P_T(t)$ for all $t \in [0, T]$. Then, $\Delta^{N,P}_T(\cdot)$ satisfies the following differential equation:
\begin{equation*}
\begin{aligned}
& \dot{\Delta}^{N,P}_T(t) + \cO_1 (\Delta^{N,P}_T(t)) + \cO_1(t) (\Delta^{N,P}_T(t)) - \cO_1 (\Delta^{N,P}_T(t)) \\
& \hspace{0.3in} - (\Theta_T^N(t) - \Theta_T^*(t))^\top \cR(P_T^N(t)) (\Theta_T^N(t) - \Theta_T^*(t)) + g_1 \big(N, P_T^N(t), \Pi_T^N(t) \big) = 0
\end{aligned}
\end{equation*}
with the terminal condition $\Delta^{N, P}_T(T) = 0$, where $\Theta_T^N(\cd)$ is defined in \eqref{eq:theta_T_N}. The estimate \eqref{eq:convergence_Riccati_T_infinity} in Proposition \ref{p:convergence_Riccati_T_infinity} gives
$$\|\cO_1(t) (\Delta^{N,P}_T(t)) - \cO_1 (\Delta^{N,P}_T(t))\| \les K_2 e^{-\lambda_2 (T-t)} \|\Delta^{N,P}_T(t)\|, \quad \forall t \in [0, T]$$
for $K_2, \lambda_2 > 0$. Similarly, using the estimate \eqref{eq:uniform_bound_P_N}, for $N > N_0$ we obtain
$$\|(\Theta_T^N(t) - \Theta_T^*(t))^\top \cR(P_T^N(t)) (\Theta_T^N(t) - \Theta_T^*(t))\| \les K_3 \|\Delta^{N,P}_T(t)\|^2, \quad \forall t \in [0, T]$$
for $K_3 > 0$, and there exists $K_4 > 0$ such that
$$\big\|g_1 \big(N, P_T^N(t), \Pi_T^N(t) \big) \big\| \les \frac{K_4}{N}, \quad \forall t \in [0, T].$$
Thus, we derive that, for $N > N_0$ and $t \in [0, T]$,
$$\|\Delta^{N,P}_T(t)\| \les \int_t^T K_1 e^{-\lambda_1(s - t)} \Big(K_2 e^{-\lambda_2 (T-s)} \|\Delta^{N,P}_T(s)\| + K_3 \|\Delta^{N,P}_T(s)\|^2 + \frac{K_4}{N}\Big) ds.$$
Define $\wt{\Delta}^{N,P}_T(t) = \Delta^{N,P}_T(T-t)$ for all $t \in [0, T]$. Then, 
$$\|\wt{\Delta}^{N,P}_T(t)\| \les \int_0^t K_1 e^{-\lambda_1(t - r)} \Big(K_2 e^{-\lambda_2 r} \|\wt{\Delta}^{N,P}_T(r)\| + K_3 \|\wt{\Delta}^{N,P}_T(r)\|^2 + \frac{K_4}{N}\Big) dr.$$
Multiplying both sides by $e^{\lambda_1 t}$ and letting $h_1^N(t) = \int_0^t K_1 e^{\lambda_1 r}(K_3 \|\wt{\Delta}^{N,P}_T(r)\|^2 + \frac{K_4}{N}) dr$, we obtain
$$e^{\lambda_1 t} \|\wt{\Delta}^{N,P}_T(t)\| \les \int_0^t K_1 K_2 e^{-\lambda_2 r} e^{\lambda_1 r} \|\wt{\Delta}^{N,P}_T(r)\| dr + h_1^N(t).$$
Gr\"onwall's inequality implies that 
\begin{equation*}
\begin{aligned}
e^{\lambda_1 t} \|\wt{\Delta}^{N,P}_T(t)\| \les h_1^N(t) + \int_0^t K_1 K_2 e^{-\lambda_2 r} h_1^N(r) e^{\int_r^t K_1 K_2 e^{-\lambda_2 u} du} dr \les K_5 h_1^N(t)
\end{aligned}
\end{equation*}
for some $K_5 > 0$. Thus, there exist $K_6, K_7 > 0$ such that
$$\|\wt{\Delta}^{N,P}_T(t)\| \les \int_0^t e^{-\lambda_1 (t-r)} \Big(K_6 \|\wt{\Delta}^{N,P}_T(r)\|^2 + \frac{K_7}{N} \Big) dr := h_2^N(t).$$
Observe that $h_2^N(0) = 0$, and
$$\dot{h}_2^N(t) = - \lambda_1 h_2^N(t) + K_6 \|\wt{\Delta}^{N,P}_T(t)\|^2 + \frac{K_7}{N} \les - \lambda_1 h_2^N(t) + K_6 (h_2^N(t))^2 + \frac{K_7}{N}.$$
Let $h(\cdot)$ be the solution to the scalar Riccati equation $\dot{h}(t) = - \lambda_1 h(t) + K_6 h^2(t) + \frac{K_7}{N}$ with $h(0) = 0$. By the comparison principle, we have $h_2^N(t) \les h(t)$ for all $t \in [0, T]$. Let $N \ges N_1 := \max\{N_0, \frac{4K_6 K_7}{\lambda_1^2}\}$. Then, the equation $K_6 x^2 - \lambda_1 x + \frac{K_7}{N} = 0$ admits two positive roots $x_1$ and $x_2$, and its smaller root satisfies
$$x_1 = \frac{\lambda_1 - \sqrt{\lambda_1^2 -\frac{4 K_6 K_7}{N}}}{2K_6} \les \frac{2K_7}{\lambda_1 N}.$$
Thus, we obtain $h(t) \les \frac{2K_7}{\lambda_1 N}$ for all $t \in [0, T]$ since $h(0) = 0$ and $\dot{h}(t) = K_6 (h(t) - x_1)(h(t) - x_2)$. Hence, for all $N \ges N_1$,
$$\|\wt{\Delta}^{N,P}_T(t)\| \les h_2^N(t) \les h(t) \les \frac{K}{N}, \quad \forall t \in [0, T]$$
for some $K > 0$, which is independent of $N$. It is equivalent to
\begin{equation}
\label{eq:uniform_difference_P_T}
\|P_T^N(t) - P_T(t)\| \les \frac{K}{N}, \quad \forall t \in [0, T].
\end{equation}

\textit{Step 3.} We prove the uniform-in-time estimate for $\bar{\Pi}_T^N(\cd) - \bar{\Pi}_T(\cd)$, where $\bar{\Pi}_T^N(\cd) = P_T^N(\cd) + \Pi_T^N(\cd)$. For any $P \in \dbS^n_{+}$ and $\bar{\Pi}_1, \bar{\Pi}_2 \in \dbS^n$, by the definition of $\bar{\Psi}_2(\cdot)$ in \eqref{eq:functions_bar_Psi_2}, 
\begin{equation*}
\begin{aligned}
\bar{\Psi}_2(P, \bar{\Pi}_1) - \bar{\Psi}_2(P, \bar{\Pi}_2) &= (\bar{\Pi}_1 - \bar{\Pi}_2)(A + \bar{A} + B \bar{\Theta}(P, \bar{\Pi}_2)) + A + \bar{A} + B \bar{\Theta}(P, \bar{\Pi}_2))^\top (\bar{\Pi}_1 - \bar{\Pi}_2) \\
& \hspace{0.3in} + (\Gamma + \bar{\Gamma})^\top (\bar{\Pi}_1 - \bar{\Pi}_2) (\Gamma + \bar{\Gamma}) \\
& \hspace{0.3in}  - (\bar{\Theta}(P, \bar{\Pi}_1) - \bar{\Theta}(P, \bar{\Pi}_2))^\top \cR(P) (\bar{\Theta}(P, \bar{\Pi}_1) - \bar{\Theta}(P, \bar{\Pi}_2)).
\end{aligned}
\end{equation*}
Let $\bar{\Theta}^*$ be given in \eqref{eq:theta_star}. We define the linear operator $\cO_2$ on $\dbS^n$ as follows:
$$\cO_2(P) = P(A+\bar{A}+B\bar{\Theta}^*) + (A+\bar{A}+B\bar{\Theta}^*)^\top P + (\Gamma + \bar{\Gamma})^\top P (\Gamma + \bar{\Gamma}).$$
Similarly, for each $t \in [0, T]$, let $\bar{\Theta}_T^*(t)$ be given in \eqref{eq:theta_T_star}, and we define the operator $\cO_2(t)$ by
$$\cO_2(t)(P) = P(A+\bar{A}+B\bar{\Theta}_T^*(t)) + (A+\bar{A}+B\bar{\Theta}_T^*(t))^\top P + (\Gamma + \bar{\Gamma})^\top P (\Gamma + \bar{\Gamma}).$$
Since the system $[A+\bar{A}+B\bar{\Theta}^*, \Gamma+ \bar{\Gamma}]$ is $L^2$-exponentially stable, there exist $K_8 > 0$ and $\lambda_3 > 0$ such that
$$\|e^{\cO_2 t}\|_{op} \les K_8 e^{-\lambda_3 t}, \quad \forall t \ges 0.$$
Next, we define $\Delta^{N, \bar{\Pi}}_T(t) = \bar{\Pi}_T^N(t) - \bar{\Pi}_T(t)$ for all $t \in [0, T]$. Then, $\Delta^{N,\bar{\Pi}}_T(\cdot)$ satisfies the following differential equation:
\begin{equation*}
\begin{aligned}
& \dot{\Delta}^{N,\bar{\Pi}}_T(t) + \cO_2 (\Delta^{N,\bar{\Pi}}_T(t)) + \Delta^{N,\bar{\Pi}}_T(t) B(\bar{\Theta}_T^*(t) - \bar{\Theta}^*) + (\bar{\Theta}_T^*(t) - \bar{\Theta}^*)^\top B^\top \Delta^{N,\bar{\Pi}}_T(t) \\
& \hspace{0.3in} - (\bar{\Theta}(P_T(t), \bar{\Pi}_T^N(t)) - \bar{\Theta}_T^*(t))^\top \cR(P_T(t)) (\bar{\Theta}(P_T(t), \bar{\Pi}_T^N(t)) - \bar{\Theta}_T^*(t)) \\
& \hspace{0.3in} + \bar{\Psi}_2(P_T^N(t), \bar{\Pi}_T^N(t)) - \bar{\Psi}_2(P_T(t), \bar{\Pi}_T^N(t)) + g_1 \big(N, P_T^N(t), \Pi_T^N(t) \big) + g_2 \big(N, P_T^N(t), \Pi_T^N(t) \big) = 0
\end{aligned}
\end{equation*}
with the terminal condition $\Delta_T^{N, \bar{\Pi}}(T) = 0$, where $\bar{\Theta}_T^N(\cd)$ is given in \eqref{eq:theta_T_N}. By the estimates \eqref{eq:uniform_bound_P_N} and \eqref{eq:uniform_difference_P_T}, we have
$$\big\|g_1 \big(N, P_T^N(t), \Pi_T^N(t) \big) + g_2 \big(N, P_T^N(t), \Pi_T^N(t) \big) \big\| \les \frac{K_9}{N}, \quad \forall t \in [0, T],$$
and 
$$\|\bar{\Psi}_2(P_T^N(t), \bar{\Pi}_T^N(t)) - \bar{\Psi}_2(P_T(t), \bar{\Pi}_T^N(t))\| \les K \|P_T^N(t) - P_T(t)\| \les \frac{K_{10}}{N}, \quad \forall t \in [0, T].$$
Moreover, there exists $K_{11} > 0$ such that
$$\|(\bar{\Theta}(P_T(t), \bar{\Pi}_T^N(t)) - \bar{\Theta}_T^*(t))^\top \cR(P_T(t)) (\bar{\Theta}(P_T(t), \bar{\Pi}_T^N(t)) - \bar{\Theta}_T^*(t))\| \les K_{11} \|\Delta^{N,\bar{\Pi}}_T(t) \|^2$$
for all $t \in [0, T]$. Then, by using the estimate \eqref{eq:convergence_coefficient_control} in Corollary \ref{c:convergence_coefficients_control}, we obtain
$$\|\Delta^{N,\bar{\Pi}}_T(t) B(\bar{\Theta}_T^*(t) - \bar{\Theta}^*) + (\bar{\Theta}_T^*(t) - \bar{\Theta}^*)^\top B^\top \Delta^{N,\bar{\Pi}}_T(t)\| \les K_{12} e^{-\lambda_4 (T-t)} \|\Delta^{N,\bar{\Pi}}_T(t)\|,$$
for some constants $K_{12} > 0$ and $\lambda_4 > 0$, and thus
$$\|\Delta^{N,\bar{\Pi}}_T(t)\| \les \int_t^T K_8 e^{-\lambda_3(s - t)} \Big(K_{12} e^{-\lambda_4 (T-s)} \|\Delta^{N,\bar{\Pi}}_T(s)\| + K_{11} \|\Delta^{N,\bar{\Pi}}_T(s)\|^2 + \frac{K_9 + K_{10}}{N}\Big) ds.$$
Using an argument similar to that in \textit{Step 2}, we conclude that
\begin{equation}
\label{eq:uniform_difference_Pi_T}
\|\Delta^{N,\bar{\Pi}}_T(t)\| \les \frac{K}{N}, \quad \forall t \in [0, T]
\end{equation}
for some $K > 0$, independent of $N$. By \eqref{eq:uniform_difference_P_T}, it follows that
$$\|\Pi_T^N(t) - \Pi_T(t)\| \les \|\bar{\Pi}_T^N(t) - \bar{\Pi}_T(t)\| + \|P_T^N(t) - P_T(t)\| \les \frac{K}{N}, \quad \forall t \in [0, T].$$

\textit{Step 4.} Finally, we prove a uniform-in-time estimate for $p_T^N(\cdot) - p_T(\cd)$. Recall that $p_T^N(\cd)$ is the solution to 
$$\dot{p}_T^N(t) + \Psi_3 \big(P_T^N(t), \Pi_T^N(t), p_T^N(t) \big) + g_3 \big(N, P_T^N(t), \Pi_T^N(t), p_T^N(t) \big) = 0$$
with the terminal condition $p_T^N(T) = 0$, where $\Psi_3$ and $g_3$ are defined in \eqref{eq:functions_Psi_3} and \eqref{eq:functions_g_3}, respectively. It is equivalent to 
\begin{equation*}
\begin{aligned}
& \dot{p}_T^N(t) + (A + \bar{A} + B \bar{\Theta}^*)^\top p_T^N(t) + \big(\bar{\Theta}_T^N(t) - \bar{\Theta}^* \big)^\top B^\top p_T^N(t) + \frac{1}{N} \Pi_T^N(t)B \cR(P_T^N(t))^{-1} B^\top p_T^N(t) \\
& \hspace{0.3in} + \big(C + \bar{C} + D\bar{\Theta}_T^N(t) \big)^\top P_T^N(t) \sigma + \bar{\Theta}_T^N(t)^\top r + \bar{\Pi}_T^N(t) b + \h{\Gamma}^\top \bar{\Pi}_T^N(t) \gamma + q \\
& \hspace{0.3in} + \frac{1}{N} \Pi_T^N(t)B\cR(P_T^N(t))^{-1} \big(D^\top P_T^N(t) \sigma + r \big) - \frac{1}{N} \big(\Pi_T^N(t) b + \h{\Gamma} \Pi_T^N(t) \gamma \big) = 0
\end{aligned}
\end{equation*}
with $p_T^N(T) = 0$, where $\bar{\Theta}_T^N(\cd)$ is defined in \eqref{eq:theta_T_N}. Since the system $[A+\bar{A}+B\bar{\Theta}^*, \Gamma+ \bar{\Gamma}]$ is $L^2$-exponentially stable, the matrix $A + \bar{A} + B \bar{\Theta}^*$ is stable and thus there exist positive constants $K_{13}$ and $\lambda_5$ such that
$$\|e^{(A + \bar{A} + B \bar{\Theta}^*)^\top  t}\| \les K_{13} e^{-\lambda_5 t}, \quad \forall t \ges 0.$$
Using the estimates \eqref{eq:convergence_Riccati_T_infinity}, \eqref{eq:uniform_bound_P_N}, \eqref{eq:uniform_difference_P_T}, and \eqref{eq:uniform_difference_Pi_T}, together with the integrating factor method, we obtain
\begin{equation*}
|p_T^N(t)| \les \int_t^T K e^{-\lambda_5 (s-t)} \Big\{\Big(\frac{1}{N} + e^{-\lambda(T-s)}\Big) |p_T^N(s)| + 1 \Big\} ds
\end{equation*}
for some positive constants $K$ and $\lambda$. Applying backward Gr\"onwall's inequality, we deduce the uniform boundedness of $p_T^N$: for all $N > N_1$,
\begin{equation}
\label{eq:uniform_bound_p_N}
|p_T^N(t)| \les K, \quad \forall t \in [0, T].
\end{equation}
Define $\Delta^{N, p}_T(t) = p_T^N(t) - p_T(t)$ for all $t \in [0, T]$. Then, $\Delta^{N,p}_T(\cdot)$ satisfies the following differential equation:
\begin{equation*}
\begin{aligned}
& \dot{\Delta}^{N,p}_T(t) + (A+\bar{A}+B\bar{\Theta}^*)^\top \Delta^{N, p}_T(t) + (\bar{\Theta}_T^*(t) - \bar{\Theta}^*)^\top B^\top \Delta^{N, p}_T(t) \\
& \hspace{0.3in} + \Psi_3(P_T^N(t), \Pi_T^N(t), p_T^N(t)) - \Psi_3(P_T(t), \Pi_T(t), p_T^N(t)) + g_3 \big(N, P_T^N(t), \Pi_T^N(t), p_T^N(t) \big) = 0
\end{aligned}
\end{equation*}
with the terminal condition $\Delta^{N, p}_T(T) = 0$. By the estimates \eqref{eq:uniform_bound_P_N}, \eqref{eq:uniform_difference_P_T}, \eqref{eq:uniform_difference_Pi_T}, and \eqref{eq:uniform_bound_p_N}, there exists $K_{14} > 0$ such that
$$|g_3 \big(N, P_T^N(t), \Pi_T^N(t), p_T^N(t) \big)| \les \frac{K_{14}}{N}, \quad \forall t \in [0, T]$$
and 
$$|\Psi_3(P_T^N(t), \Pi_T^N(t), p_T^N(t)) - \Psi_3(P_T(t), \Pi_T(t), p_T^N(t))| \les \frac{K_{14}}{N}, \quad \forall t \in [0, T].$$
Hence, using the integrating factor method and the estimate \eqref{eq:convergence_coefficient_control} in Corollary \ref{c:convergence_coefficients_control} again, we obtain
$$|\Delta^{N, p}_T(t)| \les \int_t^T K_{13} e^{-\lambda_5 (s-t)} \Big(K e^{-\lambda(T-s)}|\Delta^{N, p}_T(s)| + \frac{2K_{14}}{N} \Big) ds.$$
The backward Gr\"onwall inequality implies that
\begin{equation*}
|\Delta^{N,p}_T(t)| \les \frac{K}{N}, \quad \forall t \in [0, T]
\end{equation*}
for some $K > 0$, independent of $N$. This completes the proof.
\end{proof}

\end{document}